\documentclass[10pt]{article}

\usepackage{amsmath}
\usepackage{amsmath, amsthm, amsfonts, amssymb, color}
\usepackage{mathrsfs}
\usepackage{latexsym,bm}
\usepackage{graphicx}
\usepackage[toc,page]{appendix}
\usepackage{mathtools}
\mathtoolsset{showonlyrefs}
\usepackage{comment}
\usepackage{xcolor}
\usepackage[numbers, sort&compress]{natbib}

\newtheorem{theorem}{Theorem}[section]
\newtheorem{proposition}[theorem]{Proposition}

\newtheorem{lemma}[theorem]{Lemma}

\newtheorem{remark}[theorem]{Remark}
\newtheorem{example}[theorem]{Example}

\definecolor{wco}{rgb}{0.5,0.2,0.3}

\newcommand{\pp}{\mathbb{P}}

\newcommand{\mf}{\mathcal{M}(E)}
\newcommand{\mfo}{\mathcal{M}^{0}(E)}

\newcommand{\ud}{\mathrm{d}}

\newcommand{\R}{\mathbb{R}}
\newcommand{\e}{\mathrm{e}}

\newcommand{\Var}{\mathrm{Var}}
\newcommand{\Cov}{\mathrm{Cov}}
\newcommand{\mm}{\mathfrak{M}}
\numberwithin{equation}{section} 

\begin{document}

\allowdisplaybreaks

\title{\bf Functional limit theorems for weighted occupation times of supercritical superprocesses}
\author{
{\bf Ting Yang }\thanks{School of Mathematics and Statistics, Beijing Institute of Technology, Beijing, 100081, P.R. China.
Email: yangt@bit.edu.cn.
The research of this author is supported by NSFC (Grant No. 12371143).}
}

\date{}
\maketitle

\begin{abstract}
Suppose $X=(X_t)_{t\ge0}$ is a supercritical superprocess with a non-local branching
mechanism on a Lusin space $E$, whose mean semigroup admits a positive leading
eigenvalue $\lambda_1$ with a bounded positive eigenfunction $\varphi$ and a finite eigenmeasure
$\widetilde{\varphi}$. Let $Y_t:=\int_0^t X_s\mathrm{d}s$ be its occupation measure. We
first prove a law of large numbers, that is, for every bounded measurable function $f$,
$\e^{-\lambda_1 t}\langle f,Y_t\rangle$ converges in $L^2$ to
$\lambda_1^{-1}\langle f,\widetilde{\varphi}\rangle W^\varphi_\infty$, where
$W^\varphi_\infty$ is the limit of the martingale
$W^\varphi_t:=\e^{-\lambda_1 t}\langle\varphi,X_t\rangle$. We then establish
functional central limit theorems for the fluctuations of $\langle f,Y_t\rangle$
around this random centering. We show that the normalisation and the limit are
determined by the spectral decay rate of $f$ compared with the critical value
$\lambda_1/2$, and there are three possible regimes: after suitable normalisation, the fluctuation process converges in distribution in $\mathcal{C}[0,+\infty)$
either to a mixed Gaussian process $\sqrt{W^{\varphi}_{\infty}}G(f)$, or to a constant Gaussian process with random variance depending on $W^{\varphi}_{\infty}$, or to a non-Gaussian limit
given by the limit of a martingale associated with the spectral projection of $f$. These results are the occupation-time counterparts of the fluctuation theorems for instantaneous linear functionals of supercritical non-local branching superprocesses.
\end{abstract}

\medskip

\noindent\textbf{AMS 2020 Mathematics Subject Classification:} Primary 60J68; Secondary 60F17, 60G57.

\medskip

\noindent\textbf{Keywords and Phrases:} supercritical superprocesses; non-local branching mechanism; weighted occupation time; functional central limit theorem.
\section{Introduction}

Superprocesses, which are an important class of measure-valued branching processes, arise as the short lifetime and high density limits of
branching particle systems, and have been extensively studied
over the past decades; see, e.g., the monograph \cite{Li11} for a systematic
account of the theory.  For any measure-valued branching process $X=(X_t)_{t\ge0}$ on a
state space $E$, a fundamental object is its occupation measure $Y_t:=\int_0^t X_s\,\mathrm{d}s$,
which captures the space-time behaviour of the process rather than its
behaviour at a fixed instant, and therefore contains essential information on the global structure of $X$.

The long-time behaviour of weighted occupation
times $\langle f,Y_{t}\rangle:=\int_{0}^{t}\langle f,X_{s}\rangle\mathrm{d}s$, in particular laws of large numbers (LLN) and central limit theorems (CLTs), forms a
classical topic in the theory of branching systems.
We recall that a branching system is called
supercritical, critical or subcritical according as its mean grows exponentially, remains constant in time, or decays to $0$.
In the \textit{critical} regime, the fluctuation theory of occupation
times is by now well developed.
Iscoe \cite{Iscoe86a} obtained CLTs
for a weighted occupation time of critical super-stable processes started off from Lebesgue measure, with Gaussian or stable generalised random fields arising
in the limit depending on the dimension. Cox and Griffeath \cite{CG85} obtained analogous results for critical binary branching Brownian motions.
Later, Dawson et al. \cite{DGW01} investigated
occupation time fluctuations of one- and two-level critical branching particle systems
on locally compact Abelian groups, and proved that the asymptotics are
determined by the Green potential operator of the individual particle motion.
In a series of
papers, Bojdecki et al.
\cite{BGT04,BGT06a,BGT06b,BGT07} proved functional limit theorems for rescaled
occupation time fluctuations of $(d,\alpha,\beta)$-branching particle systems.
Related functional CLTs were obtained by Hong \cite{Hong04} for super $\alpha$-stable processes and by
Birkner and Z\"{a}hle \cite{BZ07} for state-dependent branching random walks.
Recently, Fan et al. \cite{FHX25} studied a two-layer model
in which a critical super-Brownian motion evolves in a Gaussian random
environment that is white in time and coloured in space (this model
reduces to the classical critical super-Brownian motion of \cite{Iscoe86a,Iscoe86b}
when the environment degenerates), and established quenched and annealed
LLN and CLTs for the weighted occupation
times in dimensions $d\ge3$.
In the \textit{subcritical} regime, Mi{\l}o\'{s} \cite{Milos09} proved a functional
CLT for occupation time fluctuations of branching
immigration systems, where the limit is a generalized Wiener process taking values in the space of tempered distributions.

The fluctuation theory of \emph{supercritical} branching systems is quite
different in nature, since the process grows exponentially and the fluctuations have a random centering.
The study of CLTs in this setting goes back to Kesten and Stigum \cite{KS66} and Athreya
\cite{Ath69a,Ath69b} for multitype continuous-time Galton--Watson processes. Related functional CLTs were obtained by Janson \cite{Jan04}. These works revealed a trichotomy of regimes determined by
the spectral gap of the mean matrix of the processes.
This trichotomy was later extended to spatial branching processes, including branching Markov processes and superprocesses, see, e.g.,
\cite{AH83,AM15,Milos18,RSZ14,RSZ15,RSZ17,RSZ17b} and the references therein.
For non-local branching mechanisms, LLN and
CLTs were established in \cite{DH25} for branching Markov processes and in \cite{PY20,Yang25} for superprocesses. Again, three regimes
arise according to the decay rate of the projection of $f$ onto the
complement of the principal eigenfunction. We emphasize that all the works cited above in the supercritical setting concern
the linear functionals $\langle f,X_t\rangle$ at a single time $t$, rather than occupation times.
In fact, to the best of our knowledge, no central or functional limit theorem is available for the occupation times of supercritical superprocesses.
The purpose of this paper is to fill this gap.

In this paper we consider a superprocess $X:=(X_t)_{t\ge0}$ on a Lusin space
$E$ with a general non-local branching mechanism as in \cite{Li11}. Under the assumptions that the mean semigroup displays a natural Perron--Frobenius type behaviour with an eigentriplet $(\lambda_{1},\varphi,\widetilde{\varphi})$ and the second moment exists, we first prove an
LLN for the occupation measure (see Lemma \ref{lem2.2} below) in the sense that,
for every bounded measurable function $f$,
\begin{equation}\label{lln-intro}
\e^{-\lambda_1 t}\langle f,Y_t\rangle\ \xrightarrow[t\to+\infty]{L^2}\
\frac{1}{\lambda_1}\langle f,\widetilde{\varphi}\rangle W^\varphi_\infty ,
\end{equation}
where $W^\varphi_\infty$ is the limit of the non-negative martingale
$W^\varphi_t:=\e^{-\lambda_1 t}\langle\varphi,X_t\rangle$. Our main results,
Theorems \ref{them1}--\ref{them3} below, are functional CLTs for the
fluctuation process
\begin{equation}\label{Z-intro}
Z^{(t)}_s(f):=\e^{-\lambda_1(t+s)}\langle f,Y_{t+s}\rangle
-\frac{1}{\lambda_1}\langle f,\widetilde{\varphi}\rangle W^\varphi_\infty,
\quad\forall t,s\ge0.
\end{equation}
We show that as in the classical
theory of supercritical spatial branching processes, there exist three regimes corresponding to the decay rate of the centered
function $\tilde {f}:=f-\langle f,\widetilde{\varphi}\rangle\varphi$ under the mean
semigroup compared with the critical value $\lambda_1/2$, with each regime exhibiting a different normalisation factor and limit.
The precise statements are given in Section \ref{sec1.4} below.
These results extend the CLTs of \cite{Yang25}
from the instantaneous linear functionals $\langle f,X_{t}\rangle$ to the weighted occupation times $\langle f,Y_{t}\rangle$,
and may be viewed as superprocess analogues, in the
supercritical setting, of the functional limit theorems for occupation time
fluctuations of (sub)critical branching systems mentioned above.

The fluctuations of weighted occupation times studied here do not follow directly from the fluctuation theorems for $X_{t}$, and the passage from instantaneous linear functionals to occupation times brings in new features in the analysis. In particular, a one-time limit theorem cannot simply be integrated in time, since
$$\e^{-\lambda_{1}t}\langle f,Y_{t}\rangle=\int_{0}^{t}\e^{-\lambda_{1}(t-s)}\cdot \e^{-\lambda_{1}s}\langle f,X_{s}\rangle\mathrm{d}s,$$
and the fluctuations of the integrand accumulate rather than average out.
A first difference appears at the level of second moments: the covariance formulae of Lemma \ref{lem:moment formulae} give the exact two-point covariance structure of the occupation times. While the
covariance of $\langle f,X_t\rangle$ and $\langle g,X_{t}\rangle$ is expressed as a single time integral
$\int_0^t T_{t-s}(\vartheta[T_sf,T_{s}g])(x)\mathrm{d}s$ (see \eqref{eq:covforX_{t}} below), the covariance of
$\langle f,Y_t\rangle$ and $\langle g,Y_{t}\rangle$ involves the time-integrated operator
$M_t[f]=\int_0^t T_s f\mathrm{d}s$ through
$\int_0^t T_{t-s}\big(\vartheta[M_{s}[f],M_{s}[g]]\big)(x)\mathrm{d}s$ (see Lemma \ref{lem:moment formulae} below).
Hence, the precise asymptotics of $\Cov_{\delta_{x}}(\langle f,Y_{t}\rangle,\langle g,Y_{t}\rangle)$ in the three regimes require new estimates. It turns out that, despite this additional integration, the trichotomy persists with exactly the same normalisation factors as the linear functionals studied in \cite{Yang25}. However, the limiting laws are different from those in \cite{Yang25}. The covariance kernel of the Gaussian process $G(f)$ in Theorem \ref{them1} is constructed from the covariance structure of the occupation times, and contains cross terms with no counterpart for instantaneous linear functionals. Moreover, the limits in Theorems \ref{them2} and \ref{them3} carry the additional factors $2/\lambda_{1}$ and $\lambda^{-1}$, respectively, reflecting the time averaging $\int_{0}^{t}\e^{a(s-t)}\mathrm{d}s\to a^{-1}$ inherent in the occupation time, with $a=\lambda_{1}/2$ in Theorem \ref{them2} and $a=\lambda$ in Theorem \ref{them3}. A detailed discussion of the main ingredients of the proof, and a comparison of our approach with the $L^{2}$-spectral method of \cite{RSZ17} and with the results of \cite{Yang25} for linear functionals, is given in Section~\ref{sec1.5}. We also mention that our Perron--Frobenius assumption (H2) is weaker than the uniform $h$-transform type condition imposed in \cite{Yang25} (its Assumption 1.1), so that our results apply to a larger class of models; see Example \ref{eg1.4} and Remark \ref{rm1} below.

Finally, we briefly indicate the statistical potential of our results, whose full development is left to future work. The principal eigenvalue $\lambda_{1}$ (also known as the Malthusian growth rate of the population) is typically a function of parameters of the branching mechanism that cannot be observed directly, whereas the trajectory of $X$ can.
For a test function $g$ with $\langle g,\widetilde{\varphi}\rangle\not=0$, the statistic $\hat{\lambda}_{t}(g):=\langle g,X_{t}\rangle/\langle g,Y_{t}\rangle$ is, by Lemma \ref{lem2.2} below, a consistent estimator of $\lambda_{1}$ on the event $\{W^{\varphi}_{\infty}>0\}$. Its convergence rate and limit law depend on the spectral decay of $g$ and are expected to reflect the trichotomy of Theorems \ref{them1}--\ref{them3}. Their precise description is left to future work. Moreover, for $g=\varphi$, writing $\mathrm{M}_{t}(\varphi)=\langle\varphi,Y_{t}\rangle-\lambda_{1}^{-1}\langle\varphi,X_{t}\rangle$ for the martingale of Lemma \ref{lem3.n1}, we have $\hat{\lambda}_{t}(\varphi)-\lambda_{1}=-\lambda_{1}\mathrm{M}_{t}(\varphi)/\langle\varphi,Y_{t}\rangle$. The stable limit theorem for $\mathrm{M}_{t}(\varphi)$ together with the LLN \eqref{lln-intro} therefore yields that $\hat{\lambda}_{t}(\varphi)-\lambda_{1}$ is asymptotically normal on $\{W^{\varphi}_{\infty}>0\}$ at rate $\e^{\lambda_{1}t/2}$ with explicit mixed normal variance $\lambda_{1}\langle\vartheta[\varphi],\widetilde{\varphi}\rangle/W^{\varphi}_{\infty}$, so that $\lambda_{1}$ can be estimated from a single trajectory of $X$. This continues the classical line of research on estimating the growth parameters of a branching system from a single trajectory; see the monograph \cite{Guttorp91} for a comprehensive treatment.

The rest of this paper is organised as follows. In the remainder of this
section, we introduce the non-local branching superprocess, the weighted
occupation times, and state our assumptions,
examples and main
results.
We also provide
a detailed discussion of the approach and related literature
at the end of this section.
Section \ref{sec2} contains the LLN and the asymptotic behaviour of the
first two moments. Section \ref{sec3} is devoted to the proofs of the main results.
The convergence of finite-dimensional distributions and tightness are established in Sections \ref{sec3.1} and \ref{sec3.2}, respectively,
and the main theorems are proved in Section \ref{sec3.3}.
The appendix contains the proof of the covariance formulae and two auxiliary
results on stable convergence of martingales and on tightness.

\medskip

\noindent\textit{Notation:} Throughout this paper, we use ``:=" as a way of
definition.
The letter $c$, with or without subscript, denotes a finite positive constant whose value may vary from place to place.
Let $E$ be a Lusin topological space with Borel $\sigma$-algebra $\mathcal{B}(E)$. Let $E_{\partial}:=E\cup\{\partial\}$ be the one-point compactification of $E$. Any function on $E$ will be automatically extended to $E_{\partial}$ by setting $f(\partial)=0$.
We use $\mathcal{B}_{b}(E)$ (resp. $\mathcal{B}^{+}(E)$) to denote the space of bounded (resp. non-negative) measurable functions on $(E,\mathcal{B}(E))$.
Let $\mf$ denote the space of finite Borel measures on $E$ topologized by weak convergence.
For a measure $\mu$ and any integrable function $f$, we write
$$\langle f,\mu\rangle=\mu(f)=\int f(x)\mu(\mathrm{d}x).$$
For $k\ge 1$ and a probability space $(\Omega,\mathcal{G},\mathrm{P})$ we use $L^{k}(\mathrm{P})$ to denote the space of random variables $\eta$ on $\Omega$ satisfying that $\int_{\Omega}|\eta|^{k}d\mathrm{P}<+\infty$. If $X,Y$ are random variables, we define $\mathrm{Cov}(X,Y):=\mathrm{E}\big[(X-\mathrm{E}[X])(Y-\mathrm{E}[Y])\big]$, and $\mathrm{Var}[X]:=\mathrm{Cov}(X,X)=\mathrm{E}\big[\big(X-\mathrm{E}[X]\big)^{2}\big]$.
For any two positive functions $f$ and $g$,
we use $f\stackrel{c}\lesssim g$ to denote that there is a positive constant
$c$ such that $f\le c g$ on their common domain of definition. We also
write ``$\lesssim$" if $c$ is unimportant or
understood.
We say a sequence of functions $\{f_{n}:n\ge 1\}$ converges boundedly and pointwise to $f$ if there is a constant $c\ge 0$ such that $\|f_{n}\|_{\infty}\le c$ for $n$ sufficiently large and $f_{n}(x)\to f(x)$ as $n\to+\infty$ for all $x\in E$. We use `$\stackrel{b.p.}{\longrightarrow}$' for bounded pointwise convergence, `$\rightrightarrows$' for uniform convergence, `$\stackrel{d}{\to}$' for convergence in distribution, and `$\stackrel{f.d.d.}{\longrightarrow}$' for convergence in finite-dimensional distributions.

\subsection{Non-local branching superprocess}\label{sec1.1}

Suppose $\xi=(\Omega,\mathcal{H},\mathcal{H}_{t},\xi_{t},\Pi_{x})$ is a Borel right process in $E$ with
transition semigroup $(P_{t})_{t\ge 0}$. Here $\{\mathcal{H}_{t}:t\ge 0\}$ is the minimal admissible filtration.
In this paper, we consider a superprocess $X:=(X_{t})_{t\ge 0}$ with spatial motion $\xi$ and a non-local branching mechanism $\psi$ given by
\begin{eqnarray}
\psi(x,f)&=&a(x)f(x)+b(x)f(x)^{2}-\eta(x,f)\nonumber\\
&&\quad+\int_{\mfo }\left(\e^{-\nu(f)}-1+\nu(\{x\})f(x)\right)H(x,d\nu)\quad\forall x\in E,\ f\in\mathcal{B}^{+}(E),\label{def:branching mechanism}
\end{eqnarray}
where $a\in\mathcal{B}_{b}(E)$, $b\in\mathcal{B}^{+}_{b}(E)$, $\eta(x,dy)$ is a bounded kernel on $E$, and $\left(\nu(1)\wedge \nu(1)^{2}+\nu_{x}(1)\right)H(x,d\nu)$ is a bounded kernel from $E$ to $\mfo :=\mf\setminus \{0\}$ with $\nu_{x}(\mathrm{d}y)$ defined by $\nu_{x}(A):=\nu(A\setminus\{x\})$ for all $A\in\mathcal{B}(E)$.
To be specific, $X$ is a $\mf$-valued Markov process satisfying that for every $f\in\mathcal{B}^{+}_{b}(E)$ and $\mu\in\mf$,
\begin{equation}\label{eq1}
\pp_{\mu}\left[\e^{-\langle f,X_{t}\rangle}\right]=\e^{-\langle V_{t}f,\mu\rangle},
\end{equation}
where $V_{t}f(x):=-\log\pp_{\delta_{x}}\left[\e^{-\langle f,X_{t}\rangle}\right]$ is the unique nonnegative locally bounded solution to the integral equation
\begin{equation}\label{eq:integral equation for V_{t}}
V_{t}f(x)=P_{t}f(x)-\Pi_{x}\left[\int_{0}^{t}\psi(\xi_{s},V_{t-s}f)\mathrm{d}s\right].
\end{equation}
By `locally bounded' we mean that $\sup_{x\in E,t\in [0,T]}|V_{t}f(x)|<+\infty$ for every $T\in (0,+\infty)$.
Such a process is defined in \cite{Li11} via its log-Laplace functional and is referred to as the $(P_{t},\psi)$-superprocess.

By \cite[Theorem 5.12]{Li11}, a $(P_{t},\psi)$-superprocess $X$ has a Borel right realization in $\mf$. We denote by $\mathcal{W}^{+}_{0}$ the space of c\`{a}d\`{a}g paths from $[0,+\infty)$ to $\mf$ having zero as a trap. Here, we assume that $X$ is the coordinate process in $\mathcal{W}^{+}_{0}$ and  $(\mathcal{F}_{t})_{t\in[0,\infty]}$ is the filtration generated by the coordinate process, which is completed  with the class of $\pp_{\mu}$-negligible measurable sets for every $\mu\in\mf$.

\subsection{Weighted occupation times and related moment formulae}\label{sec1.2}
{
If $X$ is a $(P_{t},\psi)$-superprocess, the running occupation measure $Y_{t}$ of $X$ is defined by $Y_{t}(\cdot):=\int_{0}^{t}X_{s}(\cdot)\mathrm{d}s$. For $f\in\mathcal{B}_{b}(E)$,
$$\langle f,Y_{t}\rangle:=\int_{0}^{t}\langle f,X_{s}\rangle\mathrm{d}s$$
is called a weighted occupation time of the superprocess. \cite[Corollary 5.17]{Li11} has established the following characterization of Laplace transform of the occupation measure:
for every $f,g\in\mathcal{B}^{+}_{b}(E)$ and $\mu\in\mf$,
\begin{equation}\label{eq:U_{t}}
\pp_{\mu}\big[\exp\{-\langle f,X_{t}\rangle -\langle g,Y_{t}\rangle\}\big]=\e^{-\langle U_{t}[f,g],\mu\rangle},
\end{equation}
where $(t,x)\mapsto U_{t}[f,g](x)$ is the unique locally bounded nonnegative solution of
$$U_{t}[f,g](x)=P_{t}f(x)-\Pi_{x}\big[\int_{0}^{t}\psi(\xi_{s},V_{t-s}[f,g])\mathrm{d}s\big]+\Pi_{x}\big[\int_{0}^{t}g(\xi_{s})\mathrm{d}s\big].$$
Our assumptions on the branching mechanism guarantee the corresponding superprocess has finite first moments in the sense that, for every $f\in\mathcal{B}_{b}(E)$ and $\mu\in\mf$,
$$\pp_{\mu}\big[\langle f,X_{t}\rangle\big]=\langle T_{t}f,\mu\rangle\mbox{ and }\pp_{\mu}\big[\langle f,Y_{t}\rangle\big]=\langle M_{t}[f],\mu\rangle.$$
where
$$T_{t}f(x)=\pp_{\delta_{x}}\big[\langle f,X_{t}\rangle\big]\mbox{ and }M_{t}[f](x)=\pp_{\delta_{x}}\big[\langle f,Y_{t}\rangle\big]=\int_{0}^{t}T_{s}f(x)\mathrm{d}s.$$
Furthermore, it is proved by \cite[Proposition 2.27]{Li11} that $T_{t}f(x)$ is the unique locally bounded solution to the integral equation
\begin{equation}\label{eq:many-to-one}
T_{t}f(x)=P_{t}f(x)-\Pi_{x}\left[\int_{0}^{t}a(\xi_{s})T_{t-s}f(\xi_{s})\mathrm{d}s\right]+\Pi_{x}\left[\int_{0}^{t}m[T_{t-s}f](\xi_{s})\mathrm{d}s\right],
\end{equation}
where $m[f](y):=\eta(y,f)+\int_{\mfo }\nu_{x}(f)H(y,d\nu)$ for $y\in E$. We can easily show by the above equation and Gronwall's inequality that there exists some constant $c_{0}>0$ such that
\begin{equation}\label{eq0}
\|T_{t}f\|_{\infty}\le \e^{c_{0}t}\|f\|_{\infty}\quad\forall t\ge 0,\ f\in\mathcal{B}_{b}(E).
\end{equation}

We now introduce the following boundedness assumption on the branching mechanism.
\begin{description}
\item{(Mk)}
$\sup_{x\in E}\int_{\nu(1)>1}\nu(1)^{k}H(x,d\nu)<+\infty$.
\end{description}
We have already assumed in the definition of superprocess that $\int_{\mathcal{M}^{0}(E)}\nu(1)\wedge\nu(1)^{2}H(x,\mathrm{d}\nu)$ is uniformly bounded for $x\in E$, and hence (M1) holds automatically.
For $k>1$,
{(Mk)} is a sufficient condition ensuring the superprocess has finite $k$th moments.
In particular, when {(M2)} holds, let us define
$$\vartheta[f,g](x):=2b(x)f(x)g(x)+\int_{\mfo }\nu(f)\nu(g)H(x,d\nu)\mbox{ and }\vartheta[f](x):=\vartheta[f,f](x)$$
for $f,g\in \mathcal{B}_{b}(E)$ and $x\in E$, then it follows by \cite[Proposition 2.38]{Li11} that, $\pp_{\mu}\left[\langle f,X_{t}\rangle^{2}\right]$ is finite for every $t\ge 0$, $\mu\in\mf$ and $f\in\mathcal{B}_{b}(E)$, and
\begin{equation}\label{eq:second moment}
\pp_{\mu}\left[\langle f,X_{t}\rangle^{2}\right]=\langle T_{t}f,\mu\rangle^{2}+\langle\int_{0}^{t}T_{t-s}(\vartheta[T_{s}f])\mathrm{d}s,\mu\rangle.
\end{equation}
The above identity yields that
\begin{equation}\label{eq:variance}
\mathrm{Var}_{\delta_{x}}\left[\langle f,X_{t}\rangle\right]=\int_{0}^{t}T_{t-s}(\vartheta[T_{s}f])(x)\mathrm{d}s\mbox{ and }\mathrm{Var}_{\mu}\left[\langle f,X_{t}\rangle\right]=\langle \mathrm{Var}_{\delta_{\cdot}}\left[\langle f,X_{t}\rangle\right],\mu\rangle.
\end{equation}
Moreover, since $\Cov_{\mu}(\langle f,X_{t}\rangle,\langle g,X_{t}\rangle)=4^{-1}\big(\Var_{\mu}[\langle f+g,X_{t}\rangle]-\Var_{\mu}[\langle f-g,X_{t}\rangle]\big)$, we have
\begin{equation}\label{eq:covforX_{t}}
\Cov_{\delta_{x}}(\langle f,X_{t}\rangle,\langle g,X_{t}\rangle)=\int_{0}^{t}T_{t-s}\big(\vartheta[T_{s}f,T_{s}g]\big)(x)\mathrm{d}s
\end{equation}
and $\Cov_{\mu}(\langle f,X_{t}\rangle,\langle g,X_{t}\rangle)=\langle\Cov_{\delta_{\cdot}}(\langle f,X_{t}\rangle,\langle g,X_{t}\rangle),\mu\rangle$.

The following lemma extends the covariance formulae \eqref{eq:variance}-\eqref{eq:covforX_{t}} from $\langle f,X_{t}\rangle$ to the weighted occupation times.
\begin{lemma}\label{lem:moment formulae}
Suppose (M2) holds. For $f,g\in\mathcal{B}^{+}_{b}(E)$, $x\in E$ and $t\ge 0$,
\begin{align*}
\mathrm{Cov}_{\delta_{x}}\big(\langle f,Y_{t}\rangle,\langle g,Y_{t}\rangle\big)=&\int_{0}^{t}T_{s}\big(\vartheta\big[M_{t-s}[f],M_{t-s}[g]\big]\big)(x)\mathrm{d}s,\\
\mathrm{Cov}_{\delta_{x}}\big(\langle f,Y_{t}\rangle,\langle g,X_{t}\rangle\big)=&\int_{0}^{t}T_{s}\big(\vartheta[M_{t-s}[f],T_{t-s}g]\big)(x)\mathrm{d}s.
\end{align*}
Moreover, for every $\mu\in\mf$,
\begin{align*}
\Cov_{\mu}(\langle f,Y_{t}\rangle,\langle g,Y_{t}\rangle)=&\langle \Cov_{\delta_{\cdot}}(\langle f,Y_{t}\rangle,\langle g,Y_{t}\rangle),\mu\rangle,\\
\Cov_{\mu}(\langle f,X_{t}\rangle,\langle g,Y_{t}\rangle)=&\langle \Cov_{\delta_{\cdot}}(\langle f,Y_{t}\rangle,\langle g,X_{t}\rangle),\mu\rangle.
\end{align*}
\end{lemma}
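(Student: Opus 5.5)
The plan is to reduce everything to the known second-moment formula for $\langle f,X_t\rangle$ by discretising the time integral, rather than by differentiating the Laplace functional \eqref{eq:U_{t}}.

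\textbf{Step 1: two-time covariance.} First I will derive a two-time covariance formula from the Markov property. For $0\le r\le u$, conditioning on $\mathcal{F}_r$ and using $\pp_{\mu}[\langle g,X_u\rangle|\mathcal{F}_r]=\langle T_{u-r}g,X_r\rangle$, together with \eqref{eq:covforX_{t}} and the semigroup property, gives
$$\Cov_{\delta_x}(\langle f,X_r\rangle,\langle g,X_u\rangle)=\Cov_{\delta_x}(\langle f,X_r\rangle,\langle T_{u-r}g,X_r\rangle)=\int_0^r T_{s}\big(\vartheta[T_{r-s}f,T_{u-s}g]\big)(x)\,\ud s .$$
Here I substitute $s\mapsto r-s$ in \eqref{eq:covforX_{t}}. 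Consequently, for all $r,u\ge0$,
$$\Cov_{\delta_x}(\langle f,X_r\rangle,\langle g,X_u\rangle)=\int_0^{r\wedge u}T_s\big(\vartheta[T_{r-s}f,T_{u-s}g]\big)(x)\,\ud s .$$
By (M2) and \eqref{eq0}, $\vartheta[\cdot,\cdot]$ is a bounded bilinear form, with $\|\vartheta[h,k]\|_\infty\le c\|h\|_\infty\|k\|_\infty$. So this quantity is bounded uniformly for $r,u\in[0,t]$.

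\textbf{Step 2: Fubini.} Since $\sup_{r\le t}\pp_{\delta_x}[\langle f,X_r\rangle^2]<\infty$ and $r\mapsto X_r$ is measurable (càdlàg), Fubini applies:
$$\Cov_{\delta_x}(\langle f,Y_t\rangle,\langle g,Y_t\rangle)=\int_0^t\!\!\int_0^t\Cov_{\delta_x}(\langle f,X_r\rangle,\langle g,X_u\rangle)\,\ud r\,\ud u .$$
Inserting the formula from Step 1 and exchanging the order of integration, $s$ runs over $[0,t]$ and then $r,u\in[s,t]$. Bilinearity of $\vartheta$, the kernel representation of $T_s$, and Fubini (all integrands are nonnegative because $f,g\ge0$, which is why the lemma is stated for $\mathcal{B}^+_b$) let me pull the $r,u$ integrals inside $\vartheta$. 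The inner integrals give $\int_s^tT_{r-s}f\,\ud r=M_{t-s}[f]$, and likewise for $g$, which yields the first identity.

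The mixed formula follows the same way with only the $r$-integral: the inner integrals give $M_{t-s}[f]$ and $T_{t-s}g$. Here I use the two-time formula with $u=t\ge r$.

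\textbf{Step 3: general $\mu$.} By \eqref{eq1}, $X$ under $\pp_\mu$ is infinitely divisible: it is the sum of independent copies under a Poisson-type decomposition. Concretely, the covariance identities for $X$ already hold in the form $\Cov_\mu=\langle\Cov_{\delta_\cdot},\mu\rangle$ by \eqref{eq:variance} and \eqref{eq:covforX_{t}}. The two-time formula in Step 1 is then valid under $\pp_\mu$ with $(x)$ replaced by integration against $\mu$, and the same Fubini argument goes through verbatim. So I will simply run Steps 1–2 under $\pp_\mu$ directly.

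\textbf{Main obstacle.} The main obstacle is the measure-theoretic justification: the joint measurability of $(r,\omega)\mapsto\langle f,X_r\rangle$, and the interchange of $\vartheta$'s $H$-integral $\int\nu(h)\nu(k)H(x,\ud\nu)$ with the $\ud r\,\ud u$ integrals. Nonnegativity and (M2) give domination by $\nu(1)^2\|M_t f\|_\infty\|M_tg\|_\infty$, which is $H$-integrable, so Tonelli settles it.
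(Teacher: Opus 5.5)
Your proof is correct. For the $\delta_x$ identities it is the paper's argument. The paper likewise uses Fubini and the Markov property to replace $\langle g,X_r\rangle$ by $\langle T_{r-s}g,X_s\rangle$ for $r\ge s$, applies \eqref{eq:covforX_{t}}, and rearranges the resulting triple integral. Its splitting of $[0,t]^2$ into the triangles $\{r\ge s\}$ and $\{r<s\}$ is exactly your two-time formula with $r\wedge u$.

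The genuine difference is the passage to general $\mu$. The paper differentiates the joint log-Laplace functional from \eqref{eq:U_{t}}, writing the covariance as $-\langle \partial^2_{\theta_1\theta_2}U_t[\theta_1 g,\theta_2 f]|_{\theta_1=\theta_2=0},\mu\rangle$. Linearity in $\mu$ then comes directly from the branching property, without redoing any time integrals. You instead rerun the Markov--Fubini computation under $\pp_\mu$. That needs only the one-time identity $\Cov_\mu(\langle h,X_r\rangle,\langle k,X_r\rangle)=\langle\Cov_{\delta_\cdot}(\langle h,X_r\rangle,\langle k,X_r\rangle),\mu\rangle$, already available from \eqref{eq:second moment}--\eqref{eq:covforX_{t}}. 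This is more elementary. It avoids justifying the differentiation under the expectation, which the paper leaves implicit and which is one-sided because \eqref{eq:U_{t}} only allows $\theta_1,\theta_2\ge 0$. It also delivers the explicit integral formula under $\pp_\mu$ at the same time.

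Nonnegativity of $f,g$ is not what makes your interchanges legitimate. The domination by $\nu(1)^2$ times sup-norms, which you mention yourself, handles signed $f,g\in\mathcal{B}_b(E)$ equally well. So your argument proves the lemma directly in the signed form in which it is later used, e.g.\ for centered functions in Lemma~\ref{lem1}. The restriction to $\mathcal{B}^+_b(E)$ is rather what the Laplace-functional route requires.

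Finally, for the mixed term you obtain $\Cov_\mu(\langle f,Y_t\rangle,\langle g,X_t\rangle)=\langle\Cov_{\delta_\cdot}(\langle f,Y_t\rangle,\langle g,X_t\rangle),\mu\rangle$. This is the intended last identity; as printed, $f$ and $g$ trade places between its two sides.
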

The proof of this lemma is deferred to Appendix \ref{sec:A.1}.

}

\subsection{Assumptions and examples}\label{sec1.3}
The Borel right process $\xi$ generates an intrinsic topology, called the fine topology, which is the smallest topology on $E$ that makes all $\alpha$-excessive functions ($\alpha\ge 0$)  relative to $\xi$ continuous; see, e.g., \cite[Appendix A.3]{Li11}. We use $C^{\xi}_{b}(E)$ to denote the space of bounded measurable functions on $E$ that are finely continuous. It is known that a bounded measurable function $f$ belongs to $C^{\xi}_{b}(E)$ if and only if $t\mapsto f(\xi_{t})$ is a.s. right continuous on $[0,+\infty)$.

Throughout this paper we assume the following conditions hold.
\begin{description}
\item{(H1)} The operators $f\mapsto\psi(\cdot,f)$ and $f\mapsto -a f+m[f]$ preserve $C^{\xi}_{b}(E)$.
\item{({H2})} There exist a real number $\lambda_{1}>0$, a corresponding positive bounded function $\varphi$ and a finite measure $\widetilde{\varphi}$ on $E$, with normalisation $\langle \varphi,\widetilde{\varphi}\rangle=1$, such that for $f\in\mathcal{B}^{+}_{b}(E)$, $\mu\in\mf$ and $t\ge 0$,
    \begin{equation}\label{H2.1}
    \langle T_{t}\varphi,\mu\rangle=\e^{\lambda_{1}t}\langle \varphi,\mu\rangle\mbox{ and }\langle T_{t}f,\widetilde{\varphi}\rangle=\e^{\lambda_{1}t}\langle f,\widetilde{\varphi}\rangle.
    \end{equation}
    Further, we define
$$\triangle_{t}:=\sup_{x\in E,f\in\mathcal{B}^{+}_{1}(E)}\left|\e^{-\lambda_{1}t}T_{t}f(x)-\langle f,\widetilde{\varphi}\rangle\varphi(x)\right|\quad\forall t\ge 0.$$
Here $\mathcal{B}^{+}_{1}(E)$ denotes the set of nonnegative measurable functions on $E$ that are bounded by $1$.
We assume that
\begin{equation}\label{H2.2}
\lim_{t\to+\infty}\triangle_{t}=0.
\end{equation}
\end{description}

Let us give some remarks on the assumptions here.
Assumption (H1), inherited from \cite{PY20}, serves as a regularity condition on the branching mechanism. This condition is required to ensure the existence of a stochastic integral representation for the non-local superprocess (see, e.g., \cite[Proposition 4.3]{PY20}) which will be employed in our argument.

Assumption ({H2}) is a Perron--Frobenius assumption that ensures the existence of the leading eigenvalue and corresponding eigenfunction. The superprocess is known as {\it supercritical} when $\lambda_{1}>0$. Without restriction on the sign of $\lambda_{1}$, ({H2}) has been recently named the Asmussen--Hering class of branching processes (see, \cite{HK23}).
Assumption ({H2}) implies in particular that for every $f\in \mathcal{B}_{b}(E)$,
\begin{equation}\label{ieq:H3}
\|\e^{-\lambda_{1}t}T_{t}f-\langle f,\widetilde{\varphi}\rangle\varphi\|_{\infty}\le 2\|f\|_{\infty}\triangle _{t}\to 0\mbox{ as }t\to+\infty.
\end{equation}
Moreover, \eqref{H2.2} of (H2) combined with \eqref{eq0} yields that $\sup_{t\ge 0}\triangle_{t}<+\infty$.
Consequently, there exists some constant $c_{1}>0$ such that
\begin{equation}\label{eq1.9}
\big|\e^{-\lambda_{1}t}T_{t}f(x)\big|\le \|f\|_{\infty}\left(2\triangle_{t}+\|\varphi\|_{\infty}\langle 1,\widetilde{\varphi}\rangle\right)\le c_{1}\|f\|_{\infty}\quad\forall x\in E,\ t\ge 0,\ f\in\mathcal{B}_{b}(E).
\end{equation}
Although (H2) may seem like a strong assumption, it can be shown to hold for a large class of superprocesses.
	Here we give some concrete examples to illustrate the generality of this setting.
	
\begin{example}\rm
A continuous-state branching process (CSBP) may be considered as the simplest example.
In this setting, we can take $E=\{0\}$, $\xi$ to be the Markov process which remains at $0$, and a spatially-independent branching mechanism $\psi$ given by
$$\psi(\lambda)=-\alpha\lambda+\frac{1}{2}\sigma^{2}\lambda^{2}+\int_{0}^{+\infty}\big(\e^{-\lambda r}-1+\lambda r\big)\pi(\mathrm{d}r)\quad\forall \lambda\ge 0,$$
where $\alpha>0$, $\sigma^{2}\ge 0$ and $\pi$ is a Radon measure on $(0,+\infty)$ such that $\int_{0}^{+\infty}(r\wedge r^{2})\pi(\mathrm{d}r)<+\infty$.
Condition (H2) trivially holds if we take $\lambda_{1}=\alpha$, $\varphi=1$ and $\widetilde{\varphi}=\delta_{\{0\}}$.
\end{example}
	
	\begin{example}\label{eg1.3}\rm	
		Multitype continuous-state branching processes (MCSBP) are the natural analogues of multitype Galton-Watson processes in the context of continuous mass. They are introduced in \cite{BLP15} and correspond to the setting that $E$ is a finite set of types. In essence, (H2) is the classical Perron-Frobenius behaviour for the matrix of the mean semigroup, and will hold under a simple irreducibility assumption; see, e.g., \cite[Example 3.7]{PY20}.
\end{example}

\begin{example}\label{eg1.4}\rm
			Assume \eqref{H2.1} holds. Let
			\begin{equation}
				\tilde{\triangle}_{t}:=\sup_{x\in E,\,f\in\mathcal{B}^{+}_{1}(E)}\big|\varphi(x)^{-1}\e^{-\lambda_{1}t}T_{t}f(x)-\langle f,\widetilde{\varphi}\rangle\big|.
			\end{equation}
Since $\varphi$ is bounded and positive, \eqref{H2.1} is implied by the stronger convergence
\begin{equation}\label{H2.2'}
\tilde{\triangle}_{t}\to 0\mbox{ as }t\to+\infty.
\end{equation}
This type of convergence condition (with or without further conditions on the convergence rate) has recently been one of the fundamental assumptions imposed in
a series of papers that investigate the limiting theories of non-local spatial branching processes, see, e.g.
\cite{HHKW,HKMPR,Yang25} and the references therein, and has been verified for a wide class of non-local spatial branching processes, including the neutron branching process, branching diffusion (superdiffusion) on a compact domain, and others. For a detailed discussion of these models, we refer the reader to the aforementioned papers.
		\end{example}

\begin{remark}\label{rm1}
\rm
Beyond \eqref{H2.1} and \eqref{H2.2'}, it is further assumed in \cite{HHKW,HKMPR,Yang25} that
$$\sup_{t>0}\tilde{\triangle}_{t}<+\infty.$$
Compared with the uniform boundedness of $\triangle_{t}$ which comes directly from \eqref{H2.2} and \eqref{eq0}, such property of $\tilde{\triangle}_{t}$ can fail or be difficult to verify. In fact, in some interesting models, such as, multitype superdiffusion on a bounded domain (\cite[Example 3.8]{PY20}) and super-$\alpha$-stable process on a bounded smooth domain (\cite[Example 3.9]{PY20}), \eqref{H2.2'} can be easily verified by the intrinsic ultracontractivity of the underlying mean semigroup, but the boundedness of $\tilde{\triangle}_{t}$ fails near $t=0$.
Therefore, condition (H2) is formulated in the present paper so as to cover these models.
\end{remark}

\subsection{Main results}\label{sec1.4}
For the remainder of this paper, we assume (H1), (H2) and (M2) hold. Other conditions needed will be stated explicitly.
Using \eqref{H2.1} and the Markov property, one can easily show that
$${ W^{\varphi}_{t}}:=\e^{-\lambda_{1}t}\langle \varphi,X_{t}\rangle \quad\forall t\ge 0$$
is a nonnegative $\pp_{\mu}$-martingale with respect to the filtration $(\mathcal{F}_{t})_{t\ge 0}$ for every $\mu\in\mf$.
Let ${ W^{\varphi}_{\infty}}$ be the martingale limit.
We will show in Lemma \ref{lem2.2} below that, under (H1), (H2) and (M2), the occupation measure $Y_{t}$ satisfies the LLN in the sense of \eqref{lln-intro}.
The goal of this paper is to prove the corresponding functional central limit theorem.

To describe the main results, we define a parameter $\rho(f)$ for $f\in\mathcal{B}_{b}(E)$, and subclasses $\mathcal{D}_{\mathrm{S}}$, $\mathcal{D}(\lambda,r)$ of $\mathcal{B}_{b}(E)$ for $\lambda,r\ge 0$ by the following relations: for $f\in \mathcal{B}_{b}(E)$, let $\tilde{f}:=f-\langle f,\widetilde{\varphi}\rangle\varphi$.
Define
\begin{align*}
{\rho(}f):=&-\limsup_{t\to+\infty}\frac{1}{t}\log\sup_{x\in E}\big|\e^{-\lambda_{1}t}T_{t}\tilde{f}(x)\big|,\\
\mathcal{D}_{\mathrm{S}}:=&\{f\in\mathcal{B}_{b}(E):\ {\rho}(f)>\lambda_{1}/2\},\\
\mathcal{D}(\lambda,r):=&\{f\in\mathcal{B}_{b}(E):\ t^{-r}\e^{-\lambda t}T_{t}\tilde{f}\rightrightarrows f^{*}\mbox{ for some }f^{*}\in\mathcal{B}_{b}(E)\}.
\end{align*}
In what follows, for $f\in\mathcal{D}(\lambda,r)$, we use $f^{*}_{\lambda,r}$ to denote the corresponding limiting function. When no confusion arises, we omit the subscript and simply write $f^{*}$.
\begin{remark}\rm
\begin{description}
\item[(i)] The parameter $\rho(f)$ measures the exponential decay rate of the centered function $\tilde{f}$ under the rescaled mean semigroup $\e^{-\lambda_{1}t}T_{t}$.
Intuitively, the larger $\rho(f)$ is, the faster $\tilde{f}$ decays under $\e^{-\lambda_{1}t}T_{t}$, and hence the smaller the fluctuations of $\langle f,Y_{t}\rangle$ are.
By \eqref{ieq:H3} and the fact that
$\sup_{t\ge 0}\triangle_{t}<+\infty$, we have ${\rho(}f)\in [0,+\infty]$, and particularly $\rho(\varphi)=+\infty$.
\item[(ii)] From the definition, for fixed $r$ (resp. $\lambda$), the class $\mathcal{D}(\lambda,r)$ becomes larger as $\lambda$ (resp. $r$) increases.
Moreover, by \eqref{ieq:H3}, we have $\e^{-\lambda_{1}t}T_{t}\tilde{f}\rightrightarrows 0$ for every $f\in\mathcal{B}_{b}(E)$, and hence $\mathcal{D}(\lambda_{1},0)=\mathcal{B}_{b}(E)$.
Furthermore, Lemma \ref{lem0} (i) below implies that every function $f$ in $\mathcal{D}_{\mathrm{S}}$ also belongs to $\mathcal{D}(\lambda_{1}/2,0)$ with the limiting function equal to $0$. Therefore, the following relation holds:
$$\mathcal{D}_{\mathrm{S}}\subset \mathcal{D}(\lambda_{1}/2,0)\subset\mathcal{D}(\lambda,r)\subset \mathcal{D}(\lambda_{1},0)=\mathcal{B}_{b}(E)\quad\forall (\lambda,r)\in [\lambda_{1}/2,\lambda_{1})\times [0,+\infty).$$
Theorems \ref{them1}, \ref{them2} and \ref{them3} below are stated for $f$ in $\mathcal{D}_{\mathrm{S}}$, $\mathcal{D}(\lambda_{1}/2,r)$ and $\mathcal{D}(\lambda,r)$ with $(\lambda,r)\in (\lambda_{1}/2,\lambda_{1})\times [0,+\infty)\cup\{(\lambda_{1},0)\}$, respectively. Moreover, by the above inclusions, the limiting function $f^{*}$ vanishes whenever $f$ is viewed in a strictly larger class. Hence, a nondegenerate limit is obtained only when the normalisation matches the exact decay rate of $\tilde{f}$. All three regimes are realized, for instance, in the setting of Example \ref{eg1.3} when the mean matrix has only real eigenvalues (as is the case for symmetric mean matrices). In this setting, an eigenvector with eigenvalue $\lambda_{k}<\lambda_{1}/2$, $\lambda_{k}=\lambda_{1}/2$ or $\lambda_{k}\in (\lambda_{1}/2,\lambda_{1})$ then belongs to $\mathcal{D}_{\mathrm{S}}$, $\mathcal{D}(\lambda_{1}/2,0)$ or $\mathcal{D}(\lambda_{k},0)$, respectively. Moreover, if a real eigenvalue has a Jordan block of size $r+1$, then the top generalized eigenvector of the block belongs to $\mathcal{D}(\lambda_{k},r)$. In particular, the classes with $r>0$, which give rise to the polynomial correction factors in Theorems \ref{them2} and \ref{them3}, are non-empty in this model. We refer to \cite[Section 1.3]{Yang25} for the infinite-dimensional counterpart of this spectral picture.
\end{description}
\end{remark}

Let $\mathcal{C}[0,+\infty)$ denote the space of continuous functions from $[0,+\infty)$ to $\R$ equipped with the local uniform topology for which it is a Polish space, and let $\mathcal{P}(\mathcal{C}[0,+\infty))$ be the space of all probability measures on $\mathcal{C}[0,+\infty)$ endowed with the weak topology.
We consider the fluctuation of the occupation time defined by \eqref{Z-intro}.
The process $Z^{(t)}(f):=(Z^{(t)}_{s}(f))_{s\ge 0}$ may be considered as a random variable taking values in $\mathcal{C}[0,+\infty)$ and its law is an element of $\mathcal{P}(\mathcal{C}[0,+\infty))$. To describe the limit of $Z^{(t)}(f)$ in $\mathcal{P}(\mathcal{C}[0,+\infty))$, we need the following covariance functions.
For $f\in\mathcal{B}_{b}(E)$ and $t\ge 0$, put
\begin{align*}
\sigma_{\tilde{f}}(\infty,t):=&\int_{0}^{+\infty}\e^{-\lambda_{1}r}\langle\vartheta\big[M_{r+t}[\tilde{f}],
M_{r}[\tilde{f}]\big],\widetilde{\varphi}\rangle\mathrm{d}r,\\
\varsigma_{\varphi,\tilde{f}}(\infty,t):=&-\frac{1}{\lambda_{1}}\Big(\int_{0}^{t}\langle\vartheta\big[M_{r}[\tilde{f}],\varphi\big]
,\widetilde{\varphi}\rangle\mathrm{d}r+\int_{0}^{+\infty}\e^{-\lambda_{1}r}\langle\vartheta[M_{r+t}[\tilde{f}],\varphi]
,\widetilde{\varphi}\rangle\mathrm{d}r\Big),\\
\kappa_{\varphi}(t):=&\lambda^{-3}_{1}(2+\lambda_{1}t)\langle\vartheta[\varphi],\widetilde{\varphi}\rangle.
\end{align*}
We will show in Lemma \ref{lem3.n2} below that both $\sigma_{\tilde{f}}(\infty,t)$ and $\varsigma_{\varphi,\tilde{f}}(\infty,t)$ are finite constants.
We are now ready to state our main results which are derived under assumptions (H1), (H2) and (M2).

\begin{theorem}\label{them1}
For every $f\in\mathcal{D}_{\mathrm{S}}$, under $\pp_{\delta_{x}}$,
$$\big(\e^{\frac{\lambda_{1}}{2}(t+s)}Z^{(t)}_{s}(f)\big)_{s\ge 0}\stackrel{d}{\to}\sqrt{W^{\varphi}_{\infty}}G(f)\mbox{ in }\mathcal{C}[0,+\infty)\mbox{ as }t\to+\infty,$$
where $G(f):=(G_{s}(f))_{s\ge 0}$ is a continuous centered Gaussian process with covariance
$$\Cov(G_{s}(f),G_{t}(f))=\e^{-\frac{\lambda_{1}}{2}|s-t|}\big(\sigma_{\tilde{f}}(\infty,|s-t|)+\langle f,\widetilde{\varphi}\rangle^{2}\kappa_{\varphi}(|s-t|)+
\langle f,\widetilde{\varphi}\rangle({\varsigma}_{\varphi,\tilde{f}}(\infty,0)+{\varsigma}_{\varphi,\tilde{f}}(\infty,|s-t|))\big)$$
for all $s,t\ge 0$, and $G(f)$ is independent of $W^{\varphi}_{\infty}$.
\end{theorem}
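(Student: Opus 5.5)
The proof splits into convergence of finite-dimensional distributions and tightness in $\mathcal{C}[0,+\infty)$, combined by the standard criterion.

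\emph{Decomposition.} Write $f=\langle f,\widetilde{\varphi}\rangle\varphi+\tilde f$ and treat the two parts separately. For $\varphi$, the process $\mathrm{M}_{t}(\varphi)=\langle\varphi,Y_{t}\rangle-\lambda_{1}^{-1}\langle\varphi,X_{t}\rangle$ is a martingale (Lemma \ref{lem3.n1}). Hence
\[
\e^{-\lambda_1 u}\langle \varphi,Y_u\rangle-\lambda_1^{-1}W^\varphi_\infty
=\e^{-\lambda_1 u}\mathrm{M}_u(\varphi)+\lambda_1^{-1}\big(W^\varphi_u-W^\varphi_\infty\big),
\]
and both terms are martingale-type quantities of order $\e^{-\lambda_1 u/2}$. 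For $\tilde f$ there is no random centering, since $\langle \tilde f,\widetilde\varphi\rangle=0$. I would use the Markov property at time $t$ (or at $t-\ell$ with $\ell\to\infty$ slowly) and write
\[
\langle\tilde f,Y_{t+s}\rangle=\langle\tilde f,Y_{t}\rangle+\int_0^s\langle \tilde f,X_{t+r}\rangle\,\mathrm d r .
\]
Conditionally on $\mathcal F_t$, the second piece equals $\langle M_s[\tilde f],X_t\rangle$ plus a sum of contributions from the immigrated masses. The term $\e^{-\lambda_1 t}\langle\tilde f,Y_t\rangle$ would be handled by a similar decomposition over earlier times. The condition $\rho(f)>\lambda_1/2$, through Lemma \ref{lem0}(i), gives $\|T_u\tilde f\|_\infty\lesssim \e^{(\lambda_1/2-\epsilon)u}$. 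Therefore all mean terms $\e^{-\lambda_1(t+s)}\e^{\frac{\lambda_1}{2}(t+s)}\langle M_\cdot[\tilde f],X_{t-\ell}\rangle$ vanish in $L^2$ after normalisation, and only the fluctuation terms survive.

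\emph{F.d.d.\ convergence.} Fix $0\le s_1<\dots<s_k$ and a linear combination $\sum_j a_j \e^{\frac{\lambda_1}{2}(t+s_j)}Z^{(t)}_{s_j}(f)$. I would express this, up to $L^2$-negligible errors, as a sum of increments of a martingale in a time parameter. Two routes are available. The first uses the stochastic integral representation of $X$ from (H1) \cite[Proposition 4.3]{PY20}, giving $\int_0^{t+s_j}\int \e^{\frac{\lambda_1}{2}(t+s_j)}\,(\cdots)\,\tilde{N}(\ud r,\ud y,\ud\nu)$ with integrands built from $M_{t+s_j-r}[\cdot]$. The second is a branching decomposition at $t-\ell$ into independent sub-superprocesses, whose infinitesimal contributions conditionally on $\mathcal F_{t-\ell}$ satisfy a Lindeberg condition by (M2) and a truncation. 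I would then apply the stable martingale CLT from the appendix. The conditional variance converges, using Lemma \ref{lem:moment formulae} and (H2), to
\[
W^\varphi_\infty\sum_{i,j}a_ia_j\,C(s_i,s_j),
\]
which gives a mixed Gaussian limit independent of $W^\varphi_\infty$ because the convergence is stable. The variance computation is Lemma \ref{lem:moment formulae} applied with $M_{t+s_i-r}$ and $M_{t+s_j-r}$, followed by $\e^{-\lambda_1 r}T_r\to\langle\cdot,\widetilde\varphi\rangle\varphi$ under dominated convergence and Lemma \ref{lem3.n2}. This produces $\e^{-\frac{\lambda_1}{2}|s-t|}\sigma_{\tilde f}(\infty,|s-t|)$. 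The $\varphi$-part gives $\kappa_\varphi$, from the martingale and $W^\varphi$ terms together, which yields the $\lambda_1^{-3}(2+\lambda_1 t)$ structure. The cross covariances between the $\varphi$ and $\tilde f$ parts give the $\varsigma$ terms.

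\emph{Tightness.} I would apply the tightness criterion from the appendix, in the form of a Kolmogorov-type moment bound on increments. Since $\ud Y_u=X_u\,\ud u$, the map $s\mapsto \e^{\frac{\lambda_1}{2}(t+s)}Z^{(t)}_s$ is differentiable, with derivative
\[
\e^{\frac{\lambda_1}{2}(t+s)}\Big(\e^{-\lambda_1(t+s)}\langle f,X_{t+s}\rangle-\lambda_1\e^{-\lambda_1(t+s)}\langle f,Y_{t+s}\rangle\Big)+\tfrac{\lambda_1}{2}(\cdots).
\]
Bounding its second moment uniformly on compacts via Lemma \ref{lem:moment formulae} and \eqref{eq:covforX_{t}} would give $\mathbb E|\Delta|^2\lesssim |s-s'|^2$ on compacts, which is Kolmogorov's condition. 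The quantity $\e^{-\frac{\lambda_1}{2}u}\langle \tilde f,X_u\rangle$ has bounded second moments for $f\in\mathcal D_S$. The quantity $\e^{-\frac{\lambda_1}{2}u}(\langle \varphi,X_u\rangle-\lambda_1\langle\varphi,Y_u\rangle)$ has bounded second moments because it is $-\lambda_1\e^{-\frac{\lambda_1}{2}u}\mathrm{M}_u(\varphi)$.

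\emph{Main obstacle.} I expect the hardest step to be the exact identification of the limiting conditional covariance. It requires the double time integration with uniform-in-$t$ dominated convergence under only $\triangle_t\to0$, rather than the stronger ratio condition \eqref{H2.2'}. It also requires carefully tracking the cross terms between the random centering $W^\varphi_\infty$ and the $\tilde f$ fluctuation so that they produce the $\varsigma$ terms.
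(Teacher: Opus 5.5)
Your finite-dimensional step, in its second form, is essentially the paper's argument for Proposition \ref{prop:fddsuper}. You condition on an earlier $\sigma$-field and write the conditional characteristic function through the infinitely divisible law under $\pp_{X_t}$. You kill the L\'evy part using (M2), and replace $\e^{-\lambda_1 t}\langle\Var_{\delta_\cdot}[\zeta],X_t\rangle$ by $W^\varphi_\infty\langle\Var_{\delta_\cdot}[\zeta],\widetilde\varphi\rangle$ via Lemma \ref{lem2.2}(i). The covariances are then read off Lemma \ref{lem3.n2}. The paper conditions on $\mathcal{F}_t$ with a fixed lag $s$ and sends $s\to\infty$ after $t\to\infty$; this is your $\ell\to\infty$ done as an iterated limit.

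Two remarks on this step. First, Lemma \ref{lemA.0} concerns a single martingale with deterministic normalisation, so it does not cover your first route. There the integrands depend on $t$, and you would need an array version of the CLT with a nesting condition. Second, in the conditioning route that lemma is not needed at all. The $\mathcal{F}_{t-\ell}$-measurable part of the $\varphi$-component is $\e^{-\frac{\lambda_1}{2}(t+s)}\mathrm{M}_{t-\ell}(\varphi)$, whose $L^2$-norm is $O(\e^{-\lambda_1(\ell+s)/2})$. The paper keeps this term at fixed lag and evaluates it with Lemma \ref{lem3.n1}; its contribution exactly cancels the correction term in the second identity of Lemma \ref{lem3.n2}.

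Your tightness argument is genuinely different from the paper's and, as far as I can check, simpler.

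\emph{The paper's route.} It splits $\e^{\frac{\lambda_1}{2}(t+s)}Z^{(t)}_s(f)=Z^{1,(t)}_s(\tilde f)+\langle f,\widetilde\varphi\rangle Z^{2,(t)}_s(\varphi)$. It proves an $L^2$ increment bound of order $|u-v|^2$ only for the first piece (Lemma \ref{lem4.1}). For the second piece it writes $Z^{2,(t)}=\mathfrak{M}^{(t)}+\lambda_1^{-1}\mathfrak{N}^{(t)}$, two c\`adl\`ag processes with jumps. These are handled separately: $\mathfrak{M}^{(t)}$ by Aldous' criterion (Lemma \ref{lem3.9}), and $\mathfrak{N}^{(t)}$ by importing its C-tightness from \cite{Yang25}. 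The pieces are then recombined via \cite[Corollary VI.3.33]{JS}.

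\emph{Your route.} You use that the jumps of $\mathrm{M}(\varphi)$ and $W^\varphi$ cancel, so that $V_s:=\e^{\frac{\lambda_1}{2}(t+s)}Z^{(t)}_s(f)$ is absolutely continuous in $s$. With $u=t+s$, its derivative is $\frac{\lambda_1}{2}V_s+\e^{-\frac{\lambda_1}{2}u}\big(X_u(\tilde f)-\lambda_1Y_u(\tilde f)\big)-\lambda_1\langle f,\widetilde\varphi\rangle\e^{-\frac{\lambda_1}{2}u}\mathrm{M}_u(\varphi)$. Every term is bounded in $L^2$ uniformly in $t\ge t_0$ and $s\ge0$, by the following facts:
\begin{itemize}
\item Lemmas \ref{lem0}(i) and \ref{lem1}(i) for the $\tilde f$ terms;
\item $\e^{-\lambda_1u}\pp_{\delta_x}[\mathrm{M}_u(\varphi)^2]\le c$;
\item $\e^{\lambda_1u}\pp_{\delta_x}[(W^\varphi_u-W^\varphi_\infty)^2]=\e^{\lambda_1u}\int_u^{\infty}\e^{-2\lambda_1r}T_r(\vartheta[\varphi])(x)\,\ud r\le c$.
\end{itemize}
You should state the last bound explicitly, since it is what controls the $\frac{\lambda_1}{2}V_s$ term. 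Cauchy--Schwarz then gives $\pp_{\delta_x}[|V_{s'}-V_s|^2]\le c|s'-s|^2$, and Kolmogorov's criterion yields tightness directly in $\mathcal{C}[0,+\infty)$.

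\emph{Comparison.} Your version avoids the martingale/Aldous machinery and the external C-tightness result. Applied to $f=\varphi$, it still gives the bound on $\sup_{s\le N}|Z^{2,(t)}_s(\varphi)|$ that the paper reuses in Theorems \ref{them2}--\ref{them3}. The paper's decomposition mainly buys reuse of \cite{Yang25}.
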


\begin{theorem}\label{them2}
Assume, in addition, (M4) holds. Then for every $r\ge 0$ and $f\in \mathcal{D}(\lambda_{1}/2,r)$ with limiting function $f^{*}$, under $\pp_{\delta_{x}}$,
$$\big((t+s)^{-(r+\frac{1}{2})}\e^{\frac{\lambda_{1}}{2}(t+s)}Z^{(t)}_{s}(f)\big)_{s\ge 0}\stackrel{d}{\to}\frac{2}{\lambda_{1}}\sqrt{\frac{\langle\vartheta[f^{*}],\widetilde{\varphi}\rangle}{1+2r}W^{\varphi}_{\infty}}C\mbox{ in }\mathcal{C}[0,+\infty)\mbox{ as }t\to+\infty,$$
where $C\equiv N$ is a constant process with $N$ a standard normal random variable independent of $W^{\varphi}_{\infty}$.
\end{theorem}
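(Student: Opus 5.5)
Throughout, write $\tau_t:=t^{-(r+\frac12)}\e^{\frac{\lambda_1}{2}t}$ for the normalisation in Theorem \ref{them2}. The plan is to separate the $\varphi$-component of $f$ from the critical part $\tilde f$. Write $f=\langle f,\widetilde\varphi\rangle\varphi+\tilde f$, and note that $\tilde f\in\mathcal{D}(\lambda_1/2,r)$ with the same limit $f^*$. For the $\varphi$-part we use the martingale $\mathrm{M}_t(\varphi)=\langle\varphi,Y_t\rangle-\lambda_1^{-1}\langle\varphi,X_t\rangle$ of Lemma \ref{lem3.n1}. It gives
$$
\e^{-\lambda_1 t}\langle\varphi,Y_t\rangle-\lambda_1^{-1}W^\varphi_\infty=\lambda_1^{-1}(W^\varphi_t-W^\varphi_\infty)+\e^{-\lambda_1t}\mathrm{M}_t(\varphi)-\lambda_1^{-1}\e^{-\lambda_1 t}\langle\varphi,X_0\rangle .
$$
Each of these terms is $O(\e^{-\lambda_1 t/2})$ in $L^2$, uniformly in the sense needed for the functional statement. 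After multiplying by $\tau_{t+s}$, this part therefore vanishes because of the extra factor $(t+s)^{-(r+1/2)}$. So it suffices to prove the theorem for $\tilde f$, for which $\langle\tilde f,\widetilde\varphi\rangle=0$ and $Z^{(t)}_s(\tilde f)=\e^{-\lambda_1(t+s)}\langle\tilde f,Y_{t+s}\rangle$.

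\textbf{Step 1 (one-dimensional limit).} I would decompose at an intermediate time $u=u(t)$ with $u\to\infty$ and $t-u\to\infty$ slowly, say $t-u=\sqrt t$ or $t-u=c\log t$. By the Markov property,
$$
\langle\tilde f,Y_t\rangle=\langle\tilde f,Y_u\rangle+\big(\langle \tilde f,Y_t\circ\theta_u\rangle-\langle M_{t-u}[\tilde f],X_u\rangle\big)+\langle M_{t-u}[\tilde f],X_u\rangle .
$$
The first term is negligible: the variance formula of Lemma \ref{lem:moment formulae} together with $\|T_s\tilde f\|_\infty\lesssim s^r\e^{\lambda_1 s/2}$ gives $\Var[\langle\tilde f,Y_u\rangle]\lesssim u^{2r+1}\e^{\lambda_1u}$, which is $o(\tau_t^{-2}\e^{2\lambda_1 t})$. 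The last term is also negligible after normalisation, since $\|M_{t-u}\tilde f\|_\infty\lesssim (t-u)^r\e^{\lambda_1(t-u)/2}$, and its second moment is controlled the same way. The middle term is conditionally a sum of independent contributions given $\mathcal{F}_u$ (infinitely divisible), with conditional variance $\langle V_{t-u},X_u\rangle$, where $V_{t-u}(x)=\Var_{\delta_x}\langle\tilde f,Y_{t-u}\rangle$.

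The key estimate, which I expect to be the main obstacle, is
$$
\int_0^{T}T_s\big(\vartheta[M_{T-s}\tilde f]\big)\,\mathrm{d}s\sim \frac{4}{\lambda_1^2}\,\frac{T^{2r+1}}{2r+1}\,\e^{\lambda_1 T}\langle\vartheta[f^*],\widetilde\varphi\rangle\,\varphi
$$
uniformly. To prove it, one uses $M_{v}\tilde f=\int_0^v T_w\tilde f\,\mathrm{d}w\approx \frac{2}{\lambda_1}v^r\e^{\lambda_1v/2}f^*$. This is where the factor $2/\lambda_1$ comes from. Then $T_s(\vartheta[\cdot])\approx \e^{\lambda_1 s}\langle\cdot,\widetilde\varphi\rangle\varphi$ by (H2), and the resulting integral is $\int_0^T(T-s)^{2r}\,\mathrm{d}s=\frac{T^{2r+1}}{2r+1}$. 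Since every $s$ contributes equally at the exponential scale, bounded pointwise convergence and dominated convergence must be handled carefully, using $\sup_t\triangle_t<\infty$.

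Given this estimate, the normalised conditional variance converges to $\frac{4}{\lambda_1^2}\frac{\langle\vartheta[f^*],\widetilde\varphi\rangle}{1+2r}W^\varphi_\infty$. A conditional Lindeberg/Lyapunov condition then follows from (M4), through fourth-moment bounds for each small contribution. This yields the stable convergence to the stated mixed normal law, with $N$ independent of $W^\varphi_\infty$, via the auxiliary stable-convergence result in the appendix.

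\textbf{Step 2 (the constant process).} For fixed $s$, I would show that $\tau_{t+s}Z^{(t)}_s(\tilde f)-\tau_tZ^{(t)}_0(\tilde f)\to0$ in $L^2$. The difference is
$$
\e^{-\lambda_1(t+s)}\tau_{t+s}\Big(\int_t^{t+s}\langle\tilde f,X_v\rangle\,\mathrm{d}v\Big)+(\tau_{t+s}\e^{-\lambda_1 s}-\tau_t)\,\e^{-\lambda_1 t}\langle\tilde f,Y_t\rangle .
$$
The integral term has second moment of order $t^{-(2r+1)}t^{2r}=o(1)$, and the coefficient in the second term is $\tau_t\big((1+s/t)^{-(r+1/2)}\e^{-\lambda_1s/2}-1\big)$. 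This does not vanish, so I need to recheck. Computing directly, $\tau_{t+s}\e^{-\lambda_1(t+s)}=(t+s)^{-(r+1/2)}\e^{-\lambda_1(t+s)/2}$, so the difference of the occupation parts is
$$
(t+s)^{-(r+1/2)}\e^{-\lambda_1(t+s)/2}\langle\tilde f,Y_{t+s}\rangle-t^{-(r+1/2)}\e^{-\lambda_1 t/2}\langle\tilde f,Y_t\rangle .
$$
Its second moment can be computed exactly with the covariance formula of Lemma \ref{lem:moment formulae} (used at times $t$ and $t+s$ via the Markov property). Leading terms cancel because the dominant contribution of $\langle\tilde f,Y_{t+s}\rangle$ comes from the branching at times $\ll t$, with weights $(t+s-v)^{2r}\approx (t-v)^{2r}$ up to relative error $O(s/t)$. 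So the finite-dimensional distributions converge to those of the constant process $C$.

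\textbf{Step 3 (tightness).} I would verify a moment criterion on compacts $s\in[0,K]$. Using the covariance formulae, bound $\mathbb{E}\big|\tau_{t+s}Z^{(t)}_s-\tau_{t+s'}Z^{(t)}_{s'}\big|^2\lesssim |s-s'|^{\gamma}$ uniformly in large $t$; fourth moments, available under (M4), upgrade this to a Kolmogorov-type bound $\lesssim|s-s'|^2$. Then the tightness lemma in the appendix concludes.
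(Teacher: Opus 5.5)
There is a genuine gap in Step 1: you choose the splitting time the wrong way round, and with $t-u=o(t)$ the roles of your two terms are reversed.

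\textbf{Your kept term vanishes.} The key estimate you quote is Lemma \ref{lem1}(ii). For the conditional variance $\langle V_{t-u},X_u\rangle$ it must be applied with $T=t-u$, not $T=t$. Together with Lemma \ref{lem2.2}(i) this gives
\[
\frac{\e^{-\lambda_1 t}}{t^{1+2r}}\langle V_{t-u},X_u\rangle\approx\Big(\frac{t-u}{t}\Big)^{1+2r}\frac{4}{\lambda_1^{2}(1+2r)}\langle\vartheta[f^*],\widetilde{\varphi}\rangle W^{\varphi}_{\infty}\longrightarrow 0 .
\]
So the martingale-difference term carries no variance in the limit, contrary to your claim.

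\textbf{Your discarded term carries everything.} The sup-norm bound on $M_{t-u}[\tilde f]$ is beside the point, because $T_sM_{t-u}[\tilde f]=M_{t-u+s}[\tilde f]-M_s[\tilde f]$ is of order $(t-u+s)^r\e^{\lambda_1(t-u+s)/2}$. Hence in $\Var_{\delta_x}[\langle M_{t-u}[\tilde f],X_u\rangle]=\int_0^uT_{u-s}\big(\vartheta[T_sM_{t-u}[\tilde f]]\big)(x)\,\mathrm{d}s$, every $s\in[0,u]$ contributes order $(t-u+s)^{2r}\e^{\lambda_1 t}$, and
\[
\frac{\e^{-\lambda_1 t}}{t^{1+2r}}\Var_{\delta_x}\big[\langle M_{t-u}[\tilde f],X_u\rangle\big]\approx\frac{1-(1-u/t)^{1+2r}}{1+2r}\cdot\frac{4}{\lambda_1^{2}}\langle\vartheta[f^*],\widetilde{\varphi}\rangle\varphi(x).
\]
When $u/t\to1$ this is the entire limiting variance.

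The underlying reason is that in the critical regime, branching at time $s$ contributes weight $(t-s)^{2r}\e^{\lambda_1 t}$ to $\Var_{\delta_x}[Y_t(\tilde f)]$. The variance is therefore spread over all of $[0,t]$ on the linear time scale, and a final window of length $o(t)$ carries a vanishing share. Your own Step 2 heuristic says the same thing, since $\pp_{\delta_x}[Y_t(\tilde f)\,|\,\mathcal{F}_u]=Y_u(\tilde f)+\langle M_{t-u}[\tilde f],X_u\rangle$ is exactly the part driven by branching before $u$.

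\textbf{Repair.} Take $u\to\infty$ with $u/t\to0$, e.g.\ $u=\sqrt t$. Then:
\begin{itemize}
\item $Y_u(\tilde f)$ and $\langle M_{t-u}[\tilde f],X_u\rangle$ are negligible in $L^2$ after normalisation; the latter has normalised variance of order $u/t$.
\item The conditional variance of the middle term converges to $\frac{4}{\lambda_1^2(1+2r)}\langle\vartheta[f^*],\widetilde{\varphi}\rangle W^\varphi_\infty$.
\item The fourth-moment bound of Lemma \ref{lemA.1}, made uniform in the starting point, makes the L\'evy part negligible.
\end{itemize}
Alternatively, do what the paper does in Lemma \ref{lem3}. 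Condition on $\mathcal F_t$ and work at time $nt$, where the conditional-mean part has normalised variance of order $1-(1-1/n)^{1+2r}$. Then let $t\to\infty$ first and $n\to\infty$ second.

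\textbf{A smaller point.} Lemma \ref{lemA.0} is not the right tool for the middle term. The integral $\int_u^t\int_EM_{t-s}[\tilde f](y)M(\mathrm{d}s,\mathrm{d}y)$ has a $t$-dependent integrand, so it is not the value of one fixed martingale. Conclude instead from the L\'evy--Khintchine form of the conditional characteristic function under $\pp_{X_u}$. This directly gives the limit $\pp_{\delta_x}\big[\exp\{-\frac{2\theta^2}{\lambda_1^2(1+2r)}\langle\vartheta[f^*],\widetilde{\varphi}\rangle W^\varphi_\infty\}\big]$, and with it the independence of $N$ and $W^\varphi_\infty$.

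\textbf{The rest is sound.} The following parts work, and are somewhat more direct than the paper's route through C-tightness of $Z^{2,(t)}(\varphi)$ and the backward recursion of Proposition \ref{prop:fddcri}:
\begin{itemize}
\item the Doob-type removal of the $\varphi$-component, using $r+\frac12>0$;
\item the $L^2$-closeness of the normalised occupation times at $t$ and $t+s$ in Step 2;
\item the moment-based tightness in Step 3.
\end{itemize}
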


\begin{theorem}\label{them3}
Suppose $(\lambda,r)\in (\lambda_{1}/2,\lambda_{1})\times [0,+\infty)\cup\{(\lambda_{1},0)\}$. For every $f\in \mathcal{D}(\lambda,r)$ with limiting function $f^{*}$, set $W^{f^{*}}_{t}
:=\e^{-\lambda t}\langle f^{*},X_{t}\rangle$. Then the following hold.
\begin{description}
\item[(i)] $\{W^{f^{*}}_{t}:\ t\ge 0\}$ is an $L^{2}(\pp_{\delta_{x}})$-bounded martingale, and hence converges $\pp_{\delta_{x}}$-a.s. and in $L^{2}(\pp_{\delta_{x}})$ to a finite limit $W^{f^{*}}_{\infty}$. Furthermore, $\Var_{\delta_{x}}[W^{f^{*}}_{\infty}]=\varpi^{2}_{f^{*}}(x)$ where $\varpi^{2}_{f^{*}}(x):=\int_{0}^{+\infty}\e^{-2\lambda s}T_{s}\big(\vartheta[f^{*}]\big)(x)\mathrm{d}s\in [0,+\infty)$.
\item[(ii)] Under $\pp_{\delta_{x}}$,
$$\big((t+s)^{-r}\e^{(\lambda_{1}-\lambda)(t+s)}Z^{(t)}_{s}(f)\big)_{s\ge 0}\stackrel{d}{\to}\frac{1}{\lambda}W^{f^{*}}_{\infty}\mbox{ in }\mathcal{C}[0,+\infty)\mbox{ as }t\to+\infty.$$
\end{description}
\end{theorem}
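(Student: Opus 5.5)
Part (i) comes first. Since $f\in\mathcal{D}(\lambda,r)$, we have $t^{-r}\e^{-\lambda t}T_t\tilde f\rightrightarrows f^*$. Using the semigroup property, $T_u(T_t\tilde f)=T_{t+u}\tilde f$, and dividing by $t^{r}\e^{\lambda t}$ gives
$$T_uf^*=\lim_{t\to\infty}(t+u)^r t^{-r}\e^{\lambda u}\,(t+u)^{-r}\e^{-\lambda(t+u)}T_{t+u}\tilde f=\e^{\lambda u}f^*,$$
where the limit may be passed through $T_u$ because of \eqref{eq0}. So $f^*$ is an eigenfunction, and the Markov property makes $W^{f^*}_t$ a martingale. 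Its variance comes from \eqref{eq:variance}:
$$\Var_{\delta_x}[W^{f^*}_t]=\int_0^t\e^{-2\lambda t}T_{t-s}(\vartheta[\e^{\lambda s}f^*])(x)\,\ud s=\int_0^t\e^{-2\lambda u}T_u(\vartheta[f^*])(x)\,\ud u .$$
By \eqref{eq1.9}, $\|T_u\vartheta[f^*]\|_\infty\lesssim\e^{\lambda_1u}$, and $2\lambda>\lambda_1$, so the integral converges. This gives $L^2$-boundedness, and hence a.s. and $L^2$ convergence with $\Var_{\delta_x}[W^{f^*}_\infty]=\varpi^2_{f^*}(x)$.

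For (ii), split $f=\langle f,\widetilde\varphi\rangle\varphi+\tilde f$. For the $\varphi$-part we use the martingale $\mathrm M_t(\varphi)=\langle\varphi,Y_t\rangle-\lambda_1^{-1}\langle\varphi,X_t\rangle$ of Lemma \ref{lem3.n1}, together with $\lambda_1^{-1}\langle \varphi,\mathrm{d}\rangle$ terms and $W^\varphi_\infty$. Its contribution to $Z^{(t)}_s$ is $\e^{-\lambda_1(t+s)}\mathrm M_{t+s}(\varphi)+\lambda_1^{-1}(W^\varphi_{t+s}-W^\varphi_\infty)$, which is of order $\e^{-\lambda_1(t+s)/2}$ in $L^2$ by Theorem \ref{them1}-type second-moment bounds. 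Since $\lambda_1-\lambda<\lambda_1/2$ (or the case $(\lambda_1,0)$ where the normalisation is trivial and this part is simply $o(1)$), multiplying by $(t+s)^{-r}\e^{(\lambda_1-\lambda)(t+s)}$ sends it to $0$ uniformly on compacts. I will get this via Doob's inequality for the martingales, which also yields tightness.

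For the $\tilde f$-part we write
$$\langle\tilde f,Y_T\rangle=\int_0^T\langle \tilde f,X_u\rangle\,\ud u,$$
and decompose $\langle\tilde f,X_u\rangle=\langle T_{\cdot}\ldots\rangle$. The cleanest route is the martingale identity $\langle\tilde f,X_u\rangle=\langle T_{u}\tilde f,X_0\rangle+\int_0^u \ldots$. Instead I will compare with $f^*$ directly. Writing $\tilde f=g$, for fixed large $t_0$ and $u\ge t_0$ the Markov property gives
$$\pp[\langle g,X_u\rangle\mid\mathcal F_{t_0}]=\langle T_{u-t_0}g,X_{t_0}\rangle\approx (u-t_0)^r\e^{\lambda(u-t_0)}\langle f^*,X_{t_0}\rangle .$$
Its variance part is $\Var[\langle g,X_u\rangle\mid\mathcal F_{t_0}]\lesssim\langle \e^{\lambda_1 (u-t_0)},X_{t_0}\rangle$, by \eqref{eq:variance} and the bound $\|T_sg\|\lesssim s^r\e^{\lambda s}$.

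Integrating over $u\le T=t+s$ and normalising by $T^{-r}\e^{-\lambda T}$ then proceeds in two pieces. The conditional-mean piece gives
$$\int_{t_0}^T (u/T)^r\e^{-\lambda(T-u)}\,\ud u\cdot W^{f^*}_{t_0}\to\lambda^{-1}W^{f^*}_{t_0},$$
the factor $\lambda^{-1}$ being the time average mentioned in the introduction. The fluctuation piece is bounded in $L^2$ using Lemma \ref{lem:moment formulae} by $\lesssim T^{-2r}\e^{(\lambda_1-2\lambda)T}\to0$. Finally we let $t_0\to\infty$ and invoke part (i). It is more convenient to avoid $t_0$ altogether: apply the covariance formula of Lemma \ref{lem:moment formulae} to
$$\Var_{\delta_x}\Big[T^{-r}\e^{-\lambda T}\langle \tilde f,Y_T\rangle-\lambda^{-1}W^{f^*}_T\Big],$$
and show that it tends to $0$, using $M_{T-s}[\tilde f]\approx\lambda^{-1}(T-s)^r\e^{\lambda(T-s)}f^*$ uniformly (a Cesàro-type estimate from the definition of $\mathcal D(\lambda,r)$). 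The means vanish since $\langle\tilde f,\widetilde\varphi\rangle=0$ only asymptotically; the mean term $T^{-r}\e^{-\lambda T}M_T[\tilde f](x)-\lambda^{-1}f^*(x)\to0$ by the same estimate. Combined with $W^{f^*}_T\to W^{f^*}_\infty$, this gives one-dimensional $L^2$ convergence, and since the limit does not depend on $s$ we also get f.d.d. convergence.

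The main obstacle is tightness in $\mathcal C[0,+\infty)$, i.e. uniform control over $s\in[0,S]$. I will try to show
$$\pp\Big[\sup_{s\le S}\big|(t+s)^{-r}\e^{(\lambda_1-\lambda)(t+s)}Z^{(t)}_s(f)-\lambda^{-1}W^{f^*}_{\infty}\big|^2\Big]\to0 .$$
To do this, I write the $\tilde f$-part as $\int_0^T\langle\tilde f,X_u\rangle\ud u$ with a smooth deterministic prefactor and express $\langle\tilde f,X_u\rangle$ through the stochastic integral representation available under (H1) (\cite[Proposition 4.3]{PY20}). This reduces the supremum to Doob's maximal inequality for martingales plus a deterministic derivative bound in $s$. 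Failing that, I will use a Kolmogorov-type increment bound $\pp|\Delta|^2\lesssim|s-s'|^2$ together with the appendix tightness criterion.
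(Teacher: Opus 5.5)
Part (i) matches the paper. The $\varphi$-part and the one-dimensional limit are correct and take a different route. Tightness of the $\tilde f$-part is not actually proved, and that is the gap.

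\textbf{What works, and how it differs.} In part (i) the paper reads the variance off Lemma~\ref{lem0}(iii), while you compute it directly; otherwise the arguments are the same.

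For the $\varphi$-part you write $Z^{(t)}_s(\varphi)=\e^{-\lambda_1(t+s)}\mathrm{M}_{t+s}(\varphi)+\lambda_1^{-1}(W^{\varphi}_{t+s}-W^{\varphi}_\infty)$ and apply Doob's inequality to $\mathrm{M}(\varphi)$ and $W^{\varphi}$. This bounds the $L^2$ norm of the supremum over $s\le S$ of the normalised term by $C(S)\e^{(\lambda_1/2-\lambda)t}$. The paper instead multiplies the tight family $Z^{2,(t)}(\varphi)$ (Lemma~\ref{lem3.9} plus C-tightness of $\mathfrak{N}^{(t)}$ from \cite{Yang25}) by a vanishing factor, so your route is more direct.

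For the one-dimensional limit of the $\tilde f$-part, Lemma~\ref{lem:moment formulae} and bilinearity of $\vartheta$ give
\[
\Var_{\delta_x}\big[T^{-r}\e^{-\lambda T}Y_T(\tilde f)-\lambda^{-1}W^{f^*}_T\big]=\int_0^T\e^{-2\lambda s}\,T_s\big(\vartheta[k_T(s)]\big)(x)\,\ud s,\qquad k_T(s):=T^{-r}\e^{-\lambda(T-s)}M_{T-s}[\tilde f]-\lambda^{-1}f^*.
\]
Dominated convergence then finishes, using $2\lambda>\lambda_1$ and Lemma~\ref{lem1}(iii). The paper first proves $t^{-r}\e^{-\lambda t}X_t(\tilde f)\to W^{f^*}_\infty$ in $L^2$ and then integrates in time (Proposition~\ref{prop:fddsub}(ii)--(iii)). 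Both routes are valid.

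Discard the $t_0$ sketch, though. There $\Var[\langle g,X_u\rangle\,|\,\mathcal F_{t_0}]=\langle\Var_{\delta_\cdot}[X_{u-t_0}(g)],X_{t_0}\rangle$ grows like $(u-t_0)^{2r}\e^{2\lambda(u-t_0)}$ by Lemma~\ref{lem0}(iii), not like $\e^{\lambda_1(u-t_0)}$. So the normalised conditional fluctuation does not in general vanish as $T\to\infty$. It is of order $\e^{-(2\lambda-\lambda_1)t_0}$ in mean square and disappears only as $t_0\to\infty$.

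\textbf{The gap: tightness of the $\tilde f$-part.} You only announce tightness of $U_t(s):=(t+s)^{-r}\e^{-\lambda(t+s)}Y_{t+s}(\tilde f)$. Your first plan does not work as described, because neither $u\mapsto\langle\tilde f,X_u\rangle$ nor $T\mapsto Y_T(\tilde f)$ is a martingale. In the representation of \cite[Proposition 4.3]{PY20} the integrand $T_{u-s}\tilde f(y)$ depends on $u$, so Doob's inequality does not apply directly.

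Your fallback is exactly the paper's argument (Lemma~\ref{lem4.1} and Proposition~\ref{prop:tight1}). By H\"older, $\pp_{\delta_x}[(Y_{T+v}(\tilde f)-Y_{T+u}(\tilde f))^2]\le(v-u)\int_0^{v-u}\pp_{\delta_x}[X_{T+u+w}(\tilde f)^2]\,\ud w$. So you need $\sup_{T\ge t_0}T^{-2r}\e^{-2\lambda T}\pp_{\delta_x}[X_T(\tilde f)^2+Y_T(\tilde f)^2]<\infty$ and $|a(T+v)/a(T+u)-1|\le c|v-u|$ for $a(T)=T^{-r}\e^{-\lambda T}$, after which Lemma~\ref{lemA.3} applies.

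A shorter route reaches your $\sup_{s\le S}$ target directly. Set $V_t(s):=(t+s)^{-r}\e^{-\lambda(t+s)}X_{t+s}(\tilde f)$. Pathwise, $s\mapsto U_t(s)$ is absolutely continuous with $\partial_sU_t(s)=V_t(s)-(\lambda+\frac{r}{t+s})U_t(s)$. Minkowski's inequality then gives
\[
\Big\|\sup_{s\le S}|U_t(s)-U_t(0)|\Big\|_{L^2}\le\int_0^S\|V_t(u)-\lambda U_t(u)\|_{L^2}\,\ud u+\frac{rS}{t}\sup_{u\le S}\|U_t(u)\|_{L^2}\longrightarrow0,
\]
provided $V_t(u)\to W^{f^*}_\infty$ in $L^2$. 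That convergence is missing from your proposal. It follows from the same variance computation with $T_{T-s}\tilde f$ in place of $M_{T-s}[\tilde f]$; it is Proposition~\ref{prop:fddsub}(ii).

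Combined with your Doob bound for the $\varphi$-part, this gives locally uniform $L^2$ convergence of the whole normalised process to $\lambda^{-1}W^{f^*}_\infty$. No separate tightness criterion is then needed.
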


\subsection{Heuristics and related literature}\label{sec1.5}
Let us start with a short discussion of the main ingredients of the proof. As usual, the convergence in $\mathcal{C}[0,+\infty)$ is established through convergence of finite-dimensional distributions and tightness.
Firstly, using the moment formulae for occupation times, we derive exact asymptotics for means, variances and covariances in each of the three regimes (Lemmas \ref{lem0} and \ref{lem1}). For the finite-dimensional distributions, we decompose $Z^{(t)}_{s}(f)$ into the centered part driven by $\tilde{f}$ and the eigenfunction part driven by $\varphi$, and compute the limiting characteristic functions by conditioning on $\mathcal{F}_{t}$.
A key observation is that, $\mathrm{M}_{t}(\varphi):=\langle \varphi,Y_{t}\rangle-\lambda^{-1}_{1}\langle \varphi,X_{t}\rangle$ is a c\`{a}dl\`{a}g martingale with predictable quadratic variation $\lambda^{-2}_{1}\langle\vartheta[\varphi],Y_{t}\rangle$. Combined with the LLN \eqref{lln-intro} and a stable central limit theorem for martingales due to Touati \cite{Tou92}, this yields the stable convergence of $\e^{-\frac{\lambda_{1}}{2}t}\mathrm{M}_{t}(\varphi)$ to a mixed normal limit (Lemma \ref{lem3.n1}).
Another key observation is that, by the regular branching property of superprocess,
the relevant linear combinations of the fluctuation process
are infinitely divisible random variables, and thus admit a canonical representation of their characteristic functions. The jump part of this representation is shown to vanish in the limit under suitable moment conditions. Hence, the problem is reduced to the convergence of certain random variances determined by the martingale limit $W^{\varphi}_{\infty}$. The critical regime is more delicate than the other two regimes, where a backward
recursion along the time sequence is used to peel off one increment at a time, and the
fourth moment condition (M4) is needed to control the L\'{e}vy kernels of the jump parts.
Tightness is proved separately for the centered part and the eigenfunction part. The former is handled by second moment increment estimates (Lemma \ref{lem4.1}) combined with a Billingsley-type tightness criterion (Lemma \ref{lemA.3}), and the latter is proved by
Aldous' criterion applied to the underlying martingale. The two parts are then merged by standard criteria for sums of tight families.

We end this section by comparing the above approach with the two works closest to ours, \cite{RSZ17} and \cite{Yang25}.
The paper \cite{RSZ17} provides functional CLTs of instantaneous linear functionals for supercritical superprocesses with symmetric spatial motions and local branching mechanisms.
Their proofs rely on an $L^{2}$-spectral decomposition: the mean semigroup is assumed to be compact with purely discrete spectrum and the test functions are expanded in a complete eigenfunction basis. Under these assumptions, one martingale $e^{\lambda_{k}t}\langle\varphi^{(k)}_{j},X_{t}\rangle$ is available for each eigenpair $(\lambda_{k},\varphi^{(k)}_{j})$, and the random centering of the fluctuations is assembled from the corresponding martingale limits. None of these ingredients
passes
to occupation times.
In our setting, the mean semigroup is assumed to display a natural Perron-Frobenius type behaviour, which allows for non-symmetric spatial motions and non-local branching mechanisms. $L^{2}$-spectral theory is not available in this setting. More importantly, the occupation measure admits no eigenfunction-type martingale. The fluctuation analysis instead is built on the compensated martingale $\mathrm{M}_{t}(\varphi)$ introduced above.
The tightness step also requires new estimates.
The second-moment increment bounds for the centered part (Lemma \ref{lem4.1}) have no instantaneous analogue and are controlled through the second-moment asymptotics of Lemmas \ref{lem0}--\ref{lem1}. The eigenfunction part, on the other hand, relies on Aldous' criterion applied to the martingale $\mathrm{M}_{t}(\varphi)$.

The paper \cite{Yang25} treats the same model as ours but for instantaneous linear functionals.
Compared with \cite{Yang25}, 
the conditioning argument based on infinitely divisible representations carries over to our proof of finite-dimensional convergence.
The difficulties specific to the occupation times arise in the moment asymptotics, covariance structure and limit identification.
The moment asymptotics for the occupation times (Lemma \ref{lem1}) must control the growing operator $M_{t}[f]$ and have no analogue in \cite{Yang25}.
The finite-dimensional convergence relies on the covariance structure of the occupation times, and the cross terms in the covariance kernels of Theorem \ref{them1} have no instantaneous counterpart. The limit processes themselves differ qualitatively: the additional factors $2/\lambda_{1}$ and $\lambda^{-1}$, and the martingale limits in Theorems \ref{them2} and \ref{them3}, arise from the time averaging $\int_{0}^{t}e^{a(s-t)}\mathrm{d}s\to a^{-1}$ and cannot be deduced from the results of \cite{Yang25}.

\section{Preliminaries}\label{sec2}

\subsection{A law of large numbers for the occupation measure}

\begin{lemma}\label{lem2.2}
Suppose $f_{t}$ and $f$ are bounded measurable functions on $E$ and $f_{t}\stackrel{b.p.}{\longrightarrow}f$ as $t\to+\infty$, then the following hold for every $\mu\in\mf$.
\begin{description}
\item[(i)]
$\e^{-\lambda_{1}t}\langle f_{t},X_{t}\rangle \to \langle f,\widetilde{\varphi}\rangle { W^{\varphi}_{\infty}}\mbox{ in }L^{2}(\pp_{\mu})\mbox{ as }t\to+\infty.$
\item[(ii)] $\lim_{t\to+\infty}\e^{-\lambda_{1}t}\int_{0}^{t}\langle f_{s},X_{s}\rangle \mathrm{d}s=\frac{1}{\lambda_{1}}\langle f,\widetilde{\varphi}\rangle W^{\varphi}_{\infty}\mbox{ in }L^{2}(\pp_{\mu}).$
\end{description}
\end{lemma}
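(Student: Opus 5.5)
First I will prove (i) by comparing $\e^{-\lambda_1 t}\langle f_t,X_t\rangle$ with a conditional expectation taken at an earlier time. Fix $s<t$ (later $s=t/2$, or $s$ large and then $t\to\infty$). Write
\[
\e^{-\lambda_1 t}\langle f_t,X_t\rangle-\langle f,\widetilde\varphi\rangle W^\varphi_\infty
= I_1+I_2+I_3+I_4 ,
\]
with
\begin{align*}
I_1&=\e^{-\lambda_1 t}\big(\langle f_t,X_t\rangle-\pp_\mu[\langle f_t,X_t\rangle\,|\,\mathcal F_s]\big),\\
I_2&=\e^{-\lambda_1 s}\langle \e^{-\lambda_1(t-s)}T_{t-s}f_t-\langle f_t,\widetilde\varphi\rangle\varphi,X_s\rangle,\\
I_3&=\langle f_t-f,\widetilde\varphi\rangle W^\varphi_s,\\
I_4&=\langle f,\widetilde\varphi\rangle (W^\varphi_s-W^\varphi_\infty).
\end{align*}
By the Markov property, $\pp_\mu[\langle f_t,X_t\rangle|\mathcal F_s]=\langle T_{t-s}f_t,X_s\rangle$.

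\emph{Term $I_1$.} By \eqref{eq:variance},
\[
\pp_\mu[I_1^2]=\e^{-2\lambda_1 t}\pp_\mu\Big\langle \int_0^{t-s}T_{t-s-u}(\vartheta[T_uf_t])\,\mathrm du,X_s\Big\rangle .
\]
Using \eqref{eq1.9}, $\|T_uf_t\|_\infty\le c\e^{\lambda_1 u}$. Under (M2), $\vartheta[g]\le c\|g\|_\infty^2$. Hence the integrand is at most $c\e^{\lambda_1(t-s-u)}\e^{2\lambda_1u}$, and the inner integral is at most $c\e^{2\lambda_1(t-s)}$. Since $\pp_\mu\langle1,X_s\rangle\le c\e^{\lambda_1 s}$, we get $\pp_\mu[I_1^2]\le c\e^{-\lambda_1 s}$.

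\emph{Term $I_2$.} By \eqref{ieq:H3}, $|I_2|\le 2\sup_t\|f_t\|_\infty\triangle_{t-s}\e^{-\lambda_1 s}\langle1,X_s\rangle$. The second moment of $\e^{-\lambda_1 s}\langle1,X_s\rangle$ is bounded uniformly in $s$, by \eqref{eq:second moment} and the same estimate as for $I_1$. So $\pp_\mu[I_2^2]\le c\triangle_{t-s}^2$.

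\emph{Term $I_3$.} Since $\widetilde\varphi$ is finite, bounded convergence gives $\langle f_t-f,\widetilde\varphi\rangle\to0$. Also $W^\varphi$ is $L^2$-bounded, which is the case $f=\varphi$ of the previous bound. Hence $I_3\to0$ in $L^2$.

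\emph{Term $I_4$.} $W^\varphi$ is an $L^2$-bounded martingale, so $W^\varphi_s\to W^\varphi_\infty$ in $L^2$.

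Taking $s=t/2$, all four terms tend to $0$ in $L^2$. The main care is the uniform bound $\sup_s\pp_\mu[(\e^{-\lambda_1 s}\langle 1,X_s\rangle)^2]<\infty$. This follows because $\int_0^s \e^{-2\lambda_1 s}T_{s-u}(c\e^{2\lambda_1 u})\,\mathrm du\le c\int_0^s\e^{-\lambda_1(s-u)}\,\mathrm du$, which is bounded.

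For (ii), write $A_s:=\e^{-\lambda_1 s}\langle f_s,X_s\rangle$ and $L:=\langle f,\widetilde\varphi\rangle W^\varphi_\infty$. Then
\[
\e^{-\lambda_1 t}\int_0^t\langle f_s,X_s\rangle\,\mathrm ds=\int_0^t \e^{-\lambda_1(t-s)}A_s\,\mathrm ds,
\]
and we compare it with $\int_0^t\e^{-\lambda_1(t-s)}\,\mathrm ds\,L=\lambda_1^{-1}(1-\e^{-\lambda_1 t})L$. By Minkowski's inequality,
\[
\Big\|\int_0^t\e^{-\lambda_1(t-s)}(A_s-L)\,\mathrm ds\Big\|_{L^2}\le\int_0^t\e^{-\lambda_1(t-s)}\|A_s-L\|_{L^2}\,\mathrm ds .
\]
By (i), $\|A_s-L\|_{L^2}\to0$, and it is bounded uniformly in $s$ by the moment bound above. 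The right-hand side is therefore a convolution of a vanishing bounded function with an integrable kernel, so it tends to $0$ by dominated convergence after the substitution $s\mapsto t-s$. Finally, $\lambda_1^{-1}\e^{-\lambda_1t}L\to0$ in $L^2$, which completes (ii). Joint measurability of $(s,\omega)\mapsto A_s$ follows from the càdlàg paths of $X$.
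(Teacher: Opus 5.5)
Your proof is correct, but for (i) you take a different route from the paper.

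\textbf{Part (i).} The paper decomposes the test function. With $\tilde f_t:=f_t-\langle f_t,\widetilde\varphi\rangle\varphi$ it writes $\e^{-\lambda_1 t}\langle f_t,X_t\rangle=\e^{-\lambda_1 t}\langle\tilde f_t,X_t\rangle+\langle f_t,\widetilde\varphi\rangle W^\varphi_t$. It then kills the centered part with a single application of the second-moment formula \eqref{eq:second moment}. There the decay of $\triangle_s$ must be used inside the variance integral, which produces $\e^{\lambda_1 t}\int_0^t\e^{\lambda_1 s}\triangle_s^2\,\mathrm{d}s$ and requires an $\epsilon$--$t_0$ splitting.

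You instead decompose in time, by conditioning on $\mathcal F_{t/2}$:
\begin{itemize}
\item The martingale-difference term $I_1$ is controlled by the crude growth bound \eqref{eq1.9} alone. No decay of $\triangle$ is needed, and you get the rate $\e^{-\lambda_1 t/2}$ for its second moment.
\item The decay enters only once, through $\triangle_{t/2}$ in the conditional-mean term $I_2$.
\item The random centering is reached via $W^\varphi_{t/2}\to W^\varphi_\infty$.
\end{itemize}
Your version gives an explicit error bound and is the standard ``branch at an intermediate time'' argument. The paper's version is a slightly more direct one-shot computation with no conditioning.

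\textbf{Part (ii).} Your argument is the paper's (Minkowski's inequality against the kernel $\e^{-\lambda_1(t-s)}$), with dominated convergence in place of the explicit $\epsilon$--$t_1$ split.

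One small point needs fixing. The domination $\sup_{s\ge 0}\|A_s-L\|_{L^2(\pp_\mu)}<\infty$ follows from your moment bound only if $\sup_{s\ge0}\|f_s\|_\infty<\infty$. Bounded pointwise convergence gives $\|f_s\|_\infty\le c$ only for $s$ large.

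The fix is to split off $[0,t_1]$ as the paper does. On $[0,t_1]$, the contribution $\e^{-\lambda_1 t}\int_0^{t_1}\langle f_s,X_s\rangle\,\mathrm{d}s$ tends to $0$ in $L^2(\pp_\mu)$ as soon as it is square integrable. That square integrability is implicitly assumed in the statement anyway.

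A related remark on your last sentence: joint measurability of $(s,\omega)\mapsto A_s$ also needs joint measurability of $(s,x)\mapsto f_s(x)$. The c\`{a}dl\`{a}g paths of $X$ alone do not give it. The statement, and the paper's proof, take both of these for granted.
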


\proof (i) Set $\tilde{f}_{t}:=f_{t}-\langle f_{t},\widetilde{\varphi}\rangle\varphi$. Then $\langle\tilde{f}_{t},\widetilde{\varphi}\rangle=0$ and $\e^{-\lambda_{1}t}\langle f_{t},X_{t}\rangle=\e^{-\lambda_{1}t}\langle\tilde{f}_{t},X_{t}\rangle+\langle f_{t},\widetilde{\varphi}\rangle W^{\varphi}_{t}$. Clearly, we have $\lim_{t\to+\infty}\langle f_{t},\widetilde{\varphi}\rangle=\langle f,\widetilde{\varphi}\rangle$ by the bounded convergence theorem.
Define
 \begin{equation}\label{def:Theta(x)}
\Theta(x):=\int_{0}^{+\infty}\e^{-2\lambda_{1} s}T_{s}(\vartheta[\varphi])(x)\mathrm{d}s\quad\forall x\in E.
\end{equation}
It is easy to verify by \eqref{eq1.9} that $\Theta$ is a nonnegative bounded function on $E$. By \eqref{eq:variance}, we have
$$\mathrm{Var}_{\delta_{x}}[{ W^{\varphi}_{t}}]
=\int_{0}^{t}\e^{-2\lambda_{1} s}T_{s}(\vartheta[\varphi])(x)\mathrm{d}s\mbox{ and }\mathrm{Var}_{\mu}[{ W^{\varphi}_{t}}]=\langle \mathrm{Var}_{\delta_{\cdot}}[{ W^{\varphi}_{t}}],\mu\rangle\quad\forall \mu\in\mathcal{M}(E).$$
Obviously, as $t\to+\infty$, $\mathrm{Var}_{\delta_{x}}[{ W^{\varphi}_{t}}]\uparrow \Theta(x)$ and $\mathrm{Var}_{\mu}[{ W^{\varphi}_{t}}]\uparrow \langle \Theta,\mu\rangle$.
This implies that $({ W^{\varphi}_{t}})_{t\ge 0}$ is an $L^{2}(\pp_{\mu})$-bounded martingale and thus converges to ${ W^{\varphi}_{\infty}}$ in $L^{2}(\pp_{\mu})$.
Consequently, we have $\langle f_{t},\widetilde{\varphi}\rangle W^{\varphi}_{t}\to \langle f,\widetilde{\varphi}\rangle W^{\varphi}_{\infty}$ in $L^{2}(\pp_{\mu})$.
Hence to prove (i), it suffices to prove that
\begin{equation}\label{lem2.2.n1}
\e^{-\lambda_{1}t}\langle\tilde{f}_{t},X_{t}\rangle\to 0\mbox{ in }L^{2}(\pp_{\mu})\mbox{ as }t\to+\infty.
\end{equation}
Since by \eqref{ieq:H3} $\|\e^{-\lambda_{1}s}T_{s}\tilde{f}_{t}\|_{\infty}\le 2\|\tilde{f}_{t}\|_{\infty}\triangle_{s}$ for all $t,s\ge 0$, we have
\begin{align}
\pp_{\mu}\big[\langle\tilde{f}_{t},X_{t}\rangle^{2}\big]=&\langle T_{t}\tilde{f}_{t},\mu\rangle^{2}+\langle\int_{0}^{t}T_{t-s}\big(\vartheta[T_{s}\tilde{f}_{t}]\big)\mathrm{d}s,\mu\rangle\nonumber\\
\le&4\e^{2\lambda_{1}t}\|\tilde{f}_{t}\|_{\infty}^{2}\triangle^{2}_{t}\langle 1,\mu\rangle^{2}+4\|\tilde{f}_{t}\|^{2}_{\infty}\langle\int_{0}^{t}\e^{2\lambda_{1}s}\triangle^{2}_{s}T_{t-s}(\vartheta[1])\mathrm{d}s,\mu\rangle\nonumber\\
\le&4\e^{2\lambda_{1}t}\|\tilde{f}_{t}\|_{\infty}^{2}\triangle^{2}_{t}\langle 1,\mu\rangle^{2}+4c_{1}\e^{\lambda_{1}t}\|\tilde{f}_{t}\|^{2}_{\infty}\|\vartheta[1]\|_{\infty}
\langle\int_{0}^{t}\e^{\lambda_{1}s}\triangle^{2}_{s}\mathrm{d}s,\mu\rangle.\label{lem2.2.n2}
\end{align}
The final inequality follows from \eqref{eq1.9}.
Let $\epsilon>0$. Since $\triangle_{t}\to 0$, there is some $t_{0}>0$ such that $\triangle_{t}\le \epsilon$ for all $t\ge t_{0}$. Thus
$$\int_{0}^{t}\e^{\lambda_{1}s}\triangle^{2}_{s}\mathrm{d}s=\big(\int_{0}^{t_{0}}+\int_{t_{0}}^{t}\big)\e^{\lambda_{1}s}\triangle^{2}_{s}\mathrm{d}s\le \e^{\lambda_{1}t_{0}}\sup_{s\in [0,t_{0}]}\triangle^{2}_{s}+\lambda^{-1}_{1}\e^{\lambda_{1}t}\epsilon^{2}=:c_{2}+c_{3}\e^{\lambda_{1}t}\epsilon^{2}.$$
Inserting this into the right hand side of \eqref{lem2.2.n2} yields that for $t\ge t_{0}$,
$$\e^{-2\lambda_{1}t}\pp_{\mu}\big[\langle \tilde{f}_{t},X_{t}\rangle^{2}\big]\le 4\|\tilde{f}_{t}\|_{\infty}^{2}\triangle^{2}_{t}\langle 1,\mu\rangle^{2}+4c_{1}\|\tilde{f}_{t}\|^{2}_{\infty}\|\vartheta[1]\|_{\infty}(c_{2}\e^{-\lambda_{1}t}+c_{3}\epsilon^{2}).$$
Note that $\|\tilde{f}_{t}\|_{\infty}$ is bounded from above for large $t$. Letting first $t\to+\infty$ and then $\epsilon\to 0$ in the above inequality, we get
$\e^{-2\lambda_{1}t}\pp_{\mu}\big[\langle \tilde{f}_{t},X_{t}\rangle^{2}\big]\to 0$, which in turn yields \eqref{lem2.2.n1}.

\medskip

\noindent(ii) It follows by (i) that for any $\epsilon>0$, there exists $t_{1}>0$ such that $\|\e^{-\lambda_{1}t}\langle f_{t},X_{t}\rangle -\langle f,\widetilde{\varphi}\rangle W^{\varphi}_{\infty}\|_{L^{2}(\pp_{\mu})}\le \epsilon$. For $t\ge t_{1}$, we may write
\begin{equation}\label{lem2.1.1}
\int_{0}^{t}\langle f_{s},X_{s}\rangle\mathrm{d}s=\int_{0}^{t_{1}}\langle f_{s},X_{s}\rangle\mathrm{d}s+\int_{0}^{t-t_{1}}\langle f_{t-s},X_{t-s}\rangle\mathrm{d}s.
\end{equation}
Since $\int_{0}^{t_{1}}\langle f_{s},X_{s}\rangle \mathrm{d}s$ has a finite second moment, we have $\e^{-\lambda_{1}t}\int_{0}^{t_{1}}\langle f_{s},X_{s}\rangle \mathrm{d}s\to 0$ in $L^{2}(\pp_{\mu})$ as $t\to+\infty$. On the other hand, by Minkowski's inequality,
\begin{align*}
&\|\e^{-\lambda_{1}t}\int_{0}^{t-t_{1}}\langle f_{t-s},X_{t-s}\rangle \mathrm{d}s-\frac{1-\e^{-\lambda_{1}(t-t_{1})}}{\lambda_{1}}\langle f,\widetilde{\varphi}\rangle W^{\varphi}_{\infty}\|_{L^{2}(\pp_{\mu})}\\
=&\|\int_{0}^{t-t_{1}}\e^{-\lambda_{1}s}\big(\e^{-\lambda_{1}(t-s)}\langle f_{t-s},X_{t-s}\rangle -\langle f,\widetilde{\varphi}\rangle W^{\varphi}_{\infty}\big)\mathrm{d}s\|_{L^{2}(\pp_{\mu})}\\
\le&\int_{0}^{t-t_{1}}\e^{-\lambda_{1}s}\|\e^{-\lambda_{1}(t-s)}\langle f_{t-s},X_{t-s}\rangle -\langle f,\widetilde{\varphi}\rangle W^{\varphi}_{\infty}\|_{L^{2}(\pp_{\mu})}\mathrm{d}s\\
\le&\frac{\epsilon}{\lambda_{1}}\big(1-\e^{-\lambda_{1}(t-t_{1})}\big).
\end{align*}
Letting $t\to+\infty$ and then $\epsilon\to 0$, we get
$$\e^{-\lambda_{1}t}\int_{0}^{t-t_{1}}\langle f_{t-s},X_{t-s}\rangle \mathrm{d}s-\frac{1-\e^{-\lambda_{1}(t-t_{1})}}{\lambda_{1}}\langle f,\widetilde{\varphi}\rangle W^{\varphi}_{\infty}\to 0\mbox{ in }L^{2}(\pp_{\mu}),$$
which in turn yields that
$\e^{-\lambda_{1}t}\int_{0}^{t-t_{1}}\langle f_{t-s},X_{t-s}\rangle \mathrm{d}s\to \frac{1}{\lambda_{1}}\langle f,\widetilde{\varphi}\rangle W^{\varphi}_{\infty}$ in $L^{2}(\pp_{\mu})$. Combining this with \eqref{lem2.1.1}, we get (ii).
\qed

\subsection{Asymptotic behaviour of the first two moments}\label{sec:asymptotic moments}

We denote by $\mathcal{D}^{0}:=\{f\in\mathcal{B}_{b}(E):\ \langle f,\widetilde{\varphi}\rangle=0\}$ the class of centered functions, and write $\mathcal{D}^{0}_{\mathrm{S}}:=\mathcal{D}_{\mathrm{S}}\cap \mathcal{D}^{0}$ and
$\mathcal{D}^{0}(\lambda,r):=\mathcal{D}(\lambda,r)\cap\mathcal{D}^{0}$.

\begin{lemma}\label{lem0}
\begin{description}
\item{(i)} For every $f\in\mathcal{D}^{0}_{\mathrm{S}}$,
$$ \e^{-\frac{\lambda_{1}}{2}t}T_{t}f(x)\rightrightarrows 0\mbox{ as }t\to+\infty,$$
and
$$ \e^{-\lambda_{1}t}\Var_{\delta_{x}}[X_{t}(f)]\stackrel{b.p.}{\longrightarrow}\varrho^{2}_{f}\varphi(x) \mbox{ as }t\to+\infty,$$
where $\varrho^{2}_{f}:=\int_{0}^{+\infty}\e^{-\lambda_{1}s}\langle\vartheta[T_{s}f],\widetilde{\varphi}\rangle\mathrm{d}s\in [0,+\infty)$.
\item{(ii)} For every $f\in \mathcal{D}^{0}(\lambda_{1}/2,r)$ with $r\in [0,+\infty)$,
$$ \frac{\e^{-\lambda_{1}t}}{t^{1+2r}}\Var_{\delta_{x}}\big[X_{t}(f)\big]\stackrel{b.p.}{\longrightarrow}\frac{\langle \vartheta[f^{*}],\widetilde{\varphi}\rangle}{1+2r}\varphi(x)\mbox{ as }t\to+\infty.$$
\item{(iii)} For every $f\in\mathcal{D}^{0}(\lambda,r)$ with $(\lambda,r)\in (\lambda_{1}/2,\lambda_{1})\times [0,+\infty)\cup\{(\lambda_{1},0)\}$,
$$\frac{\e^{-2\lambda t}}{t^{2r}}\Var_{\delta_{x}}\big[X_{t}(f)\big]\stackrel{b.p.}{\longrightarrow}\varpi^{2}_{f^{*}}(x)\mbox{ as }t\to+\infty,$$
where for $x\in E$, $\varpi^{2}_{f^{*}}(x)=\int_{0}^{+\infty}\e^{-2\lambda s}T_{s}\big(\vartheta[f^{*}]\big)(x)\mathrm{d}s\in [0,+\infty)$.
\end{description}
\end{lemma}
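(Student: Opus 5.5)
All three parts start from the variance formula \eqref{eq:variance},
$$\Var_{\delta_x}[X_t(f)]=\int_0^t T_{t-s}(\vartheta[T_sf])(x)\,\ud s,$$
combined with the Perron--Frobenius bounds \eqref{ieq:H3} and \eqref{eq1.9}. The bilinear form $\vartheta$ satisfies $\|\vartheta[g,h]\|_\infty\lesssim\|g\|_\infty\|h\|_\infty$ by (M2). Also $\langle\vartheta[g],\widetilde\varphi\rangle$ is a well-defined number, and by \eqref{ieq:H3} $\e^{-\lambda_1 u}T_u h\to\langle h,\widetilde\varphi\rangle\varphi$ uniformly, with $\|\e^{-\lambda_1u}T_uh\|_\infty\le c_1\|h\|_\infty$ for all $u$.

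\emph{Part (i).} For the uniform convergence, fix $\delta>0$ with $\lambda_1/2+\delta<\rho(f)$. By the definition of $\rho(f)$, for $t$ large we have $\|\e^{-\lambda_1t}T_tf\|_\infty\le \e^{-(\lambda_1/2+\delta)t}$ (note $\tilde f=f$ since $f\in\mathcal D^0$). Hence $\|T_tf\|_\infty\le \e^{(\lambda_1/2-\delta)t}$, so $\e^{-\lambda_1t/2}T_tf\rightrightarrows0$.

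For the variance, substitute $s\mapsto$ the time variable so that
$$\e^{-\lambda_1t}\Var_{\delta_x}[X_t(f)]=\int_0^t \e^{-\lambda_1 s}\,\e^{-\lambda_1(t-s)}T_{t-s}\big(\vartheta[T_sf]\big)(x)\,\ud s .$$
For fixed $s$ the integrand converges to $\e^{-\lambda_1 s}\langle\vartheta[T_sf],\widetilde\varphi\rangle\varphi(x)$ as $t\to\infty$. It is dominated by
$$c_1\e^{-\lambda_1 s}\|\vartheta[T_sf]\|_\infty\lesssim \e^{-\lambda_1 s}\|T_sf\|^2_\infty\lesssim \e^{-2\delta s},$$
which is integrable. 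Dominated convergence then gives bounded pointwise convergence to $\varrho_f^2\varphi(x)$, and the same bound gives $\varrho^2_f<\infty$.

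\emph{Part (ii).} Write $T_sf=s^r\e^{\lambda_1 s/2}(f^*+\epsilon_s)$ with $\|\epsilon_s\|_\infty\to0$. Then
$$\frac{\e^{-\lambda_1t}}{t^{1+2r}}\Var_{\delta_x}[X_t(f)]=\int_0^1 u^{2r}\,\e^{-\lambda_1 t(1-u)}T_{t(1-u)}\big(\vartheta[f^*+\epsilon_{tu}]\big)(x)\,\ud u .$$
For each $u\in(0,1)$, we have $t(1-u)\to\infty$ and $\vartheta[f^*+\epsilon_{tu}]\to\vartheta[f^*]$ uniformly. Applying \eqref{ieq:H3} with the uniform bound $\triangle_\cdot$, the integrand tends to $u^{2r}\langle\vartheta[f^*],\widetilde\varphi\rangle\varphi(x)$. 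Domination by a constant times $u^{2r}$ follows from \eqref{eq1.9} together with $\sup_s\|\epsilon_s\|_\infty<\infty$, which holds because $T_sf$ is locally bounded by \eqref{eq0} and uniformly convergent after normalisation. Integrating gives the factor $1/(1+2r)$.

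\emph{Part (iii).} Write $T_sf=s^r\e^{\lambda s}(f^*+\epsilon_s)$ and change variable to $v=t-s$:
$$\frac{\e^{-2\lambda t}}{t^{2r}}\Var_{\delta_x}[X_t(f)]=\int_0^t \e^{-2\lambda v}\Big(\frac{t-v}{t}\Big)^{2r}T_v\big(\vartheta[f^*+\epsilon_{t-v}]\big)(x)\,\ud v .$$
For fixed $v$ the integrand tends to $\e^{-2\lambda v}T_v(\vartheta[f^*])(x)$, since $T_v$ is a bounded operator for fixed $v$ by \eqref{eq0}. It is dominated by $c\,\e^{-(2\lambda-\lambda_1)v}$ using \eqref{eq1.9}, and this is integrable because $2\lambda>\lambda_1$. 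Dominated convergence yields $\varpi^2_{f^*}(x)$, and the same bound shows $\varpi^2_{f^*}$ is bounded and finite. The case $(\lambda_1,0)$ is covered as well, since there $f^*=0$ and the domination constant is still fine.

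The main obstacle is part (ii). There the weight has no exponential decay, so domination must come from the polynomial rescaling. One must also handle the points $u$ near $1$, where $t(1-u)$ is not large; these form a set of vanishing measure and are controlled by the uniform bound, so the step works but must be handled carefully. One must also check that $\epsilon_s$ is bounded for small $s$, where $s^{-r}$ blows up. I would avoid this by splitting off $s\in[0,1]$: on that range $T_sf$ is bounded, so its contribution is $O(\e^{\lambda_1 t}\cdot 1)$ before normalisation and hence negligible after dividing by $t^{1+2r}\e^{\lambda_1 t}$.
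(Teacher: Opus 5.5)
Your proposal is correct and follows essentially the same dominated-convergence scheme the paper relies on. The paper defers Lemma \ref{lem0} to \cite{Yang25}, but its own proof of Lemma \ref{lem1} runs the identical arguments: the $\e^{-\lambda_{1}s}$-weighted integral in (i), the substitution $s=tu$ with weight $u^{2r}$ in (ii), and the substitution $v=t-s$ with weight $\e^{-(2\lambda-\lambda_{1})v}$ in (iii).

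One correction: your claim $\sup_{s}\|\epsilon_{s}\|_{\infty}<\infty$ is false for $r>0$, because of the factor $s^{-r}$ at $s=0$. The splitting off of $s\in[0,1]$ that you describe at the end is therefore genuinely needed in (ii); this is exactly what the paper's bound $l_{t}(u,x)\le c_{6}+c_{8}u^{-2r}$ does. The same issue arises in (iii) near $v=t$, where either that splitting or the factor $((t-v)/t)^{2r}$ absorbs the blow-up.
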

This lemma can be proved similarly to \cite[Lemmas 4.1 and 4.2]{Yang25}, with minor modifications. Hence we omit the details here.

\begin{lemma}\label{lem1}
\begin{description}
\item{(i)} For every $f,g\in\mathcal{D}^{0}_{\mathrm{S}}$,
$$ \e^{-\frac{\lambda_{1}}{2}t}M_{t}[g](x)\rightrightarrows 0\mbox{ as }t\to+\infty,$$
$$ \e^{-\lambda_{1}t}\mathrm{Var}_{\delta_{x}}[\langle g,Y_{t}\rangle]\stackrel{b.p.}{\longrightarrow}{ \chi^{2}_{g}}\varphi(x)\mbox{ as }t\to+\infty,$$
where ${ \chi^{2}_{g}}:=\int_{0}^{+\infty}\e^{-\lambda_{1} s}\langle\vartheta[M_{s}[g]],\widetilde{\varphi}\rangle \mathrm{d}s\in [0,+\infty)$, and
$$ \e^{-\lambda_{1}t}\mathrm{Cov}_{\delta_{x}}\big(X_{t}(f),Y_{t}(g)\big)\stackrel{b.p.}{\longrightarrow}\bar{\varsigma}_{f,g}\varphi(x)\mbox{ as }t\to+\infty,$$
where $\bar{\varsigma}_{f,g}:=\int_{0}^{+\infty}\e^{-\lambda_{1}s}\langle\vartheta[T_{s}f,M_{s}[g]] ,\widetilde{\varphi}\rangle\mathrm{d}s\in \R$.
\item{(ii)} For every $f,g\in\mathcal{D}^{0}(\lambda_{1}/2,r)$ with $r\in [0,+\infty)$,
$$\frac{\e^{-\frac{\lambda_{1}}{2}t}}{t^{r}}M_{t}[g](x)\rightrightarrows \frac{2}{\lambda_{1}}g^{*}(x)\mbox{ as }t\to+\infty,$$
$$ \frac{\e^{-\lambda_{1}t}}{t^{1+2r}}\mathrm{Var}_{\delta_{x}}[\langle g,Y_{t}\rangle]\stackrel{b.p.}{\longrightarrow}\frac{4}{\lambda^{2}_{1}(1+2r)}\langle\vartheta[g^{*}],\widetilde{\varphi}\rangle \varphi(x)\mbox{ as }t\to+\infty,$$
and
$$ \frac{\e^{-\lambda_{1}t}}{t^{1+2r}}\mathrm{Cov}_{\delta_{x}}\big(X_{t}(f),Y_{t}(g)\big)\stackrel{b.p.}{\longrightarrow}\frac{2}{\lambda_{1}(1+2r)}\langle\vartheta[f^{*},g^{*}],
\widetilde{\varphi}\rangle \varphi(x)\mbox{ as }t\to+\infty.$$
\item{(iii)} For every $g\in \mathcal{D}^{0}(\lambda,r)$ with $(\lambda,r)\in (\lambda_{1}/2,\lambda_{1})\times [0,+\infty)\cup\{(\lambda_{1},0)\}$,
$$\frac{\e^{-\lambda t}}{t^{r}}M_{t}[g](x)\rightrightarrows \frac{1}{\lambda}g^{*}(x)\mbox{ as }t\to+\infty,$$
and
$$\frac{\e^{-2\lambda t}}{t^{2 r}}\Var_{\delta_{x}}\big[Y_{t}(g)\big]\stackrel{b.p.}{\longrightarrow}\frac{1}{\lambda^{2}}\varpi^{2}_{g^{*}}(x)\mbox{ as }t\to+\infty.$$
\end{description}
\end{lemma}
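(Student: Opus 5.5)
The plan is to handle the three regimes in parallel, each in two steps. First we get the uniform asymptotics of $M_t[g]=\int_0^t T_s g\,\mathrm{d}s$. Then we insert them into the covariance formula of Lemma \ref{lem:moment formulae}, which for $g\in\mathcal{D}^0$ gives
$$\Var_{\delta_x}[\langle g,Y_t\rangle]=\int_0^t T_{t-u}\big(\vartheta[M_u[g]]\big)(x)\,\mathrm{d}u,$$
and the analogous expression with $\vartheta[T_uf,M_u[g]]$ for $\Cov_{\delta_x}(X_t(f),Y_t(g))$. We then mimic the proof of Lemma \ref{lem0} (i.e. \cite[Lemmas 4.1, 4.2]{Yang25}), with $T_uf$ replaced by $M_u[g]$. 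The basic tool throughout is \eqref{ieq:H3} in the form $\e^{-\lambda_1(t-u)}T_{t-u}h\to\langle h,\widetilde{\varphi}\rangle\varphi$ uniformly, with the error controlled by $\triangle_{t-u}\|h\|_\infty$, together with the uniform bound \eqref{eq1.9}.

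\emph{Asymptotics of $M_t[g]$.} In case (i), $\rho(g)>\lambda_1/2$ gives $\|T_sg\|_\infty\lesssim \e^{(\lambda_1/2-\delta)s}$ for some $\delta>0$. Integrating yields $\|M_t[g]\|_\infty\lesssim \e^{(\lambda_1/2-\delta)t}$, which is the first claim. In cases (ii) and (iii) we write $T_sg=s^r\e^{\lambda s}(g^*+\epsilon_s)$ with $\|\epsilon_s\|_\infty\to0$. The elementary Abelian limit
$$t^{-r}\e^{-\lambda t}\int_0^t s^r\e^{\lambda s}\,\mathrm{d}s\to\lambda^{-1},$$
together with the fact that the error $\epsilon_s$ contributes negligibly (split the integral at a large fixed $s_0$), gives $t^{-r}\e^{-\lambda t}M_t[g]\rightrightarrows \lambda^{-1}g^*$. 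Taking $\lambda=\lambda_1/2$ produces the constant $2/\lambda_1$.

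\emph{Second moments.} Multiply by $\e^{-\lambda_1 t}$ (case (i)) and substitute $u\mapsto t-u$, i.e. write the integrand as $\e^{-\lambda_1 u}\cdot \e^{-\lambda_1(t-u)}T_{t-u}(\vartheta[M_u[g]])$. The $u$-asymptotics now come from the previous step.
\begin{itemize}
\item In case (i), $\e^{-\lambda_1u}\|\vartheta[M_u[g]]\|_\infty\lesssim \e^{-2\delta u}$ is integrable. Dominated convergence, with the inner factor converging to $\langle\vartheta[M_u[g]],\widetilde{\varphi}\rangle\varphi(x)$ and bounded by \eqref{eq1.9}, gives $\chi_g^2\varphi(x)$. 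The covariance term is identical, using $|\vartheta[h,k]|\lesssim\|h\|_\infty\|k\|_\infty$ and the bound on $T_uf$ from Lemma \ref{lem0}(i).
\item In case (ii), $\e^{-\lambda_1 u}\vartheta[M_u[g]]\approx u^{2r}(4/\lambda_1^2)\vartheta[g^*]$ grows only polynomially. We split $[0,t]$ into $[0,t-s_0]$, where $t-u\ge s_0$ so $\triangle_{t-u}$ is small and $\e^{-\lambda_1(t-u)}T_{t-u}$ is replaced by $\langle\cdot,\widetilde{\varphi}\rangle\varphi$, and $[t-s_0,t]$, which contributes $O(t^{2r})=o(t^{1+2r})$. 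This leaves $\frac{4}{\lambda_1^2}\langle\vartheta[g^*],\widetilde{\varphi}\rangle\varphi(x)\int_0^tu^{2r}\,\mathrm{d}u\sim \frac{4t^{1+2r}}{\lambda_1^2(1+2r)}\langle\vartheta[g^*],\widetilde{\varphi}\rangle\varphi(x)$. The covariance term follows the same way, using $T_uf\approx u^r\e^{\lambda_1u/2}f^*$.
\item In case (iii) we use the original time variable and normalize by $t^{-2r}\e^{-2\lambda t}$. The integrand becomes $\e^{-2\lambda s}T_s\big(t^{-2r}\e^{-2\lambda(t-s)}\vartheta[M_{t-s}[g]]\big)$, with $s=t-u$. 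For fixed $s$ the inner function converges uniformly to $\lambda^{-2}\vartheta[g^*]$. Domination comes from $\|T_s h\|_\infty\le c_1\e^{\lambda_1 s}\|h\|_\infty$ and $((t-s)/t)^{2r}\le1$, giving the bound $\e^{(\lambda_1-2\lambda)s}$, integrable because $\lambda>\lambda_1/2$. The limit is $\lambda^{-2}\varpi^2_{g^*}(x)$.
\end{itemize}

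\emph{Main obstacle.} The delicate point is the uniform-in-$t$ bookkeeping in case (ii). There $\triangle_{t-u}$ is not small near $u=t$, and we must also verify that replacing $M_u[g]$ by its asymptotic form inside the quadratic $\vartheta$ costs only $o(u^{2r}\e^{\lambda_1u})$. This uses bilinearity, $|\vartheta[h]-\vartheta[k]|\lesssim(\|h\|_\infty+\|k\|_\infty)\|h-k\|_\infty$ (valid under (M2)), plus a Cesàro-type argument to absorb the error for small $u$.

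The b.p. rather than uniform convergence comes only from dominated convergence, since the limits involve $\varphi(x)$ pointwise. The bounds \eqref{eq1.9} provide the uniform bound needed for b.p. convergence.
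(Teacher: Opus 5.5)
Your proposal is correct and follows the paper's proof essentially step by step: uniform asymptotics of $M_t[g]$ from the Abelian limit $t^{-r}\e^{-\lambda t}\int_0^t s^r\e^{\lambda s}\,\mathrm{d}s\to\lambda^{-1}$, then dominated convergence in the covariance formulae of Lemma~\ref{lem:moment formulae}, with the same integrable exponential factors in (i) and (iii). One small correction in (iii): for $s$ close to $t$, your domination $\lesssim\e^{(\lambda_1-2\lambda)s}$ has to come from $t^{-r}\le 1$ together with the boundedness of $M_v[g]$ for small $v$, not from $((t-s)/t)^{2r}\le 1$, since $(t-s)^{-r}M_{t-s}[g]$ need not stay bounded as $s\uparrow t$ when $r>1$. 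The only real deviation is in (ii): the paper substitutes $s=tu$ and dominates $u^{2r}l_t(u,x)$ by $c_6u^{2r}+c_8$ on $(0,1)$, while you cut at $t-s_0$, use the uniform error bound $2\e^{-\lambda_1u}\|\vartheta[M_u[g]]\|_\infty\triangle_{t-u}$ from \eqref{ieq:H3} on $[0,t-s_0]$, and finish with a Ces\`aro limit; both routes are valid.
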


\proof
(i) Let $\delta\in (0,({\rho}(g)-\lambda_{1}/2)\wedge ({\rho}(f)-\lambda_{1}/2))$. By definition, there exists some $t_{0}>0$ such that for all $t\ge t_{0}$,
\begin{equation}\label{lem1.7}
\frac{1}{t}\log\sup_{x\in E}\big| \e^{-\lambda_{1}t}T_{t}g(x)\big|\le -{\rho}(g)+\delta\mbox{ and }\frac{1}{t}\log\sup_{x\in E}\big| \e^{-\lambda_{1}t}T_{t}f(x)\big|\le -{\rho}(f)+\delta.
\end{equation}
Recall that $M_{t}[g](x)=\int_{0}^{t}T_{s}g(x)\mathrm{d}s$. For $0\le t\le t_{0}$, we have by \eqref{ieq:H3}
\begin{align}\label{lem1.1}
\sup_{x\in E}\big| M_{t}[g](x)\big|\le &\int_{0}^{t_{0}}\e^{\lambda_{1}s}\sup_{x\in E}\big| \e^{-\lambda_{1}s}T_{s}g(x)\big|\mathrm{d}s\nonumber\\
\le &\frac{2}{\lambda_{1}}\big(\e^{\lambda_{1}t_{0}}-1\big)\|g\|_{\infty}\sup_{s>0}\triangle_{s}=:c_{1}.
\end{align}
Thus by \eqref{lem1.7}, for $t\ge t_{0}$,
\begin{align}\label{lem1.2}
\sup_{x\in E}\big| M_{t}[g](x)\big|=&\sup_{x\in E}\big|M_{t_{0}}[g](x)+\int_{t_{0}}^{t}T_{s}g(x)\mathrm{d}s\big|\le c_{1}+\int_{t_{0}}^{t}\sup_{x\in E}\big| T_{s}g(x)\big|\mathrm{d}s\nonumber\\
\le &c_{1}+\int_{t_{0}}^{t}\e^{(\lambda_{1}-{\rho}(g)+\delta)s}\mathrm{d}s
\le c_{2}+c_{3}\e^{(\lambda_{1}-{\rho}(g)+\delta)t}.
\end{align}
Consequently, we get for all $t\ge 0$,
\begin{equation}\label{lem1.9}
\sup_{x\in E}\big| \e^{-\frac{\lambda_{1}}{2}t}M_{t}[g](x)\big|\le (c_{1}+c_{2})\e^{-\frac{\lambda_{1}}{2}t}+c_{3}\e^{-({\rho}(g)-\delta-\lambda_{1}/2)t}\to 0\mbox{ as }t\to+\infty,
\end{equation}
which in turn yields the first conclusion.

Let $\epsilon\in (\lambda_{1}/2,({\rho}(g)-\delta)\wedge ({\rho}(f)-\delta)\wedge \lambda_{1})$.
By \eqref{lem1.1} and \eqref{lem1.2}, $M_{1}:=\sup_{s\ge 0,\, x\in E}\big|\e^{-(\lambda_{1}-\epsilon)s}M_{s}[g](x)\big|<+\infty$.
Note that
\begin{align}\label{lem1.3}
\e^{-\lambda_{1}t}\Var_{\delta_{x}}\big[Y_{t}(g)\big]
=&\int_{0}^{+\infty}\e^{-(2\epsilon-\lambda_{1})s} 1_{\{s\le t\}}
\e^{-\lambda_{1}(t-s)}T_{t-s}\big(\vartheta\big[\e^{-(\lambda_{1}-\epsilon)s}M_{s}[g]\big]\big)(x)\mathrm{d}s\nonumber\\
=:&\int_{0}^{+\infty}\e^{-(2\epsilon-\lambda_{1})s} h_{t}(s,x)\mathrm{d}s.
\end{align}
It follows by \eqref{eq1.9} and {(H2)} that for every $s\ge 0$ and $x\in E$,
$$\big| h_{t}(s,x)\big|\le  1_{\{s\le t\}}\e^{-\lambda_{1}(t-s)}T_{t-s}\big(M^{2}_{1}\vartheta[1]\big)(x)
\le c_{4}M^{2}_{1}\|\vartheta[1]\|_{\infty},$$
and
$$\lim_{t\to+\infty} h_{t}(s,x)=\e^{-2(\lambda_{1}-\epsilon)s}\langle\vartheta[M_{s}[g]] ,\widetilde{\varphi}\rangle \varphi(x).$$
Letting $t\to+\infty$ in \eqref{lem1.3}, we get by the dominated convergence theorem that $ \e^{-\lambda_{1}t}\Var_{\delta_{x}}\big[Y_{t}(g)\big]\stackrel{b.p.}{\longrightarrow}\chi^{2}_{g}\varphi(x)$ as $t\to+\infty$.

We have
\begin{align*}
& \e^{-\lambda_{1}t}\mathrm{Cov}_{\delta_{x}}\big(X_{t}(f),Y_{t}(g)\big)\\
=& \e^{-\lambda_{1}t}\int_{0}^{t}T_{t-s}\big(\vartheta[T_{s}f,M_{s}[g]] \big)(x)\mathrm{d}s\\
=&\int_{0}^{+\infty}\e^{-(2\epsilon-\lambda_{1})s} 1_{\{s\le t\}}\e^{-\lambda_{1}(t-s)}T_{t-s}\big(\vartheta\big[\e^{-(\lambda_{1}-\epsilon)s}T_{s}f,
\e^{-(\lambda_{1}-\epsilon)s}M_{s}[g]\big]\big)(x)\mathrm{d}s\\
=:&\int_{0}^{+\infty}\e^{-(2\epsilon-\lambda_{1})s} g_{t}(s,x)\mathrm{d}s.
\end{align*}
It follows by \eqref{lem1.7} and \eqref{lem1.2} that $\sup_{s\ge 0,\, x\in E}\big(\big|\e^{-(\lambda_{1}-\epsilon)s}T_{s}f(x)\big|\vee\big|\e^{-(\lambda_{1}-\epsilon)s}M_{s}[g](x)\big|\big)<+\infty$. We can show similarly as above that
$$\sup_{x\in E,\,s\ge 0}\big| g_{t}(s,x)\big|<+\infty,$$ and
$$\lim_{t\to+\infty} g_{t}(s,x)=\e^{-2(\lambda_{1}-\epsilon)s}\langle\vartheta\big[T_{s}f,M_{s}[g]\big] ,\widetilde{\varphi}\rangle \varphi(x).$$
Thus, the third conclusion follows by the dominated convergence theorem.

\medskip

\noindent (ii) For $\theta>0$ and $r\ge 0$, let $F_{\theta,r}(t):=\int_{0}^{t}\e^{\theta s}s^{r}\mathrm{d}s$. Clearly, we have $t^{-r}\e^{-\theta t}F_{\theta,r}(t)\sim 1/\theta$ as $t\to+\infty$.
Let $\epsilon>0$.
Since $t^{-r}\e^{-\frac{\lambda_{1}}{2}t}T_{t}g\rightrightarrows g^{*}$ as $t\to+\infty$, there is some $t_{0}>0$ such that
$$\sup_{x\in E}\big|t^{-r}\e^{-\frac{\lambda_{1}}{2}t}T_{t}g(x)-g^{*}(x)\big|\le \epsilon\quad\forall t\ge t_{0}.$$
For $t\ge t_{0}$, we have
\begin{align*}
\frac{\e^{-\frac{\lambda_{1}}{2}t}}{t^{r}}\big(M_{t}[g](x)-F_{\frac{\lambda_{1}}{2},r}(t)g^{*}(x)\big)=&\frac{\e^{-\frac{\lambda_{1}}{2}t}}{t^{r}}
\Big[\int_{0}^{t_{0}}\big(T_{s}g(x)-s^{r}\e^{\frac{\lambda_{1}}{2}s}g^{*}(x)\big)\mathrm{d}s\\
&+\int_{t_{0}}^{t}s^{r}\e^{\frac{\lambda_{1}}{2}r}
\big(s^{-r}\e^{-\frac{\lambda_{1}}{2}s}T_{s}g(x)-g^{*}(x)\big)\mathrm{d}s\Big].
\end{align*}
Hence
\begin{align*}
\sup_{x\in E}\Big|\frac{\e^{-\frac{\lambda_{1}}{2}t}}{t^{r}}\big(M_{t}[g](x)-F_{\frac{\lambda_{1}}{2},r}(t)g^{*}(x)\big)\Big|
\le&\frac{\e^{-\frac{\lambda_{1}}{2}t}}{t^{r}}
\Big(\int_{0}^{t_{0}}\sup_{x\in E}\big|T_{s}g(x)-s^{r}\e^{\frac{\lambda_{1}}{2}s}g^{*}(x)\big|\mathrm{d}s+\int_{0}^{t}s^{r}\e^{\frac{\lambda_{1}}{2}r}
\epsilon\mathrm{d}s\Big)\\
=:&\frac{\e^{-\frac{\lambda_{1}}{2}t}}{t^{r}}\big(c_{4}+\epsilon F_{\frac{\lambda_{1}}{2},r}(t)\big).
\end{align*}
Recall that $t^{-r}\e^{-\frac{\lambda_{1}}{2}t}F_{\frac{\lambda_{1}}{2},r}(t)\sim 2/\lambda_{1}$ as $t\to+\infty$. Letting $t\to+\infty$ and then $\epsilon\to 0$ in the above inequality, we obtain that
\begin{equation}\label{lem3.2.n1}
\frac{\e^{-\frac{\lambda_{1}}{2}t}}{t^{r}}\big(M_{t}[g](x)-F_{\frac{\lambda_{1}}{2},r}(t)g^{*}(x)\big)\rightrightarrows 0\mbox{ as }t\to+\infty,
\end{equation}
which in turn implies that $t^{-r}\e^{-\frac{\lambda_{1}}{2}t}M_{t}[g]\rightrightarrows 2g^{*}/\lambda_{1}$ as $t\to+\infty$.

We have
\begin{align*}
\frac{\e^{-\lambda_{1}t}}{t^{1+2r}}\Var_{\delta_{x}}\big[Y_{t}(g)\big]=& \frac{\e^{-\lambda_{1}t}}{t^{1+2r}}\int_{0}^{t}T_{t-s}\big(\vartheta[M_{s}[g]]\big)(x)\mathrm{d}s\\
=&\frac{1}{t}\int_{0}^{t}\big(\frac{s}{t}\big)^{2r} \e^{-\lambda_{1}(t-s)}T_{t-s}\big(\vartheta\big[s^{-r}\e^{-\frac{\lambda_{1}}{2}s}M_{s}[g]\big]\big)(x)\mathrm{d}s\\
=&\int_{0}^{1}u^{2r} \e^{-\lambda_{1}t(1-u)}T_{t(1-u)}\big(\vartheta\big[(tu)^{-r}\e^{-\frac{\lambda_{1}}{2}tu}M_{tu}[g]\big]\big)(x)\mathrm{d}u\\
=:&\int_{0}^{1}u^{2 r}l_{t}(u,x)\mathrm{d}u.
\end{align*}
Using $t^{-r}\e^{-\frac{\lambda_{1}}{2}t}M_{t}[g]\rightrightarrows 2g^{*}/\lambda_{1}$, we can apply similar argument as in the proof of \cite[Lemma 3.4(i)]{Yang25} to show that for every $u\in (0,1)$,
$$\vartheta\big[(tu)^{-r}\e^{-\frac{\lambda_{1}}{2}tu}M_{tu}[g]\big]\stackrel{b.p.}{\longrightarrow}\frac{4}{\lambda^{2}_{1}}\vartheta[g^{*}]\mbox{ as }t\to+\infty.$$
Then by {(H2)} and the dominated convergence theorem, we have for every $u\in (0,1)$ and $x\in E$,
$$\lim_{t\to+\infty} l_{t}(u,x)=\frac{4}{\lambda^{2}_{1}}\langle
\vartheta[g^{*}],\widetilde{\varphi}\rangle \varphi(x).$$
If there exists some nonnegative integrable function $h_{1}(u)$ on $(0,1)$, such that, for $t$ sufficiently large,
\begin{equation}\label{lem1.4}
u^{2r} l_{t}(u,x)\le h_{1}(u)\quad\forall u\in (0,1),\ x\in E,
\end{equation}
then by the dominated convergence theorem, we have
\begin{equation}\label{lem1.6}
\frac{\e^{-\lambda_{1}t}}{t^{1+2r}}\Var_{\delta_{x}}[Y_{t}(g)]\stackrel{b.p.}{\longrightarrow}\int_{0}^{1}u^{2r}\frac{4}{\lambda^{2}_{1}}\langle\vartheta[g^{*}],\widetilde{\varphi}\rangle \varphi(x)\mathrm{d}u=\frac{4}{\lambda^{2}_{1}(1+2r)}\langle\vartheta[g^{*}],
\widetilde{\varphi}\rangle \varphi(x)\mbox{ as }t\to+\infty.
\end{equation}
In fact, by the uniform convergence of $t^{-r}\e^{-\frac{\lambda_{1}}{2}t}M_{t}[g]$ as $t\to+\infty$, there exists some $t_{1}>0$ such that $M_{1}:=\sup_{t\ge t_{1},x\in E}\big|t^{-r}\e^{-\frac{\lambda_{1}}{2}t}M_{t}[g](x)\big|<+\infty$. By this and \eqref{eq1.9} we have for $t\ge t_{1}$ and $u\ge t_{1}/t$,
$$l_{t}(u,x)\le M^{2}_{1}\e^{-\lambda_{1}t(1-u)}T_{t(1-u)}\big(\vartheta[1]\big)(x)\le c_{5}M^{2}_{1}\|\vartheta[1]\|_{\infty}=:c_{6}.$$
On the other hand, by \eqref{eq1.9}, for $t\ge t_{1}$ and $u\in (0,t_{1}/t]$,
$$\|M_{tu}[g]\|_{\infty}=\|\int_{0}^{tu}T_{s}g\mathrm{d}s\|_{\infty}\le c_{5}\|g\|_{\infty}\int_{0}^{t_{1}}\e^{\lambda_{1}s}\mathrm{d}s=:c_{7}.$$
Thus in this case,
\begin{align*}
l_{t}(u,x)=&(tu)^{-2r}\e^{-\lambda_{1}tu}\cdot\e^{-\lambda_{1}t(1-u)}T_{t(1-u)}\big(\vartheta\big[M_{tu}[g]\big]\big)(x)\\
\le&u^{-2r}t_{1}^{-2r}c^{2}_{7}\e^{-\lambda_{1}t(1-u)}T_{t(1-u)}\big(\vartheta[1]\big)(x)\le c_{8}u^{-2r}.
\end{align*}
Therefore, we have proved that $l_{t}(u,x)\le c_{6}+c_{8}u^{-2r}$ for all $t\ge 0$, $u>0$ and $x\in E$. Hence, \eqref{lem1.4} holds with $h_{1}(u)=c_{6}u^{2r}+c_{8}$.

Note that
\begin{align*}
\frac{\e^{-\lambda_{1}t}}{t^{1+2r}} \mathrm{Cov}_{\delta_{x}}\big(X_{t}(f),Y_{t}(g)\big)
=&\frac{\e^{-\lambda_{1}t}}{t^{1+2r}} \int_{0}^{t}T_{t-s}\big(\vartheta[T_{s}f,M_{s}[g]] \big)(x)\mathrm{d}s\\
=&\int_{0}^{1}u^{2r} \e^{-\lambda_{1}t(1-u)}T_{t(1-u)}\big(\vartheta\big[(tu)^{-r}\e^{-\frac{\lambda_{1}}{2}tu}T_{tu}f,(tu)^{-r}\e^{-\frac{\lambda_{1}}{2}tu}M_{tu}[g]\big]\big)(x)\mathrm{d}u\\
=:&\int_{0}^{1}u^{2r}k_{t}(u,x)\mathrm{d}u.
\end{align*}
Based on the facts that $t^{-r}\e^{-\frac{\lambda_{1}}{2}t}T_{t}f\rightrightarrows f^{*}$ and $t^{-r}\e^{-\frac{\lambda_{1}}{2}t}M_{t}[g]\rightrightarrows 2g^{*}/\lambda_{1}$, we can show similarly as above that
$$\frac{\e^{-\lambda_{1}t}}{t^{1+2r}}\Cov_{\delta_{x}}(X_{t}(f),Y_{t}(g))\stackrel{b.p.}{\longrightarrow}\varphi(x)\int_{0}^{1}u^{2r}\langle\vartheta[f^{*},\frac{2}{\lambda_{1}}g^{*}],\widetilde{\varphi}\rangle\mathrm{d}u=\frac{2}{\lambda_{1}(1+2r)}\langle
\vartheta[f^{*},g^{*}],\widetilde{\varphi}\rangle \varphi(x)\mbox{ as }t\to+\infty,$$ which yields the third conclusion.

\medskip

\noindent (iii) Recall that $F_{\lambda,r}(t)=\int_{0}^{t}\e^{\lambda s}s^{r}\mathrm{d}s$.
Since $t^{-r}\e^{-\lambda t}T_{t}g\rightrightarrows g^{*}$ and $t^{-r}\e^{-\lambda t}F_{\lambda,r}(t)\sim 1/\lambda$ as $t\to+\infty$, we can prove similarly to \eqref{lem3.2.n1} that
$$\frac{\e^{-\lambda t}}{t^{r}}\big(M_{t}[g](x)-F_{\lambda,r}(t)g^{*}(x)\big)\rightrightarrows 0\mbox{ as }t\to+\infty.$$
and consequently, $t^{-r}\e^{-\lambda t}M_{t}[g]\rightrightarrows g^{*}/\lambda$ as $t\to+\infty$.

We have
\begin{align*}
\frac{\e^{-2\lambda t}}{t^{2r}}\Var_{\delta_{x}}\big[Y_{t}(g)\big]=&\frac{\e^{-2\lambda t}}{t^{2r}}\int_{0}^{t}T_{s}\big(\vartheta[M_{t-s}[g]]\big)(x)\mathrm{d}s\\
=&\int_{0}^{+\infty}\e^{-(2\lambda-\lambda_{1})s}\cdot 1_{\{s\le t\}}\big(1-\frac{s}{t}\big)^{2r}\e^{-\lambda_{1}s}T_{s}\big(\vartheta\big[(t-s)^{-r}\e^{-\lambda(t-s)}M_{t-s}[g]\big]\big)(x)\mathrm{d}s\\
=:&\int_{0}^{+\infty}\e^{-(2\lambda-\lambda_{1})s}f_{t}(s,x)\mathrm{d}s.
\end{align*}

{Using the fact that $t^{-r}\e^{-\lambda t}M_{t}[g]\rightrightarrows g^{*}/\lambda$ as $t\to+\infty$, one can easily show that for every $s\ge 0$,
$$\vartheta\big[(t-s)^{-r}\e^{-\lambda(t-s)}M_{t-s}[g]\big]\stackrel{b.p.}{\longrightarrow} \frac{1}{\lambda^{2}}\vartheta[g^{*}]\mbox{ as }t\to+\infty,$$}
and thus by the bounded convergence theorem, for every $s\ge 0$ and $x\in E$,
$$f_{t}(s,x)\stackrel{b.p.}{\longrightarrow}\frac{\e^{-\lambda_{1} s}}{\lambda^{2}}T_{s}\big(\vartheta[g^{*}]\big)(x)\mbox{ as }t\to+\infty.$$
Using the uniform convergence of $t^{-r}\e^{-\lambda t}M_{t}[g]$ and \eqref{eq1.9}, we can show that there is a constant $c_{9}>0$ such that for $t$ sufficiently large,
\begin{equation}\label{lem1.8}
|f_{t}(s,x)|\le c_{9}\quad\forall s>0,\ x\in E.
\end{equation}
It thus follows by the dominated convergence theorem that $t^{-2r}\e^{-2\lambda t}\Var_{\delta_{x}}[Y_{t}[g]]\stackrel{b.p.}{\longrightarrow}\varpi^{2}_{g^{*}}(x)/\lambda^{2}$ as $t\to+\infty$. \qed

\section{Proofs of the main results}\label{sec3}
Let us first recall some basic notions and facts about convergence in distribution for random variables and processes, following \cite[Chapter VI]{JS}.

{ Let $S$ be a Polish space with Borel $\sigma$-field $\mathcal{S}$. We denote by $\mathcal{P}(S)$ the space of all probability measures on $(S,\mathcal{S})$ endowed with the weak topology. Then $\mathcal{P}(S)$ itself is a Polish space for this topology. A subset $A$ of $\mathcal{P}(S)$ is called tight if for every $\epsilon>0$, there exists a compact set $K$ in $S$ such that $\sup_{\nu\in A}\nu(S\setminus K)\le \epsilon$.
Prokhorov's theorem tells us that, a subset $A$ of $\mathcal{P}(S)$ is relatively compact (i.e. it has a compact closure) for the weak topology if and only if it is tight.
Now we consider random variables.
If $\{\xi^{n}:n\ge 1\}$ is a sequence of $S$-valued random variables, we say $\{\xi^{n}:\ n\ge 1\}$ is tight in $S$ if the set of its laws is tight in $\mathcal{P}(S)$.
We say $\xi^{n}$ converges in distribution to $\xi$, and write ``$\xi^{n}\stackrel{d}{\to}\xi$ in $S$", if the law of $\xi^{n}$ converges weakly to the law of $\xi$.}

Let $\mathcal{D}[0,+\infty)$ denote the space of c\`{a}dl\`{a}g functions from $[0,+\infty)$ to $\R$ equipped with the Skorokhod topology for which it is a Polish space, and let $\mathcal{P}(\mathcal{D}[0,+\infty))$ be the space of all probability measures on $\mathcal{D}[0,+\infty)$ endowed with the weak topology. An $\R$-valued c\`{a}dl\`{a}g process $Y$ may be considered as a random variable taking values in $\mathcal{D}[0,+\infty)$, and consequently its law, denoted by $\mathcal{L}(Y)$, is an element of $\mathcal{P}(\mathcal{D}[0,+\infty))$.
For c\`{a}dl\`{a}g processes $Y^{(n)}$ and $Y$,
the most common method of proving $Y^{(n)}\stackrel{d}{\to}Y$ in $\mathcal{D}[0,+\infty)$ goes through the following procedure:
\begin{description}
\item[(i)] prove $\{Y^{(n)}\}$ is tight in $\mathcal{D}[0,+\infty)$,
\item[(ii)] prove that $\mathcal{L}(Y)$ is the only possible limit point for the sequence $\{\mathcal{L}(Y^{(n)})\}$.
\end{description}
Since the law of a c\`{a}dl\`{a}g process is uniquely determined by its finite-dimensional distributions on a dense set of $[0,+\infty)$, step (ii) can be replaced by the following:
\begin{description}
\item[(ii')] prove there is a dense set $D\subset [0,+\infty)$ such that $(Y^{(n)}_{t_{1}},\cdots,Y^{(n)}_{t_{k}})\stackrel{d}{\to}(Y_{t_{1}},\cdots,Y_{t_{k}})$ in $\R^{k}$ for all $t_{1},\cdots,t_{k}\in D$.
\end{description}

Recall that $\mathcal{C}[0,+\infty)$ denotes the space of continuous functions equipped with the local uniform topology and $\mathcal{P}(\mathcal{C}[0,+\infty))$ denotes the space of probability measures on $\mathcal{C}[0,+\infty)$ with the corresponding weak topology.
Since $\mathcal{C}[0,+\infty)$ is a closed subset of $\mathcal{D}[0,+\infty)$ under the Skorokhod topology and the relative topology coincides with the local uniform topology, $\mathcal{P}(\mathcal{C}[0,+\infty))$ can be canonically embedded as a closed subset of $\mathcal{P}(\mathcal{D}[0,+\infty))$, and the two weak topologies coincide on it.
 It follows that a sequence of probability measures $\{\nu_{n}:\ n\ge 1\}$ supported on $\mathcal{C}[0,+\infty)$ converges in $\mathcal{P}(\mathcal{C}[0,+\infty))$ if and only if it converges in $\mathcal{P}(\mathcal{D}[0,+\infty))$. In particular, if $Y^{(n)}$ are continuous processes and $Y^{(n)}\stackrel{d}{\to}Y$ in $\mathcal{D}[0,+\infty)$ for some c\`{a}dl\`{a}g process $Y$, then $\mathcal{L}(Y)$ is supported on $\mathcal{C}[0,+\infty)$, so that $Y$ is almost surely continuous and $Y^{(n)}\stackrel{d}{\to}Y$ in $\mathcal{C}[0,+\infty)$ as well.
 Therefore, to prove a sequence of continuous processes converges in distribution in $\mathcal{C}[0,+\infty)$, one may use the same procedure (i)-(ii) or (i)-(ii') as above, and the limit process is then automatically continuous.

\subsection{Convergence of finite-dimensional distributions}\label{sec3.1}

{
We say a sequence $\{\eta_{n}:\ n\ge 1\}$ of random variables on a probability space $(\Omega,\mathcal{G},\mathrm{P})$ converges stably in distribution to a random variable $\eta$ (defined, if necessary, on an extension of $(\Omega,\mathcal{G},\mathrm{P})$), if for every bounded $\mathcal{G}$-adapted random variable $\zeta$, $(\eta_{n},\zeta)\stackrel{d}{\to}(\eta,\zeta)$ as $n\to+\infty$; see, e.g., \cite[Chapter VIII.5]{JS} or \cite{Tou92} for this definition and its equivalent formulations.
\begin{lemma}\label{lem3.n1}
Let
$$\mathrm{M}_{t}(\varphi):= Y_{t}(\varphi)-\frac{1}{\lambda_{1}} X_{t}(\varphi)\quad\forall t\ge 0.$$
Then $\{\mathrm{M}_{t}(\varphi):\ t\ge 0\}$ is a c\`{a}dl\`{a}g square-integrable $\pp_{\delta_{x}}$-martingale. Moreover, as $t\to+\infty$, $\e^{-\frac{\lambda_{1}}{2}t}\mathrm{M}_{t}(\varphi)$ converges stably in distribution
to $\sqrt{\lambda^{-3}_{1}\langle\vartheta[\varphi],\widetilde{\varphi}\rangle W^{\varphi}_{\infty}}\cdot N$, where $N$ is a standard normal random variable independent of $W^{\varphi}_{\infty}$.
\end{lemma}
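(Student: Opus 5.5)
We first check the martingale property from the Markov property and \eqref{H2.1}. For $s\le t$ we have $\pp_{\delta_x}[X_t(\varphi)\mid\mathcal{F}_s]=\e^{\lambda_1(t-s)}X_s(\varphi)$ and $\pp_{\delta_x}[Y_t(\varphi)-Y_s(\varphi)\mid\mathcal{F}_s]=\langle M_{t-s}[\varphi],X_s\rangle=\lambda_1^{-1}(\e^{\lambda_1(t-s)}-1)X_s(\varphi)$. Subtracting gives $\pp[\mathrm{M}_t(\varphi)\mid\mathcal{F}_s]=\mathrm{M}_s(\varphi)$. Square integrability follows from (M2), \eqref{eq:second moment} and Lemma~\ref{lem:moment formulae}. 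The c\`adl\`ag property comes from the right-continuity of $X$: by (H1), $\varphi$ is finely continuous, so $t\mapsto X_t(\varphi)$ is c\`adl\`ag, while $Y_t(\varphi)$ is continuous in $t$.

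Next we identify the quadratic variation. We would use the stochastic integral representation of \cite[Proposition 4.3]{PY20}, available under (H1). It writes $X_t(\varphi)=X_0(\varphi)+\lambda_1 Y_t(\varphi)+N^c_t+N^d_t$, where the drift is $\int_0^t\langle -a\varphi+m[\varphi],X_s\rangle\mathrm{d}s=\lambda_1Y_t(\varphi)$ because $\varphi$ is an eigenfunction. The martingale parts consist of a continuous martingale with $\langle N^c\rangle_t=\int_0^t\langle 2b\varphi^2,X_s\rangle\mathrm{d}s$ and a purely discontinuous martingale driven by the compensated random measure of jumps $\nu$ with intensity $\mathrm{d}s\,X_s(\mathrm{d}y)H(y,\mathrm{d}\nu)$. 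Consequently $\mathrm{M}_t(\varphi)=-\lambda_1^{-1}(N^c_t+N^d_t)+\mathrm{const}$, and its predictable quadratic variation is $\langle\mathrm{M}(\varphi)\rangle_t=\lambda_1^{-2}\langle\vartheta[\varphi],Y_t\rangle$. As a cross-check, the expectation of this should equal $\Var_{\delta_x}[\mathrm{M}_t(\varphi)]$, which can be computed from Lemma~\ref{lem:moment formulae} with $M_{t}[\varphi]=\lambda_1^{-1}(\e^{\lambda_1 t}-1)\varphi$.

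For the stable limit we apply Touati's stable CLT for continuous-time martingales \cite{Tou92}, in the form recorded in the appendix, with normalisation $\e^{-\lambda_1t/2}$. Two conditions need checking.
\begin{description}
\item[(a)] \emph{Convergence of the bracket.} We need $\e^{-\lambda_1 t}\langle\mathrm{M}(\varphi)\rangle_t\to \lambda_1^{-3}\langle\vartheta[\varphi],\widetilde{\varphi}\rangle W^\varphi_\infty$ in probability. This follows from Lemma~\ref{lem2.2}(ii) with $f_t\equiv\vartheta[\varphi]$, which is bounded by (M2). Since the limit is $\mathcal{F}_\infty$-measurable, stability is automatic.
\item[(b)] \emph{Lindeberg condition on jumps.} We need, for every $\epsilon>0$,
$$\e^{-\lambda_1 t}\int_0^t\!\!\int_E\int_{\mfo}\nu(\varphi)^2 1_{\{\nu(\varphi)>\epsilon\lambda_1 \e^{\lambda_1t/2}\}}H(y,\mathrm{d}\nu)X_s(\mathrm{d}y)\mathrm{d}s\to0$$
in probability, where jumps of $\mathrm{M}(\varphi)$ have size $\lambda_1^{-1}\nu(\varphi)$. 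Set $g_K(y):=\int\nu(\varphi)^21_{\{\nu(\varphi)>K\}}H(y,\mathrm{d}\nu)$. By (M2) and boundedness of $\varphi$, $\|g_K\|_\infty\to0$ as $K\to\infty$. For $t\ge T$ the integrand is bounded by $\|g_{K_T}\|_\infty$, so the expression is at most $\|g_{K_T}\|_\infty \e^{-\lambda_1t}Y_t(1)$, and $\e^{-\lambda_1 t}Y_t(1)$ is $L^2$-bounded by Lemma~\ref{lem2.2}(ii). Letting $t\to\infty$ and then $T\to\infty$ gives the claim.
\end{description}
Touati's theorem then yields stable convergence to $\sqrt{\lambda_1^{-3}\langle\vartheta[\varphi],\widetilde\varphi\rangle W^\varphi_\infty}\,N$ with $N$ independent of $\mathcal{F}_\infty$, hence of $W^\varphi_\infty$.

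The main obstacle is the quadratic variation step: the exact form of the representation in \cite{PY20}, in particular how the local jump term $\nu(\{x\})f(x)$ enters, must match $\vartheta$. If that bookkeeping becomes awkward, the fallback is to verify directly via the Markov property that $\mathrm{M}_t(\varphi)^2-\lambda_1^{-2}\langle\vartheta[\varphi],Y_t\rangle$ is a martingale. This uses the variance formula of Lemma~\ref{lem:moment formulae} conditioned on $\mathcal{F}_s$. Since that process is continuous plus predictable, it identifies the predictable bracket.
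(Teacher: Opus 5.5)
Your route is the paper's: the martingale property from the Markov property, the bracket $\lambda_1^{-2}\langle\vartheta[\varphi],Y_t\rangle$, its convergence via Lemma~\ref{lem2.2}(ii), a Lindeberg condition on the jump compensator, and Touati's theorem in the form of Lemma~\ref{lemA.0}. However, step (b) fails as written. (M2), together with the standing bound on $(\nu(1)\wedge\nu(1)^2)H(x,\mathrm{d}\nu)$, gives only $\sup_y\int\nu(1)^2H(y,\mathrm{d}\nu)<+\infty$. That is uniform boundedness of these second moments, not their uniform integrability in $y$, so $\|g_K\|_\infty\to0$ does not follow. For example, on $E=\mathbb{N}$ let $H(y,\cdot)$ put mass $y^{-2}$ on the single measure $\nu=y\delta_y$. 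All kernel conditions and (M2) hold, yet $\int_{\{\nu(1)>K\}}\nu(1)^2H(y,\mathrm{d}\nu)=1_{\{y>K\}}$, and $\sup_y g_K(y)$ stays bounded away from $0$ whenever $\varphi$ is bounded below.

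What does hold is that $g_K$ is non-increasing in $K$ and $g_K\to0$ boundedly pointwise (dominated convergence at each fixed $y$). So keep the function instead of its sup norm. With $K_t:=\epsilon\lambda_1\e^{\lambda_1t/2}$, your quantity equals $\e^{-\lambda_1t}Y_t(g_{K_t})\le\e^{-\lambda_1t}Y_t(g_{K_T})$ for $t\ge T$. By Lemma~\ref{lem2.2}(ii) the right side converges in $L^2$ to $\lambda_1^{-1}\langle g_{K_T},\widetilde{\varphi}\rangle W^{\varphi}_{\infty}$. Then $\langle g_{K_T},\widetilde{\varphi}\rangle\to0$ as $T\to\infty$ by bounded convergence, because $\widetilde{\varphi}$ is finite. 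This pointwise-plus-LLN argument is the one the paper uses.

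Two other justifications need replacing.

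\emph{Fine continuity of $\varphi$.} (H1) is a hypothesis on $\psi$ and on $f\mapsto -af+m[f]$; it says nothing about $\varphi$. Moreover, $X$ is c\`adl\`ag only in the weak topology while $\varphi$ is merely measurable, so path regularity of $X$ alone gives nothing for $X_t(\varphi)$. The paper shows $\varphi$ is $(\lambda_1+\|a\|_\infty)$-excessive: apply Gronwall to \eqref{lemn2.1}, using $m[\varphi]\ge0$, and note $\e^{-\lambda_1t}P_t\varphi\to\varphi$ as $t\downarrow0$. Hence $\varphi$ is finely continuous, and \cite[Lemma 4.1]{PY20} then gives right-continuity of $X_t(\varphi)$.

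\emph{The drift identity.} $-a\varphi+m[\varphi]=\lambda_1\varphi$ is false in general. The relation $T_t\varphi=\e^{\lambda_1t}\varphi$ also involves the motion $\xi$; think of the principal Dirichlet eigenfunction of a branching diffusion in a bounded domain. Also, $\varphi$ need not lie in any strong domain. The paper instead starts from the mild representation \eqref{lemn2.2} and uses stochastic Fubini to obtain \eqref{lemn2.3}.

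Your final decomposition is nonetheless correct, and your fallback is valid and self-contained. Lemma~\ref{lem:moment formulae} with $M_u[\varphi]=\lambda_1^{-1}(\e^{\lambda_1u}-1)\varphi$ gives $\Var_{\delta_y}[Y_u(\varphi)-\lambda_1^{-1}X_u(\varphi)]=\lambda_1^{-2}M_u[\vartheta[\varphi]](y)$, since $(\e^{\lambda_1v}-1)^2-2(\e^{\lambda_1v}-1)\e^{\lambda_1v}+\e^{2\lambda_1v}=1$. With the Markov property this makes $\mathrm{M}_t(\varphi)^2-\lambda_1^{-2}Y_t(\vartheta[\varphi])$ a martingale, which identifies the bracket without the representation. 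You still need the jump structure from \cite{PY20} for the compensator in (b).
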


\begin{proof}
By the Markov property and the fact that $T_{s}\varphi=\e^{\lambda_{1}s}\varphi$ for all $s\ge 0$, we have $M_{s}[\varphi]=\lambda^{-1}_{1}(\e^{\lambda_{1}s}-1)\varphi$ and
$$\pp_{\delta_{x}}\big[\mathrm{M}_{t+s}(\varphi)|\mathcal{F}_{t}\big]=Y_{t}(\varphi)+X_{t}(M_{s}[\varphi])-\frac{1}{\lambda_{1}}X_{t}(T_{s}\varphi)= Y_{t}(\varphi)-\frac{1}{\lambda_{1}} X_{t}(\varphi).$$
Clearly, under (M2), $\pp_{\delta_{x}}[\mathrm{M}_{t}(\varphi)^{2}]<+\infty$ for every $t\ge 0$.
Thus, $\{\mathrm{M}_{t}(\varphi):\ t\ge 0\}$ is a square-integrable $\pp_{\delta_{x}}$-martingale.
Next we shall show $\{\mathrm{M}_{t}(\varphi):\ t\ge 0\}$ has c\`{a}dl\`{a}g paths. Replacing $f$ with $\varphi$ in \eqref{eq:many-to-one} yields that
\begin{equation}\label{lemn2.1}
\varphi(x)=\e^{-\lambda_{1}t}P_{t}\varphi(x)-\Pi_{x}\big[\int_{0}^{t}a(\xi_{s})\e^{-\lambda_{1}s}\varphi(\xi_{s})\mathrm{d}s\big]+\Pi_{x}\big[\int_{0}^{t}\e^{-\lambda_{1}s}m[\varphi](\xi_{s})
\mathrm{d}s\big]\quad\forall x\in E,
\end{equation}
which in turn yields that
\begin{align*}
\e^{-\lambda_{1}t}P_{t}\varphi(x)=&\varphi(x)+\Pi_{x}\big[\int_{0}^{t}a(\xi_{s})\e^{-\lambda_{1}s}\varphi(\xi_{s})\mathrm{d}s\big]-\Pi_{x}\big[\int_{0}^{t}\e^{-\lambda_{1}s}m[\varphi](\xi_{s})
\mathrm{d}s\big]\\
\le&\varphi(x)+\|a\|_{\infty}\int_{0}^{t}\e^{-\lambda_{1}s}P_{s}\varphi(x)\mathrm{d}s.
\end{align*}
Thus, we get by Gronwall's inequality that $\e^{-\lambda_{1}t}P_{t}\varphi(x)\le \e^{\|a\|_{\infty}t}\varphi(x)$ for all $t\ge 0$ and $x\in E$. Moreover, letting $t\downarrow 0$ in \eqref{lemn2.1}, we get $\e^{-\lambda_{1}t}P_{t}\varphi(x)\to \varphi(x)$ for every $x\in E$, and thus $\lim_{t\downarrow 0}\e^{-(\lambda_{1}+\|a\|_{\infty})t}P_{t}\varphi(x)=\varphi(x)$. Hence, $\varphi$ is a $(\lambda_{1}+\|a\|_{\infty})$-excessive function relative to the process $\xi$, and thus belongs to $C^{\xi}_{b}(E)$. It follows by \cite[Lemma 4.1]{PY20} that, $t\mapsto X_{t}(\varphi)$ is a.s. right continuous on $[0,+\infty)$. This combined with \cite[Proposition 4.3]{PY20} yields that $\pp_{\delta_{x}}$-a.s.
\begin{equation}\label{lemn2.2}
 X_{t}(\varphi)=\e^{\lambda_{1}t}\varphi(x)+\e^{\lambda_{1}t}\int_{0}^{t}\int_{E}\e^{-\lambda_{1}s}\varphi(y)M(\mathrm{d}s,\mathrm{d}y)\quad\forall t\ge 0,
\end{equation}
where $M(\mathrm{d}s,\mathrm{d}y)$ is the martingale measure defined from the martingale problem of superprocesses, and $\int_{0}^{t}\int_{E}\e^{-\lambda_{1}s}\varphi(y)M(\mathrm{d}s,\mathrm{d}y)$ stands for the stochastic integral of $\e^{-\lambda_{1}s}\varphi(y)$ with respect to $M(\mathrm{d}s,\mathrm{d}y)$.
It is known that $\{\int_{0}^{t}\int_{E}\e^{-\lambda_{1}s}\varphi(y)M(\mathrm{d}s,\mathrm{d}y):\ t\ge 0\}$ is a c\`{a}dl\`{a}g local martingale. Hence by \eqref{lemn2.2} $\{X_{t}(\varphi):\ t\ge 0\}$ has c\`{a}dl\`{a}g paths. The same holds for $\{\mathrm{M}_{t}(\varphi):\ t\ge 0\}$, since $t\mapsto Y_{t}(\varphi)$ is continuous.

By \eqref{lemn2.2} and the stochastic Fubini's theorem (see, e.g., \cite[Theorem 7.24]{Li11}), it holds that $\pp_{\delta_{x}}$-a.s. for all $t\ge 0$,
\begin{align*}
 Y_{t}(\varphi)=&\int_{0}^{t}\langle\varphi,X_{r}\rangle\mathrm{d}r=\varphi(x)\int_{0}^{t}\e^{\lambda_{1}r}\mathrm{d}r+\int_{0}^{t}\int_{E}\big(\int_{s}^{t}\e^{\lambda_{1}(r-s)}
\varphi(y)\mathrm{d}r\big)M(\mathrm{d}s,\mathrm{d}y)\\
=&\frac{\e^{\lambda_{1}t}-1}{\lambda_{1}}\varphi(x)+\int_{0}^{t}\int_{E}\frac{\e^{\lambda_{1}(t-s)}-1}{\lambda_{1}}\varphi(y)M(\mathrm{d}s,\mathrm{d}y).
\end{align*}
Consequently, we have $\pp_{\delta_{x}}$-a.s.,
\begin{equation}\label{lemn2.3}
\mathrm{M}_{t}(\varphi)=-\frac{1}{\lambda_{1}}\varphi(x)-\frac{1}{\lambda_{1}}\int_{0}^{t}\int_{E}\varphi(y)M(\mathrm{d}s,\mathrm{d}y)=:-\frac{1}{\lambda_{1}}\varphi(x)+\tilde{\mathrm{M}}_{t}(\varphi)\quad \forall t\ge 0,
\end{equation}
where $\{\tilde{\mathrm{M}}_{t}(\varphi):\ t\ge 0\}$ is a c\`{a}dl\`{a}g square-integrable martingale with $\tilde{\mathrm{M}}_{0}(\varphi)=0$. Moreover, it is shown in \cite{PY20} that the predictable quadratic variation of $\tilde{\mathrm{M}}_{t}(\varphi)$ is given by $\langle \tilde{\mathrm{M}}(\varphi)\rangle_{t}=\lambda^{-2}_{1}\int_{0}^{t}\langle\vartheta[\varphi],X_{s}\rangle\mathrm{d}s=\lambda^{-2}_{1}\langle \vartheta[\varphi],Y_{t}\rangle$.
Hence, by Lemma \ref{lem2.2}(ii),
$$\lim_{t\to+\infty}\e^{-\lambda_{1}t}\langle\tilde{\mathrm{M}}(\varphi)\rangle_{t}=\lambda^{-3}_{1}\langle\vartheta[\varphi],\widetilde{\varphi}\rangle W^{\varphi}_{\infty}\mbox{ in } L^{2}(\pp_{\delta_{x}}).$$
Let $\nu^{\tilde{\mathrm{M}}}(\mathrm{d}s,\mathrm{d}x)$ be the predictable compensator of the jump measure of $\tilde{\mathrm{M}}_{t}(\varphi)$. It follows by \cite[Section 4]{PY20} that for any $\epsilon>0$,
$$\int_{0}^{t}\int_{|x|>\epsilon\e^{\frac{\lambda_{1}}{2}t}}|x|^{2}\nu^{\tilde{\mathrm{M}}}(\mathrm{d}s,\mathrm{d}x)
=\int_{0}^{t}\mathrm{d}s\int_{E}X_{s}(\mathrm{d}y)\int_{|\nu(\varphi)|>\epsilon\e^{\frac{\lambda_{1}}{2}t}}\nu(\varphi)^{2}H(y,\mathrm{d}\nu)$$
Under (M2), we have $$\int_{|\nu(\varphi)|>\epsilon\e^{\frac{\lambda_{1}}{2}t}}\nu(\varphi)^{2}H(y,\mathrm{d}\nu)\stackrel{b.p.}{\longrightarrow}0\mbox{ as }t\to+\infty.$$
Hence, by Lemma \ref{lem2.2}(ii),
\begin{align*}
\e^{-\lambda_{1}t}\int_{0}^{t}\int_{|x|>\epsilon\e^{\frac{\lambda_{1}}{2}t}}|x|^{2}\nu^{\tilde{\mathrm{M}}}(\mathrm{d}s,\mathrm{d}x)
\to&0\mbox{ in }L^{2}(\pp_{\delta_{x}})\mbox{ as }t\to+\infty.
\end{align*}
Therefore, both conditions of Lemma \ref{lemA.0} are satisfied by the martingale $\{\tilde{\mathrm{M}}_{t}(\varphi):\ t\ge 0\}$. It follows that, $\e^{-\frac{\lambda_{1}}{2}t}\tilde{\mathrm{M}}_{t}(\varphi)$ converges stably in distribution
to $\sqrt{\lambda^{-3}_{1}\langle\vartheta[\varphi],\widetilde{\varphi}\rangle W^{\varphi}_{\infty}}\cdot N$, where $N$ is a standard normal random variable independent of $W^{\varphi}_{\infty}$. Noting \eqref{lemn2.3} and the fact that $\lim_{t\to+\infty}\e^{-\lambda_{1}t/2}\varphi(x)=0$, we get the second conclusion.
\end{proof}

\begin{lemma}\label{lem3.n2}
For every $f\in\mathcal{D}^{0}_{\mathrm{S}}$ and $0\le t\le s<+\infty$, we define
\begin{align*}
\sigma_{f}(s,t):=&\int_{0}^{s-t}\e^{-\lambda_{1}r}\langle\vartheta[M_{r+t}[f],M_{r}[f]],\widetilde{\varphi}\rangle\mathrm{d}r,\\
\kappa_{\varphi}(t):=&\frac{1}{\lambda^{3}_{1}}(2+\lambda_{1}t)\langle\vartheta[\varphi],\widetilde{\varphi}\rangle,\\
{\varsigma}_{\varphi,f}(s,t):=&-\frac{1}{\lambda_{1}}\big(\int_{0}^{t}\langle\vartheta[M_{r}[f],\varphi],\widetilde{\varphi}\rangle\mathrm{d}r+\int_{0}^{s-t}
\e^{-\lambda_{1}r}\langle\vartheta[M_{r+t}[f],\varphi],\widetilde{\varphi}\rangle\mathrm{d}r\big).
\end{align*}
Then the following limits exist and are finite.
\begin{align*}
\sigma_{f}(\infty,t):=&\lim_{s\to+\infty}\sigma_{f}(s,t)=\int_{0}^{+\infty}\e^{-\lambda_{1}r}\langle\vartheta[M_{r+t}[f],M_{r}[f]],\widetilde{\varphi}\rangle\mathrm{d}r,\\
{\varsigma}_{\varphi,f}(\infty,t):=&\lim_{s\to+\infty}{\varsigma}_{\varphi,f}(s,t)=-\frac{1}{\lambda_{1}}\big(\int_{0}^{t}\langle\vartheta[M_{r}[f],\varphi],\widetilde{\varphi}\rangle
\mathrm{d}r+\int_{0}^{+\infty}\e^{-\lambda_{1}r}\langle\vartheta[M_{r+t}[f],\varphi],\widetilde{\varphi}\rangle\mathrm{d}r\big).
\end{align*}
Moreover, if we write $B_{\varphi}(t)$ for $\e^{\frac{\lambda_{1}}{2}t}\big(\e^{-\lambda_{1}t}Y_{t}(\varphi)-\lambda^{-1}_{1}W^{\varphi}_{\infty}\big)$ and $\hat{A}_{f}(t,s)$ for $\e^{-\frac{\lambda_{1}}{2}(t+s)}\big(Y_{t+s}(f)-Y_{t}(f)- X_{t}(M_{s}[f])\big)$, then for any $0\le u\le v\le s<+\infty$,
\begin{align}\label{lem3.2.2}
&\langle \Cov_{\delta_{\cdot}}(\hat{A}_{f}(u,s),\hat{A}_{f}(v,s)),\widetilde{\varphi}\rangle=\e^{-\frac{\lambda_{1}}{2}(v-u)}\sigma_{f}(s,v-u),\nonumber\\
&\langle\Cov_{\delta_{\cdot}}(B_{\varphi}(s+u),B_{\varphi}(s+v)),\widetilde{\varphi}\rangle=\e^{-\frac{\lambda_{1}}{2}(v-u)}\kappa_{\varphi}(v-u)-\frac{1}{\lambda^{3}_{1}}\e^{-\lambda_{1}
(s+\frac{u+v}{2})}\langle\vartheta[\varphi],\widetilde{\varphi}\rangle,\nonumber\\
&\langle \Cov_{\delta_{\cdot}}(\hat{A}_{f}(u,s),B_{\varphi}(s+v)),\widetilde{\varphi}\rangle=\e^{-\frac{\lambda_{1}}{2}(v-u)}{\varsigma}_{\varphi,f}(s,0),\nonumber\\
&\langle\Cov_{\delta_{\cdot}}(\hat{A}_{f}(v,s),B_{\varphi}(s+u)),\widetilde{\varphi}\rangle=\e^{-\frac{\lambda_{1}}{2}(v-u)}{\varsigma}_{\varphi,f}(s,v-u).
\end{align}
\end{lemma}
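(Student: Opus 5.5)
The finiteness of the two limits follows from the exponential bounds in the proof of Lemma \ref{lem1}(i). The covariance identities follow by writing every quantity as a stochastic integral against the martingale measure $M(\mathrm{d}r,\mathrm{d}y)$ of \eqref{lemn2.2}, with a deterministic integrand, and then using the isometry together with the eigen-relation $\langle T_r g,\widetilde{\varphi}\rangle=\e^{\lambda_1 r}\langle g,\widetilde{\varphi}\rangle$.

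\emph{Finiteness.} First I note that under (M2) and the standing boundedness of $(\nu(1)\wedge\nu(1)^2)H(x,\mathrm{d}\nu)$, one has $\|\vartheta[g,h]\|_\infty\le c\|g\|_\infty\|h\|_\infty$. Fix $\delta\in(0,\rho(f)-\lambda_1/2)$. By \eqref{lem1.1}--\eqref{lem1.2}, $\|M_r[f]\|_\infty\le c(1+\e^{(\lambda_1-\rho(f)+\delta)r})$. The integrand of $\sigma_f(\infty,t)$ is therefore bounded by
$$c_t\,\e^{-\lambda_1 r}\big(1+\e^{2(\lambda_1-\rho(f)+\delta)r}\big),$$
and the exponent $\lambda_1-2(\rho(f)-\delta)$ is negative. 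The same bound, with one factor $\|\varphi\|_\infty$, controls the infinite integral in $\varsigma_{\varphi,f}$. The integral over $[0,t]$ is trivially finite. Dominated (indeed monotone in $s$) convergence then gives the limits.

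\emph{Representations.} Under $\pp_{\delta_y}$ the representation \eqref{lemn2.2} gives, for $w\ge u$, $X_w(f)=X_u(T_{w-u}f)+\int_u^w\int_E T_{w-r}f(z)M(\mathrm{d}r,\mathrm{d}z)$. Integrating over $w\in[u,u+s]$ and applying the stochastic Fubini theorem, as in the proof of Lemma \ref{lem3.n1}, yields
$$\hat A_f(u,s)=\e^{-\frac{\lambda_1}{2}(u+s)}\int_u^{u+s}\int_E M_{u+s-r}[f](z)\,M(\mathrm{d}r,\mathrm{d}z).$$
Similarly, the formula for $Y_t(\varphi)$ in the proof of Lemma \ref{lem3.n1} together with $W^\varphi_\infty=\varphi(y)+\int_0^\infty\int_E\e^{-\lambda_1 r}\varphi\,\mathrm{d}M$ gives
$$B_\varphi(t)=\text{(deterministic)}-\frac{1}{\lambda_1}\int_0^\infty\int_E\big(\e^{-\frac{\lambda_1}{2}t}1_{\{r\le t\}}+\e^{\frac{\lambda_1}{2}t-\lambda_1 r}1_{\{r>t\}}\big)\varphi(z)\,M(\mathrm{d}r,\mathrm{d}z).$$
The covariance of two such integrals with integrands $h_1,h_2$ is $\pp_{\delta_y}\int_0^\infty\langle\vartheta[h_1(r),h_2(r)],X_r\rangle\mathrm{d}r=\int_0^\infty T_r(\vartheta[h_1(r),h_2(r)])(y)\,\mathrm{d}r$, using the quadratic variation from \cite{PY20}. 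Integrating in $y$ against $\widetilde{\varphi}$ replaces $T_r$ by $\e^{\lambda_1 r}$.

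\emph{The four identities.} For $u\le v$, the product of the indicators for $\hat A_f(u,s)$ and $\hat A_f(v,s)$ restricts $r$ to $[v,u+s]$. Substituting $w=u+s-r$ gives
$$\e^{-\frac{\lambda_1}{2}(u+v+2s)}\e^{\lambda_1(u+s)}\int_0^{s-(v-u)}\e^{-\lambda_1 w}\langle\vartheta[M_w[f],M_{w+v-u}[f]],\widetilde{\varphi}\rangle\,\mathrm{d}w=\e^{-\frac{\lambda_1}{2}(v-u)}\sigma_f(s,v-u).$$
For the two $B_\varphi$ terms, splitting $r$ at $s+u$ and $s+v$ and integrating elementary exponentials produces $\e^{-\frac{\lambda_1}{2}(v-u)}\kappa_\varphi(v-u)$. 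The piece $r\in[0,s+u]$ gives the linear term $t\,\e^{-\lambda_1 t/2}$ together with the stated correction $-\lambda_1^{-3}\e^{-\lambda_1(s+\frac{u+v}{2})}\langle\vartheta[\varphi],\widetilde{\varphi}\rangle$. For the cross terms, the $\hat A$ integrand lives on $[u,u+s]$ (respectively $[v,v+s]$). One compares this window with $s+v$ (respectively $s+u$), using the two branches of the $B$ integrand. In the second cross identity the part $r\in[v,s+u]$ gives $\int_0^{v-u}\langle\vartheta[M_w[f],\varphi],\widetilde{\varphi}\rangle\mathrm{d}w$, and the part beyond gives the exponentially weighted integral. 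This reproduces $\varsigma_{\varphi,f}(s,0)$ and $\varsigma_{\varphi,f}(s,v-u)$, and checking the exponents is routine bookkeeping.

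\emph{Main obstacle.} The step I expect to be hardest is justifying the stochastic-integral representation of $X_w(f)$ for a general bounded measurable $f\in\mathcal{D}^0_{\mathrm{S}}$. \cite[Proposition 4.3]{PY20} is used above only for the finely continuous $\varphi$. If this fails, I would instead compute the covariances directly. For the $\hat A$-$\hat A$ term, condition on $\mathcal{F}_v$ (the $\mathcal{F}_v$-measurable part of $\hat A_f(u,s)$ is orthogonal to $\hat A_f(v,s)$). Decompose $M_s=M_{v-u}+T_{v-u}M_{s-v+u}$. Then apply a two-time version of Lemma \ref{lem:moment formulae}, namely $\Cov_{\delta_y}(Y_a(f),Y_b(f))=\int_0^a T_r\big(\vartheta[M_{a-r}[f],M_{b-r}[f]]\big)(y)\,\mathrm{d}r$ for $a\le b$. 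This version is obtained from the Markov property at time $a$ and the second formula of that lemma. The same bilinear integrands then result, and only second moments, hence (M2), are needed.
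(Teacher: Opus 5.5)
Your proposal is correct, and its main route differs from the paper's. You write $\hat A_f(u,s)$ and $B_\varphi(t)$ as single stochastic integrals against $M(\mathrm{d}r,\mathrm{d}z)$ with deterministic integrands. All four identities then follow from one isometry, $\int_0^\infty T_r(\vartheta[h_1(r),h_2(r)])\,\mathrm{d}r$, by reading off where the time supports of the integrands overlap.

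The paper never uses the martingale measure in this proof. It applies the Markov property at intermediate times and the identity $M_r[f]+T_r(M_s[f])=M_{r+s}[f]$, together with $\Var_{\delta_x}[W^\varphi_\infty]=\Theta(x)$. This reduces everything to the two formulas of Lemma \ref{lem:moment formulae} and gives the pointwise covariances \eqref{lem3.2.3}--\eqref{lem3.2.6}, which are only then integrated against $\widetilde{\varphi}$. That is exactly your fallback, including your two-time formula for $\Cov_{\delta_y}(Y_a(f),Y_b(f))$, which the paper uses inline.

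The trade-off is as follows. The paper's route needs only second-moment formulas. Yours needs the representation for general $f$, stochastic Fubini, and the covariation of $M$ for time-dependent integrands. In exchange it is shorter and structurally transparent, and it avoids the terms of order $\e^{\lambda_1 s}$ (e.g. $\lambda_1^{-2}\e^{\lambda_1(s+\frac{u+v}{2})}\Theta$) that must cancel in the paper's computation.

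Your ``main obstacle'' is not really one. The paper itself applies \cite[Proposition 4.3]{PY20} to every $g\in\mathcal{B}_b(E)$ at fixed times (see \eqref{lemA.1.1} in the proof of Lemma \ref{lemA.1}). Integrating in $w$ is then the same stochastic Fubini step as in Lemma \ref{lem3.n1}.

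Two parts of your bookkeeping description are misassigned, although the final formulas are right. Put $K:=\langle\vartheta[\varphi],\widetilde{\varphi}\rangle$.
\begin{itemize}
\item In the $B$--$B$ identity, $r\in[0,s+u]$ gives $\lambda_1^{-3}K\e^{-\frac{\lambda_1}{2}(v-u)}$ plus the correction term. The linear term $\lambda_1^{-2}K(v-u)\e^{-\frac{\lambda_1}{2}(v-u)}$ comes from $r\in(s+u,s+v]$. The range $r>s+v$ supplies the second $\lambda_1^{-3}K\e^{-\frac{\lambda_1}{2}(v-u)}$.
\item In the second cross identity the two pieces are swapped. On $[v,s+u]$ the $B$-integrand is the constant $\e^{-\frac{\lambda_1}{2}(s+u)}$, and this piece yields the $\e^{-\lambda_1 r}$-weighted integral over $[0,s-(v-u)]$. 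On $(s+u,s+v]$ the factor $\e^{\frac{\lambda_1}{2}(s+u)-\lambda_1 r}$ cancels the $\e^{\lambda_1 r}$ coming from $\widetilde{\varphi}$-invariance, and this piece yields the unweighted $\int_0^{v-u}$.
\end{itemize}
Finally, ``monotone in $s$'' is not justified, since $\vartheta[M_{r+t}[f],M_r[f]]$ can change sign. The absolute integrability you established is what gives the limits.
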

The proof of Lemma \ref{lem3.n2} involves lengthy but straightforward computations and is deferred to Appendix \ref{sec:A.3}.

{For two distributions $\nu_{1}$ and $\nu_{2}$ on $\R^{d}$, define
$$\mathbf{d}(\nu_{1},\nu_{2}):=\sup\{\big|\langle f,\nu_{1}\rangle -\langle f,\nu_{2}\rangle\big|:\ f:\R^{d}\to\R\mbox{ satisfies }\|f\|_{\infty}+\sup_{x,y\in\R^{d}:x\not=y}\frac{|f(x)-f(y)|}{\|x-y\|}\le 1\}.$$
Here $\|\cdot\|$ denotes the Euclid norm on $\R^{d}$.
Then by \cite[Theorem 11.3.3]{Dudley}, $\mathrm{d}$ is a metric and it generates the same topology as the weak convergence of distributions. For notational simplicity, if $X=(X_{1},\cdots,X_{d})$ and $Y=(Y_{1},\cdots,Y_{d})$ are two $\R^{d}$-valued random variables, we use $\mathbf{d}(X,Y)$ to denote the distance between their distributions. In particular, if $X,Y$ are defined in the same probability space, then from the definition we have
\begin{equation}\label{ineq1}
\mathbf{d}(X,Y)\le \mathrm{E}[\|X-Y\|]\le \sum_{k=1}^{d}\mathrm{E}[|X_{k}-Y_{k}|^{2}]^{1/2}.
\end{equation}
}

\begin{proposition}\label{prop:fddsuper}
Suppose $f\in\mathcal{D}^{0}_{\mathrm{S}}$. Under $\pp_{\delta_{x}}$,
\begin{equation*}
\big(\e^{-\frac{\lambda_{1}}{2}(t+s)}Y_{t+s}(f),\e^{\frac{\lambda_{1}}{2}(t+s)}{Z^{(t)}_{s}(\varphi)}\big)_{s\ge 0}
\stackrel{f.d.d.}{\longrightarrow}\sqrt{W^{\varphi}_{\infty}}(G^{1}(f),G^{2}(\varphi))\mbox{ as }t\to+\infty,
\end{equation*}
where $(G^{1}(f),G^{2}(\varphi)):=(G^{1}_{s}(f),G^{2}_{s}(\varphi))_{s\ge 0}$ is an $\R^{2}$-valued centered Gaussian process with covariances given by
\begin{align*}
\Cov(G^{1}_{s}(f),G^{1}_{t}(f))=&\e^{-\frac{\lambda_{1}}{2}|s-t|}\sigma_{f}(\infty,|s-t|),\\
\Cov(G^{2}_{s}(\varphi),G^{2}_{t}(\varphi))=&\e^{-\frac{\lambda_{1}}{2}|s-t|}\kappa_{\varphi}(|s-t|),\\
\Cov(G^{1}_{s}(f),G^{2}_{t}(\varphi))=&
\e^{-\frac{\lambda_{1}}{2}|s-t|}{\varsigma}_{\varphi,f}(\infty,(s-t)\vee 0)
\end{align*}
for all $s,t\ge 0$, and $(G^{1}_{s}(f),G^{2}_{s}(\varphi))_{s\ge 0}$ is independent of $W^{\varphi}_{\infty}$.
\end{proposition}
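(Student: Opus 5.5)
By the Cramér--Wold device, it suffices to fix $0\le s_1<\cdots<s_n$ and real $a_k,b_k$ and prove convergence of the characteristic function of
$$\Xi_t:=\sum_{k}a_k\e^{-\frac{\lambda_1}{2}(t+s_k)}Y_{t+s_k}(f)+\sum_k b_k\e^{\frac{\lambda_1}{2}(t+s_k)}Z^{(t)}_{s_k}(\varphi).$$
Since $\langle\varphi,\widetilde\varphi\rangle=1$, we have $\e^{\frac{\lambda_1}{2}(t+s)}Z^{(t)}_s(\varphi)=B_\varphi(t+s)$. The plan is to condition on $\mathcal{F}_{\tau}$ at an intermediate time $\tau=t-m$, with $m$ fixed and large, and to use the Markov property. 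This writes each term as an $\mathcal{F}_\tau$-measurable piece plus a sum over the branching of $X_\tau$.

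\textbf{Step 1: decomposition.} Write
$$Y_{t+s}(f)=Y_\tau(f)+X_\tau(M_{t+s-\tau}[f])+\big(Y_{t+s}(f)-Y_\tau(f)-X_\tau(M_{t+s-\tau}[f])\big).$$
By Lemma \ref{lem1}(i), the first two pieces, multiplied by $\e^{-\frac{\lambda_1}{2}(t+s)}$, are negligible in $L^2$ once $\tau\to\infty$ with $t-\tau=m$ and then $m\to\infty$. Indeed, $\e^{-\frac{\lambda_1}{2}t}Y_\tau(f)$ has second moment of order $\e^{-\lambda_1 m}$, and $\e^{-\frac{\lambda_1}{2}(t+s)}M_{m+s}[f]\rightrightarrows0$ is multiplied by $X_\tau(1)\approx \e^{\lambda_1\tau}$. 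This last point needs care: the first moment is $\e^{\lambda_1\tau}\cdot o(\e^{\frac{\lambda_1}{2}(m+s)})\e^{-\frac{\lambda_1}{2}(t+s)}$, which is not small. I therefore expect to instead condition at $\tau=t$ itself and use the quantity $\hat A_f$ of Lemma \ref{lem3.n2}, keeping $X_t(M_s[f])$ and $Y_t(f)$ as part of the limit through an iterative (backward recursion) argument over the times $s_k$.

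Similarly, $B_\varphi(t+s)$ is written, via the martingale $\mathrm M(\varphi)$ of Lemma \ref{lem3.n1}, as $\e^{-\frac{\lambda_1}{2}(t+s)}\big(\mathrm M_{t+s}(\varphi)-\mathrm M_\infty\text{-type tail}\big)$. Concretely,
$$B_\varphi(t+s)=\e^{\frac{\lambda_1}{2}(t+s)}\Big(\e^{-\lambda_1(t+s)}\mathrm M_{t+s}(\varphi)+\lambda_1^{-1}\big(W^\varphi_{t+s}-W^\varphi_\infty\big)\Big),$$
and conditionally on $\mathcal F_\tau$ both parts are sums of independent contributions from $X_\tau$.

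\textbf{Step 2: conditional characteristic function.} Conditionally on $\mathcal F_\tau$, the centered vector $\Xi_t-\pp[\Xi_t\mid\mathcal F_\tau]$ is infinitely divisible under $\pp_{X_\tau}$. Its log characteristic function equals $-\frac12\langle V_t,X_\tau\rangle$ plus a jump remainder. Here $V_t(y)$ is the variance under $\pp_{\delta_y}$, and the remainder is bounded by the $H$-tail integrals $\int_{|\nu(\cdot)|>\epsilon \e^{\lambda_1\tau/2}}\nu(\cdot)^2H$, which vanish under (M2), as in Lemma \ref{lem3.n1}. By Lemma \ref{lem:moment formulae} and Lemma \ref{lem2.2}(i) applied to the bounded functions $\e^{\lambda_1 \tau}V_t$ (which converge b.p.), we get
$$\langle V_t,X_\tau\rangle\to W^\varphi_\infty\,\langle \lim \e^{\lambda_1\tau}V_t,\widetilde\varphi\rangle.$$
The covariance identities \eqref{lem3.2.2} of Lemma \ref{lem3.n2}, together with $s\to\infty$ (so that $\sigma_f(s,\cdot)\to\sigma_f(\infty,\cdot)$, $\varsigma(s,\cdot)\to\varsigma(\infty,\cdot)$, and the correction $\lambda_1^{-3}\e^{-\lambda_1(\cdot)}$ vanishes), identify this limit as $\mathrm{Var}(\sum a_kG^1_{s_k}+b_kG^2_{s_k})$. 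The mixed covariance appears with $\varsigma(\infty,(s-t)\vee0)$, according to which index is larger, exactly as in the last two lines of \eqref{lem3.2.2}.

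\textbf{Step 3: conditional means and conclusion.} It remains to show that $\pp[\Xi_t\mid\mathcal F_\tau]\to0$ in probability. For the $\varphi$-part, the conditional mean of $B_\varphi$ is $\e^{\frac{\lambda_1}{2}(t+s)}(\cdots)$, which is controlled by Lemma \ref{lem3.n1} and the martingale property. For the $f$-part, it is handled as in Step 1. With this, dominated convergence gives
$$\pp_{\delta_x}[\e^{i\Xi_t}]\to\pp_{\delta_x}\big[\e^{-\frac12W^\varphi_\infty\Sigma}\big],$$
which is the characteristic function of $\sqrt{W^\varphi_\infty}(G^1,G^2)$ with the stated covariances and independence.

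\textbf{Main obstacle.} The main obstacle is the nonvanishing conditional mean $\e^{-\frac{\lambda_1}{2}(t+s)}X_\tau(M_{\cdot}[f])$ and $Y_\tau(f)$. I would handle it by peeling off one time $s_k$ at a time (backward recursion). At each stage, the $\mathcal F_{t+s_{k}}$-measurable remainder $\e^{-\frac{\lambda_1}{2}(t+s_{k})}\big(Y_{t+s_k}(f)+X_{t+s_k}(M_{s_{k+1}-s_k}[f])\big)$ is itself of the form of a fluctuation at the earlier time. This produces exactly the $\hat A_f(u,s)$ structure and the factors $\e^{-\frac{\lambda_1}{2}(v-u)}$ in \eqref{lem3.2.2}, and one then inducts on $n$ with stable convergence, as in Lemma \ref{lem3.n1}, carrying the $\mathcal F$-measurable parts.
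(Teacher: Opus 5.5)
Your opening plan (condition at $\tau=t-m$, let $\tau\to\infty$, then $m\to\infty$) is in fact the paper's route. The paper conditions on $\mathcal F_t$ and observes at $t+s+r_j$, so its lag $s$ is your gap $m$, and your Step~2 matches its Step~1. The genuine gap is that you then reject this route because of a wrong estimate, so the $f$-part of the conditional mean is never controlled.

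The quantity $\e^{\lambda_1\tau}\cdot o(\e^{\frac{\lambda_1}{2}(m+s)})\,\e^{-\frac{\lambda_1}{2}(t+s)}$ is only a sup-norm bound that ignores $\langle M_u[f],\widetilde\varphi\rangle=0$. The actual mean is $\e^{-\frac{\lambda_1}{2}(t+s)}\big(M_{t+s}[f](x)-M_\tau[f](x)\big)\to0$ by Lemma~\ref{lem1}(i). For the fluctuation, the missing observation is that $h_u:=\e^{-\frac{\lambda_1}{2}u}M_u[f]$ again lies in $\mathcal D^0_{\mathrm S}$. With $u=m+s$:
\begin{itemize}
\item[(a)] $\e^{-\frac{\lambda_1}{2}(t+s)}X_\tau(M_u[f])=\e^{-\frac{\lambda_1}{2}\tau}X_\tau(h_u)$;
\item[(b)] Lemma~\ref{lem0}(i) gives $\e^{-\lambda_1\tau}\Var_{\delta_x}[X_\tau(h_u)]\to\varrho^2_{h_u}\varphi(x)$;
\item[(c)] since $T_rh_u=\e^{-\frac{\lambda_1}{2}u}\big(M_{u+r}[f]-M_r[f]\big)$, one gets $\varrho^2_{h_u}\le 2\int_u^{\infty}\e^{-\lambda_1 r}\langle\vartheta[M_r[f]],\widetilde\varphi\rangle\,\mathrm{d}r+2\e^{-\lambda_1u}\chi^2_f\to0$ as $u\to\infty$.
\end{itemize}
This is the paper's proof of \eqref{lem3.1.6}. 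Hence the conditional mean is negligible in $L^2$ in the iterated limit, exactly as you first asserted, and no recursion is needed.

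With the gap sent to infinity, the $\varphi$-part of the conditional mean, $\e^{-\frac{\lambda_1}{2}(t+s)}\mathrm M_\tau(\varphi)$, is negligible as well. So your route would not even need the stable convergence of Lemma~\ref{lem3.n1}, which the paper uses to cancel the correction term in \eqref{lem3.2.2} exactly at fixed $s$. You would, however, need the analogue of \eqref{lem3.2.2} for functionals with a common starting time $\tau$ and lags $m+s_k$; it has the same limits as $m\to\infty$.

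The backward recursion you put in its place does not close. In Proposition~\ref{prop:fddcri} peeling is harmless because the conditional variance of each fixed-length increment is negligible at the critical normalisation, see \eqref{prop3.6.1}. Here it is of the same order $\e^{\lambda_1 t}$ as the limit. So each step leaves a nondegenerate factor $\exp\{-\frac12 c\,W^\varphi_\infty\}$ together with an $\mathcal F$-measurable remainder. At the bottom you face $\e^{-\frac{\lambda_1}{2}t}\big(c'\,Y_t(f)+X_t(g)\big)$ (plus $\varphi$-terms) at the single time $t$, whose limit you need jointly with $W^\varphi_\infty$.

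No such one-time result is available for $\mathcal D^0_{\mathrm S}$. Lemma~\ref{lem3} concerns the critical class and is itself proved by conditioning across a diverging gap. Lemma~\ref{lem3.n1} is a martingale CLT for $\mathrm M(\varphi)$ and has no counterpart for $Y_t(f)$ in the paper. Proving that base case requires precisely the large-gap negligibility you discarded. Moreover, once you condition at $\tau=t$ the gaps are the fixed $s_k$. The passage $s\to\infty$ you invoke in Step~2, which turns $\sigma_f(s,\cdot)$ and $\varsigma_{\varphi,f}(s,\cdot)$ into their $s=\infty$ versions and kills the correction term, is then no longer available.

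A smaller point: the jump remainder is not controlled by $H$-tails. As in \eqref{lem3.1.3}, use the L\'evy kernel $\pi_\zeta$ of the conditioned functional, whose law under $\pp_{\delta_z}$ does not depend on $t$ at fixed gap. Combine it with $\int y^2\pi_\zeta(z,\mathrm{d}y)\le\Var_{\delta_z}[\zeta]$ and dominated convergence.
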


\begin{proof}
For notational simplicity, we use $A(t)$, $B(t)$ and $\hat{A}(t,s)$ to denote $\e^{-\frac{\lambda_{1}}{2}t}Y_{t}(f)$, $\e^{\frac{\lambda_{1}}{2}t}\big(\e^{-\lambda_{1}t}Y_{t}(\varphi)-\lambda^{-1}_{1}W^{\varphi}_{\infty}\big)$ and $\e^{-\frac{\lambda_{1}}{2}(t+s)}\big(Y_{t+s}(f)-Y_{t}(f)- X_{t}\big(M_{s}[f]\big)\big)$, respectively.

\noindent Step 1. Suppose $0=r_{1}\le r_{2}\le\cdots r_{k}<+\infty$. We shall show that for every $s\in [r_{k},+\infty)$,
\begin{equation}\label{lem3.1.0}
(\hat{A}(t+r_{j},s),B(t+s+r_{j}))_{1\le j\le k}\stackrel{d}{\to}\sqrt{W^{\varphi}_{\infty}}(G^{1,s}_{r_{j}}(f),G^{2,s}_{r_{j}}(\varphi))_{1\le j\le k}\mbox{ as }t\to+\infty,
\end{equation}
where $(G^{1,s}_{r_{j}}(f),G^{2,s}_{r_{j}}(\varphi))_{1\le j\le k}$ is an $\R^{2k}$-valued Gaussian random variable with mean $0$ and covariances given by
\begin{align*}
\Cov(G^{1,s}_{r_{i}}(f),G^{1,s}_{r_{j}}(f))=&\e^{-\frac{\lambda_{1}}{2}|r_{i}-r_{j}|}\sigma_{f}(s,|r_{i}-r_{j}|),\\
\Cov(G^{2,s}_{r_{i}}(\varphi),G^{2,s}_{r_{j}}(\varphi))=&\e^{-\frac{\lambda_{1}}{2}|r_{i}-r_{j}|}\kappa_{\varphi}(|r_{i}-r_{j}|),\\
\Cov(G^{1,s}_{r_{i}}(f),G^{2,s}_{r_{j}}(\varphi))=&
\e^{-\frac{\lambda_{1}}{2}|r_{i}-r_{j}|}{\varsigma}_{\varphi,f}(s,(r_{i}-r_{j})\vee 0),
\end{align*}
and $(G^{1,s}_{r_{j}}(f),G^{2,s}_{r_{j}}(\varphi))_{1\le j\le k}$ is independent of $W^{\varphi}_{\infty}$.

For $\vec{\theta}=(\theta_{1},\cdots,\theta_{k})$ and $\vec{\eta}=(\eta_{1},\cdots,\eta_{k})\in \R^{k}$, let $K^{s}_{t}(\vec{\theta},\vec{\eta})$ be the characteristic function of
$(\hat{A}(t+r_{j},s),B(t+s+r_{j}))_{1\le j\le k}$ given by
$$K^{s}_{t}(\vec{\theta},\vec{\eta})=\pp_{\delta_{x}}\big[\exp\{\mathrm{i}\sum_{j=1}^{k}\big(\theta_{j}\hat{A}(t+r_{j},s)+\eta_{j}B(t+s+r_{j})\big)\}\big].$$
We note that, by the Markov property, given $\mathcal{F}_{t}$, $(\hat{A}(t+r_{j},s),B(t+s+r_{j})-\e^{-\frac{\lambda_{1}}{2}(t+s+r_{j})}Y_{t}(\varphi))_{1\le j\le k}$ is equal in distribution to $\big((\e^{-\frac{\lambda_{1}}{2}t}\hat{A}(r_{j},s),\e^{-\frac{\lambda_{1}}{2}t}B(s+r_{j}))_{1\le j\le k},\pp_{X_{t}}\big)$. Hence we have
\begin{align}\label{lem3.1.1}
K^{s}_{t}(\vec{\theta},\vec{\eta})=&\pp_{\delta_{x}}\big[\exp\{\mathrm{i}\sum_{j=1}^{k}\eta_{j}\e^{-\frac{\lambda_{1}}{2}(t+s+r_{j})}Y_{t}(\varphi)\}\pp_{X_{t}}\big[
\exp\{\mathrm{i}\e^{-\frac{\lambda_{1}}{2}t}\sum_{j=1}^{k}\big(\theta_{j}\hat{A}(r_{j},s)+\eta_{j}B(s+r_{j})\big)\}\big]\big].
\end{align}
Let $\zeta(\vec{r},\vec{\theta},\vec{\eta},s):=\sum_{j=1}^{k}\big(\theta_{j}\hat{A}(r_{j},s)+\eta_{j}B(s+r_{j})\big)$. The regular branching property of superprocess implies that for every $\mu\in\mf$, under $\pp_{\mu}$, $\zeta(\vec{r},\vec{\theta},\vec{\eta},s)$ is an infinitely divisible random variable. Moreover, under (M2), $\zeta(\vec{r},\vec{\theta},\vec{\eta},s)$ has finite mean and variance. Therefore, the characteristic function of $\zeta(\vec{r},\vec{\theta},\vec{\eta},s)$ has the following canonical representation:
$$\pp_{\mu}\big[\e^{\mathrm{i}\theta\zeta(\vec{r},\vec{\theta},\vec{\eta},s)}\big]=\exp\{\langle \mathrm{i}\theta\pp_{\delta_{\cdot}}\big[\zeta(\vec{r},\vec{\theta},\vec{\eta},s)\big]-\frac{1}{2}\theta^{2}\Var_{\delta_{\cdot}}[\zeta(\vec{r},\vec{\theta},\vec{\eta},s)]
+\int_{\R\setminus\{0\}}\big(\e^{\mathrm{i}\theta y}-1-\mathrm{i}\theta y+\frac{1}{2}\theta^{2}y^{2}\big)\pi_{\zeta}(\cdot,\mathrm{d}y),\mu\rangle\}$$
for all $\theta\in \R$, where $\pi_{\zeta}(z,\mathrm{d}y)$ is a kernel from $E$ to $\R\setminus\{0\}$ satisfying that
\begin{equation}\label{lem3.1.2}
\int_{\R\setminus\{0\}}y^{2}\pi_{\zeta}(z,\mathrm{d}y)\le \Var_{\delta_{z}}[\zeta(\vec{r},\vec{\theta},\vec{\eta},s)]\quad\forall z\in E.
\end{equation}
Since $\pp_{\delta_{y}}\big[\hat{A}(r_{j},s)\big]=0$ for all $y\in E$, we have
$$\pp_{\delta_{y}}[\zeta(\vec{r},\vec{\theta},\vec{\eta},s)]=\sum_{j=1}^{k}\eta_{j}\big(\e^{-\frac{\lambda_{1}}{2}(s+r_{j})}
M_{s+r_{j}}[\varphi](y)-\frac{1}{\lambda_{1}}\e^{\frac{\lambda_{1}}{2}(s+r_{j})}\varphi(y)\big)=-\frac{1}{\lambda_{1}}\sum_{j=1}^{k}\eta_{j}\e^{-\frac{\lambda_{1}}{2}(s+r_{j})}\varphi(y).$$
Inserting this into the right hand side of \eqref{lem3.1.1} yields that
\begin{align}
K^{s}_{t}(\vec{\theta},\vec{\eta})=&\pp_{\delta_{x}}\big[\exp\{\mathrm{i}\sum_{j=1}^{k}\eta_{j}\e^{-\frac{\lambda_{1}}{2}(t+s+r_{j})}Y_{t}(\varphi)\}\pp_{X_{t}}\big[\exp\{\mathrm{i}
\e^{-\frac{\lambda_{1}}{2}t}\zeta(\vec{r},\vec{\theta},\vec{\eta},s)\}\big]\big]\nonumber\\
=&\pp_{\delta_{x}}\big[\exp\big\{\mathrm{i}\sum_{j=1}^{k}\eta_{j}\e^{-\frac{\lambda_{1}}{2}(s+r_{j}+t)}\big(Y_{t}(\varphi)-\frac{1}{\lambda_{1}}X_{t}(\varphi)\big)\nonumber\\
&-\frac{1}{2}\e^{-\lambda_{1}t}\langle \Var_{\delta_{\cdot}}[\zeta(\vec{r},\vec{\theta},\vec{\eta},s)],X_{t}\rangle+\langle \mathcal{L}_{\pi_{\zeta}}[\e^{-\frac{\lambda_{1}}{2}t}],X_{t}\rangle\big\}\big],\label{lem3.1.4}
\end{align}
where $\mathcal{L}_{\pi_{\zeta}}[\theta](z):=\int_{\R\setminus\{0\}}\big(\e^{\mathrm{i}\theta y}-1-\mathrm{i}\theta y+\frac{1}{2}\theta^{2}y^{2}\big)\pi_{\zeta}(z,\mathrm{d}y)$ for $z\in E$ and $\theta\in\R$. Using the inequality that
\begin{equation}\label{ineq2}
\big|\e^{\mathrm{i}y}-1-\mathrm{i}y+\frac{1}{2}y^{2}\big|\le |y|^{2}\big(\frac{|y|}{6}\wedge 1\big)\quad\forall y\in\R,
\end{equation}
we get for every $z\in E$,
\begin{align}\label{lem3.1.3}
\big|\e^{\lambda_{1}t}\mathcal{L}_{\pi_{\zeta}}[\e^{-\frac{\lambda_{1}}{2}t}](z)\big|\le &\e^{\lambda_{1}t}\int_{\R\setminus\{0\}}\big|\e^{\mathrm{i}\e^{-\frac{\lambda_{1}}{2}t} y}-1-\mathrm{i}\e^{-\frac{\lambda_{1}}{2}t} y+\frac{1}{2}\e^{-\lambda_{1}t}y^{2}\big|\pi_{\zeta}(z,\mathrm{d}y)\nonumber\\
\le&\int_{\R\setminus\{0\}}|y|^{2}\big(\frac{1}{6}\e^{-\frac{\lambda_{1}}{2}t}|y|\wedge 1\big)\pi_{\zeta}(z,\mathrm{d}y).
\end{align}
We can easily verify that, under (M2), $z\mapsto \Var_{\delta_{z}}[\zeta(\vec{r},\vec{\theta},\vec{\eta},s)]$ is a bounded function on $E$. This combined with \eqref{lem3.1.2} and the dominated convergence theorem implies that the final integral of \eqref{lem3.1.3} converges boundedly and pointwise to $0$ as $t\to+\infty$, which in turn yields that
\begin{equation} \label{lem3.1.8}
\e^{\lambda_{1}t}\mathcal{L}_{\pi_{\zeta}}[\e^{-\frac{\lambda_{1}}{2}t}](z)\stackrel{b.p.}{\longrightarrow}0\mbox{ as }t\to+\infty.
\end{equation}
 Hence by Lemma \ref{lem2.2}(i), we have
$$\lim_{t\to+\infty}\e^{-\lambda_{1}t}\langle\Var_{\delta_{\cdot}}[\zeta(\vec{r},\vec{\theta},\vec{\eta},s)],X_{t}\rangle=
\langle\Var_{\delta_{\cdot}}[\zeta(\vec{r},\vec{\theta},\vec{\eta},s)],\widetilde{\varphi}\rangle W^{\varphi}_{\infty}\mbox{ in }L^{2}(\pp_{\delta_{x}}),$$
and $\lim_{t\to+\infty}\langle \mathcal{L}_{\pi_{\zeta}}[\e^{-\frac{\lambda_{1}}{2}t}],X_{t}\rangle =0$ in $L^{2}(\pp_{\delta_{x}})$.
It follows that
\begin{align*}
&\lim_{t\to+\infty}\pp_{\delta_{x}}\big[\big|\exp\big\{-\frac{1}{2}\e^{-\lambda_{1}t}\langle \Var_{\delta_{\cdot}}[\zeta(\vec{r},\vec{\theta},\vec{\eta},s)],X_{t}\rangle+\langle \mathcal{L}_{\pi_{\zeta}}[\e^{-\frac{\lambda_{1}}{2}t}],X_{t}\rangle\big\}-\exp\big\{-\frac{1}{2}\langle \Var_{\delta_{\cdot}}[\zeta(\vec{r},\vec{\theta},\vec{\eta},s)],\widetilde{\varphi}\rangle W^{\varphi}_{\infty}\big\}\big|\big]\\
=&0.
\end{align*}
Combining this with \eqref{lem3.1.4}, we get
\begin{align*}
\lim_{t\to+\infty}K^{s}_{t}(\vec{\theta},\vec{\eta})=&\lim_{t\to+\infty}\pp_{\delta_{x}}\big[\exp\big\{\mathrm{i}\big(\sum_{j=1}^{k}\eta_{j}\e^{-\frac{\lambda_{1}}{2}(s+r_{j})}\big)
\e^{-\frac{\lambda_{1}}{2}t}\big(Y_{t}(\varphi)-\frac{1}{\lambda_{1}}X_{t}(\varphi)\big)-\frac{1}{2}\langle\Var_{\delta_{\cdot}}[\zeta(\vec{r},\vec{\theta},\vec{\eta},s)],\widetilde{\varphi}
\rangle W^{\varphi}_{\infty}\big\}\big]\\
=&\pp_{\delta_{x}}\big[\exp\big\{-\frac{1}{2}W^{\varphi}_{\infty}\big[\big(\sum_{j=1}^{k}\eta_{j}\e^{-\frac{\lambda_{1}}{2}(s+r_{j})}\big)^{2}\frac{1}{\lambda_{1}^{3}}\langle\vartheta[\varphi]
,\widetilde{\varphi}\rangle+\langle\Var_{\delta_{\cdot}}[\zeta(\vec{r},\vec{\theta},\vec{\eta},s)],\widetilde{\varphi}\rangle\big]\big\}\big].
\end{align*}
The second equality follows from Lemma \ref{lem3.n1}. From the definition of $\zeta(\vec{r},\vec{\theta},\vec{\eta},s)$, we have
\begin{align*}
\langle \Var_{\delta_{\cdot}}[\zeta(\vec{r},\vec{\theta},\vec{\eta},s)],\widetilde{\varphi}\rangle=&\sum_{j=1}^{k}\theta^{2}_{j}\langle\Var_{\delta_{\cdot}}[\hat{A}(r_{j},s)],\widetilde{\varphi}\rangle
+2\sum_{1\le i<j\le k}\theta_{i}\theta_{j}\langle\Cov_{\delta_{\cdot}}(\hat{A}(r_{i},s),\hat{A}(r_{j},s)),\widetilde{\varphi}\rangle\\
&+\sum_{j=1}^{k}\eta^{2}_{j}\langle\Var_{\delta_{\cdot}}[B(r_{j}+s)],\widetilde{\varphi}\rangle+2\sum_{1\le i<j\le k}\eta_{i}\eta_{j}
\langle\Cov_{\delta_{\cdot}}(B(r_{i}+s),B(r_{j}+s)),\widetilde{\varphi}\rangle\\
&+2\sum_{i=1}^{k}\sum_{j=1}^{k}\theta_{i}\eta_{j}\langle \Cov_{\delta_{\cdot}}(\hat{A}(r_{i},s),B(r_{j}+s)),\widetilde{\varphi}\rangle.
\end{align*}
Simple computation using Lemma \ref{lem3.n2} gives that
\begin{align*}
\lim_{t\to+\infty}K^{s}_{t}(\vec{\theta},\vec{\eta})=&\pp_{\delta_{x}}\big[\exp\big\{-\frac{1}{2}W^{\varphi}_{\infty}\big(\sum_{j=1}^{k}\theta_{j}^{2}\sigma_{f}(s,0)+2\sum_{1\le i<j\le k}\theta_{i}\theta_{j}\e^{-\frac{\lambda_{1}}{2}(r_{j}-r_{i})}\sigma_{f}(s,r_{j}-r_{i})\\
&+\sum_{j=1}^{k}\eta_{j}^{2}\kappa_{\varphi}(0)+2\sum_{1\le i<j\le k}\eta_{i}\eta_{j}\e^{-\frac{\lambda_{1}}{2}(r_{j}-r_{i})}\kappa_{\varphi}(r_{j}-r_{i})\\
&+2\sum_{1\le i\le j\le k}\theta_{i}\eta_{j}\e^{-\frac{\lambda_{1}}{2}(r_{j}-r_{i})}{\varsigma}_{\varphi,f}(s,0)+2\sum_{1\le j<i\le k}\theta_{i}\eta_{j}\e^{-\frac{\lambda_{1}}{2}(r_{i}-r_{j})}
{\varsigma}_{\varphi,f}(s,r_{i}-r_{j})
\big)\big\}\big].
\end{align*}
Note that the expectation in the right hand side is equal to the characteristic function of the random vector $\sqrt{W^{\varphi}_{\infty}}(G^{1,s}_{r_{j}}(f),G^{2,s}_{r_{j}}(\varphi))_{1\le j\le k}$. Hence, we have proved \eqref{lem3.1.0}.

\smallskip

\noindent Step 2. We shall show that
\begin{equation}\label{lem3.1.5}
\e^{-\frac{\lambda_{1}}{2}(t+s)}Y_{t}(f)\to 0\mbox{ in }L^{2}(\pp_{\delta_{x}})\mbox{ as }t\to+\infty\mbox{ and then }s\to+\infty.
\end{equation}
\begin{equation}\label{lem3.1.6}
\e^{-\frac{\lambda_{1}}{2}(t+s)} X_{t}\big(M_{s}[f]\big)\to 0\mbox{ in }L^{2}(\pp_{\delta_{x}})\mbox{ as }t\to+\infty\mbox{ and then }s\to+\infty.
\end{equation}

\noindent Proof of \eqref{lem3.1.5}: Note that
$$\pp_{\delta_{x}}\big[\big(\e^{-\frac{\lambda_{1}}{2}(t+s)} Y_{t}(f)\big)^{2}\big]=\e^{-\lambda_{1}(t+s)}\Var_{\delta_{x}}[ Y_{t}(f)]+\e^{-\lambda_{1}(t+s)}M_{t}[f](x)^{2}.$$
By Lemma \ref{lem1}(i), we have $\pp_{\delta_{x}}\big[\big(\e^{-\frac{\lambda_{1}}{2}(t+s)} Y_{t}(f)\big)^{2}\big]\to 0$ as $t\to+\infty$ and then $s\to+\infty$, and hence \eqref{lem3.1.5} follows immediately.

\smallskip

\noindent Proof of \eqref{lem3.1.6}: Let $h_{s}(y):=\e^{-\frac{\lambda_{1}}{2}s}M_{s}[f](y)$ for $y\in E$ and $s\ge 0$. Then $\langle h_{s},\widetilde{\varphi}\rangle=\e^{-\frac{\lambda_{1}}{2}s}\int_{0}^{s}\langle T_{s}f,\widetilde{\varphi}\rangle\mathrm{d}s=\e^{\frac{\lambda_{1}}{2}s}\int_{0}^{s}\langle f,\widetilde{\varphi}\rangle\mathrm{d}s=0$.
Note that $\e^{-\lambda_{1}t}T_{t}h_{s}=\e^{-\frac{\lambda_{1}}{2}s}\int_{0}^{s}\e^{\lambda_{1}u}\cdot\e^{-\lambda_{1}(t+u)}T_{t+u}f\mathrm{d}u$.
By the definition of ${\rho}(f)$, for any $\epsilon>0$, when $t$ is sufficiently large,
\begin{align*}
\big| \e^{-\lambda_{1}t}T_{t}h_{s}\big|\le&\e^{-\frac{\lambda_{1}}{2}s}\int_{0}^{s}\e^{\lambda_{1}u}\big| \e^{-\lambda_{1}(t+u)}T_{t+u}f\big|\mathrm{d}u\\
\le&\e^{-\frac{\lambda_{1}}{2}s}\int_{0}^{s}\e^{\lambda_{1}u}\cdot\e^{(-{\rho}(f)+\epsilon)(t+u)}\mathrm{d}u=:c_{1}(s)\e^{(-\rho(f)+\epsilon)t}.
\end{align*}
This implies that ${\rho}(h_{s})\ge {\rho}(f)>\lambda_{1}/2$, and hence $h_{s}\in\mathcal{D}^{0}_{\mathrm{S}}$.
By Lemma \ref{lem0}(i), we have
\begin{equation}\label{lem3.1.7}
\pp_{\delta_{x}}\big[\e^{-\frac{\lambda_{1}}{2}t} X_{t}(h_{s})\big]=\e^{-\frac{\lambda_{1}}{2}t}T_{t}h_{s}(x)\to 0\mbox{ as }t\to+\infty,
\end{equation}
and
$$\Var_{\delta_{x}}\big[\e^{-\frac{\lambda_{1}}{2}t} X_{t}(h_{s})\big]=\e^{-\lambda_{1}t}\Var_{\delta_{x}}\big[ X_{t}(h_{s})\big]\to
\varphi(x)\varrho^{2}_{h_{s}}\mbox{ as }t\to+\infty,$$
where $\varrho^{2}_{h_{s}}=\int_{0}^{+\infty}\e^{-\lambda_{1}u}\langle\vartheta[T_{u}h_{s}],\widetilde{\varphi}\rangle\mathrm{d}u=\int_{0}^{+\infty}\e^{-\lambda_{1}(u+s)}\langle\vartheta[M_{s+u}[f]-M_{u}[f]],\widetilde{\varphi}\rangle\mathrm{d}u$.
Using the fact that $\vartheta[f_{1}-f_{2}]\le 2(\vartheta[f_{1}]+\vartheta[f_{2}])$ for any $f_{1},f_{2}\in\mathcal{B}_{b}(E)$, we have
\begin{align}\label{prop1.10}
\varrho^{2}_{h_{s}}\le &2\int_{0}^{+\infty}\e^{-\lambda_{1}(u+s)}\langle\vartheta[M_{u+s}[f]]+\vartheta[M_{u}[f]],\widetilde{\varphi}\rangle\mathrm{d}u\nonumber\\
=&2\Big(\int_{s}^{+\infty}\e^{-\lambda_{1}u}\langle\vartheta[M_{u}[f]],\widetilde{\varphi}\rangle\mathrm{d}u+
\e^{-\lambda_{1}s}\int_{0}^{+\infty}\e^{-\lambda_{1}u}\langle\vartheta[M_{u}[f]],\widetilde{\varphi}\rangle\mathrm{d}u\Big).
\end{align}
We can easily show by \eqref{lem3.2.1} that the integral $\int_{0}^{+\infty}\e^{-\lambda_{1}u}\langle\vartheta[M_{u}[f]],\widetilde{\varphi}\rangle\mathrm{d}u$ is finite.
It thus follows by \eqref{prop1.10} that $\lim_{s\to+\infty}\varrho^{2}_{h_{s}}=0$. Hence we get $\lim_{s\to+\infty}\lim_{t\to+\infty}\Var_{\delta_{x}}[\e^{-\frac{\lambda_{1}}{2}t} X_{t}(h_{s})]=0,$
which together with \eqref{lem3.1.7} yields \eqref{lem3.1.6}.
\smallskip

\noindent Step 3. Recall the metric `$\mathbf{d}$' of two distributions defined above \eqref{ineq1}. We have for $t,s\ge 0$,
\begin{align*}
&\mathbf{d}\big((A(t+s+r_{j}),B(t+s+r_{j}))_{1\le j\le k},\sqrt{W^{\varphi}_{\infty}}(G^{1}_{r_{j}}(f),G^{2}_{r_{j}}(\varphi))_{1\le j\le k}\big)\\
\le&\mathbf{d}\big((A(t+s+r_{j}),B(t+s+r_{j}))_{1\le j\le k},(\hat{A}(t+r_{j},s),B(t+s+r_{j}))_{1\le j\le k}\big)\\
&+\mathbf{d}\big((\hat{A}(t+r_{j},s),B(t+s+r_{j}))_{1\le j\le k},\sqrt{W^{\varphi}_{\infty}}(G^{1,s}_{r_{j}}(f),G^{2,s}_{r_{j}}(\varphi))_{1\le j\le k}\big)\\
&+\mathbf{d}\big(\sqrt{W^{\varphi}_{\infty}}(G^{1,s}_{r_{j}}(f),G^{2,s}_{r_{j}}(\varphi))_{1\le j\le k},\sqrt{W^{\varphi}_{\infty}}(G^{1}_{r_{j}}(f),G^{2}_{r_{j}}(\varphi))_{1\le j\le k}\big).
\end{align*}
Clearly by \eqref{lem3.1.0}, the second term  in the right hand side converges to $0$ as $t\to+\infty$. Note that $A(t+s+r_{j})-\hat{A}(t+r_{j},s)=\e^{-\frac{\lambda_{1}}{2}(t+s+r_{j})}\big(Y_{t+r_{j}}(f)+X_{t+r_{j}}(M_{s}[f])\big)$.
By \eqref{ineq1}, the first term in the right hand side is bounded by
$$\sum_{j=1}^{k}\|\e^{-\frac{\lambda_{1}}{2}(t+s+r_{j})}\big(Y_{t+r_{j}}(f)+X_{t+r_{j}}\big(M_{s}[f]\big)\big)\|_{L^{2}(\pp_{\delta_{x}})},$$
and hence by \eqref{lem3.1.5} and \eqref{lem3.1.6} it converges to $0$ as first $t\to+\infty$ and then $s\to+\infty$.

We note by the first conclusion of Lemma \ref{lem3.n2} that the covariance functions of the Gaussian random vector $(G^{1,s}_{r_{j}}(f),G^{2,s}_{r_{j}}(\varphi))_{1\le j\le k}$ converges to that of $(G^{1}_{r_{j}}(f),G^{2}_{r_{j}}(\varphi))_{1\le j\le k}$ as $s\to+\infty$. Note also that both random vectors are independent of $W^{\varphi}_{\infty}$. Using these facts, we can easily verify that $\sqrt{W^{\varphi}_{\infty}}(G^{1,s}_{r_{j}}(f),G^{2,s}_{r_{j}}(\varphi))_{1\le j\le k}$ converges in distribution to $\sqrt{W^{\varphi}_{\infty}}(G^{1}_{r_{j}}(f),G^{2}_{r_{j}}(\varphi))_{1\le j\le k}$ as $s\to+\infty$. Hence
$$\lim_{s\to+\infty}\mathbf{d}\big(\sqrt{W^{\varphi}_{\infty}}(G^{1,s}_{r_{j}}(f),G^{2,s}_{r_{j}}(\varphi))_{1\le j\le k},\sqrt{W^{\varphi}_{\infty}}(G^{1}_{r_{j}}(f),G^{2}_{r_{j}}(\varphi))_{1\le j\le k}\big)=0.$$
Therefore, we have proved
$$\lim_{s\to+\infty}\lim_{t\to+\infty}\mathbf{d}\big((A(t+s+r_{j}),B(t+s+r_{j}))_{1\le j\le k},\sqrt{W^{\varphi}_{\infty}}(G^{1}_{r_{j}}(f),G^{2}_{r_{j}}(\varphi))_{1\le j\le k}\big)=0,$$
which in turn yields that $(A(t+r_{j}),B(t+r_{j}))_{1\le j\le k}\stackrel{d}{\to}\sqrt{W^{\varphi}_{\infty}}(G^{1}_{r_{j}}(f),G^{2}_{r_{j}}(\varphi))_{1\le j\le k}$ as $t\to+\infty$.
Since $A(t+r_{j})=\e^{\frac{-\lambda_{1}}{2}(t+r_{j})}Y_{t+r_{j}}(f)$ and $B(t+r_{j})=\e^{\frac{\lambda_{1}}{2}(t+r_{j})}Z^{(t)}_{r_{j}}(\varphi)$, we have proved this proposition.
\end{proof}

}

\begin{lemma}\label{lemA.1}
Suppose in addition that (M4) holds.
Then for every $x\in E$ and $f\in\mathcal{D}^{0}(\lambda_{1}/2,r)$,
$$\pp_{\delta_{x}}\big[Y_{t}(f)^{4}+X_{t}(f)^{4}\big]=O(t^{2+4r}\e^{2\lambda_{1}t})\mbox{ as }t\to+\infty.$$
\end{lemma}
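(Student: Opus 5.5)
We reduce the fourth moments to cumulants. For a real random variable $\zeta$ with cumulants $\kappa_j$ one has
\begin{equation*}
\mathrm{E}[\zeta^{4}]=\kappa_4+4\kappa_3\kappa_1+3\kappa_2^{2}+6\kappa_2\kappa_1^{2}+\kappa_1^{4}.
\end{equation*}
We take $\zeta=Y_t(f)$ or $X_t(f)$ under $\pp_{\delta_x}$. The first two cumulants are already controlled.

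\emph{First two cumulants.} Since $f\in\mathcal{D}^{0}(\lambda_1/2,r)$, we have $T_tf=O(t^{r}\e^{\lambda_1 t/2})$. Lemma \ref{lem1}(ii) gives $M_t[f]=O(t^{r}\e^{\lambda_1t/2})$ uniformly in $x$. Lemmas \ref{lem0}(ii) and \ref{lem1}(ii) give variances of order $t^{1+2r}\e^{\lambda_1t}$, uniformly in $x$; the dominations used there are uniform. Hence $\kappa_1^4$, $\kappa_2^2$ and $\kappa_2\kappa_1^2$ are all $O(t^{2+4r}\e^{2\lambda_1 t})$. It remains to show $\kappa_3=O(t^{a}\e^{3\lambda_1t/2})$ for some power $a$, so that $\kappa_3\kappa_1$ is of lower order, and that $\kappa_4=O(t^{2+4r}\e^{2\lambda_1t})$.

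\emph{Higher cumulants via the log-Laplace equation.} Under (M4), differentiate the integral equation for $U_t[\theta f,\theta g]$ (or for $V_t$) in $\theta$ at $0$. This is the standard moment-formula argument behind \eqref{eq:second moment} and Lemma \ref{lem:moment formulae}, used by Li and in \cite{Yang25}. It shows that the $k$-th cumulant $u^{(k)}_t(x)$ satisfies
\begin{equation*}
u^{(k)}_t(x)=\int_0^t T_{t-s}\big(\Psi_k[u^{(1)}_s,\dots,u^{(k-1)}_s]\big)(x)\,\mathrm{d}s,
\end{equation*}
where $\Psi_k$ is a finite sum of multilinear terms. These terms are $b\,u^{(i)}u^{(j)}$ with $i+j=k$, and $\int\prod_l\nu(u^{(i_l)})H(\cdot,\mathrm{d}\nu)$ with $\sum_l i_l=k$ and at least two factors. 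Such terms are bounded by products of sup-norms. For $Y_t$ one has $u^{(1)}_s=M_s[f]$ and the same structure holds with $U$ in place of $V$. The nonlocal moment integrals $\int\nu(1)^{m}H(x,\mathrm{d}\nu)$ are finite for $m\le 4$ by (M4); the part with $\nu(1)\le1$ is controlled by the boundedness of $\nu(1)^2H$.

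\emph{Inductive bounds.} Write $\|u^{(1)}_s\|\lesssim (1+s)^r\e^{\lambda_1s/2}$ and $\|u^{(2)}_s\|\lesssim (1+s)^{1+2r}\e^{\lambda_1 s}$.
\begin{itemize}
\item[(a)] For $k=3$: $\Psi_3\lesssim \|u^{(1)}_s\|\|u^{(2)}_s\|+\|u^{(1)}_s\|^3\lesssim (1+s)^{1+3r}\e^{3\lambda_1s/2}$. By \eqref{eq1.9}, $\|T_{t-s}h\|\le c\,\e^{\lambda_1(t-s)}\|h\|$, so $u^{(3)}_t\lesssim \e^{\lambda_1t}\int_0^t(1+s)^{1+3r}\e^{\lambda_1 s/2}\mathrm{d}s\lesssim t^{1+3r}\e^{3\lambda_1t/2}$.
\item[(b)] For $k=4$: $\Psi_4\lesssim \|u^{(1)}\|\|u^{(3)}\|+\|u^{(2)}\|^2+\|u^{(1)}\|^2\|u^{(2)}\|+\|u^{(1)}\|^4\lesssim (1+s)^{2+4r}\e^{2\lambda_1 s}$. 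Then $u^{(4)}_t\lesssim \e^{\lambda_1t}\int_0^t(1+s)^{2+4r}\e^{\lambda_1 s}\mathrm{d}s\lesssim t^{2+4r}\e^{2\lambda_1t}$.
\end{itemize}
Then $\kappa_3\kappa_1=O(t^{1+4r}\e^{2\lambda_1t})$, and the claim follows.

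\emph{Main obstacle.} The delicate step is justifying the cumulant recursion rigorously under only (M4). This requires differentiating four times under the $H$-integral and under $\Pi_x$, and handling the compensator term $\nu(\{x\})f(x)$ and the $\eta$-part, which are linear and are absorbed into $T$. I would first prove finiteness of fourth moments via the known result that (Mk) implies finite $k$-th moments, citing \cite{Li11}. I would then derive the recursion by Taylor expanding $\e^{-\theta\nu(u)}$ to fourth order with (M4)-integrable remainder. Applying Gronwall on $[0,T]$ gives local boundedness, which licenses the differentiation.
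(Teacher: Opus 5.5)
Your argument is correct, but it takes a different route from the paper. The paper never computes higher cumulants. It writes $X_t(f)=T_tf(x)+\int_0^t\int_E T_{t-s}f(y)\,M(\mathrm{d}s,\mathrm{d}y)$ using the martingale-measure representation. It then invokes the fourth-moment estimate for this stochastic integral from \cite[Lemma A.3]{Yang25}, namely a bound by $\Var_{\delta_x}[X_t(f)]^2+\int_0^tT_s(\vartheta[\Var_{\delta_\cdot}[X_{t-s}(f)]])(x)\,\mathrm{d}s$, and plugs in the variance asymptotics of Lemma \ref{lem0}(ii). The bound for $Y_t$ then comes from Minkowski's integral inequality $\|Y_t(f)\|_{L^4}\le\int_0^t\|X_s(f)\|_{L^4}\,\mathrm{d}s$, so Lemma \ref{lem1}(ii) is not needed at all.

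Your cumulant recursion is self-contained: it avoids stochastic integrals and the cited external estimate, and treats $X$ and $Y$ by the same mechanism. It also gives bounds uniform in $x$ and shows explicitly that the $\kappa_3\kappa_1$ contribution is of lower order. The computational core is the same in both arguments: the dominant term $\int_0^tT_{t-s}(\vartheta[u^{(2)}_s])\,\mathrm{d}s\le c\,\e^{\lambda_1t}\int_0^t\e^{-\lambda_1s}\|u^{(2)}_s\|_\infty^2\,\mathrm{d}s$ is exactly the paper's estimate \eqref{lemA.1.5}.

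The price is the rigorous derivation of the recursion up to order four under only (M4), which you correctly flag. One further point you should make explicit: $f\in\mathcal{D}^0$ is in general signed, while the log-Laplace functional $V_t$ (and $U_t$) is only defined for nonnegative test functions. You must therefore either differentiate $V_t(\theta_1f^++\theta_2f^-)$ as $\theta_1,\theta_2\downarrow0$ and use multilinearity of joint cumulants, or work with the complex (characteristic) log-Laplace functional. Either way, the recursion you wrote then holds with the signed $u^{(1)}_s=T_sf$ or $M_s[f]$.
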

\begin{proof}
Fix an arbitrary $x\in E$ and $f\in\mathcal{D}^{0}(\lambda_{1}/2,r)$. By \cite[Proposition 4.3]{PY20}, under (H1) and (H2), for every $ t\in (0,+\infty)$ and $g\in \mathcal{B}_{b}(E)$, $ X_{t}(g)$ can be represented by
\begin{equation}\label{lemA.1.1}
 X_{t}(g)=T_{ t}g(x)+\int_{0}^{ t}\int_{E}T_{ t-s}g(y)M(\mathrm{d}s,\mathrm{d}y),
\end{equation}
where $M(\mathrm{d}s,\mathrm{d}y)$ is the martingale measure related to the superprocess $X$.
{
Assuming additionally (M4) holds, \cite[Lemma A.3]{Yang25} further proves that there exists some $c_{1}>0$ such that for any $g\in\mathcal{B}_{b}(E)$, $ t>0$ and $x\in E$,
\begin{align}\label{lemA.1.4}
\pp_{\delta_{x}}\big[\big(\int_{0}^{ t}\int_{E}T_{ t-s}g(y)M(\mathrm{d}s,\mathrm{d}y)\big)^{4}\big]
\le&c_{1}\Big(\Var_{\delta_{x}}[ X_{t}(g)]^{2}+\int_{0}^{ t}T_{s}\big(\vartheta\big[\Var_{\delta_{\cdot}}\big[ X_{t-s}(g)\big]\big]\big)(x)\mathrm{d}s\Big).
\end{align}
It follows by \eqref{lemA.1.1} and \eqref{lemA.1.4} that there is a constant $c_{2}>0$ such that
\begin{equation}\label{lemA.1.6}
\pp_{\delta_{x}}\big[ X_{t}(f)^{4}\big]\le c_{2}\Big((T_{t}f(x))^{4}+\Var_{\delta_{x}}[ X_{t}(f)]^{2}+\int_{0}^{t}T_{s}\big(\vartheta\big[\Var_{\delta_{\cdot}}[ X_{t-s}(f)]\big]\big)(x)\mathrm{d}s\Big)\quad\forall x\in E,\ t>0.
\end{equation}
Since $f\in \mathcal{D}^{0}(\lambda_{1}/2,r)$, we have $t^{-2-4r}\e^{-2\lambda_{1}t}T_{t}f(x)^{4}\to 0$ as $t\to+\infty$, and by Lemma \ref{lem0}(ii),
\begin{equation}\label{lemA.1.7}
\frac{\e^{-\lambda_{1}t}}{t^{1+2r}}\Var_{\delta_{y}}\big[ X_{t}(f)\big]\stackrel{b.p.}{\longrightarrow}\frac{\langle\vartheta[f^{*}],\widetilde{\varphi}\rangle}{1+2r}\varphi(y)\mbox{ as }t\to+\infty.
\end{equation}
Thus, there exists some $t_{1}\ge 1$, such that $M_{1}:=\sup_{t\ge t_{1},\,y\in E}t^{-1-2r}\e^{-\lambda_{1}t}\Var_{\delta_{y}}[ X_{t}(f)]<+\infty$. Recall from \eqref{eq1.9}
that there is a constant $c_{2}>0$ such that $|T_{t}g(y)|\le c_{2}\e^{\lambda_{1}t}\|g\|_{\infty}$ for all $y\in E$, $g\in\mathcal{B}_{b}(E)$ and $t\ge 0$. Hence, we have
\begin{align}\label{lemA.1.5}
\int_{0}^{t}T_{s}\big(\vartheta\big[\Var_{\delta_{\cdot}}[ X_{t-s}(f)]\big]\big)(x)\mathrm{d}s\le&c_{2}\int_{0}^{t}\e^{\lambda_{1}s}\|\vartheta\big[\Var_{\delta_{\cdot}}[ X_{t-s}(f)]\big]\|_{\infty}\mathrm{d}s\nonumber\\
\le&c_{2}\|\vartheta[1]\|_{\infty}\int_{0}^{t}\e^{\lambda_{1}s}\|\Var_{\delta_{\cdot}}[ X_{t-s}(f)]\|^{2}_{\infty}\mathrm{d}s\nonumber\\
=&c_{2}\|\vartheta[1]\|_{\infty}\e^{\lambda_{1}t}\int_{0}^{t}\e^{-\lambda_{1}u}\|\Var_{\delta_{\cdot}}[ X_{u}(f)]\|^{2}_{\infty}\mathrm{d}u.
\end{align}
Clearly, $\int_{0}^{t_{1}}\e^{-\lambda_{1}u}\|\Var_{\delta_{\cdot}}[ X_{u}(f)]\|^{2}_{\infty}\mathrm{d}u$ is finite. Moreover, for $t\ge t_{1}$,
$$\int_{t_{1}}^{t}\e^{-\lambda_{1}u}\|\Var_{\delta_{\cdot}}[ X_{u}(f)]\|^{2}_{\infty}\mathrm{d}u\le \int_{t_{1}}^{t}M_{1}^{2}\e^{\lambda_{1}u}u^{2+4r}\mathrm{d}u=O(t^{2+4r}\e^{\lambda_{1}t})\mbox{ as }t\to+\infty.$$
Inserting this into the right hand side of \eqref{lemA.1.5} yields that $\int_{0}^{t}T_{s}\big(\vartheta\big[\Var_{\delta_{\cdot}}\big[ X_{t-s}(f)\big]\big]\big)(x)\mathrm{d}s
=O(t^{2+4r}\e^{2\lambda_{1}t})$ as $t\to+\infty$. Combining this with \eqref{lemA.1.6}, \eqref{lemA.1.7}, we get that
$\pp_{\delta_{x}}\big[X_{t}(f)^{4}\big]=O(t^{2+4r}\e^{2\lambda_{1}t})$ as $t\to+\infty$.

Next we deal with $\pp_{\delta_{x}}[Y_{t}(f)^{4}]$.
By Minkowski's inequality, we have
\begin{align}\label{lemA.1.8}
\pp_{\delta_{x}}[ Y_{t}(f)^{4}]=&\|\int_{0}^{t} X_{s}(f)\mathrm{d}s\|^{4}_{L^{4}(\pp_{\delta_{x}})}\le \big(\int_{0}^{t}\| X_{s}(f)\|_{L^{4}(\pp_{\delta_{x}})}\mathrm{d}s\big)^{4}
\end{align}
Note that there exists $t_{2}>0$ such that $M_{2}:=\sup_{t\ge t_{2}}t^{-2-4r}\e^{-2\lambda_{1}t}\pp_{\delta_{x}}[ X_{t}(f)^{4}]<+\infty$.
Hence, for $t\ge t_{2}$,
$$\int_{t_{2}}^{t}\| X_{s}(f)\|_{L^{4}(\pp_{\delta_{x}})}\mathrm{d}s\le \int_{t_{2}}^{t}\big(M_{2}s^{2+4r}\e^{2\lambda_{1}s}\big)^{1/4}\mathrm{d}s=O(t^{r+\frac{1}{2}}\e^{\frac{\lambda_{1}}{2}t})\mbox{ as }t\to+\infty.$$
Inserting this into the right hand side of \eqref{lemA.1.8} yields that $\pp_{\delta_{x}}[ Y_{t}(f)^{4}]=O(t^{2+4r}\e^{2\lambda_{1}t})$ as $t\to+\infty$.
}
\end{proof}

\begin{lemma}\label{lem3}
Suppose the assumptions of Lemma \ref{lemA.1} hold. Then for $f,g\in \mathcal{D}^{0}(\lambda_{1}/2,r)$,
$$t^{-(r+1/2)\e^{-\frac{\lambda_{1}}{2}t}}(Y_{t}(g),X_{t}(f))\stackrel{d}{\to}\sqrt{\frac{W^{\varphi}_{\infty}}{1+2r}}(N_{1}(g^{*}),N_{2}(f^{*}))\mbox{ as }t\to+\infty,$$
where $(N_{1}(g^{*}),N_{2}(f^{*}))$ is an $\R^{2}$-valued Gaussian random variable with mean $0$ and covariance matrix given by
$$
\begin{pmatrix}
\frac{4}{\lambda^{2}_{1}}\langle \vartheta[g^{*}],\widetilde{\varphi}\rangle &\frac{2}{\lambda_{1}}\langle\vartheta[f^{*},g^{*}],\widetilde{\varphi}\rangle\\
\frac{2}{\lambda_{1}}\langle\vartheta[f^{*},g^{*}],\widetilde{\varphi}\rangle &\langle \vartheta[f^{*}],\widetilde{\varphi}\rangle
\end{pmatrix},
$$
and $(N_{1}(g^{*}),N_{2}(f*))$ is independent of $W^{\varphi}_{\infty}$.
\end{lemma}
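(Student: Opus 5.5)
We follow the conditioning argument of Proposition \ref{prop:fddsuper}, with the polynomial factor handled as in the critical regime of \cite{Yang25}. Fix $\theta,\eta\in\R$ and set $\zeta_t:=\theta Y_t(g)+\eta X_t(f)$, normalised by $a_t:=t^{-(r+1/2)}\e^{-\frac{\lambda_1}{2}t}$. The target is
$$\pp_{\delta_x}\big[\e^{\mathrm{i}a_t\zeta_t}\big]\to\pp_{\delta_x}\big[\exp\{-\tfrac12 W^{\varphi}_{\infty}\,\Sigma(\theta,\eta)/(1+2r)\}\big],$$
where $\Sigma(\theta,\eta)=\frac{4}{\lambda_1^2}\theta^2\langle\vartheta[g^*],\widetilde\varphi\rangle+\frac{4}{\lambda_1}\theta\eta\langle\vartheta[f^*,g^*],\widetilde\varphi\rangle+\eta^2\langle\vartheta[f^*],\widetilde\varphi\rangle$. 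The first two moments are already available. The means $a_tM_t[g](x)$ and $a_tT_tf(x)$ are $O(t^{-1/2})\to0$ by Lemma \ref{lem1}(ii) and the definition of $\mathcal{D}(\lambda_1/2,r)$. The variance $a_t^2\Var_{\delta_y}[\zeta_t]$ converges b.p. to $\Sigma(\theta,\eta)\varphi(y)/(1+2r)$, by Lemma \ref{lem0}(ii) together with the variance and covariance statements of Lemma \ref{lem1}(ii).

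We cannot condition on $\mathcal F_{t}$ with a fixed time lag, because in the critical regime the variance accumulates over the whole time interval (this is the factor $t^{1+2r}$). Instead we split at an intermediate time $t_0=t_0(t)$ with $t_0\to\infty$ and $t-t_0\to\infty$, for instance $t_0=\delta t$ with $\delta\in(0,1)$ later sent to $0$, or $t_0=\sqrt t$. We write
$$\zeta_t=\theta Y_{t_0}(g)+\theta\big(Y_t(g)-Y_{t_0}(g)\big)+\eta X_t(f).$$
The term $a_tY_{t_0}(g)$ is negligible in $L^2$: its second moment is $O(t_0^{1+2r}\e^{\lambda_1t_0}/(t^{1+2r}\e^{\lambda_1 t}))$ by Lemma \ref{lem1}(ii). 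Given $\mathcal F_{t_0}$, the Markov property identifies the remaining part in law with $\theta Y_{s}(g)+\eta X_{s}(f)$ under $\pp_{X_{t_0}}$, where $s=t-t_0$. This random variable is infinitely divisible, so, exactly as in \eqref{lem3.1.4}, its conditional characteristic function equals
$$\exp\Big\{\big\langle \mathrm{i}a_t\pp_{\delta_\cdot}[\zeta_s]-\tfrac12a_t^2\Var_{\delta_\cdot}[\zeta_s]+\mathcal L_{\pi_{\zeta_s}}[a_t],X_{t_0}\big\rangle\Big\}.$$
The three terms in the exponent are treated as follows.

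\emph{Drift term.} $\langle a_t\pp_{\delta_\cdot}[\zeta_s],X_{t_0}\rangle=\theta a_tX_{t_0}(M_s[g])+\eta a_tX_{t_0}(T_sf)$. It has mean of order $a_t s^r\e^{\frac{\lambda_1}{2}(s+t_0)}\cdot O(1)$. Its variance can be controlled by Lemma \ref{lem0}(ii) applied to the functions $s^{-r}\e^{-\frac{\lambda_1}{2}s}T_sf$ and $s^{-r}\e^{-\frac{\lambda_1}{2}s}M_s[g]$. These are centred and lie uniformly in $\mathcal{D}^0(\lambda_1/2,\cdot)$ up to uniformly small errors. So the term is of order $(t_0/t)^{1/2}$ in $L^2$. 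This forces $t_0=o(t)$, or else $\delta\to0$ at the end.

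\emph{Gaussian term.} $a_t^2\Var_{\delta_y}[\zeta_s]\approx \e^{-\lambda_1 t_0}(s/t)^{1+2r}\,\Sigma\varphi(y)/(1+2r)$. Hence, by Lemma \ref{lem2.2}(i) with $f_{t_0}:=\e^{\lambda_1t_0}a_t^2\Var_{\delta_\cdot}[\zeta_s]\stackrel{b.p.}{\longrightarrow}\Sigma\varphi/(1+2r)$ (using $\langle\varphi,\widetilde\varphi\rangle=1$), the pairing $\langle\cdot,X_{t_0}\rangle$ tends in $L^2$ to $\Sigma W^\varphi_\infty/(1+2r)$.

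\emph{Jump term.} This is the main obstacle. The bound \eqref{lem3.1.3} together with \eqref{lem3.1.2} is no longer enough, because the scale $a_t$ does not match $\e^{-\frac{\lambda_1}{2}t_0}$ uniformly. Here (M4) is used. By \eqref{ineq2}, $|\mathcal L_{\pi}[a_t](z)|\le a_t^3\int|y|^3\pi(z,\mathrm{d}y)/6$, and the Lévy measure integral $\int y^4\pi_{\zeta_s}(z,\mathrm{d}y)$ is bounded by the fourth cumulant, hence by $c\,\pp_{\delta_z}[\zeta_s^4]$. By Lemma \ref{lemA.1}, which extends to combinations by Minkowski's inequality, this is $O(s^{2+4r}\e^{2\lambda_1 s})$. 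Cauchy--Schwarz, interpolating between the second and fourth moments, then gives
$$\e^{\lambda_1t_0}\,\|\mathcal L_{\pi_{\zeta_s}}[a_t]\|_\infty\lesssim \e^{\lambda_1t_0}a_t^3\,(s^{1+2r}\e^{\lambda_1 s})^{1/2}(s^{2+4r}\e^{2\lambda_1s})^{1/2}\to0,$$
which holds because $\e^{-\frac{\lambda_1}{2}t_0}t^{-3/2}\to0$. Lemma \ref{lem2.2}(i) then makes $\langle\mathcal L,X_{t_0}\rangle\to0$ in $L^2$. If the precise cumulant estimate is awkward, we would instead peel off increments by a backward recursion as described in Section \ref{sec1.5}.

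Combining the three terms by dominated convergence for the characteristic functions, and then letting $\delta\to0$ if needed, yields the limit. By the Cramér--Wold device this is the claimed mixed Gaussian limit, and it is independent of $W^\varphi_\infty$ because it appears as a mixture over $W^\varphi_\infty$.
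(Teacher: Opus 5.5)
Your proposal is correct and is essentially the paper's proof. Splitting at $t_0=\delta t$ with $\delta\to0$ is exactly the paper's conditioning on $\mathcal{F}_t$ for the horizon $nt$ with $n\to\infty$, followed by the same three steps: the conditional means are $L^2$-negligible, the Gaussian part converges by Lemma \ref{lem2.2}(i), and the jump part is controlled using (M4) through Lemma \ref{lemA.1}. The only deviation is that you use the Lyapunov bound $\int|y|^3\pi\le(\int y^2\pi)^{1/2}(\int y^4\pi)^{1/2}$ instead of the paper's $\epsilon$-truncation, and there the correct rate is $(s/t)^{3r+3/2}\e^{-\frac{\lambda_1}{2}t_0}$ (no extra factor $t^{-3/2}$), which still tends to $0$.
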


\proof Suppose $n>1$. We have
\begin{align*}
Y_{n t}(g)=&Y_{n t}(g)-\pp_{\delta_{x}}\big[Y_{n t}(g)\,|\,\mathcal{F}_{t}\big]+\pp_{\delta_{x}}\big[Y_{n t}(g)\,|\,\mathcal{F}_{t}\big],\\
X_{n t}(f)=&X_{n t}(f)-\pp_{\delta_{x}}\big[X_{n t}(f)\,|\,\mathcal{F}_{t}\big]+\pp_{\delta_{x}}\big[X_{n t}(f)\,|\,\mathcal{F}_{t}\big].
\end{align*}
Step 1. We shall show that
\begin{equation}\label{lem3.0}
\frac{\e^{-\frac{\lambda_{1}}{2}n t}}{(n t)^{(r+1/2)}}\pp_{\delta_{x}}\big[Y_{n t}(g)\,|\,\mathcal{F}_{t}\big]\to 0\mbox{ in }L^{2}(\pp_{\delta_{x}})\mbox{ as }t\to+\infty\mbox{ and then }n\to+\infty
\end{equation}
and
\begin{equation}\label{lem3.new2}
\frac{\e^{-\frac{\lambda_{1}}{2}nt}}{(nt)^{r+1/2}}\pp_{\delta_{x}}\big[X_{nt}(f)|\mathcal{F}_{t}\big]\to 0\mbox{ in }L^{2}(\pp_{\delta_{x}})\mbox{ as }t\to+\infty \mbox{ and then } n\to+\infty.
\end{equation}
\noindent Proof of \eqref{lem3.0}: By the Markov property we have
$$\pp_{\delta_{x}}\big[Y_{n t}(g)\,|\,\mathcal{F}_{t}\big]=\pp_{X_{t}}\big[Y_{(n-1)t}(g)\big]+Y_{t}(g)=\langle M_{(n-1)t}[g],X_{t}\rangle+Y_{t}(g).$$
Since $g\in\mathcal{D}^{0}(\lambda_{1}/2,r)$, it follows by Lemma \ref{lem1}(ii) that
$$\pp_{\delta_{x}}\big[\frac{\e^{-\frac{\lambda_{1}}{2}nt}}{(nt)^{r+1/2}}Y_{t}(g)\big]=\frac{\e^{-\frac{\lambda_{1}}{2}(n-1)t}}{n^{r+1/2}t^{1/2}}\cdot\frac{\e^{-\frac{\lambda_{1}}{2}t}}{t^{r}}
M_{t}[g](x)\to 0\mbox{ as }t\to+\infty,$$
and
$$\Var_{\delta_{x}}\big[\frac{\e^{-\frac{\lambda_{1}}{2}n t}}{(n t)^{r+1/2}}Y_{t}(g)\big]=\frac{\e^{-\lambda_{1}(n-1)t}}{n^{1+2r}}\cdot\frac{\e^{-\lambda_{1}t}}{t^{1+2r}}\Var_{\delta_{x}}\big[Y_{t}(g)\big]\to 0\mbox{ as }t\to+\infty.$$
Hence
\begin{equation}\label{lem3.9}
\frac{\e^{-\frac{\lambda_{1}}{2}nt}}{(nt)^{r+1/2}}Y_{t}(g)\to 0\mbox{ in }L^{2}(\pp_{\delta_{x}})\mbox{ as }t\to+\infty.
\end{equation}

On the other hand, by Lemma \ref{lem1}(ii), we have
\begin{equation}\label{lem3.1}
\pp_{\delta_{x}}\big[\frac{\e^{-\frac{\lambda_{1}}{2}n t}}{(n t)^{r+1/2}}\langle M_{(n-1)t}[g],X_{t}\rangle\big]=\frac{1}{(nt)^{1/2}}\cdot\frac{\e^{-\frac{\lambda_{1}}{2}n t}}{(n t)^{r}}\big(M_{nt}[g](x)-M_{t}[g](x)\big)\to 0\mbox{ as }t\to +\infty.
\end{equation}
Note that
\begin{align}\label{lem3.11}
&\Var_{\delta_{x}}\big[\frac{\e^{-\frac{\lambda_{1}}{2}n t}}{(n t)^{(r+1/2)}}\langle M_{(n-1)t}[g],X_{t}\rangle\big]\nonumber\\
=&\frac{\e^{-\lambda_{1}n t}}{(n t)^{1+2r}}\int_{0}^{t}T_{s}\big(\vartheta\big[T_{t-s}\big(M_{(n-1)t}[g]\big)\big]\big)(x)\mathrm{d}s\nonumber\\
=&\frac{\e^{-\lambda_{1}n t}}{(n t)^{1+2r}}\int_{0}^{t}T_{s}\big(\vartheta\big[M_{nt-s}[g]-M_{t-s}[g]\big]\big)(x)\mathrm{d}s\nonumber\\
=&\frac{1}{nt}\int_{0}^{t}\e^{-\lambda_{1}s}\big(1-\frac{s}{nt}\big)^{2 r}T_{s}\big(\vartheta\big[\frac{\e^{-\frac{\lambda_{1}}{2}(nt-s)}}{(nt-s)^{r}}M_{nt-s}[g]
-\e^{-\frac{\lambda_{1}}{2}(n-1)t}\big(\frac{t-s}{nt-s}\big)^{r}\cdot\frac{\e^{-\frac{\lambda_{1}}{2}(t-s)}}{(t-s)^{r}}M_{t-s}[g]\big]\big)(x)\mathrm{d}s\nonumber\\
=&\frac{1}{n}\int_{0}^{1}\e^{-\lambda_{1}t u}\big(1-\frac{u}{n}\big)^{2 r}T_{t u}\big(\vartheta\big[\frac{\e^{-\frac{\lambda_{1}}{2}(n-u)t}}{((n-u)t)^{r}}M_{(n-u)t}[g]
-\e^{-\frac{\lambda_{1}}{2}(n-1)t}\big(\frac{1-u}{n-u}\big)^{r}\cdot\frac{\e^{-\frac{\lambda_{1}}{2}(1-u)t}}{((1-u)t)^{r}}M_{(1-u)t}[g]\big]\big)(x)\mathrm{d}u\nonumber\\
\le&\frac{1}{n}\int_{0}^{1}\e^{-\lambda_{1}t u}T_{t u}\big(\vartheta\big[\frac{\e^{-\frac{\lambda_{1}}{2}(n-u)t}}{((n-u)t)^{r}}M_{(n-u)t}[g]
-\e^{-\frac{\lambda_{1}}{2}(n-1)t}\big(\frac{1-u}{n-u}\big)^{r}\cdot\frac{\e^{-\frac{\lambda_{1}}{2}(1-u)t}}{((1-u)t)^{r}}M_{(1-u)t}[g]\big]\big)(x)\mathrm{d}u\nonumber\\
=:&\frac{1}{n}\int_{0}^{1}\e^{-\lambda_{1}t u}T_{t u}\big(\vartheta[f^{(n)}_{t}(u,\cdot)]\big)(x)\mathrm{d}u.
\end{align}
Using the fact that $t^{-r}\e^{-\lambda_{1}t}M_{t}[g]\rightrightarrows \frac{2}{\lambda_{1}}g^{*}$ as $t\to+\infty$, one can easily show that for every $n>1$,
$\sup_{0<u<1,\,t\ge 1,\, x\in E}|f^{(n)}_{t}(u,x)|<+\infty$, and for every $u\in (0,1)$, $f^{(n)}_{t}(u,\cdot)\rightrightarrows 2g^{*}/\lambda_{1}$ as $t\to+\infty$.
Consequently, we get $\vartheta[f^{(n)}_{t}(u,\cdot)]\stackrel{b.p.}{\longrightarrow}\frac{4}{\lambda^{2}_{1}}\vartheta[g^{*}]$ as $t\to+\infty$ for every $n>1$ and $u\in (0,1)$. Using this, {(H2)} and the dominated convergence theorem, we can further show that as $t\to+\infty$ the integral in the right hand side of \eqref{lem3.11} converges to $\frac{4}{\lambda^{2}_{1}}\langle\vartheta[g^{*}],\widetilde{\varphi}\rangle \varphi(x)$.
Hence we get $\Var_{\delta_{x}}\big[\frac{\e^{-\frac{\lambda_{1}}{2}n t}}{(n t)^{(r+1/2)}}\langle M_{(n-1)t}[g],X_{t}\rangle\big]\to 0$ as $t\to+\infty$ and then $n\to+\infty$. This together with \eqref{lem3.1} yields that
$$\frac{\e^{-\frac{\lambda_{1}}{2}n t}}{(nt)^{r+1/2}}\langle M_{(n-1)t}[g],X_{t}\rangle \to 0\mbox{ in }L^{2}(\pp_{\delta_{x}})\mbox{ as }t\to+\infty\mbox{ and then }n\to+\infty,$$
which combined with \eqref{lem3.9} yields \eqref{lem3.0}.

\smallskip

\noindent Proof of \eqref{lem3.new2}: By the Markov property, we have $\pp_{\delta_{x}}\big[X_{nt}(f)|\mathcal{F}_{t}\big]= X_{t}(T_{(n-1)t}f)$.
We note that
\begin{align*}
\mathrm{Var}_{\delta_{x}}[ X_{t}(T_{(n-1)t}f)]
=&\int_{0}^{t}T_{t-s}\left(\vartheta[T_{(n-1)t+s}f]\right)(x){ \ud s}\\
=&\int_{(n-1)t}^{nt}T_{nt-u}\left(\vartheta[T_{u}f]\right)(x){ \ud u}\\
=&\int_{0}^{nt}T_{nt-u}\left(\vartheta[T_{u}f]\right)(x){ \ud u}-\int_{0}^{(n-1)t}T_{t}\circ T_{(n-1)t-u}\left(\vartheta[T_{u}f]\right)(x){ \ud u}\\
=&\mathrm{Var}_{\delta_{x}}[ X_{nt}(f)]-T_{t}\left(\mathrm{Var}_{\delta_{\cdot}}[ X_{(n-1)t}(f)]\right)(x).
\end{align*}
Thus,
\begin{align}\label{lem2case2.1}
\frac{\e^{-\lambda_{1}nt}}{(nt)^{1+2r}}\mathrm{Var}_{\delta_{x}}[ X_{t}(T_{(n-1)t}f)]
=&\frac{\e^{-\lambda_{1}nt}}{(nt)^{1+2r}}\mathrm{Var}_{\delta_{x}}[ X_{nt}(f)]\nonumber\\
&-\left(1-\frac{1}{n}\right)^{1+2r}\e^{-\lambda_{1} t}T_{t}\left(\frac{\e^{-\lambda_{1}(n-1)t}}{((n-1)t)^{1+2r}}\mathrm{Var}_{\delta_{\cdot}}[ X_{(n-1)t}(f)]\right)(x).
\end{align}
Recall from Lemma \ref{lem0}(ii) that $t^{-1-2r}\e^{-\lambda_{1} t}\mathrm{Var}_{\delta_{y}}[X_{t}(f)]\stackrel{b.p.}{\longrightarrow}(1+2r)^{-1}\langle\vartheta[f^{*}],\widetilde{\varphi}\rangle \varphi(y)$ as $t\to+\infty$. This combined with (H2) and the dominated convergence theorem yields that
\begin{align*}
\lim_{t\to+\infty}\e^{-\lambda_{1} t}T_{t}\left(\frac{\e^{-\lambda_{1}(n-1)t}}{((n-1)t)^{1+2r}}\mathrm{Var}_{\delta_{\cdot}}[ X_{(n-1)t}(f)]\right)(x)
=&\frac{\langle\vartheta[f^{*}],\widetilde{\varphi}\rangle}{1+2r}\varphi(x).\end{align*}
Hence we get from \eqref{lem2case2.1} that
\begin{equation}\nonumber
\lim_{t\to+\infty}\frac{\e^{-\lambda_{1}n t}}{(nt)^{1+2r}}\Var_{\delta_{x}}\big[ X_{t}(T_{(n-1)t}f)\big]=\left(1-\left(1-\frac{1}{n}\right)^{1+2r}\right)\frac{\langle\vartheta[f^{*}],\widetilde{\varphi}\rangle}{1+2r}\varphi(x).
\end{equation}
On the other hand, from the definition,
$$\frac{\e^{-\frac{\lambda_{1}}{2}n t}}{(nt)^{1/2+r}}\pp_{\delta_{x}}\left[ X_{t}(T_{(n-1)t}f)\right]=\frac{1}{(nt)^{1/2}}\cdot\frac{\e^{-\frac{\lambda_{1}}{2}n t}}{(nt)^{r}}T_{nt}f(x)\to 0\mbox{ as }t\to+\infty.$$
Thus we get
\begin{align*}
&\lim_{n\to+\infty}\lim_{t\to+\infty}\frac{\e^{-\lambda_{1}n t}}{(nt)^{1+2r}}\pp_{\delta_{x}}\big[ X_{t}(T_{(n-1)t}f)^{2}\big]\\=&\lim_{n\to+\infty}\lim_{t\to+\infty}\frac{\e^{-\lambda_{1}n t}}{(nt)^{1+2r}}\big(\pp_{\delta_{x}}\big[ X_{t}(T_{(n-1)t}f)\big]^{2}
+\Var_{\delta_{x}}\big[ X_{t}(T_{(n-1)t}f)\big]\big)=0.
\end{align*}
Therefore, \eqref{lem3.new2} follows.
\smallskip

\noindent Step 2. We shall show that
\begin{equation}\label{lem3.12}
\frac{\e^{-\frac{\lambda_{1}}{2}n t}}{(nt)^{r+1/2}}\big(Y_{n t}(g)-\pp_{\delta_{x}}\big[Y_{nt}(g)|\mathcal{F}_{t}\big],X_{nt}(f)-\pp_{\delta_{x}}\big[X_{nt}(f)|\mathcal{F}_{t}\big]\big)
\stackrel{d}{\to}\big(1-\frac{1}{n}\big)^{r+1/2}\sqrt{\frac{W^{\varphi}_{\infty}}{1+2r}}(N_{1}(g^{*}),N_{2}(f^{*}))
\end{equation}
as $t\to+\infty$.
The proof idea is similar to that of \eqref{lem3.1.0}. By the Markov property, we have
$$Y_{n t}(g)-\pp_{\delta_{x}}\big[Y_{nt}(g)|\mathcal{F}_{t}\big]=Y_{nt}(g)-Y_{t}(g)-X_{t}(M_{(n-1)t}[g])\mbox{ and }
X_{nt}(f)-\pp_{\delta_{x}}\big[X_{nt}(f)|\mathcal{F}_{t}\big]=X_{nt}(f)-X_{t}(T_{(n-1)t}f).$$
Moreover, given $\mathcal{F}_{t}$, $(Y_{nt}(g)-Y_{t}(g),X_{nt}(f))$ is equal in distribution to $((Y_{(n-1)t}(g),X_{(n-1)t}(f)),\pp_{X_{t}})$.
Thus, we have for $\theta_{1},\theta_{2}\in\R$,
\begin{align}\label{lem3.5.1}
&\pp_{\delta_{x}}\big[\exp\big\{\mathrm{i}(nt)^{-(r+1/2)}\e^{-\frac{\lambda_{1}}{2}n t}\big(\theta_{1}(Y_{n t}(g)-\pp_{\delta_{x}}\big[Y_{nt}(g)|\mathcal{F}_{t}\big])+\theta_{2}(X_{nt}(f)-\pp_{\delta_{x}}\big[X_{nt}(f)|\mathcal{F}_{t}\big])\big)\big\}\big]\nonumber\\
=&\pp_{\delta_{x}}\big[\exp\big\{-\mathrm{i}(nt)^{-(r+1/2)}\e^{-\frac{\lambda_{1}}{2}nt}\langle \theta_{1}M_{(n-1)t}[g]+\theta_{2}T_{(n-1)t}f,X_{t}\rangle\big\}
\nonumber\\
&\cdot\pp_{X_{t}}\big[\exp\big\{\mathrm{i}(nt)^{-(r+1/2)}\e^{-\frac{\lambda_{1}}{2}n t}\big(\theta_{1}Y_{(n-1)t}(g)+\theta_{2}X_{(n-1)t}(f)\big)\big\}\big]\big].
\end{align}
Let $\zeta_{(n-1)t}:=((n-1)t)^{-(r+1/2)}\e^{-\frac{\lambda_{1}}{2}(n-1)t}\big(\theta_{1}Y_{(n-1)t}(g)+\theta_{2}X_{(n-1)t}(f)\big)$. The regular branching property and condition (M2) imply that for every $\mu\in\mf$, $\zeta_{(n-1)t}$ under $\pp_{\mu}$ is an infinitely divisible random variable with finite mean and variance, and hence its characteristic function can be represented as
\begin{align*}
\pp_{\mu}\big[\e^{\mathrm{i}\theta\zeta_{(n-1)t}}\big]=&\exp\big\{\mathrm{i}\theta\langle\pp_{\delta_{\cdot}}[\zeta_{(n-1)t}],\mu\rangle-\frac{1}{2}\theta^{2}\langle
\Var_{\delta_{\cdot}}[\zeta_{(n-1)t}],\mu\rangle\\
&+\langle\int_{\R\setminus\{0\}}\big(\e^{\mathrm{i}\theta y}-1-\mathrm{i}\theta y+\frac{1}{2}\theta^{2}y^{2}\big)\pi_{(n-1)t}(\cdot,\mathrm{d}y),\mu\rangle
\big\}\quad\forall \theta\in\R,
\end{align*}
where $\pi_{(n-1)t}(z,\mathrm{d}y)$ is a kernel from $E$ to $\R\setminus\{0\}$ satisfying that
\begin{equation}\label{lem4.2.n1}
\int_{\R\setminus\{0\}}y^{2}\pi_{(n-1)t}(z,\mathrm{d}y)\le\Var_{\delta_{z}}[\zeta_{(n-1)t}]\quad\forall z\in E.
\end{equation}
Note that $\pp_{\delta_{y}}[\zeta_{(n-1)t}]=((n-1)t)^{-(r+1/2)}\e^{-\frac{\lambda_{1}}{2}(n-1)t}\big(\theta_{1}M_{(n-1)t}[g](y)+\theta_{2}T_{(n-1)t}f(y)\big)$. Then the right hand side of \eqref{lem3.5.1} can be represented by
\begin{align}\label{lem3.2}
&\pp_{\delta_{x}}\big[\exp\big\{-\mathrm{i}\big(1-\frac{1}{n}\big)^{r+1/2}\e^{-\frac{\lambda_{1}}{2}t}\langle \pp_{\delta_{\cdot}}[\zeta_{(n-1)t}],X_{t}\rangle\big\}\cdot
\pp_{X_{t}}\big[\exp\big\{\mathrm{i}\big(1-\frac{1}{n}\big)^{r+1/2}\e^{-\frac{\lambda_{1}}{2}t}\zeta_{(n-1)t}\big\}\big]\big]\nonumber\\
=&\pp_{\delta_{x}}\big[\exp\big\{-\frac{1}{2}\big(1-\frac{1}{n}\big)^{2r+1}\e^{-\lambda_{1}t}\langle \Var_{\delta_{\cdot}}\big[\zeta_{(n-1)t}\big],X_{t}\rangle
+\langle \mathcal{L}_{\pi_{(n-1)t}}\big[\big(1-\frac{1}{n}\big)^{r+1/2}\e^{-\frac{\lambda_{1}}{2}t}\big],X_{t}\rangle
\big\}\big],
\end{align}
where $\mathcal{L}_{\pi_{(n-1)t}}[\theta](z):=\int_{\R\setminus\{0\}}\big(\e^{\mathrm{i}\theta y}-1-i\theta y+\frac{1}{2}\theta^{2}y^{2}\big)\pi_{(n-1)t}(z,\mathrm{d}y)$ for $z\in E$ and $\theta\in \R$.
By Lemma \ref{lem0}(ii) and Lemma \ref{lem1}(ii), we have
\begin{align}\label{lem3.3}
\frac{\e^{-\lambda_{1}t}}{t^{1+2r}}\Var_{\delta_{x}}[\theta_{1}Y_{t}(g)+\theta_{2}X_{t}(f)]
=&\frac{\e^{-\lambda_{1}t}}{t^{1+2r}}\big(\theta^{2}_{1}\Var_{\delta_{x}}[Y_{t}(g)]
+2\theta_{1}\theta_{2}\mathrm{Cov}_{\delta_{x}}\big(Y_{t}(g),X_{t}(f)\big)+\theta^{2}_{2}\Var_{\delta_{x}}[X_{t}(f)]\big)\nonumber\\
\stackrel{b.p.}{\longrightarrow}&\frac{1}{1+2r}\varphi(x)\big(\frac{4}{\lambda^{2}_{1}}\theta^{2}_{1}\langle\vartheta[g^{*}],
\widetilde{\varphi}\rangle
+\frac{4}{\lambda_{1}}\theta_{1}\theta_{2}\langle\vartheta[f^{*},g^{*}],\widetilde{\varphi}\rangle+\theta^{2}_{2}\langle \vartheta[f^{*}],\widetilde{\varphi}\rangle\big),
\end{align}
as $t\to+\infty$. Thus, it follows by Lemma \ref{lem2.2} that
\begin{align}\label{lem3.5}
&\big(1-\frac{1}{n}\big)^{1+2r}\e^{-\lambda_{1}t}\langle\Var_{\delta_{\cdot}}[\zeta_{(n-1)t}],X_{t}\rangle
\nonumber\\
=&\big(1-\frac{1}{n}\big)^{1+2r}\e^{-\lambda_{1}t}\langle \frac{\e^{-\lambda_{1}(n-1)t}}{((n-1)t)^{1+2r}}\Var_{\delta_{\cdot}}[\theta_{1}Y_{(n-1)t}(g)+\theta_{2}X_{(n-1)t}(f)],X_{t}\rangle\nonumber\\
\to& \frac{(1-1/n)^{1+2r}}{1+2r}\big(\frac{4}{\lambda^{2}_{1}}\theta^{2}_{1}\langle\vartheta[g^{*}],\widetilde{\varphi}
+\frac{4}{\lambda_{1}}\theta_{1}\theta_{2}\langle\vartheta[f^{*},g^{*}],\widetilde{\varphi}\rangle+\theta^{2}_{2}\langle \vartheta[f^{*}],\widetilde{\varphi}\rangle\big)W^{\varphi}_{\infty}
\end{align}
in $L^{2}(\pp_{\delta_{x}})$ as $t\to+\infty$.

On the other hand, we claim that for every $n>1$,
\begin{equation}\label{lem3.4}
\e^{\lambda_{1}t}\mathcal{L}_{\pi_{(n-1)t}}\big[\big(1-\frac{1}{n}\big)^{r+1/2}\e^{-\frac{\lambda_{1}}{2}t}\big]\stackrel{b.p.}{\longrightarrow}0\mbox{ as }t\to+\infty,
\end{equation}
whose proof is deferred to the end of this step.
Thus it follows by Lemma \ref{lem2.2} that
\begin{equation*}
\langle \mathcal{L}_{\pi_{(n-1)t}}\big[\big(1-\frac{1}{n}\big)^{r+1/2}\e^{-\frac{\lambda_{1}}{2}t}\big],X_{t}\rangle \to 0\mbox{ in }L^{2}(\pp_{\delta_{x}})\mbox{ as }t\to+\infty.
\end{equation*}
Combining this with \eqref{lem3.5} and \eqref{lem3.2}, we get for every $\theta_{1},\theta_{2}\in \R$,
\begin{align*}
&\lim_{t\to+\infty}\pp_{\delta_{x}}\big[\exp\big\{\mathrm{i}(nt)^{-(r+1/2)}\e^{-\frac{\lambda_{1}}{2}n t}\big(\theta_{1}(Y_{n t}(g)-\pp_{\delta_{x}}\big[Y_{nt}(g)|\mathcal{F}_{t}\big])+\theta_{2}(X_{nt}(f)-\pp_{\delta_{x}}\big[X_{nt}(f)|\mathcal{F}_{t}\big])\big)\big\}\big]\\
=&\pp_{\delta_{x}}\big[\exp\big\{-\frac{1}{2(1+2r)}\big(1-\frac{1}{n}\big)^{1+2r}W^{\varphi}_{\infty}\big(\frac{4}{\lambda^{2}_{1}}\theta^{2}_{1}\langle\vartheta[g^{*}],\widetilde{\varphi}
\rangle
+\frac{4}{\lambda_{1}}\theta_{1}\theta_{2}\langle\vartheta[f^{*},g^{*}],\widetilde{\varphi}\rangle+\theta^{2}_{2}\langle\vartheta[f^{*}],\widetilde{\varphi}\rangle\big)\big\}\big]\\
=&\pp_{\delta_{x}}\big[\exp\big\{\mathrm{i}\big(1-\frac{1}{n}\big)^{r+1/2}\sqrt{\frac{W^{\varphi}_{\infty}}{1+2r}}(\theta_{1}N_{1}(g^{*})+\theta_{2}N_{2}(f^{*}))\big\}\big].
\end{align*}
Hence we obtain \eqref{lem3.12}.

It remains to prove \eqref{lem3.4}. Note that by \eqref{ineq2} for every $n>1$ and $x\in E$,
\begin{align}\label{lem3.5.2}
&\Big|\mathcal{L}_{\pi_{(n-1)t}}\big[\big(1-\frac{1}{n}\big)^{r+1/2}\e^{-\frac{\lambda_{1}}{2}t}\big](x)\Big|\\
\le&\big(1-\frac{1}{n}\big)^{1+2r}\e^{-\lambda_{1}t}\int_{\R\setminus\{0\}}y^{2}\big(1\wedge\frac{\big(1-1/n\big)^{r+1/2}\e^{-\frac{\lambda_{1}}{2}t}|y|}{6}\big)\pi_{(n-1)t}(x,\mathrm{d}y).
\end{align}
By \eqref{lem3.3}, $\Var_{\delta_{x}}[\zeta_{(n-1)t}]$ is uniformly bounded in $(x,t)$ for $t$ sufficiently large, and hence by \eqref{lem4.2.n1} and \eqref{lem3.5.2}, $\Big|\mathcal{L}_{\pi_{(n-1)t}}\big[\big(1-\frac{1}{n}\big)^{r+1/2}\e^{-\frac{\lambda_{1}}{2}t}\big](x)\Big|$ is uniformly bounded in $(x,t)$ for $t$ sufficiently large.
Hence, to show \eqref{lem3.4} we only need to show that for every $n>1$,
\begin{equation}\label{lem3.6}
\lim_{t\to+\infty}\e^{\lambda_{1}t}\mathcal{L}_{\pi_{(n-1)t}}\big[\big(1-\frac{1}{n}\big)^{r+1/2}\e^{-\frac{\lambda_{1}}{2}t}\big](x)=0\quad\forall x\in E.
\end{equation}
Let $\epsilon>0$. We have by \eqref{lem4.2.n1} and \eqref{lem3.5.2}
\begin{align*}
&\Big|\mathcal{L}_{\pi_{(n-1)t}}\big[\big(1-\frac{1}{n}\big)^{r+1/2}\e^{-\frac{\lambda_{1}}{2}t}\big](x)\Big|\\
\le&\big(1-\frac{1}{n}\big)^{1+2r}\e^{-\lambda_{1}t}\Big(\int_{\big(1-n^{-1}\big)^{r+1/2}\e^{-\frac{\lambda_{1}}{2}t}|y|\le \epsilon}\frac{\epsilon}{6}y^{2}\pi_{(n-1)t}(x,\mathrm{d}y)
+\int_{\big(1-n^{-1}\big)^{r+1/2}\e^{-\frac{\lambda_{1}}{2}t}|y|>\epsilon}y^{2}\pi_{(n-1)t}(x,\mathrm{d}y)\Big)\\
\le&\big(1-\frac{1}{n}\big)^{1+2r}\e^{-\lambda_{1}t}\Big(\frac{\epsilon}{6}\Var_{\delta_{x}}[\zeta_{(n-1)t}]+\int_{\big(1-n^{-1}\big)^{r+1/2}\e^{-\frac{\lambda_{1}}{2}t}|y|>\epsilon}y^{2}\pi_{(n-1)t}(x,\mathrm{d}y)
\big).
\end{align*}
Consequently,
\begin{align}
\e^{\lambda_{1}t}\Big|\mathcal{L}_{\pi_{(n-1)t}}\big[\big(1-\frac{1}{n}\big)^{r+1/2}\e^{-\frac{\lambda_{1}}{2}t}\big](x)\Big|
\le&\big(1-\frac{1}{n}\big)^{1+2r}\Big(\frac{\epsilon}{6}\frac{\e^{-\lambda_{1}(n-1)t}}{((n-1)t)^{1+2r}}\Var_{\delta_{x}}\big[\theta_{1}Y_{(n-1)t}(g)+\theta_{2}X_{(n-1)t}(f)\big]\nonumber\\
&+\int_{|y|>\epsilon\big(1-n^{-1}\big)^{-(r+1/2)}\e^{\frac{\lambda_{1}}{2}t}}y^{2}\pi_{(n-1)t}(x,\mathrm{d}y)\Big).\label{lem4.2.n2}
\end{align}
In view of \eqref{lem3.3}, if we can show that for every $n>1$ and $x\in E$,
\begin{equation}\label{lem3.7}
\int_{|y|>\epsilon\big(1-n^{-1}\big)^{-(r+1/2)}\e^{\frac{\lambda_{1}}{2}t}}y^{2}\pi_{(n-1)t}(x,\mathrm{d}y)\to 0\mbox{ as }t\to+\infty,
\end{equation}
then we get \eqref{lem3.6} by letting first $t\to+\infty$ and then $\epsilon\to 0$ in \eqref{lem4.2.n2}.
Note that
\begin{equation}\label{lem3.8}
\int_{|y|>\epsilon\big(1-n^{-1}\big)^{-(r+1/2)}\e^{\frac{\lambda_{1}}{2}t}}y^{2}\pi_{(n-1)t}(x,\mathrm{d}y)\le \frac{1}{\epsilon^{2}}\big(1-\frac{1}{n}\big)^{1+2r}
\e^{-\lambda_{1}t}\int_{\R\setminus\{0\}}y^{4}\pi_{(n-1)t}(x,\mathrm{d}y).
\end{equation}
Under (M4), $(\zeta_{(n-1)t},\pp_{\delta_{x}})$ is an infinitely divisible random variable with a finite fourth moment, and its L\'{e}vy measure is given by $\pi_{(n-1)t}(x,\mathrm{d}y)$. We have $\pp_{\delta_{x}}\big[\big(\zeta_{(n-1)t}-\pp_{\delta_{x}}[\zeta_{(n-1)t}]\big)^{4}\big]=\int_{\R\setminus\{0\}}y^{4}\pi_{(n-1)t}(x,\mathrm{d}y)+3\Var_{\delta_{x}}[\zeta_{(n-1)t}]^{2}$.
Hence, for $x\in E$ and $t>0$,
\begin{align*}
\int_{\R\setminus\{0\}}y^{4}\pi_{(n-1)t}(x,\mathrm{d}y)\le&\pp_{\delta_{x}}\big[\big(\zeta_{(n-1)t}-\pp_{\delta_{x}}[\zeta_{(n-1)t}]\big)^{4}\big]\\
\lesssim&\pp_{\delta_{x}}\big[\zeta^{4}_{(n-1)t}\big]\\
\lesssim&\frac{\e^{-2\lambda_{1}(n-1)t}}{((n-1)t)^{2+4r}}\Big(\theta^{4}_{2}\pp_{\delta_{x}}[X_{(n-1)t}(f)^{4}]+\theta^{4}_{1}\pp_{\delta_{x}}[Y_{(n-1)t}(g)^{4}]\Big).
\end{align*}
It follows by Lemma \ref{lemA.1} that the right hand side is bounded from above for $t$ sufficiently large. This implies that for every $x\in E$ and $n>1$,
$\int_{\R\setminus\{0\}}y^{4}\pi_{(n-1)t}(x,\mathrm{d}y)$ is bounded in $t$ when $t$ is large enough. This together with \eqref{lem3.8} yields \eqref{lem3.7}.

\smallskip

\noindent Step 3. We have
\begin{align*}
&\mathbf{d}\big(\frac{\e^{-\frac{\lambda_{1}}{2}nt}}{(nt)^{r+1/2}}(Y_{nt}(g),X_{nt}(f)),\sqrt{\frac{W^{\varphi}_{\infty}}{1+2r}}(N_{1}(g^{*}),N_{2}(f^{*}))\big)\\
\le&\mathbf{d}\big(\frac{\e^{-\frac{\lambda_{1}}{2}nt}}{(nt)^{r+1/2}}(Y_{nt}(g),X_{nt}(f)),\frac{\e^{-\frac{\lambda_{1}}{2}nt}}{(nt)^{r+1/2}}(Y_{nt}(g)-\pp_{\delta_{x}}[Y_{nt}(g)|\mathcal{F}_{t}],
X_{nt}(f)-\pp_{\delta_{x}}[X_{nt}(f)|\mathcal{F}_{t}])\big)\\
&+\mathbf{d}\big(\frac{\e^{-\frac{\lambda_{1}}{2}nt}}{(nt)^{r+1/2}}(Y_{nt}(g)-\pp_{\delta_{x}}\big[Y_{nt}(g)|\mathcal{F}_{t}\big],X_{nt}(f)-\pp_{\delta_{x}}\big[X_{nt}(f)|\mathcal{F}_{t}\big])
,\big(1-\frac{1}{n}\big)^{r+1/2}\sqrt{\frac{W^{\varphi}_{\infty}}{1+2r}}(N_{1}(g^{*}),N_{2}(f^{*}))\big)\\
&+\mathbf{d}\big(\big(1-\frac{1}{n}\big)^{r+1/2}\sqrt{\frac{W^{\varphi}_{\infty}}{1+2r}}(N_{1}(g^{*}),N_{2}(f^{*})),\sqrt{\frac{W^{\varphi}_{\infty}}{1+2r}}(N_{1}(g^{*}),N_{2}(f^{*}))\big).
\end{align*}
Clearly, the final term converges to $0$ as $n\to+\infty$.
It has been proved in Step 2 that the second term converges to $0$ as $t\to +\infty$ and then $n\to+\infty$.
By \eqref{ineq1}, the first term in the right hand side is bounded by
$$\|\frac{\e^{-\frac{\lambda_{1}}{2}nt}}{(nt)^{r+1/2}}\pp_{\delta_{x}}\big[Y_{nt}(g)|\mathcal{F}_{t}\big]\|_{L^{2}(\pp_{\delta_{x}})}+
\|\frac{\e^{-\frac{\lambda_{1}}{2}nt}}{(nt)^{r+1/2}}\pp_{\delta_{x}}\big[X_{nt}(f)|\mathcal{F}_{t}\big]\|_{L^{2}(\pp_{\delta_{x}})},$$
and hence by \eqref{lem3.0} and \eqref{lem3.new2} it converges to $0$ as first $t\to+\infty$ and then $n\to+\infty$. Therefore, we obtain that
$$\lim_{n\to+\infty}\lim_{t\to+\infty}\mathbf{d}\big(\frac{\e^{-\frac{\lambda_{1}}{2}nt}}{(nt)^{r+1/2}}(Y_{nt}(g),X_{nt}(f)),\sqrt{\frac{W^{\varphi}_{\infty}}{1+2r}}(N_{1}(g^{*}),N_{2}(f^{*}))\big)=0,$$ which yields the result of this lemma.\qed

\begin{proposition}\label{prop:fddcri}
Suppose the assumptions of Lemma \ref{lemA.1} hold. Then for $f\in \mathcal{D}^{0}(\lambda_{1}/2,r)$,
$$\big(t^{-(r+1/2)}\e^{-\frac{\lambda_{1}}{2}(t+s)}Y_{t+s}(f)\big)_{s\ge 0}\stackrel{f.d.d.}{\longrightarrow}\frac{2}{\lambda_{1}}\sqrt{\frac{\langle\vartheta[f^{*}],\widetilde{\varphi}\rangle}{1+2r}\,W^{\varphi}_{\infty}}C\mbox{ as }t\to+\infty,$$
where $C\equiv N$ is a constant process with $N$ a standard normal random variable independent of $W^{\varphi}_{\infty}$.
\end{proposition}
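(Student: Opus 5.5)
The plan is to reduce the finite-dimensional statement to the one-time limit of Lemma \ref{lem3}, using the fact that the limit process is constant in $s$. Fix $0=s_{1}<s_{2}<\cdots<s_{k}$. The target vector is then $\frac{2}{\lambda_{1}}\sqrt{\frac{\langle\vartheta[f^{*}],\widetilde{\varphi}\rangle}{1+2r}W^{\varphi}_{\infty}}(N,\dots,N)$. By Lemma \ref{lem3} with $g=f$, the first coordinate $t^{-(r+1/2)}\e^{-\frac{\lambda_{1}}{2}t}Y_{t}(f)$ converges in distribution to $\sqrt{W^{\varphi}_{\infty}/(1+2r)}\,N_{1}(f^{*})$, where $N_{1}(f^{*})$ has variance $\frac{4}{\lambda_{1}^{2}}\langle\vartheta[f^{*}],\widetilde{\varphi}\rangle$. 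This is exactly the required one-dimensional law. Consequently, it suffices to show that for each fixed $s>0$,
$$D_{t}(s):=t^{-(r+1/2)}\big(\e^{-\frac{\lambda_{1}}{2}(t+s)}Y_{t+s}(f)-\e^{-\frac{\lambda_{1}}{2}t}Y_{t}(f)\big)\to 0\quad\mbox{in probability as }t\to+\infty.$$
Once this holds, the vector $(t^{-(r+1/2)}\e^{-\frac{\lambda_{1}}{2}(t+s_{j})}Y_{t+s_{j}}(f))_{j}$ differs from $(A_{t},\dots,A_{t})$, with $A_{t}$ the $s=0$ coordinate, by a term tending to zero in probability. Slutsky's lemma, or the bound \eqref{ineq1} with the metric $\mathbf{d}$, then gives the f.d.d. convergence.

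To prove $D_{t}(s)\to0$, I would show it in $L^{2}(\pp_{\delta_{x}})$ by conditioning on $\mathcal{F}_{t}$. Writing $Y_{t+s}(f)=Y_{t}(f)+X_{t}(M_{s}[f])+R_{t,s}$, where $R_{t,s}:=Y_{t+s}(f)-Y_{t}(f)-X_{t}(M_{s}[f])$ is conditionally centered, we get
$$\e^{\frac{\lambda_{1}}{2}t}t^{r+1/2}D_{t}(s)=(\e^{-\frac{\lambda_{1}}{2}s}-1)Y_{t}(f)+\e^{-\frac{\lambda_{1}}{2}s}X_{t}(M_{s}[f])+\e^{-\frac{\lambda_{1}}{2}s}R_{t,s}.$$
The second moments of the three terms are handled as follows.
\begin{enumerate}
\item For $R_{t,s}$: by the Markov property its conditional variance is $\langle\Var_{\delta_{\cdot}}[Y_{s}(f)],X_{t}\rangle$, so its second moment is $\langle T_{t}(\Var_{\delta_{\cdot}}[Y_{s}(f)]),\delta_{x}\rangle\lesssim\e^{\lambda_{1}t}$ for fixed $s$, using \eqref{eq1.9}. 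After dividing by $t^{1+2r}\e^{\lambda_{1}t}$ this tends to $0$.
\item For $X_{t}(M_{s}[f])$: with $s$ fixed, $\sup_{x}|M_{s}[f](x)|$ is bounded, $\langle M_{s}[f],\widetilde{\varphi}\rangle=0$, and $\e^{-\frac{\lambda_{1}}{2}t}T_{t}M_{s}[f]=\int_{0}^{s}\e^{\frac{\lambda_{1}}{2}u}\e^{-\frac{\lambda_{1}}{2}(t+u)}T_{t+u}f\,\ud u$, which is $O(t^{r})$ uniformly. Hence $M_{s}[f]\in\mathcal{D}^{0}(\lambda_{1}/2,r)$, and Lemma \ref{lem0}(ii) gives a second moment of order $t^{1+2r}\e^{\lambda_{1}t}$. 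This is not $o(\cdot)$, so this term only shows the quantity is bounded, not that it vanishes.
\end{enumerate}

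The main obstacle is therefore cancellation: neither $Y_{t}(f)$ nor $X_{t}(M_{s}[f])$ is negligible on its own. I expect to use the asymptotics $M_{s}[f]\approx\frac{2}{\lambda_{1}}(\e^{\frac{\lambda_{1}}{2}s}-1)\cdot$(the $f^{*}$ direction) at the level of the limiting covariance. The plan is to compute the limit of
$$t^{-(1+2r)}\e^{-\lambda_{1}t}\,\pp_{\delta_{x}}\Big[\big((\e^{-\frac{\lambda_{1}}{2}s}-1)Y_{t}(f)+\e^{-\frac{\lambda_{1}}{2}s}X_{t}(M_{s}[f])\big)^{2}\Big]$$
using Lemmas \ref{lem0}(ii) and \ref{lem1}(ii), including the cross covariance $\Cov(X_{t}(M_{s}[f]),Y_{t}(f))$. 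Here I would use that $t^{-r}\e^{-\frac{\lambda_{1}}{2}t}T_{t}M_{s}[f]\rightrightarrows\frac{2}{\lambda_{1}}(\e^{\frac{\lambda_{1}}{2}s}-1)f^{*}$; this needs checking, since the $r>0$ polynomial factor is harmless for fixed $s$. The limit would then be
$$\frac{\langle\vartheta[f^{*}],\widetilde{\varphi}\rangle}{1+2r}\varphi(x)\,(1-\e^{-\frac{\lambda_{1}}{2}s})^{2}\Big(\frac{4}{\lambda_{1}^{2}}-2\cdot\frac{2}{\lambda_{1}}\cdot\frac{2}{\lambda_{1}}+\frac{4}{\lambda_{1}^{2}}\Big)=0,$$
which gives the required cancellation. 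The first moments are handled similarly via Lemma \ref{lem1}(ii): they are $o(t^{r+1/2}\e^{\frac{\lambda_{1}}{2}t})$ because of the extra $t^{1/2}$.

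If the uniform limit for $T_{t}M_{s}[f]$ proves awkward, the fallback is to apply Lemma \ref{lem3} jointly to the pair $(Y_{t}(f),X_{t}(h))$ with $h=M_{s}[f]$. The limiting Gaussian pair then has a singular covariance matrix, forcing $N_{2}(h^{*})=\frac{\lambda_{1}}{2}\frac{2}{\lambda_{1}}(\e^{\frac{\lambda_{1}}{2}s}-1)N_{1}(f^{*})$, and the linear combination converges to $0$ in distribution, hence in probability.
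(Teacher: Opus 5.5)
Your argument is correct, and it takes a genuinely different route from the paper's.

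\textbf{The paper's route.} It works with the joint characteristic function and runs a backward recursion. Conditioning successively on $\mathcal{F}_{t+s_{j-1}}$ and using the infinitely divisible representation of each increment, it replaces $Y_{t+s_j}(f)$ by $Y_t(f)+X_t(M_{s_j}[f])$ inside the characteristic function. This works because each conditionally centred increment has conditional variance only of order $\mathrm{e}^{\lambda_1 t}$ and a vanishing L\'evy part. The paper then applies the \emph{joint} convergence of Lemma \ref{lem3} to the pair $(Y_t(f),X_t(h))$ with $h=\sum_j\theta_j\mathrm{e}^{-\lambda_1 s_j/2}M_{s_j}[f]$. The constancy of the limit appears only at the very end, when the limiting quadratic form collapses to $\frac{4}{\lambda_1^2}(\theta_1+\cdots+\theta_k)^2$.

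\textbf{Your route.} You prove directly that $D_t(s)\to0$ in $L^2(\pp_{\delta_x})$. The conditionally centred increment $R_{t,s}$ is killed by a plain second-moment bound. The $\mathcal{F}_t$-measurable part is killed by the exact cancellation $\big(\mathrm{e}^{-\lambda_1 s/2}-1\big)+\mathrm{e}^{-\lambda_1 s/2}\big(\mathrm{e}^{\lambda_1 s/2}-1\big)=0$ in the second-moment asymptotics of Lemmas \ref{lem0}(ii) and \ref{lem1}(ii), cross-covariance included. This is the same cancellation the paper sees as $(A+B)^2=(\sum_j\theta_j)^2$, with $A=\sum_j\theta_j\mathrm{e}^{-\lambda_1 s_j/2}$ and $B=\sum_j\theta_j(1-\mathrm{e}^{-\lambda_1 s_j/2})$.

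\textbf{Comparison.} Your route is more elementary. It needs no L\'evy-kernel bookkeeping in the peeling steps, and it uses only the one-dimensional marginal of Lemma \ref{lem3}. It also gives the stronger, structurally transparent fact that the rescaled occupation time is asymptotically constant in $L^2$ for each fixed $s$. It incidentally shows that the infinite-divisibility machinery is not really needed in the paper's recursion steps. The paper's route has the advantage of following the same template as Proposition \ref{prop:fddsuper} and of producing the joint characteristic function in one go.

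\textbf{One small point.} An $O(t^r)$ bound alone does not place $M_s[f]$ in $\mathcal{D}^0(\lambda_1/2,r)$. You need the uniform limit $t^{-r}\mathrm{e}^{-\lambda_1 t/2}T_tM_s[f]\rightrightarrows\frac{2}{\lambda_1}(\mathrm{e}^{\lambda_1 s/2}-1)f^*$, which you correctly flag. It does hold, and follows at once from your integral representation, since the integrand converges uniformly for $u\in[0,s]$. The paper verifies the same fact via Lemma \ref{lem1}(ii).
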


\proof Fix an arbitrary $k\in\mathbb{N}$ and $0=s_{1}\le s_{2}<\cdots\le s_{k}<+\infty$. Let $a_{1}=0$ and $a_{j}:=s_{j}-s_{j-1}$ for $2\le j\le k$.
Let $\vec{\theta}=(\theta_{1},\cdots,\theta_{k})\in\R^{k}$ and $K^{1}_{t}(\vec{\theta})$ be the characteristic function of the random vector $\big(t^{-(r+1/2)}\e^{-\frac{\lambda_{1}}{2}(t+s_{j})}Y_{t+s_{j}}(f)\big)_{1\le j\le k}$.
By the Markov property, given $\mathcal{F}_{t+s_{k-1}}$, $Y_{t+s_{k}}(f)-Y_{t+s_{k-1}}(f)$ is equal in distribution to $(Y_{a_{k}}(f),\pp_{X_{t+s_{k-1}}})$.
Thus we have
\begin{align*}
K^{1}_{t}(\vec{\theta})=&\pp_{\delta_{x}}\big[\exp\big\{\sum_{j=1}^{k}\mathrm{i}\theta_{j}t^{-(r+1/2)}\e^{-\frac{\lambda_{1}}{2}(t+s_{j})}Y_{t+s_{j}}(f)\big\}\big]\\
=&\pp_{\delta_{x}}\big[\exp\big\{\sum_{j=1}^{k-1}\mathrm{i}\theta_{j}t^{-(r+1/2)}\e^{-\frac{\lambda_{1}}{2}(t+s_{j})}Y_{t+s_{j}}(f)+
\mathrm{i}\theta_{k}t^{-(r+1/2)}\e^{-\frac{\lambda_{1}}{2}(t+s_{k})}Y_{t+s_{k-1}}(f)\big\}\\
&\cdot\pp_{X_{t+s_{k-1}}}\big[\exp\big\{
\mathrm{i}\theta_{k}t^{-(r+1/2)}\e^{-\frac{\lambda_{1}}{2}(t+s_{k})}Y_{a_{k}}(f)\big\}\big]\big].
\end{align*}
The regular branching property and condition (M2) imply that for every $\mu\in\mf$, $\zeta_{a_{k}}:=Y_{a_{k}}(f)$ under $\pp_{\mu}$ is an infinitely divisible random variable with finite mean and variance. Denote by $\pi_{a_{k}}(z,\mathrm{d}y)$ the corresponding L\'{e}vy kernel from $E$ to $\R\setminus\{0\}$. Applying a computation similar to the one leading to \eqref{lem3.1.4}, we get that
\begin{align}\label{prop2.1}
K^{1}_{t}(\vec{\theta})
=&\pp_{\delta_{x}}\big[\exp\big\{\sum_{j=1}^{k-2}\mathrm{i}\theta_{j}t^{-(r+1/2)}\e^{-\frac{\lambda_{1}}{2}(t+s_{j})}Y_{t+s_{j}}(f)+
\mathrm{i}t^{-(r+1/2)}\e^{-\frac{\lambda_{1}}{2}t}\big(\theta_{k-1}\e^{-\frac{\lambda_{1}}{2}s_{k-1}}+\theta_{k}\e^{-\frac{\lambda_{1}}{2}s_{k}}\big)Y_{t+s_{k-1}}(f)\nonumber\\
&+\langle\mathrm{i}\theta_{k}t^{-(r+1/2)}\e^{-\frac{\lambda_{1}}{2}(t+s_{k})}M_{a_{k}}[f],X_{t+s_{k-1}}\rangle\big\}\nonumber\\
&\cdot\exp\big\{\langle -\frac{1}{2}\theta^{2}_{k}t^{-(1+2r)}\e^{-\lambda_{1}(t+s_{k})}\Var_{\delta_{\cdot}}[Y_{a_{k}}(f)]
-\mathcal{L}_{\pi_{a_{k}}}\big[\theta_{k}t^{-(r+1/2)}\e^{-\frac{\lambda_{1}}{2}(t+s_{k})}f\big],X_{t+s_{k-1}}\rangle\big\}\big],
\end{align}
where $\mathcal{L}_{\pi_{a_{k}}}[\theta](z):=\int_{\R\setminus\{0\}}\big(\e^{\mathrm{i}\theta y}-1-\mathrm{i}\theta y+\frac{1}{2}\theta^{2}y^{2}\big)\pi_{a_{k}}(z,\mathrm{d}y)$ for $z\in E$ and $\theta\in\R$.
Similarly to \eqref{lem3.1.8}, we can show by \eqref{ineq2} that $\e^{\lambda_{1}t}\mathcal{L}_{\pi_{a_{k}}}\big[\theta_{k}t^{-(r+1/2)}\e^{-\frac{\lambda_{1}}{2}(t+s_{k})}f\big]\stackrel{b.p.}{\longrightarrow}0$ as $t\to+\infty$. Hence, by Lemma \ref{lem2.2}(i),
\begin{align}\label{prop3.6.1}
&t^{-(1+2r)}\e^{-\lambda_{1}(t+s_{k})}\langle \Var_{\delta_{\cdot}}[Y_{a_{k}}(f)],X_{t+s_{k-1}}\rangle\to 0\mbox{ in }L^{2}(\pp_{\delta_{x}})\mbox{ as }t\to+\infty,\nonumber\\
&\langle \mathcal{L}_{\pi_{a_{k}}}\big[\theta_{k}t^{-(r+1/2)}\e^{-\frac{\lambda_{1}}{2}(t+s_{k})}f\big],X_{t+s_{k-1}}\rangle\to 0\mbox{ in }L^{2}(\pp_{\delta_{x}})\mbox{ as }t\to+\infty.
\end{align}
It follows that
$$\lim_{t\to+\infty}\pp_{\delta_{x}}\big[\big|\exp\big\{\langle -\frac{1}{2}\theta^{2}_{k}t^{-(1+2r)}\e^{-\lambda_{1}(t+s_{k})}\Var_{\delta_{\cdot}}[Y_{a_{k}}(f)]
-\mathcal{L}_{\pi_{a_{k}}}\big[\theta_{k}t^{-(r+1/2)}\e^{-\frac{\lambda_{1}}{2}(t+s_{k})}f\big],X_{t+s_{k-1}}\rangle\big\}-1\big|\big]=0.$$
Combining this with \eqref{prop2.1}, we obtain that
\begin{equation}\label{prop3.6.2}
K^{1}_{t}(\vec{\theta})-K^{2}_{t}(\vec{\theta})\to 0\mbox{ as }t\to+\infty,
\end{equation}
where
\begin{align*}
K^{2}_{t}(\vec{\theta})=&\pp_{\delta_{x}}\big[\exp\big\{\sum_{j=1}^{k-2}\mathrm{i}\theta_{j}t^{-(r+1/2)}\e^{-\frac{\lambda_{1}}{2}(t+s_{j})}Y_{t+s_{j}}(f)+\mathrm{i}t^{-(r+1/2)}\e^{-\frac{\lambda_{1}}{2}t}\big(\e^{-\frac{\lambda_{1}}{2}s_{k-1}}\theta_{k-1}+\e^{-\frac{\lambda_{1}}{2}s_{k}}\theta_{k}\big)Y_{t+s_{k-1}}(f)\\
&+\mathrm{i}\theta_{k}t^{-(r+1/2)}\e^{-\frac{\lambda_{1}}{2}(t+s_{k})}\langle M_{a_{k}}[f],X_{t+s_{k-1}}\rangle\big\}\big].
\end{align*}
By the Markov property, given $\mathcal{F}_{t+s_{k-2}}$, $(Y_{t+s_{k-1}}(f)-Y_{t+s_{k-2}}(f),\langle M_{a_{k}}[f],X_{t+s_{k-1}}\rangle)$
is equal in distribution to $((Y_{a_{k-1}}(f),\langle M_{a_{k}}[f],X_{a_{k-1}}\rangle),\pp_{X_{t+s_{k-2}}})$. Thus,
we have
\begin{align}\label{prop2.2}
K^{2}_{t}(\vec{\theta})
=&\pp_{\delta_{x}}\big[\exp\big\{\sum_{j=1}^{k-3}\mathrm{i}\theta_{j}t^{-(r+1/2)}\e^{-\frac{\lambda_{1}}{2}(t+s_{j})}Y_{t+s_{j}}(f)\nonumber\\
&\quad+\mathrm{i}t^{-(r+1/2)}\e^{-\frac{\lambda_{1}}{2}t}\big(\e^{-\frac{\lambda_{1}}{2}s_{k-2}}\theta_{k-2}+\e^{-\frac{\lambda_{1}}{2}s_{k-1}}\theta_{k-1}+\e^{-\frac{\lambda_{1}}{2}s_{k}}\theta_{k}\big)
Y_{t+s_{k-2}}(f)\big\}\nonumber\\
&\cdot\pp_{X_{t+s_{k-2}}}\big[\exp\big\{\mathrm{i}t^{-(r+1/2)}\e^{-\frac{\lambda_{1}}{2}(t+s_{k-1})}\big(\theta_{k-1}+\theta_{k}\e^{-\frac{\lambda_{1}}{2}a_{k}}\big)Y_{a_{k-1}}(f)\nonumber\\
&\quad+\mathrm{i}\theta_{k}t^{-(r+1/2)}\e^{-\frac{\lambda_{1}}{2}(t+s_{k})}\langle M_{a_{k}}[f],X_{a_{k-1}}\rangle\big\}\big].
\end{align}
We note that under $\pp_{\mu}$, $\zeta_{a_{k-1}}:=(\theta_{k-1}+\theta_{k}\e^{-\frac{\lambda_{1}}{2}a_{k}})Y_{a_{k-1}}(f)+\theta_{k}\e^{-\frac{\lambda_{1}}{2}a_{k}}\langle M_{a_{k}}[f],X_{a_{k-1}}\rangle$ is an infinitely divisible random variable with finite mean and variance. Let $\pi_{a_{k-1}}(x,\mathrm{d}y)$ be the corresponding L\'{e}vy kernel from $E$ to $\R\setminus\{0\}$.
Again, applying a computation similar to that leading to \eqref{lem3.1.4}, we get
\begin{align*}
K^{2}_{t}(\vec{\theta})
=&\pp_{\delta_{x}}\big[\exp\big\{\sum_{j=1}^{k-3}\mathrm{i}\theta_{j}t^{-(r+1/2)}\e^{-\frac{\lambda_{1}}{2}(t+s_{j})}Y_{t+s_{j}}(f)\\
&+\mathrm{i}t^{-(r+1/2)}\e^{-\frac{\lambda_{1}}{2}t}\big(\e^{-\frac{\lambda_{1}}{2}s_{k-2}}\theta_{k-2}+\e^{-\frac{\lambda_{1}}{2}s_{k-1}}\theta_{k-1}+\e^{-\frac{\lambda_{1}}{2}s_{k}}\theta_{k}\big)
Y_{t+s_{k-2}}(f)\\
&+\mathrm{i}t^{-(r+1/2)}\e^{-\frac{\lambda_{1}}{2}(t+s_{k-1})}\langle \pp_{\delta_{\cdot}}\big[\zeta_{a_{k-1}}\big],X_{t+s_{k-2}}\rangle\big\}\\
&\cdot\exp\big\{-\frac{1}{2}t^{-(1+2r)}\e^{-\lambda_{1}(t+s_{k-1})}\langle \Var_{\delta_{\cdot}}[\zeta_{a_{k-1}}],X_{t+s_{k-2}}\rangle+\langle \mathcal{L}_{\pi_{a_{k-1}}}[t^{-(r+1/2)}\e^{-\frac{\lambda_{1}}{2}(t+s_{k-1})}],X_{t+s_{k-2}}\rangle\big\}\big],
\end{align*}
where $\mathcal{L}_{\pi_{a_{k-1}}}[\theta](z):=\int_{\R\setminus\{0\}}\big(\e^{\mathrm{i}\theta y}-1-\mathrm{i}\theta y+\frac{1}{2}\theta^{2}y^{2}\big)\pi_{a_{k-1}}(z,\mathrm{d}y)$ for $z\in E$ and $\theta\in \R$.
Moreover, similarly to \eqref{prop3.6.1}, we can prove that
\begin{align*}
&t^{-(1+2r)}\e^{-\lambda_{1}(t+s_{k-1})}\langle \Var_{\delta_{\cdot}}[\zeta_{a_{k-1}}],X_{t+s_{k-2}}\rangle \to 0\mbox{ in }L^{2}(\pp_{\delta_{x}})\mbox{as }t\to+\infty,\\
&\langle \mathcal{L}_{\pi_{a_{k-1}}}[t^{-(r+1/2)}\e^{-\frac{\lambda_{1}}{2}(t+s_{k-1})}],X_{t+s_{k-2}}\rangle\to 0\mbox{ in }L^{2}(\pp_{\delta_{x}})\mbox{as }t\to+\infty.
\end{align*}
Thus, it follows that
$$K^{2}_{t}(\vec{\theta})-K^{3}_{t}(\vec{\theta})\to 0\mbox{ as }t\to+\infty,$$
where
\begin{align*}
K^{3}_{t}(\vec{\theta})
=&\pp_{\delta_{x}}\big[\exp\big\{\sum_{j=1}^{k-3}\mathrm{i}\theta_{j}t^{-(r+1/2)}\e^{-\frac{\lambda_{1}}{2}(t+s_{j})}Y_{t+s_{j}}(f)\\
&+\mathrm{i}t^{-(r+1/2)}\e^{-\frac{\lambda_{1}}{2}t}\big(\e^{-\frac{\lambda_{1}}{2}s_{k-2}}\theta_{k-2}+\e^{-\frac{\lambda_{1}}{2}s_{k-1}}\theta_{k-1}+\e^{-\frac{\lambda_{1}}{2}s_{k}}\theta_{k}\big)
Y_{t+s_{k-2}}(f)\\
&+\mathrm{i}t^{-(r+1/2)}\e^{-\frac{\lambda_{1}}{2}(t+s_{k-1})}\langle\theta_{k-1} M_{s_{k-1}-s_{k-2}}[f]+\theta_{k}\e^{-\frac{\lambda_{1}}{2}a_{k}}M_{s_{k}-s_{k-2}}[f],X_{t+s_{k-2}}\rangle\big\}\big]
\end{align*}
In the above computation we use the fact that $\pp_{\delta_{y}}[\zeta_{a_{k-1}}]=\theta_{k-1}M_{s_{k-1}-s_{k-2}}[f](y)+\theta_{k}\e^{-\frac{\lambda_{1}}{2}a_{k}}M_{s_{k}-s_{k-2}}[f](y)$.
Consequently, by \eqref{prop3.6.2} we have
$$K^{1}_{t}(\vec{\theta})-K^{3}_{t}(\vec{\theta})\to 0\mbox{ as }t\to+\infty.$$

Repeating the above procedure $k-2$ more times, we obtain $K^{j}_{t}(\vec{\theta})-K^{j+1}_{t}(\vec{\theta})\to 0$ as $t\to+\infty$ for each $j=1,\cdots,k-2$. Summing up these differences yields
\begin{equation}\label{prop2.3}
K^{1}_{t}(\vec{\theta})-K^{k-1}_{t}(\vec{\theta})\to 0\mbox{ as }t\to+\infty,
\end{equation}
where
\begin{align*}
K^{k-1}_{t}(\vec{\theta})=&\pp_{\delta_{x}}\big[\exp\big\{\mathrm{i}t^{-(r+1/2)}\e^{-\frac{\lambda_{1}}{2}t}\big(
\sum_{j=1}^{k}\e^{-\frac{\lambda_{1}}{2}s_{j}}\theta_{j}\big)Y_{t}(f)+\mathrm{i}t^{-(r+1/2)}\e^{-\frac{\lambda_{1}}{2}t}\langle\sum_{j=2}^{k}\e^{-\frac{\lambda_{1}}{2}s_{j}}\theta_{j}
M_{s_{j}}[f],X_{t}\rangle
\big\}\big].
\end{align*}
Since $t^{-r}\e^{-\frac{\lambda_{1}}{2}t}M_{t}[f]\rightrightarrows \frac{2}{\lambda_{1}}f^{*}$ by Lemma \ref{lem1}(ii), we have for $j\in \{2,\cdots,k\}$,
$$\frac{\e^{-\frac{\lambda_{1}}{2}t}}{t^{r}}T_{t}\big(\e^{-\frac{\lambda_{1}}{2}s_{j}}M_{s_{j}}[f]\big)=\frac{\e^{-\frac{\lambda_{1}}{2}(t+s_{j})}}{t^{r}}\big(M_{t+s_{j}}[f]-M_{t}[f]\big)
\rightrightarrows \frac{2}{\lambda_{1}}\big(1-\e^{-\frac{\lambda_{1}}{2}s_{j}}\big)f^{*}\mbox{ as }t\to+\infty,$$
which in turn yields that $\sum_{j=2}^{k}\theta_{j}\e^{-\frac{\lambda_{1}}{2}s_{j}}M_{s_{j}}[f]\in\mathcal{D}^{0}(\lambda_{1}/2,r)$ with limiting function
$\frac{2}{\lambda_{1}}f^{*}\sum_{j=2}^{k}\theta_{j}\big(1-\e^{-\frac{\lambda_{1}}{2}s_{j}}\big)$.
Thus it follows by \eqref{prop2.3} and Lemma \ref{lem3} that
\begin{align*}
\lim_{t\to+\infty}K^{1}_{t}(\vec{\theta})=&\lim_{t\to+\infty}K^{k-1}_{t}(\vec{\theta})\\
=&\pp_{\delta_{x}}\big[\exp\big\{\mathrm{i}\sqrt{\frac{W^{\varphi}_{\infty}}{1+2r}}\big(N_{1}\big(f^{*}\sum_{j=1}^{k}\e^{-\frac{\lambda_{1}}{2}s_{j}}\theta_{j}\big)+
N_{2}\big(\frac{2}{\lambda_{1}}f^{*}\sum_{j=2}^{k}\theta_{j}\big(1-\e^{-\frac{\lambda_{1}}{2}s_{j}}\big)\big)\big)\big\}\big]\\
=&\pp_{\delta_{x}}\Big[\exp\big\{-\frac{1}{2(1+2r)}W^{\varphi}_{\infty}\big(\frac{4}{\lambda^{2}_{1}}\langle\vartheta\big[f^{*}\sum_{j=1}^{k}\e^{-\frac{\lambda_{1}}{2}s_{j}}\theta_{j}\big],\widetilde{\varphi}\rangle\\
&+\frac{4}{\lambda_{1}}\langle\vartheta\big[f^{*}\sum_{j=1}^{k}\e^{-\frac{\lambda_{1}}{2}s_{j}}\theta_{j},\frac{2}{\lambda_{1}}f^{*}\sum_{j=2}^{k}\theta_{j}(1-\e^{-\frac{\lambda_{1}}{2}s_{j}})\big],\widetilde{\varphi}\rangle\\
&+\langle\vartheta\big[\frac{2}{\lambda_{1}}f^{*}\sum_{j=2}^{k}\theta_{j}(1-\e^{-\frac{\lambda_{1}}{2}s_{j}})\big],\widetilde{\varphi}\rangle\big)\big\}\Big]\\
=&\pp_{\delta_{x}}\Big[\exp\big\{-\frac{1}{2}\cdot\frac{4}{(1+2r)\lambda^{2}_{1}}\langle \vartheta[f^{*}],\widetilde{\varphi}\rangle W^{\varphi}_{\infty}\big(\theta_{1}+\cdots+\theta_{k}\big)^{2}\big\}\Big].
\end{align*}
Note that the final term  is equal to the characteristic function of $\frac{2}{\lambda_{1}}\sqrt{\frac{\langle\vartheta[f^{*}],\widetilde{\varphi}\rangle}{1+2r}W^{\varphi}_{\infty}}(C_{s_{j}})_{1\le j\le k}$.
Hence we complete the proof.\qed

\begin{proposition}\label{prop:fddsub}
Suppose $(\lambda,r)\in (\lambda_{1}/2,\lambda_{1})\times [0,+\infty)\cup\{(\lambda_{1},0)\}$ and $f\in\mathcal{D}^{0}(\lambda,r)$ with limiting function $f^{*}$. Set $W^{f^{*}}_{t}:=\e^{-\lambda t}\langle f^{*},X_{t}\rangle$. Then the following hold.
\begin{description}
\item[(i)] $\{W^{f^{*}}_{t}:\ t\ge 0\}$ is an $L^{2}(\pp_{\delta_{x}})$-bounded martingale, and hence converges $\pp_{\delta_{x}}$-a.s. and in $L^{2}(\pp_{\delta_{x}})$ to a finite limit $W^{f^{*}}_{\infty}$. Furthermore, $\Var_{\delta_{x}}[W^{f^{*}}_{\infty}]=\varpi^{2}_{f^{*}}(x)$.
\item[(ii)] $t^{-r}\e^{-\lambda t}X_{t}(f)\to W^{f^{*}}_{\infty}\mbox{ in }L^{2}(\pp_{\delta_{x}})\mbox{ as }t\to+\infty.$
\item[(iii)] $t^{-r}\e^{-\lambda t}Y_{t}(f)\to \frac{1}{\lambda}W^{f^{*}}_{\infty}\mbox{ in }L^{2}(\pp_{\delta_{x}})\mbox{ as }t\to+\infty.$
\item[(iv)] Under $\pp_{\delta_{x}}$, $\big((t+s)^{-r}\e^{(\lambda_{1}-\lambda)(t+s)}Z^{(t)}_{s}(f)\big)_{s\ge 0}\stackrel{f.d.d.}{\longrightarrow}\frac{1}{\lambda}W^{f^{*}}_{\infty}$ as $t\to+\infty$.
\end{description}
\end{proposition}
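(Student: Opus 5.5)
Part (i) comes first. The key observation is that $f^{*}$ is an eigenfunction: $T_{u}f^{*}=\e^{\lambda u}f^{*}$ for all $u\ge0$. To see this, note that uniform convergence $t^{-r}\e^{-\lambda t}T_{t}f\rightrightarrows f^{*}$, combined with the boundedness \eqref{eq0} of $T_{u}$ on $\mathcal{B}_{b}(E)$, lets $T_{u}$ pass through the limit:
\[
T_{u}f^{*}=\lim_{t\to+\infty}t^{-r}\e^{-\lambda t}T_{t+u}f=\e^{\lambda u}\lim_{t\to+\infty}\Big(\frac{t+u}{t}\Big)^{r}(t+u)^{-r}\e^{-\lambda(t+u)}T_{t+u}f=\e^{\lambda u}f^{*}.
\]
The Markov property then gives the martingale property of $W^{f^{*}}_{t}$. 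By \eqref{eq:variance},
\[
\Var_{\delta_{x}}[W^{f^{*}}_{t}]=\int_{0}^{t}\e^{-2\lambda t}T_{t-s}(\vartheta[T_{s}f^{*}])(x)\,\mathrm{d}s=\int_{0}^{t}\e^{-2\lambda u}T_{u}(\vartheta[f^{*}])(x)\,\mathrm{d}u\uparrow\varpi^{2}_{f^{*}}(x).
\]
This limit is finite by \eqref{eq1.9}, since $2\lambda>\lambda_{1}$. Hence the martingale is $L^{2}$-bounded, and the a.s. and $L^{2}$ convergence follow together with the variance formula.

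For (ii), I would write $X_{t}(f)$ as the sum of the conditional mean given $\mathcal{F}_{u}$ and the remaining fluctuation, for a fixed large $u$ and $t\ge u$. The conditional mean is $X_{u}(T_{t-u}f)$. Normalised, it is
\[
t^{-r}\e^{-\lambda t}X_{u}(T_{t-u}f)=\e^{-\lambda u}\Big(\tfrac{t-u}{t}\Big)^{r}X_{u}\big((t-u)^{-r}\e^{-\lambda(t-u)}T_{t-u}f\big),
\]
which converges in $L^{2}$ to $W^{f^{*}}_{u}$ as $t\to\infty$, because the argument converges uniformly and $X_{u}(1)\in L^{2}$. The normalised fluctuation has second moment equal to $t^{-2r}\e^{-2\lambda t}\pp_{\delta_{x}}[\Var_{X_{u}}[X_{t-u}(f)]]$. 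By Lemma \ref{lem0}(iii), $\Var_{\delta_{y}}[X_{s}(f)]\le c\,s^{2r}\e^{2\lambda s}$ uniformly for large $s$, so this second moment is at most $c\,\e^{-2\lambda u}\,T_{u}1(x)\lesssim \e^{(\lambda_{1}-2\lambda)u}$. That bound tends to $0$ as $u\to\infty$. The same estimate can be read off directly from \eqref{eq:variance} by splitting the time integral at $t-u$. Letting $t\to\infty$ and then $u\to\infty$, and using $W^{f^{*}}_{u}\to W^{f^{*}}_{\infty}$, gives (ii). The case $(\lambda_{1},0)$ is included, since then $\varpi^{2}$ is still finite.

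Part (iii) follows from (ii) exactly as Lemma \ref{lem2.2}(ii) follows from Lemma \ref{lem2.2}(i). Write
\[
t^{-r}\e^{-\lambda t}Y_{t}(f)=\int_{0}^{t}\e^{-\lambda(t-s)}\Big(\frac{s}{t}\Big)^{r}\,s^{-r}\e^{-\lambda s}X_{s}(f)\,\mathrm{d}s,
\]
apply Minkowski's inequality, and use $\int_{0}^{t}\e^{-\lambda(t-s)}(s/t)^{r}\,\mathrm{d}s\to 1/\lambda$. The initial segment $[0,t_{1}]$ is negligible, because there $\sup_{s\le t_{1}}\|X_{s}(f)\|_{L^{2}}<\infty$ and the factor $t^{-r}\e^{-\lambda t}$ tends to $0$.

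For (iv), since $\langle f,\widetilde{\varphi}\rangle=0$, we have $Z^{(t)}_{s}(f)=\e^{-\lambda_{1}(t+s)}Y_{t+s}(f)$. Hence
\[
(t+s)^{-r}\e^{(\lambda_{1}-\lambda)(t+s)}Z^{(t)}_{s}(f)=(t+s)^{-r}\e^{-\lambda(t+s)}Y_{t+s}(f).
\]
By (iii) this converges in $L^{2}$, jointly for any finitely many $s$, to the same variable $\lambda^{-1}W^{f^{*}}_{\infty}$, which gives the f.d.d. convergence to a constant process. The main obstacle is the uniform variance bound in (ii), in particular at the boundary case $\lambda=\lambda_{1}$. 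There I would check that Lemma \ref{lem0}(iii) indeed yields $\sup_{y}\Var_{\delta_{y}}[X_{s}(f)]=O(s^{2r}\e^{2\lambda s})$, which holds because b.p. convergence includes uniform boundedness.
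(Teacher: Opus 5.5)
Your proposal is correct. For (iii) and (iv) it is essentially the paper's argument. In (i) the only difference is cosmetic: the paper first checks $\langle f^{*},\widetilde{\varphi}\rangle=0$, so that $f^{*}\in\mathcal{D}^{0}(\lambda,0)$, and then quotes Lemma \ref{lem0}(iii) for the variance limit. You instead compute $\Var_{\delta_{x}}[W^{f^{*}}_{t}]=\int_{0}^{t}\e^{-2\lambda u}T_{u}(\vartheta[f^{*}])(x)\,\mathrm{d}u$ directly from the eigen-relation, which is simpler and gives monotonicity for free.

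The genuine difference is in (ii). The paper does not condition at an intermediate time. It writes $t^{-r}\e^{-\lambda t}X_{t}(f)=\e^{-\lambda t}X_{t}(\check{f}_{t})+W^{f^{*}}_{t}$ with $\check{f}_{t}:=t^{-r}f-f^{*}$, and notes that the mean $t^{-r}\e^{-\lambda t}T_{t}f(x)-f^{*}(x)$ tends to $0$. It then expresses $\Var_{\delta_{x}}[\e^{-\lambda t}X_{t}(\check{f}_{t})]$ via \eqref{eq:variance} as $\int_{0}^{\infty}\e^{-(2\lambda-\lambda_{1})s}h_{t,x}(s)\,\mathrm{d}s$ and sends it to $0$ by dominated convergence. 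This needs, for each fixed $s$, the convergence $(1-s/t)^{r}(t-s)^{-r}\e^{-\lambda(t-s)}T_{t-s}f-f^{*}\rightrightarrows 0$. It also needs a separate uniform bound on the window $s\in[t-t_{0},t]$, where $t-s$ is small.

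Your conditioning on $\mathcal{F}_{u}$ splits the work differently. The conditional mean only uses the hypothesis $t^{-r}\e^{-\lambda t}T_{t}f\rightrightarrows f^{*}$ inside $X_{u}$. The fluctuation only uses the crude bound $\sup_{y}\Var_{\delta_{y}}[X_{s}(f)]=O(s^{2r}\e^{2\lambda s})$, available from Lemma \ref{lem0}(iii) or directly from \eqref{eq:variance}. That bound produces the explicit factor $\e^{(\lambda_{1}-2\lambda)u}$ and makes it transparent that $2\lambda>\lambda_{1}$ is exactly what is used.

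The price is a double limit ($t\to\infty$, then $u\to\infty$), whereas the paper's argument is a single limit in $t$. Both routes ultimately rely on $W^{f^{*}}_{t}\to W^{f^{*}}_{\infty}$ from (i).
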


\proof (i) Since $\langle f,\widetilde{\varphi}\rangle=0$ and $t^{-r}\e^{-\lambda t}T_{t}f\rightrightarrows f^{*}$ by the definition, we have by the dominated convergence theorem,
$$\langle f^{*},\widetilde{\varphi}\rangle=\lim_{s\to+\infty}s^{-r}\e^{-\lambda s}\langle T_{s}f,\widetilde{\varphi}\rangle=\lim_{s\to+\infty}s^{-r}\e^{(\lambda_{1}-\lambda)s}\langle f,\widetilde{\varphi}\rangle=0.$$
Moreover, for every $t\ge 0$ and $y\in E$,
\begin{equation}\label{prop3.3}
T_{t}f^{*}(y)=\lim_{s\to+\infty}\frac{\e^{-\lambda s}}{s^{r}}T_{t+s}f(y)=\lim_{s\to+\infty}\e^{\lambda t}\big(1+\frac{t}{s}\big)^{r}\frac{\e^{-\lambda(t+s)}}{(t+s)^{r}}T_{t+s}f(y)=\e^{\lambda t}f^{*}(y).
\end{equation}
Thus, by the Markov property, for $t,s\ge 0$,
$$\pp_{\delta_{x}}[W^{f^{*}}_{t+s}|\mathcal{F}_{t}]=\e^{-\lambda(t+s)}\langle T_{s}f^{*},X_{t}\rangle=W^{f^{*}}_{t}.$$
Equation \eqref{prop3.3} also implies that $f^{*}\in \mathcal{D}^{0}(\lambda,0)$ with limiting function being itself. Hence, by Lemma \ref{lem0}(iii),
$$\Var_{\delta_{x}}[W^{f^{*}}_{t}]=\e^{-2\lambda t}\Var_{\delta_{x}}[\langle f^{*},X_{t}\rangle]\stackrel{b.p.}{\longrightarrow}\varpi^{2}_{f^{*}}(x)\mbox{ as }t\to+\infty.$$
Therefore, we conclude (i).

\smallskip

\noindent (ii) For $t>0$, let $\check{f}_{t}:=t^{-r}f-f^{*}$. Then $t^{-r}\e^{-\lambda t}X_{t}(f)=\e^{-\lambda t} X_{t}(\check{f}_{t})+W^{f^{*}}_{t}$. To prove (ii), it suffices to prove that
\begin{equation}\label{prop3.1}
\lim_{t\to+\infty}\e^{-\lambda t} X_{t}(\check{f}_{t})=0\mbox{ in }L^{2}(\pp_{\delta_{x}}).
\end{equation}
We have
\begin{align}\label{prop3.2}
\Var_{\delta_{x}}\big[\e^{-\lambda t} X_{t}(\check{f}_{t})\big]=&\e^{-2\lambda t}\int_{0}^{t}T_{s}\big(\vartheta[T_{t-s}\check{f}_{t}]\big)(x)\mathrm{d}s\nonumber\\
=&\int_{0}^{+\infty}\e^{-(2\lambda-\lambda_{1})s}1_{\{s\le t\}}\e^{-\lambda_{1}s}T_{s}\big(\vartheta\big[\big(1-\frac{s}{t}\big)^{r}\frac{\e^{-\lambda(t-s)}}{(t-s)^{r}}T_{t-s}f-f^{*}\big]\big)(x)\mathrm{d}s\nonumber\\
=:&\int_{0}^{+\infty}\e^{-(2\lambda-\lambda_{1})s}h_{t,x}(s)\mathrm{d}s.
\end{align}
Using the fact that $t^{-r}\e^{-\lambda t}T_{t}f-f^{*}\rightrightarrows 0$, one can show that for every $s>0$, as $t\to+\infty$,
$$\big(1-\frac{s}{t}\big)^{r}\frac{\e^{-\lambda(t-s)}}{(t-s)^{r}}T_{t-s}f-f^{*}\rightrightarrows 0\mbox{ and }\vartheta\big[\big(1-\frac{s}{t}\big)^{r}\frac{\e^{-\lambda(t-s)}}{(t-s)^{r}}T_{t-s}f-f^{*}\big]\stackrel{b.p.}{\longrightarrow}0.$$
Hence by (H2) and the bounded convergence theorem, for every $s>0$ and $x\in E$, $h_{t,x}(s)\to 0$ as $t\to+\infty$. Note that there is some $t_{0}>0$, such that
$M_{1}:=\sup_{t\ge t_{0}}\|t^{-r}\e^{-\lambda t}T_{t}f\|_{\infty}<+\infty$. Thus, for $t\ge t_{0}$ and $s\in [0,t-t_{0}]$,
$$\|\big(1-\frac{s}{t}\big)^{r}\frac{\e^{-\lambda(t-s)}}{(t-s)^{r}}T_{t-s}f\|_{\infty}\le\|\frac{\e^{-\lambda(t-s)}}{(t-s)^{r}}T_{t-s}f\|_{\infty}\le M_{1}.$$
On the other hand, by \eqref{eq1.9}, for $t\ge t_{0}$ and $s\in [t-t_{0},t]$,
$$\|\big(1-\frac{s}{t}\big)^{r}\frac{\e^{-\lambda(t-s)}}{(t-s)^{r}}T_{t-s}f\|_{\infty}=\|\frac{\e^{(\lambda_{1}-\lambda)(t-s)}}{t^{r}}\e^{-\lambda_{1}(t-s)}T_{t-s}f\|_{\infty}\le
\frac{\e^{(\lambda_{1}-\lambda)t_{0}}}{t^{r}_{0}}c_{1}\|f\|_{\infty}.$$
Therefore, we have proved $M_{2}:=\sup_{t\ge t_{0},\,0\le s\le t}\|\big(1-\frac{s}{t}\big)^{r}\frac{\e^{-\lambda(t-s)}}{(t-s)^{r}}T_{t-s}f\|_{\infty}<+\infty$. It then follows by \eqref{eq1.9} that for $t\ge t_{0}$ and $x\in E$,
$$h_{t,x}(s)\le (M_{2}+\|f^{*}\|_{\infty})^{2}\e^{-\lambda_{1}s}T_{s}(\vartheta[1])(x)1_{\{s\le t\}}\le (M_{2}+\|f^{*}\|_{\infty})^{2}c_{1}\|\vartheta[1]\|_{\infty}\quad\forall s>0.$$
Hence, we get by \eqref{prop3.2} and the bounded convergence theorem that
$\lim_{t\to+\infty}\Var_{\delta_{x}}[\e^{-\lambda t} X_{t}(\check{f}_{t})]=0$. Note that $\pp_{\delta_{x}}\big[\e^{-\lambda t} X_{t}(\check{f}_{t})\big]=t^{-r}\e^{-\lambda t}T_{t}f(x)-f^{*}(x)\to 0$ as $t\to+\infty$. Hence \eqref{prop3.1} follows.

\smallskip

\noindent (iii) It follows by (ii) that for any $\epsilon>0$, there is $t_{1}>0$ such that for all $t\ge t_{1}$, $\|t^{-r}\e^{-\lambda t}X_{t}(f)-W^{f^{*}}_{\infty}\|_{L^{2}(\pp_{\delta_{x}})}\le \epsilon$.
For $t\ge t_{1}$, let $F(t):=\int_{0}^{t-t_{1}}\e^{-\lambda s}\big(1-\frac{s}{t}\big)^{r}\mathrm{d}s$.
Obviously, by the dominated convergence theorem, $\lim_{t\to+\infty}F(t)=\int_{0}^{+\infty}\e^{-\lambda s}\mathrm{d}s=1/\lambda$.
For $t\ge t_{1}$, we may write $Y_{t}(f)=Y_{t_{1}}(f)+\int_{0}^{t-t_{1}}X_{t-s}(f)\mathrm{d}s$.
Note that
$$\frac{\e^{-\lambda t}}{t^{r}}\int_{0}^{t-t_{1}}X_{t-s}(f)\mathrm{d}s-F(t)W^{f^{*}}_{\infty}=\int_{0}^{t-t_{1}}\e^{-\lambda s}\big(1-\frac{s}{t}\big)^{r}
\big(\frac{\e^{-\lambda(t-s)}}{(t-s)^{r}}X_{t-s}(f)-W^{f^{*}}_{\infty}\big)\mathrm{d}s.$$
Hence we have
\begin{align*}
&\big\|\frac{\e^{-\lambda t}}{t^{r}}\int_{0}^{t-t_{1}}X_{t-s}(f)\mathrm{d}s-\frac{1}{\lambda}W^{f^{*}}_{\infty}\big\|_{L^{2}(\pp_{\delta_{x}})}\\
\le &\big\|\frac{\e^{-\lambda t}}{t^{r}}\int_{0}^{t-t_{1}}X_{t-s}(f)\mathrm{d}s-F(t)W^{f^{*}}_{\infty}\big\|_{L^{2}(\pp_{\delta_{x}})}+|F(t)-\frac{1}{\lambda}|\cdot\|W^{f^{*}}_{\infty}\|_{L^{2}(\pp_{\delta_{x}})}\\
\le & \int_{0}^{t-t_{1}}\e^{-\lambda s}\big(1-\frac{s}{t}\big)^{r}\big\|\frac{\e^{-\lambda(t-s)}}{(t-s)^{r}}X_{t-s}(f)-W^{f^{*}}_{\infty}\big\|_{L^{2}(\pp_{\delta_{x}})}\mathrm{d}s+|F(t)-\frac{1}{\lambda}|\cdot\|W^{f^{*}}_{\infty}\|_{L^{2}(\pp_{\delta_{x}})}\\
\le&\epsilon\int_{0}^{+\infty}\e^{-\lambda s}\mathrm{d}s+|F(t)-\frac{1}{\lambda}|\cdot\|W^{f^{*}}_{\infty}\|_{L^{2}(\pp_{\delta_{x}})}.
\end{align*}
The second inequality follows from Minkowski's inequality.
Letting $t\to+\infty$ and then $\epsilon\to 0$, we get
$$\lim_{t\to+\infty}t^{-r}\e^{-\lambda t}\int_{0}^{t-t_{1}}X_{t-s}(f)\mathrm{d}s=\frac{1}{\lambda}W^{f^{*}}_{\infty}\mbox{ in }L^{2}(\pp_{\delta_{x}}),$$
which in turn implies that $\lim_{t\to+\infty}t^{-r}\e^{-\lambda t}Y_{t}(f)=\lambda^{-1}W^{f^{*}}_{\infty}$.

\smallskip

\noindent (iv) Note that $(t+s)^{-r}\e^{(\lambda_{1}-\lambda)(t+s)}Z^{(t)}_{s}(f)=(t+s)^{-r}\e^{-\lambda(t+s)}Y_{t+s}(f)$ for all $t,s\ge 0$. Thus (iv) follows as a direct result of (iii) and \eqref{ineq1}.\qed

\subsection{Tightness}\label{sec3.2}
{
For $t\ge 0$, we set $a_{\mathrm{S}}(t):=\e^{-\lambda_{1}t/2}$ and
\begin{equation*}
a_{\lambda,r}(t):=
\begin{cases}
t^{-r-1/2}\e^{-\lambda_{1}t/2},&\mbox{ if }\lambda=\lambda_{1}/2,\ r\ge 0;\\
t^{-r}\e^{-\lambda t},&\mbox{ if }(\lambda,r)\in (\lambda_{1}/2,\lambda_{1})\times [0,+\infty)\cup\{(\lambda_{1},0)\}.\end{cases}
\end{equation*}

\begin{lemma}\label{lem4.1}
Suppose $(\lambda,r)\in [\lambda_{1}/2,\lambda_{1})\times [0,+\infty)\cup \{(\lambda_{1},0)\}$. For $f\in \mathcal{D}^{0}(\lambda,r)$,
set $a_{f}(t):=a_{\mathrm{S}}(t)$ if $f\in \mathcal{D}^{0}_{\mathrm{S}}$, and otherwise $a_{f}(t):=a_{\lambda,r}(t)$.
Then there exist constants $t_{1},c_{1}>0$, such that for all $t\ge t_{1}$ and $0\le u\le v<+\infty$,
\begin{equation}\label{lem4.1.0}
a_{f}(t+v)^{2}\pp_{\delta_{x}}\big[\big(Y_{t+v}(f)-Y_{t+u}(f)\big)^{2}\big]+\big(a_{f}(t+v)-a_{f}(t+u)\big)^{2}\pp_{\delta_{x}}\big[Y_{t+u}(f)^{2}\big]
\le c_{1}|u-v|^{2}.
\end{equation}
\end{lemma}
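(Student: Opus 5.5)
The plan is to reduce both terms in \eqref{lem4.1.0} to the single estimate
\begin{equation}\label{plan:key}
\sup_{s\ge t_{1}}a_{f}(s)^{2}\Big(\pp_{\delta_{x}}\big[X_{s}(f)^{2}\big]+\pp_{\delta_{x}}\big[Y_{s}(f)^{2}\big]\Big)<+\infty ,
\end{equation}
together with two elementary properties of the normalisation $a_{f}$. The first property is that $a_{f}$ is eventually decreasing. The second is that $|a_{f}'(s)|\le c\,a_{f}(s)$ for $s\ge t_{1}$. Both are clear for $e^{-\lambda_{1}s/2}$, $s^{-r-1/2}e^{-\lambda_{1}s/2}$ and $s^{-r}e^{-\lambda s}$, since the logarithmic derivative is bounded on $[1,\infty)$ and negative for $s$ large.

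To prove \eqref{plan:key}, I write $\pp_{\delta_{x}}[X_{s}(f)^{2}]=T_{s}f(x)^{2}+\Var_{\delta_{x}}[X_{s}(f)]$ and $\pp_{\delta_{x}}[Y_{s}(f)^{2}]=M_{s}[f](x)^{2}+\Var_{\delta_{x}}[Y_{s}(f)]$.
\begin{itemize}
\item The variances are handled by Lemma \ref{lem0} and Lemma \ref{lem1}, parts (i), (ii), (iii) according to the class of $f$. Multiplied by $a_{f}(s)^{2}$, each variance converges to a finite limit at the fixed point $x$, so it is bounded for $s\ge t_{1}$.
\item The means are handled separately. In the case $\mathcal{D}^{0}_{\mathrm{S}}$, $e^{-\lambda_{1}s/2}T_{s}f$ and $e^{-\lambda_{1}s/2}M_{s}[f]$ tend to $0$ uniformly, by Lemma \ref{lem0}(i) and Lemma \ref{lem1}(i). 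In the case $\mathcal{D}^{0}(\lambda_{1}/2,r)$, $s^{-r}e^{-\lambda_{1}s/2}T_{s}f\rightrightarrows f^{*}$ and $s^{-r}e^{-\lambda_{1}s/2}M_{s}[f]\rightrightarrows 2f^{*}/\lambda_{1}$; after multiplying by the extra factor $s^{-1/2}$ these are bounded. In the remaining case, $s^{-r}e^{-\lambda s}T_{s}f$ and $s^{-r}e^{-\lambda s}M_{s}[f]$ converge uniformly, by the definition of $\mathcal{D}(\lambda,r)$ and Lemma \ref{lem1}(iii).
\end{itemize}
Hence \eqref{plan:key} holds for some $t_{1}$ and some constant $K$.

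For the first term of \eqref{lem4.1.0}, Cauchy--Schwarz in the time variable gives
$$\pp_{\delta_{x}}\Big[\Big(\int_{t+u}^{t+v}X_{s}(f)\,\mathrm{d}s\Big)^{2}\Big]\le (v-u)\int_{t+u}^{t+v}\pp_{\delta_{x}}[X_{s}(f)^{2}]\,\mathrm{d}s\le (v-u)^{2}K\sup_{s\in[t+u,t+v]}a_{f}(s)^{-2}.$$
For $t\ge t_{1}$ the function $a_{f}$ is decreasing on $[t+u,t+v]$, so the supremum is at most $a_{f}(t+v)^{-2}$. Multiplying by $a_{f}(t+v)^{2}$ bounds the first term by $K|u-v|^{2}$.

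For the second term, the mean value theorem together with $|a_{f}'|\le c\,a_{f}$ and the monotonicity of $a_{f}$ gives $|a_{f}(t+v)-a_{f}(t+u)|\le c|v-u|\,a_{f}(t+u)$. Combining this with \eqref{plan:key} at $s=t+u$ yields the bound $c^{2}K|u-v|^{2}$.

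The step needing most care is \eqref{plan:key}: it must bound $X_{s}(f)$ as well as $Y_{s}(f)$, and it must give a single constant valid for all $s\ge t_{1}$ rather than only asymptotically. Eventual boundedness of a convergent function of $s$ at a fixed $x$ suffices for this, so the obstacle is bookkeeping across the three regimes rather than a genuinely new estimate.
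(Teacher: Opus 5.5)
Your proposal is correct and follows essentially the same route as the paper: you bound $a_f(s)^2\pp_{\delta_x}[X_s(f)^2]$ and $a_f(s)^2\pp_{\delta_x}[Y_s(f)^2]$ uniformly for large $s$ via Lemmas \ref{lem0} and \ref{lem1}, then control the first term by Cauchy--Schwarz in time plus monotonicity of $a_f$. For the second term, you bound $|a_f(t+v)/a_f(t+u)-1|\lesssim |v-u|$ for large $t$, which you obtain through the mean value theorem and a bounded logarithmic derivative, where the paper writes it as an explicit integral in each regime; the two arguments are equivalent.
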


\begin{proof}
By the definition of $\mathcal{D}^{0}(\lambda,r)$ and Lemma \ref{lem0}, we have
$$\limsup_{t\to+\infty}a_{f}(t)^{2}\pp_{\delta_{x}}\big[ X_{t}(f)^{2}\big]\le\limsup_{t\to+\infty}a_{f}(t)^{2}\Var_{\delta_{x}}\big[ X_{t}(f)\big]+\limsup_{t\to+\infty}(a_{f}(t)T_{t}f(x))^{2}<+\infty.$$
Thus, there exists some $t_{2}>0$ such that $M_{1}:=\sup_{t\ge t_{2}}a_{f}(t)^{2}\pp_{\delta_{x}}\big[ X_{t}(f)^{2}\big]<+\infty$.
By H\"{o}lder's inequality, we have for $t\ge 0$ and $0\le u\le v<+\infty$,
\begin{equation}\label{lem4.1.4}
\pp_{\delta_{x}}\big[\big(Y_{t+v}(f)-Y_{t+u}(f)\big)^{2}\big]=\pp_{\delta_{x}}\big[\big(\int_{0}^{v-u}X_{t+u+s}(f)\mathrm{d}s\big)^{2}\big]\le (v-u)\int_{0}^{v-u}\pp_{\delta_{x}}\big[X_{t+u+s}(f)^{2}\big]\mathrm{d}s.
\end{equation}
Since for $t\ge t_{2}$ and $0\le s\le v-u$, $\pp_{\delta_{x}}\big[X_{t+u+s}(f)^{2}\big]\le M_{1}a_{f}(t+u+s)^{-2}\le M_{1}a_{f}(t+v)^{-2}$, we have by \eqref{lem4.1.4}
for $t\ge t_{2}$,
\begin{equation}\label{lem4.1.3}
a_{f}(t+v)^{2}\pp_{\delta_{x}}\big[\big(Y_{t+v}(f)-Y_{t+u}(f)\big)^{2}\big]\le (v-u)\int_{0}^{v-u}M_{1}\mathrm{d}s=M_{1}|u-v|^{2}.
\end{equation}

On the other hand, it follows by Lemma \ref{lem1} that
$$\limsup_{t\to+\infty}a_{f}(t)^{2}\pp_{\delta_{x}}\big[Y_{t}(f)^{2}\big]\le \limsup_{t\to+\infty}a_{f}(t)^{2}\Var_{\delta_{x}}[Y_{t}(f)]+\limsup_{t\to+\infty}(a_{f}(t)M_{t}[f](x))^{2}<+\infty.$$
Hence there exists some $t_{3}>0$ such that $M_{2}:=\sup_{t\ge t_{3}}a_{f}(t)^{2}\pp_{\delta_{x}}\big[Y_{t}(f)^{2}\big]<+\infty$. Consequently, for $t\ge t_{3}$ and $0\le u\le v<+\infty$,
\begin{equation}\label{lem4.1.2}
\big(a_{f}(t+v)-a_{f}(t+u)\big)^{2}\pp_{\delta_{x}}\big[Y_{t+u}(f)^{2}\big]\le M_{2}\big(\frac{a_{f}(t+v)}{a_{f}(t+u)}-1\big)^{2}.
\end{equation}
If $a_{f}(t)=a_{\mathrm{S}}(t)=\e^{-\lambda_{1}t/2}$, then
$$\big(\frac{a_{f}(t+v)}{a_{f}(t+u)}-1\big)^{2}=\big(1-\e^{-\frac{\lambda_{1}}{2}(v-u)}\big)^{2}\le \big(\frac{\lambda_{1}}{2}\big)^{2}(v-u)^{2}\quad\forall 0\le u\le v<+\infty.$$
If $a_{f}(t)=a_{\lambda_{1}/2,r}(t)=t^{-r-1/2}\e^{-\lambda_{1}t/2}$, then for any $t\ge t_{3}$ and $\delta\ge 0$,
\begin{multline}\label{lem4.1.1}
0\le 1-\frac{a_{f}(t+\delta)}{a_{f}(t)}=1-\e^{-\lambda_{1}\delta/2}\big(\frac{t}{t+\delta}\big)^{r+1/2}\le 1-\e^{-\lambda_{1}\delta/2}\big(\frac{t_{3}}{t_{3}+\delta}\big)^{r+1/2}\\
=\int_{0}^{\delta}\e^{-\lambda_{1}s/2}\big(\frac{t_{3}}{t_{3}+s}\big)^{r+1/2}\big(\frac{\lambda_{1}}{2}+(r+\frac{1}{2})(t_{3}+s)^{-1}\big)\mathrm{d}s\le \big(\frac{\lambda_{1}}{2}+(r+\frac{1}{2})t^{-1}_{3}\big)\delta.
\end{multline}
It follows that, for $t\ge t_{3}$ and $0\le u\le v<+\infty$,
\begin{equation*}
\big(\frac{a_{f}(t+v)}{a_{f}(t+u)}-1\big)^{2}\le \big(\frac{\lambda_{1}}{2}+(r+\frac{1}{2})t^{-1}_{3}\big)^{2}(u-v)^{2}.
\end{equation*}
Otherwise if $a_{f}(t)=a_{\lambda,r}(t)$ for some $(\lambda,r)\in (\lambda_{1}/2,\lambda_{1})\times [0,+\infty)\cup\{(\lambda_{1},0)\}$, we can show similarly to \eqref{lem4.1.1} that
for $t\ge t_{3}$ and $\delta\ge 0$
$$0\le 1-\frac{a_{f}(t+\delta)}{a_{f}(t)}\le (\lambda+r t^{-1}_{3})\delta,$$
and consequently, for $t\ge t_{3}$ and $0\le u\le v<+\infty$,
\begin{equation*}
\big(\frac{a_{f}(t+v)}{a_{f}(t+u)}-1\big)^{2}\le (\lambda+r t^{-1}_{3})^{2}(u-v)^{2}.
\end{equation*}
Therefore, in each case there exists a constant $c_{2}>0$ such that for $t\ge t_{3}$ and $0\le u\le v<+\infty$,
$\big(a_{f}(t+v)/a_{f}(t+u)-1\big)^{2}\le c_{2}(u-v)^{2}$. Inserting this into the right hand side of \eqref{lem4.1.2} yields that
$$\big(a_{f}(t+v)-a_{f}(t+u)\big)^{2}\pp_{\delta_{x}}\big[Y_{t+u}(f)^{2}\big]\le c_{2}M_{2}|u-v|^{2}\quad\forall t\ge t_{3},\ 0\le u\le v<+\infty.$$
Combining this with \eqref{lem4.1.3} we get the conclusion.
\end{proof}

\begin{proposition}\label{prop:tight1}
Suppose the assumptions of Lemma \ref{lem4.1} hold. If $\lambda=\lambda_{1}/2$, assume in addition that (M4) holds.
For $t,s\ge 0$, define
$$Z^{1,(t)}_{s}(f):=a_{f}(t+s)Y_{t+s}(f).$$
Then the family of processes $\{Z^{1,(t)}(f):=(Z^{1,(t)}_{s}(f))_{s\ge 0}:\ t\ge 0\}$ is tight in $\mathcal{D}[0,+\infty)$.
\end{proposition}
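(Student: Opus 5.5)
The plan is to use a Kolmogorov--Chentsov/Billingsley moment criterion for continuous processes. The auxiliary tightness result Lemma~\ref{lemA.3} in the appendix is presumably of exactly this type. Each $Z^{1,(t)}(f)$ has continuous paths, since $s\mapsto Y_{t+s}(f)$ is continuous and $a_f$ is continuous. So it suffices to check two things. First, the family of initial values $\{Z^{1,(t)}_0(f)\}_{t\ge t_1}$ is tight in $\R$. Second, there exist constants $\gamma\ge0$, $\alpha>1$ and $c>0$ such that, for all $t\ge t_1$ and all $0\le u\le v\le T$, the increments satisfy a bound of the form $\pp_{\delta_x}\big[|Z^{1,(t)}_v(f)-Z^{1,(t)}_u(f)|^{\gamma}\big]\le c|v-u|^{\alpha}$. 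Small $t\in[0,t_1]$ only contributes a family of laws that is continuous in $t$, hence compact, and can be handled separately or simply discarded, because tightness is an asymptotic property as $t\to+\infty$.

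For the initial values, Lemma~\ref{lem1} gives $\sup_{t\ge t_3}a_f(t)^2\pp_{\delta_x}[Y_t(f)^2]<+\infty$ in each of the three regimes (parts (i), (ii), (iii) respectively). Chebyshev's inequality then gives tightness of $Z^{1,(t)}_0(f)$. For the increments, write
$$Z^{1,(t)}_v(f)-Z^{1,(t)}_u(f)=a_f(t+v)\big(Y_{t+v}(f)-Y_{t+u}(f)\big)+\big(a_f(t+v)-a_f(t+u)\big)Y_{t+u}(f).$$
Applying $(x+y)^2\le 2x^2+2y^2$ and Lemma~\ref{lem4.1} gives
$$\pp_{\delta_x}\big[(Z^{1,(t)}_v(f)-Z^{1,(t)}_u(f))^2\big]\le 2c_1|u-v|^2$$
for all $t\ge t_1$. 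This is the criterion with $\gamma=2$ and $\alpha=2>1$. Kolmogorov's criterion therefore yields tightness in $\mathcal{C}[0,+\infty)$, and hence in $\mathcal{D}[0,+\infty)$, by the closed embedding discussed at the start of Section~\ref{sec3}.

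The main obstacle is the precise form of Lemma~\ref{lemA.3} and whether its hypotheses match uniformly in $t$. In particular, $c_1$ and $t_1$ from Lemma~\ref{lem4.1} must not depend on $u,v$; they do not, since they come from suprema over $t\ge t_2,t_3$. I would also check that the bound holds for all $0\le u\le v$, not just on compact sets; this is so, although only compacts are needed.

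A second issue is the role of the extra hypothesis (M4) when $\lambda=\lambda_1/2$. With the second-moment bound giving exponent $\alpha=2$, it seems unnecessary. It would become necessary if Lemma~\ref{lemA.3} is a Billingsley-type criterion stated for c\`adl\`ag processes and requires a fourth-moment or product-of-increments bound. If the criterion requires such a bound, I would use Lemma~\ref{lemA.1}, i.e.\ the fourth moments $O(t^{2+4r}\e^{2\lambda_1 t})$. Combined with H\"older's inequality exactly as in \eqref{lem4.1.4}, this gives $a_f(t+v)^4\pp_{\delta_x}[(Y_{t+v}(f)-Y_{t+u}(f))^4]\lesssim|v-u|^4$, and the analogous bound for the second term follows from the ratio estimate \eqref{lem4.1.1}. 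With this bound the argument above goes through verbatim with $\gamma=4$.
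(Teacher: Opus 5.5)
Your proof is correct and follows essentially the paper's route. Both use the same splitting of the increment, and both apply Lemma \ref{lem4.1} to get $\pp_{\delta_{x}}\big[|Z^{1,(t)}_{v}(f)-Z^{1,(t)}_{u}(f)|^{2}\big]\le c|u-v|^{2}$ for $t\ge t_{1}$, followed by a second-moment criterion. The paper verifies (ii)--(iii) of Lemma \ref{lemA.3} by Chebyshev and Cauchy--Schwarz with $\gamma=\alpha=2$; for continuous processes this amounts to your Kolmogorov--Chentsov step.

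The one real difference is how you handle the initial values. The paper gets tightness of $a_{f}(t)Y_{t}(f)$ from the f.d.d.\ results, Propositions \ref{prop:fddsuper}, \ref{prop:fddcri} and \ref{prop:fddsub}(iv). This is the only place (M4) enters, through Proposition \ref{prop:fddcri}. Your Chebyshev bound from Lemma \ref{lem1} (the constant $M_{2}$ in the proof of Lemma \ref{lem4.1}) avoids that. So you are right that (M4) is superfluous for this proposition. Your fourth-moment fallback is also never needed, because the product bound in Lemma \ref{lemA.3}(iii) already follows from second moments.

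One correction: drop the claim that $t\in[0,t_{1}]$ gives a compact family of laws by continuity in $t$. In the critical case, and whenever $r>0$, $a_{f}(t)\to+\infty$ as $t\downarrow 0$. Then $Z^{1,(0)}(f)$ is not even defined, and that argument breaks down near $t=0$. You must take your second option and consider only $t\ge t_{1}$, which is what the paper does implicitly.
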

\begin{proof}
Note that for any $t\ge 0$ and $0\le u\le v<+\infty$,
$$Z^{1,(t)}_{v}(f)-Z^{1,(t)}_{u}(f)=a_{f}(t+v)(Y_{t+v}(f)-Y_{t+u}(f))+(a_{f}(t+v)-a_{f}(t+u))Y_{t+u}(f).$$
By the fact that $(a+b)^{2}\le 2(a^{2}+b^{2})$ for all $a,b\in\R$, we have
$$\pp_{\delta_{x}}\big[|Z^{1,(t)}_{v}(f)-Z^{1,(t)}_{u}(f)|^{2}\big]\le 2a_{f}(t+v)^{2}\pp_{\delta_{x}}\big[\big(Y_{t+v}(f)-Y_{t+u}(f)\big)^{2}\big]+2(a_{f}(t+v)-a_{f}(t+u))^{2}\pp_{\delta_{x}}\big[Y_{t+u}(f)^{2}\big].$$
It then follows by Lemma \ref{lem4.1} that, there exist constants $t_{1},c_{1}>0$ such that
\begin{equation}\label{lem4.2.1}
\pp_{\delta_{x}}\big[|Z^{1,(t)}_{v}(f)-Z^{1,(t)}_{u}(f)|^{2}\big]\le c_{1}|u-v|^{2}\quad\forall t\ge t_{1},\ 0\le u\le v<+\infty.
\end{equation}
Thus, for any $\epsilon>0$ and $\delta>0$,
$$\pp_{\delta_{x}}\big(|Z^{1,(t)}_{\delta}(f)-Z^{1,(t)}_{0}(f)|>\epsilon\big)\le \epsilon^{-2}\pp_{\delta_{x}}\big[|Z^{1,(t)}_{\delta}(f)-Z^{1,(t)}_{0}(f)|^{2}\big]\le c_{1}\epsilon^{-2}\delta^{2}\quad\forall t\ge t_{1},$$
which in turn yields that
\begin{equation}\label{lem4.2.2}
\lim_{\delta\downarrow 0}\limsup_{t\to+\infty}\pp_{\delta_{x}}\big(|Z^{1,(t)}_{\delta}(f)-Z^{1,(t)}_{0}(f)|>\epsilon\big)=0.
\end{equation}
Moreover, for any $\epsilon>0$, $t\ge t_{1}$ and $0\le u<v<w<+\infty$, by H\"{o}lder's inequality and \eqref{lem4.2.1},
\begin{align}\label{lem4.2.3}
&\pp_{\delta_{x}}\big(|Z^{1,(t)}_{v}(f)-Z^{1,(t)}_{u}(f)|>\epsilon,\ |Z^{1,(t)}_{w}(f)-Z^{1,(t)}_{v}(f)|>\epsilon\big)\nonumber\\
\le&\epsilon^{-2}\pp_{\delta_{x}}\big[|Z^{1,(t)}_{v}(f)-Z^{1,(t)}_{u}(f)|\cdot|Z^{1,(t)}_{w}(f)-Z^{1,(t)}_{v}(f)|\big]\nonumber\\
\le&\epsilon^{-2}\pp_{\delta_{x}}\big[|Z^{1,(t)}_{v}(f)-Z^{1,(t)}_{u}(f)|^{2}\big]^{1/2}\pp_{\delta_{x}}\big[|Z^{1,(t)}_{w}(f)-Z^{1,(t)}_{v}(f)|^{2}\big]^{1/2}\nonumber\\
\le&\epsilon^{-2}c_{1}|w-u|^{2}.
\end{align}
It follows by Propositions \ref{prop:fddsuper}, \ref{prop:fddcri} and \ref{prop:fddsub}(iv) that $\{Z^{1,(t)}_{0}(f)=a_{f}(t)Y_{t}(f):\ t\ge 0\}$ is tight in $\R$. This combined with \eqref{lem4.2.2} and \eqref{lem4.2.3} verifies conditions (i)-(iii) of Lemma \ref{lemA.3}. Therefore, $\{Z^{1,(t)}(f):\ t\ge 0\}$ is tight.
\end{proof}

\begin{lemma}\label{lem3.9}
For $t,s\ge 0$, define
$$\mathfrak{M}^{(t)}_{s}:=\e^{-\frac{\lambda_{1}}{2}(t+s)}\big( Y_{t+s}(\varphi)-\frac{1}{\lambda_{1}} X_{t+s}(\varphi)\big).$$
Then the family of processes $\{\mathfrak{M}^{(t)}:=\big(\mathfrak{M}^{(t)}_{s}\big)_{s\ge 0}:\ t\ge 0\}$ is tight in $\mathcal{D}[0,+\infty)$.
\end{lemma}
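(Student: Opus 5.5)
Since $\mathrm{M}_{t}(\varphi)=Y_{t}(\varphi)-\lambda_{1}^{-1}X_{t}(\varphi)$ is a c\`{a}dl\`{a}g square-integrable martingale (Lemma \ref{lem3.n1}), we can write $\mathfrak{M}^{(t)}_{s}=\e^{-\frac{\lambda_{1}}{2}(t+s)}\mathrm{M}_{t+s}(\varphi)$. The plan is to apply Aldous' criterion (as in \cite[Chapter VI]{JS}) to this family. Two things must be checked: (a) for each fixed $N$, $\{\sup_{s\le N}|\mathfrak{M}^{(t)}_{s}|:t\ge0\}$ is tight in $\R$; (b) for every $N,\epsilon>0$,
$$\lim_{\delta\downarrow0}\limsup_{t\to+\infty}\sup_{\tau,\ 0\le h\le\delta}\pp_{\delta_{x}}\big(|\mathfrak{M}^{(t)}_{\tau+h}-\mathfrak{M}^{(t)}_{\tau}|>\epsilon\big)=0,$$
where the supremum runs over stopping times $\tau\le N$ of the filtration $(\mathcal{F}_{t+s})_{s\ge0}$.

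For (a), Doob's $L^{2}$ inequality gives $\pp_{\delta_{x}}[\sup_{s\le N}\mathrm{M}_{t+s}(\varphi)^{2}]\le4\pp_{\delta_{x}}[\mathrm{M}_{t+N}(\varphi)^{2}]$. By \eqref{lemn2.3} and the predictable quadratic variation $\lambda_{1}^{-2}\langle\vartheta[\varphi],Y_{t}\rangle$, this is at most $c+c\,\pp_{\delta_{x}}\langle\vartheta[\varphi],Y_{t+N}\rangle\le c\,\e^{\lambda_{1}(t+N)}$. Since $\e^{-\frac{\lambda_{1}}{2}(t+s)}\le\e^{-\frac{\lambda_{1}}{2}t}$, the second moment of $\sup_{s\le N}|\mathfrak{M}^{(t)}_{s}|$ is bounded by $c\,\e^{\lambda_{1}N}$ uniformly in $t$, and Chebyshev's inequality gives (a).

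For (b), split
$$\mathfrak{M}^{(t)}_{\tau+h}-\mathfrak{M}^{(t)}_{\tau}=\e^{-\frac{\lambda_{1}}{2}(t+\tau+h)}\big(\mathrm{M}_{t+\tau+h}-\mathrm{M}_{t+\tau}\big)+\big(\e^{-\frac{\lambda_{1}}{2}h}-1\big)\mathfrak{M}^{(t)}_{\tau}.$$
The second term is bounded by $\frac{\lambda_{1}}{2}\delta\sup_{s\le N}|\mathfrak{M}^{(t)}_{s}|$, which is small in probability uniformly in $t$ by (a). For the first term, the optional sampling theorem applied to the square-integrable martingale $\tilde{\mathrm{M}}$ with bounded stopping times gives
$$\pp_{\delta_{x}}\big[(\mathrm{M}_{t+\tau+h}-\mathrm{M}_{t+\tau})^{2}\big]=\lambda_{1}^{-2}\pp_{\delta_{x}}\Big[\int_{t+\tau}^{t+\tau+h}\langle\vartheta[\varphi],X_{r}\rangle\mathrm{d}r\Big].$$
Since $\vartheta[\varphi]\le\|\vartheta[\varphi]\|_{\infty}\|\varphi\|^{-1}_{\infty}\cdot$ is not directly comparable to $\varphi$, it is better to dominate the integrand by $\|\vartheta[\varphi]\|_{\infty}\langle1,X_{r}\rangle$ and then bound it by $c\,\delta\sup_{r\le t+N+\delta}\e^{-\lambda_{1}r}\langle1,X_{r}\rangle\cdot\e^{\lambda_{1}(t+\tau)}$.

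The main obstacle is this last bound: controlling $\sup_{r\le t+N+1}\e^{-\lambda_{1}r}\langle1,X_{r}\rangle$ uniformly in $t$, since $\varphi$ need not be bounded below. The first attempt will avoid the supremum altogether by using the Lenglart domination inequality. The event in (b) for the first term has probability at most
$$\frac{\eta}{\epsilon^{2}}+\pp_{\delta_{x}}\Big(\e^{-\lambda_{1}t}\lambda_{1}^{-2}\int_{t+\tau}^{t+\tau+\delta}\langle\vartheta[\varphi],X_{r}\rangle\mathrm{d}r>\eta\Big),$$
and the integral in the second probability is at most $\lambda_{1}^{-2}\e^{-\lambda_{1}t}\sup_{s\le N}\big(\langle\vartheta[\varphi],Y_{t+s+\delta}\rangle-\langle\vartheta[\varphi],Y_{t+s}\rangle\big)$. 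The process $s\mapsto\e^{-\lambda_{1}t}\langle\vartheta[\varphi],Y_{t+s}\rangle$ is continuous and nondecreasing, and by Lemma \ref{lem2.2}(ii) it converges in $L^{2}$ for each fixed $s$ to $\lambda_{1}^{-1}\e^{\lambda_{1}s}\langle\vartheta[\varphi],\widetilde{\varphi}\rangle W^{\varphi}_{\infty}$. For a monotone process converging pointwise to a continuous limit, the convergence is locally uniform in probability (a Dini-type argument on a finite grid of $s$). Hence its modulus of continuity over $[0,N+1]$ tends to $c(\e^{\lambda_{1}\delta}-1)W^{\varphi}_{\infty}$, which vanishes as $\delta\to0$. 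Taking $t\to\infty$, then $\delta\to0$, then $\eta\to0$ completes (b), and Aldous' criterion yields tightness in $\mathcal{D}[0,+\infty)$.
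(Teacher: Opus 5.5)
Your proof is correct. Its skeleton is the paper's: Aldous' criterion, optional sampling to turn the squared increment of $\mathrm{M}(\varphi)$ into $\lambda_{1}^{-2}$ times an increment of $Y(\vartheta[\varphi])$, and the law of large numbers of Lemma~\ref{lem2.2}. The differences are in the two estimates, and neither is a gap.

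\textbf{Compact containment.} The paper splits off $\e^{-\frac{\lambda_{1}}{2}s}\mathfrak{M}^{(t)}_{0}$ and uses the tightness of $\mathfrak{M}^{(t)}_{0}$ coming from Lemma~\ref{lem3.n1}. Your single Doob bound $\pp_{\delta_{x}}\big[\sup_{s\le N}|\mathfrak{M}^{(t)}_{s}|^{2}\big]\le c\,\e^{\lambda_{1}N}$ is simpler and does not need the stable CLT.

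\textbf{Increment term.} The paper simply takes expectations. It writes $\e^{-\lambda_{1}t}\langle\vartheta[\varphi],X_{t+r}\rangle$ as $\e^{\lambda_{1}r}$ times its deviation from $\langle\vartheta[\varphi],\widetilde{\varphi}\rangle W^{\varphi}_{\infty}$, plus $\e^{\lambda_{1}r}\langle\vartheta[\varphi],\widetilde{\varphi}\rangle W^{\varphi}_{\infty}$. The integral of the deviation over the random window $[\sigma,\tau]\subset[0,N]$ is dominated by the integral of its absolute value over all of $[0,N]$. The expectation of that integral vanishes by Lemma~\ref{lem2.2}(i) and dominated convergence. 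The remaining part is at most $c\,\varphi(x)\e^{\lambda_{1}N}(\e^{\lambda_{1}\theta}-1)$ for window length $\theta$, because $\pp_{\delta_{x}}[W^{\varphi}_{\infty}]=\varphi(x)$.

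Your route (Lenglart's inequality plus a Dini-type upgrade of Lemma~\ref{lem2.2}(ii)) is valid but heavier than needed. The ``main obstacle'' you describe does not actually arise:
\begin{itemize}
\item no supremum of $\e^{-\lambda_{1}r}\langle 1,X_{r}\rangle$ is ever required;
\item the lack of a lower bound on $\varphi$ is irrelevant, since Lemma~\ref{lem2.2} applies to any bounded test function, in particular $\vartheta[\varphi]$;
\item even on your route Lenglart can be dropped. Your grid argument, for $F_{t}(s)=\e^{-\lambda_{1}t}Y_{t+s}(\vartheta[\varphi])$ and $F(s)=\lambda_{1}^{-1}\e^{\lambda_{1}s}\langle\vartheta[\varphi],\widetilde{\varphi}\rangle W^{\varphi}_{\infty}$, already gives $\pp_{\delta_{x}}\big[\sup_{s\le K}|F_{t}(s)-F(s)|\big]\to 0$. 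So Chebyshev applied to the optional-sampling identity suffices.
\end{itemize}
The garbled sentence comparing $\vartheta[\varphi]$ with $\varphi$, and the abandoned supremum bound that follows it, should simply be deleted.
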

\begin{proof}
For $t,s\ge 0$, let $\mathcal{F}^{(t)}_{s}:=\mathcal{F}_{t+s}$. By Aldous' criterion for tightness (cf. \cite[Theorem VI.4.5]{JS}), we only need to show the following hold.
\begin{description}
\item[(a)] For any $N>0$,
$$\lim_{k\to+\infty}\limsup_{t\to+\infty}\pp_{\delta_{x}}\big(\sup_{s\in [0,N]}|\mathfrak{M}^{(t)}_{s}|>K\big)=0.$$
\item[(b)] For any $\epsilon>0$ and $N>0$,
$$\lim_{\theta\downarrow 0}\limsup_{t\to+\infty}\sup_{\sigma,\tau\in \mathcal{T}^{(t)}_{N},\,0\le \tau-\sigma\le \theta}\pp_{\delta_{x}}\big(|\mathfrak{M}^{(t)}_{\tau}-\mathfrak{M}^{(t)}_{\sigma}|\ge\epsilon\big)=0.$$
Here $\mathcal{T}^{(t)}_{N}$ denotes the set of $\mathcal{F}^{(t)}$-stopping times that are bounded by $N$.
\end{description}
Recall from Lemma \ref{lem3.n1} that $\{\mathrm{M}_{t}(\varphi)=Y_{t}(\varphi)-\lambda^{-1}_{1}X_{t}(\varphi):\ t\ge 0\}$ is an $\mathcal{F}$-adapted c\`{a}dl\`{a}g martingale with predictable quadratic variation $\langle M(\varphi)\rangle_{t}= \lambda^{-2}_{1}Y_{t}(\vartheta[\varphi])$.
We may write $\mathfrak{M}^{(t)}_{s}=\mathfrak{M}^{(t)}_{1}(s)+\mathfrak{M}^{(t)}_{2}(s)$, where
$$\mathfrak{M}^{(t)}_{1}(s):=\e^{-\frac{\lambda_{1}}{2}(t+s)}\big(M_{t+s}(\varphi)-\mathrm{M}_{t}(\varphi)\big)\mbox{ and }\mathfrak{M}^{(t)}_{2}(s):=\e^{-\frac{\lambda_{1}}{2}(t+s)}\mathrm{M}_{t}(\varphi)
=\e^{-\frac{\lambda_{1}}{2}s}\mathfrak{M}^{(t)}_{0}.$$
Clearly, we have $\pp_{\delta_{x}}\big(\sup_{s\in [0,N]}|\mathfrak{M}^{(t)}_{s}|>K\big)\le \sum_{i=1}^{2}\pp_{\delta_{x}}\big(\sup_{s\in [0,N]}|\mathfrak{M}^{(t)}_{i}(s)|>K/2\big)$ and
$\pp_{\delta_{x}}\big(|\mathfrak{M}^{(t)}_{\tau}-\mathfrak{M}^{(t)}_{\sigma}|\ge\epsilon\big)\le\sum_{i=1}^{2}
\pp_{\delta_{x}}\big(|\mathfrak{M}^{(t)}_{i}(\tau)-\mathfrak{M}^{(t)}_{i}(\sigma)|\ge\epsilon/2\big)$.
Hence to prove (a) and (b), it suffices to prove that
\begin{description}
\item[(a')] For any $N>0$,
$$\lim_{k\to+\infty}\limsup_{t\to+\infty}\pp_{\delta_{x}}\big(\sup_{s\in [0,N]}|\mathfrak{M}^{(t)}_{i}(s)|>K\big)=0\quad\forall i=1,2.$$
\item[(b')] For any $\epsilon>0$ and $N>0$,
$$\lim_{\theta\downarrow 0}\limsup_{t\to+\infty}\sup_{\sigma,\tau\in \mathcal{T}^{(t)}_{N},\,0\le \tau-\sigma\le \theta}\pp_{\delta_{x}}\big(|\mathfrak{M}^{(t)}_{i}(\tau)-\mathfrak{M}^{(t)}_{i}(\sigma)|\ge\epsilon\big)=0\quad\forall i=1,2.$$
\end{description}
\noindent Proof of (a'): Note that
$$\pp_{\delta_{x}}\big(\sup_{s\in [0,N]}|\mm^{(t)}_{2}(s)|>k\big)=\pp_{\delta_{x}}\big(\sup_{s\in [0,N]}|\e^{-\frac{\lambda_{1}}{2}s}\mm^{(t)}_{0}|>K\big)=\pp_{\delta_{x}}\big(|\mm^{(t)}_{0}|>K\big).$$
Since by Lemma \ref{lem3.n1} $\{\mm^{(t)}_{0}=\e^{-\frac{\lambda_{1}}{2}t}\mathrm{M}_{t}(\varphi):\ t\ge 0\}$ is tight in $\R$, we have
$$\lim_{K\to+\infty}\limsup_{t\to+\infty}\pp_{\delta_{x}}\big(\sup_{s\in [0,N]}|\mm^{(t)}_{2}(s)|>k\big)=\lim_{K\to+\infty}\limsup_{t\to+\infty}\pp_{\delta_{x}}\big(|\mm^{(t)}_{0}|>K\big)=0.$$
On the other hand, we have for $N>0$ and $K>0$,
\begin{align}\label{lem4.3.1}
\pp_{\delta_{x}}\big(\sup_{s\in [0,N]}|\mm^{(t)}_{1}(s)|>K\big)=&\pp_{\delta_{x}}\big(\sup_{s\in [0,N]}\e^{-\frac{\lambda_{1}}{2}(t+s)}|M_{t+s}(\varphi)-\mathrm{M}_{t}(\varphi)|>K\big)\nonumber\\
\le&\pp_{\delta_{x}}\big(\sup_{s\in [0,N]}|M_{t+s}(\varphi)-\mathrm{M}_{t}(\varphi)|>K\e^{\frac{\lambda_{1}}{2}t}\big)\nonumber\\
\le&4K^{-2}\e^{-\lambda_{1}t}\pp_{\delta_{x}}\big[\big(M_{t+N}(\varphi)-\mathrm{M}_{t}(\varphi)\big)^{2}\big]\nonumber\\
=&4K^{-2}\e^{-\lambda_{1}t}\pp_{\delta_{x}}\big[\langle M(\varphi)\rangle_{t+N}-\langle M(\varphi)\rangle_{t}\big]\nonumber\\
=&4K^{-2}\e^{-\lambda_{1}t}\lambda^{-2}_{1}\pp_{\delta_{x}}\big[Y_{t+N}(\vartheta[\varphi])-Y_{t}(\vartheta[\varphi])\big].
\end{align}
Here the second inequality follows from the $L^{2}$-maximal inequality.
By the Markov property,
$$\pp_{\delta_{x}}\big[Y_{t+N}(\vartheta[\varphi])-Y_{t}(\vartheta[\varphi])\big]=\pp_{\delta_{x}}\big[\pp_{X_{t}}\big[Y_{N}(\vartheta[\varphi])\big]\big]
=T_{t}\big(M_{N}[\vartheta[\varphi]]\big)(x).$$
Inserting this into the right hand side of \eqref{lem4.3.1} yields that
$$\pp_{\delta_{x}}\big(\sup_{s\in [0,N]}|\mm^{(t)}_{1}(s)|>K\big)\le 4K^{-2}\lambda^{-2}_{1}\e^{-\lambda_{1}t}T_{t}\big(M_{N}[\vartheta[\varphi]]\big)(x).$$
Since $\limsup_{t\to+\infty}\e^{-\lambda_{1}t}T_{t}\big(M_{N}[\vartheta[\varphi]]\big)(x)<+\infty$ by (H2), we get from the above inequality that
$$\lim_{K\to+\infty}\limsup_{t\to+\infty}\pp_{\delta_{x}}\big(\sup_{s\in [0,N]}|\mm^{(t)}_{1}(s)|>K\big)=0.$$

\smallskip

\noindent Proof of (b'): Let $\theta, t>0$. Suppose $\tau,\sigma\in \mathcal{T}^{(t)}_{N}$ and $0\le \tau-\sigma\le \theta$. Note that
$$\big|\mm^{(t)}_{2}(\tau)-\mm^{(t)}_{2}(\sigma)\big|=|(\e^{-\frac{\lambda_{1}}{2}\tau}-\e^{-\frac{\lambda_{1}}{2}\sigma})\mm^{(t)}_{0}|=\e^{-\frac{\lambda_{1}}{2}\sigma}
\big(1-\e^{-\frac{\lambda_{1}}{2}(\tau-\sigma)}\big)|\mm^{(t)}_{0}|\le\big(1-\e^{-\frac{\lambda_{1}}{2}\theta}\big)|\mm^{(t)}_{0}|.$$
Thus, for any $\epsilon>0$,
$$\lim_{\theta\downarrow 0}\limsup_{t\to+\infty}\sup_{\sigma,\tau\in\mathcal{T}^{(t)}_{N}\,0\le\tau-\sigma\le\theta}\pp_{\delta_{x}}
\big(\big|\mm^{(t)}_{2}(\tau)-\mm^{(t)}_{2}(\sigma)\big|\ge \epsilon\big)\le\lim_{\theta\downarrow 0}\limsup_{t\to+\infty}\pp_{\delta_{x}}\big(|\mm^{(t)}_{0}|\ge \epsilon\big(1-\e^{-\frac{\lambda_{1}}{2}\theta}\big)^{-1}\big)=0.$$
The final equality follows from the tightness of $\{\mm^{(t)}_{0}:\ t\ge 0\}$.

On the other hand, we have
\begin{align*}
\mm^{(t)}_{1}(\tau)-\mm^{(t)}_{1}(\sigma)=&\e^{-\frac{\lambda_{1}}{2}(t+\tau)}\big(M_{t+\tau}(\varphi)-M_{t+\sigma}(\varphi)\big)+\big(\e^{-\frac{\lambda_{1}}{2}(t+\tau)}-
\e^{-\frac{\lambda_{1}}{2}(t+\sigma)}\big)\big(M_{t+\sigma}(\varphi)-\mathrm{M}_{t}(\varphi)\big)\\
=&\e^{-\frac{\lambda_{1}}{2}(t+\tau)}\big(M_{t+\tau}(\varphi)-M_{t+\sigma}(\varphi)\big)+\big(\e^{-\frac{\lambda_{1}}{2}(\tau-\sigma)}-1\big)\mm^{(t)}_{1}(\sigma).
\end{align*}
Clearly, we have for any $\epsilon>0$,
\begin{equation}\label{lem4.3.4}
\pp_{\delta_{x}}\big(\big|\mm^{(t)}_{1}(\tau)-\mm^{(t)}_{1}(\sigma)\big|\ge \epsilon\big)\le\pp_{\delta_{x}}\big(\e^{-\frac{\lambda_{1}}{2}(t+\tau)}\big|M_{t+\tau}(\varphi)-M_{t+\sigma}(\varphi)\big|\ge\epsilon/2\big)
+\pp_{\delta_{x}}\big(\big|\big(\e^{-\frac{\lambda_{1}}{2}(\tau-\sigma)}-1\big)\mm^{(t)}_{1}(\sigma)\big|\ge\epsilon/2\big).
\end{equation}
Since $0\le\tau\le N$ and $0\le\tau-\sigma\le\theta$,
\begin{equation*}
\pp_{\delta_{x}}\big(\big|\big(\e^{-\frac{\lambda_{1}}{2}(\tau-\sigma)}-1\big)\mm^{(t)}_{1}(\sigma)\big|\ge \epsilon\big)
\le\pp_{\delta_{x}}\big(\big(1-\e^{-\frac{\lambda_{1}}{2}\theta}\big)\sup_{s\in [0,N]}|\mm^{(t)}_{1}(s)|\ge \epsilon\big).
\end{equation*}
Hence by (a'),
\begin{align}\label{lem4.3.2}
&\lim_{\theta\downarrow 0}\limsup_{t\to+\infty}\sup_{\sigma,\tau\in\mathcal{T}^{(t)}_{N}\,0\le\tau-\sigma\le\theta}\pp_{\delta_{x}}\big(\big|\big(\e^{-\frac{\lambda_{1}}{2}(\tau-\sigma)}-1\big)\mm^{(t)}_{1}(\sigma)\big|\ge \epsilon\big)\nonumber\\
\le&\lim_{\theta\downarrow 0}\limsup_{t\to+\infty}\pp_{\delta_{x}}\big(\sup_{s\in [0,N]}|\mm^{(t)}_{1}(s)|\ge \epsilon\big(1-\e^{-\frac{\lambda_{1}}{2}\theta}\big)^{-1}\big)=0.
\end{align}
From the definition, for every $t\ge 0$, both $\tau+t$ and $\sigma+t$ are bounded $\mathcal{F}$-stopping times. By the optional stopping theorem, we have
$\pp_{\delta_{x}}\big[M_{t+\tau}(\varphi)|\mathcal{F}_{t+\sigma}\big]=M_{t+\sigma}(\varphi)$ and
$$\pp_{\delta_{x}}\big[M_{t+\tau}(\varphi)^{2}-\langle M(\varphi)\rangle_{t+\tau}|\mathcal{F}_{t+\sigma}\big]=M_{t+\sigma}(\varphi)^{2}-\langle M(\varphi)\rangle_{t+\sigma},$$
which in turn yields that
\begin{equation}\label{lem4.3.3}
\pp_{\delta_{x}}\big[\big(M_{t+\tau}(\varphi)-M_{t+\sigma}(\varphi)\big)^{2}|\mathcal{F}_{t+\sigma}\big]=\pp_{\delta_{x}}\big[\langle M(\varphi)\rangle_{t+\tau}-\langle M(\varphi)\rangle_{t+\sigma}|\mathcal{F}_{t+\sigma}\big].
\end{equation}
By this and the Markov inequality, we get
\begin{align}\label{lem4.3.5}
\pp_{\delta_{x}}\big(\e^{-\frac{\lambda_{1}}{2}(t+\tau)}|M_{t+\tau}(\varphi)-M_{t+\sigma}(\varphi)|\ge \epsilon\big)
\le&\pp_{\delta_{x}}\big(|M_{t+\tau}(\varphi)-M_{t+\sigma}(\varphi)|\ge \epsilon\e^{\frac{\lambda_{1}}{2}t}\big)\nonumber\\
\le&\epsilon^{-2}\e^{-\lambda_{1}t}\pp_{\delta_{x}}\big[\big(M_{t+\tau}(\varphi)-M_{t+\sigma}(\varphi)\big)^{2}\big]\nonumber\\
=&\epsilon^{-2}\e^{-\lambda_{1}t}\pp_{\delta_{x}}\big[\langle M(\varphi)\rangle_{t+\tau}-\langle M(\varphi)\rangle_{t+\sigma}\big]\nonumber\\
=&\epsilon^{-2}\e^{-\lambda_{1}t}\lambda^{-2}_{1}\pp_{\delta_{x}}\big[Y_{t+\tau}(\vartheta[\varphi])-Y_{t+\sigma}(\vartheta[\varphi])\big].
\end{align}
Note that
\begin{align*}
&\e^{-\lambda_{1}t}\pp_{\delta_{x}}\big[Y_{t+\tau}(\vartheta[\varphi])-Y_{t+\sigma}(\vartheta[\varphi])\big]\\
=&\pp_{\delta_{x}}\big[\int_{\sigma}^{\tau}\e^{-\lambda_{1}t}\langle\vartheta[\varphi],X_{t+r}\rangle\mathrm{d}r\big]\\
=&\pp_{\delta_{x}}\big[\int_{\sigma}^{\tau}\e^{\lambda_{1}r}\big(\e^{-\lambda_{1}(t+r)}\langle\vartheta[\varphi],X_{t+r}\rangle-\langle\vartheta[\varphi],\widetilde{\varphi}\rangle W^{\varphi}_{\infty}\big)\mathrm{d}r+\langle\vartheta[\varphi],\widetilde{\varphi}\rangle W^{\varphi}_{\infty}\int_{\sigma}^{\tau}\e^{\lambda_{1}r}\mathrm{d}r\big]\\
\le&\pp_{\delta_{x}}\big[\int_{\sigma}^{\tau}\e^{\lambda_{1}r}\big|\e^{-\lambda_{1}(t+r)}\langle\vartheta[\varphi],X_{t+r}\rangle-\langle\vartheta[\varphi],\widetilde{\varphi}\rangle W^{\varphi}_{\infty}\big|\mathrm{d}r\big]+\frac{1}{\lambda_{1}}\langle\vartheta[\varphi],\widetilde{\varphi}\rangle \pp_{\delta_{x}}\big[W^{\varphi}_{\infty}\e^{\lambda_{1}\sigma}\big(\e^{\lambda_{1}(\tau-\sigma)}-1\big)\big].
\end{align*}
Hence, by \eqref{lem4.3.5}
\begin{align*}
&\sup_{\sigma,\tau\in\mathcal{T}^{(t)}_{N}\,0\le\tau-\sigma\le\theta}\pp_{\delta_{x}}\big(\e^{-\frac{\lambda_{1}}{2}(t+\tau)}|M_{t+\tau}(\varphi)-M_{t+\sigma}(\varphi)|\ge \epsilon\big)\\
\le&\epsilon^{-2}\lambda^{-2}_{1}\Big(\int_{0}^{N}\e^{\lambda_{1}r}\pp_{\delta_{x}}\big[\big|\e^{-\lambda_{1}(t+r)}\langle\vartheta[\varphi],X_{t+r}\rangle-\langle\vartheta[\varphi],\widetilde{\varphi}\rangle W^{\varphi}_{\infty}\big|\big]\mathrm{d}r+\frac{1}{\lambda_{1}}\langle\vartheta[\varphi],\widetilde{\varphi}\rangle\varphi(x)\e^{\lambda_{1}N}\big(\e^{\lambda_{1}\theta}-1\big)\Big).
\end{align*}
By Lemma \ref{lem2.2}(i), $\lim_{t\to+\infty}\pp_{\delta_{x}}\big[\big|\e^{-\lambda_{1}(t+r)}\langle\vartheta[\varphi],X_{t+r}\rangle-\langle\vartheta[\varphi],\widetilde{\varphi}\rangle W^{\varphi}_{\infty}\big|\big]=0$. Using this and the dominated convergence theorem, one can easily show that the integral in the right hand side converges to $0$ as $t\to+\infty$. Consequently,
$$\lim_{\theta\downarrow 0}\limsup_{t\to+\infty}\sup_{\sigma,\tau\in\mathcal{T}^{(t)}_{N}\,0\le\tau-\sigma\le\theta}\pp_{\delta_{x}}\big(\e^{-\frac{\lambda_{1}}{2}(t+\tau)}|M_{t+\tau}(\varphi)-M_{t+\sigma}(\varphi)|\ge \epsilon\big)=0.$$
Combining this with \eqref{lem4.3.2}, we prove (b'). Therefore, we complete the proof.
\end{proof}

\begin{proposition}\label{prop:tight2}
For $t,s\ge 0$, set
$$Z^{2,(t)}_{s}(\varphi):=\e^{\frac{\lambda_{1}}{2}(t+s)}Z^{(t)}_{s}(\varphi).$$
Then under $\pp_{\delta_{x}}$,
$$Z^{2,(t)}(\varphi):=(Z^{2,(t)}_{s}(\varphi))_{s\ge 0}\stackrel{d}{\to}\sqrt{W^{\varphi}_{\infty}}G^{2}(\varphi)\mbox{ in }\mathcal{C}[0,+\infty)\mbox{ as }t\to+\infty,$$
where $G^{2}(\varphi)=(G^{2}_{s}(\varphi))_{s\ge 0}$ is a centered Gaussian process with covariance
$$\Cov(G^{2}_{s}(\varphi),G^{2}_{t}(\varphi))=\e^{-\frac{\lambda_{1}}{2}|s-t|}\kappa_{\varphi}(|s-t|)\quad\forall s,t\ge 0.$$
\end{proposition}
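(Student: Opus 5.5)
The plan is to prove tightness of $\{Z^{2,(t)}(\varphi)\}$ in $\mathcal{D}[0,+\infty)$, identify the finite-dimensional limits, and then use the closedness of $\mathcal{C}[0,+\infty)$ in $\mathcal{D}[0,+\infty)$ as recalled at the beginning of Section~\ref{sec3}. The finite-dimensional part is already available. Taking $f=0$ in Proposition~\ref{prop:fddsuper} (note $0\in\mathcal{D}^{0}_{\mathrm{S}}$), the second coordinate gives
$$(Z^{2,(t)}_{s}(\varphi))_{s\ge0}\stackrel{f.d.d.}{\longrightarrow}\sqrt{W^{\varphi}_{\infty}}G^{2}(\varphi),$$
with exactly the stated covariance $\e^{-\frac{\lambda_{1}}{2}|s-t|}\kappa_{\varphi}(|s-t|)$. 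So it remains to prove tightness.

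For tightness I will decompose the process. Since $\e^{-\lambda_{1}(t+s)}Y_{t+s}(\varphi)-\lambda_{1}^{-1}W^{\varphi}_{\infty}=\e^{-\lambda_{1}(t+s)}\mathrm{M}_{t+s}(\varphi)+\lambda_{1}^{-1}(W^{\varphi}_{t+s}-W^{\varphi}_{\infty})$, we get
$$Z^{2,(t)}_{s}(\varphi)=\mathfrak{M}^{(t)}_{s}+\lambda_{1}^{-1}\e^{\frac{\lambda_{1}}{2}(t+s)}\big(W^{\varphi}_{t+s}-W^{\varphi}_{\infty}\big)=:\mathfrak{M}^{(t)}_{s}+R^{(t)}_{s}.$$
Lemma~\ref{lem3.9} gives tightness of $\{\mathfrak{M}^{(t)}\}$. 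I also expect the first term to be $\mathcal{C}$-tight, because its jumps are those of $\e^{-\frac{\lambda_{1}}{2}(t+s)}X_{t+s}(\varphi)/\lambda_1$. This would be checked from the compensator estimate in the proof of Lemma~\ref{lem3.n1}: the jumps of $\tilde{\mathrm{M}}(\varphi)$ exceeding $\epsilon\e^{\lambda_{1}t/2}$ have vanishing compensated square mass. Given $\mathcal{C}$-tightness of one summand, the sum of two tight families is tight by the standard criterion (\cite[Corollary VI.3.33]{JS}). So the key is tightness, preferably $\mathcal{C}$-tightness, of $\{R^{(t)}\}$.

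For $R^{(t)}$, write $W^{\varphi}_{t+s}-W^{\varphi}_{\infty}=-\lim_{T\to\infty}(W^{\varphi}_{T}-W^{\varphi}_{t+s})$. Using the representation \eqref{lemn2.2}, this equals $-\int_{t+s}^{\infty}\int_{E}\e^{-\lambda_{1}u}\varphi(y)M(\mathrm{d}u,\mathrm{d}y)$. For fixed $t$ this is a reversed-martingale-type process in $s$, and the moment bound
$$\pp_{\delta_{x}}\big[(W^{\varphi}_{\infty}-W^{\varphi}_{t+s})^{2}\big]=\pp_{\delta_x}\Big[\int_{t+s}^{\infty}\e^{-2\lambda_{1}u}\langle\vartheta[\varphi],X_{u}\rangle\mathrm{d}u\Big]\lesssim \e^{-\lambda_{1}(t+s)}$$
shows that $R^{(t)}_{s}$ is bounded in $L^{2}$. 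Increments satisfy $R^{(t)}_{v}-R^{(t)}_{u}=(\e^{\frac{\lambda_{1}}{2}(v-u)}-1)R^{(t)}_{u}+\lambda_{1}^{-1}\e^{\frac{\lambda_{1}}{2}(t+v)}(W^{\varphi}_{t+v}-W^{\varphi}_{t+u})$. The first piece is $O(|v-u|)$ in $L^{2}$. The second has second moment $\lambda_{1}^{-2}\e^{\lambda_{1}(t+v)}\int_{t+u}^{t+v}\e^{-2\lambda_{1}r}T_{r}(\vartheta[\varphi])(x)\mathrm{d}r\lesssim |v-u|$ by \eqref{eq1.9}. That is only order one in $|v-u|$, so a Billingsley bound like Lemma~\ref{lemA.3} does not apply directly.

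Hence I will instead treat $R^{(t)}$ via Aldous' criterion, as in Lemma~\ref{lem3.9}. Conditions (a) and (b) are checked through the martingale $s\mapsto \e^{-\frac{\lambda_{1}}{2}(t+s)}\cdot \e^{\lambda_{1}(t+s)}W^{\varphi}_{t+s}$ component together with the $L^{2}$-convergent random variable $\e^{\frac{\lambda_1}{2}t}(W^\varphi_t-W^\varphi_\infty)$, handled exactly as $\mathfrak{M}^{(t)}_{2}$ was. Concretely, $\e^{\frac{\lambda_{1}}{2}(t+s)}(W^{\varphi}_{t+s}-W^{\varphi}_{t})=\e^{-\frac{\lambda_{1}}{2}(t+s)}\lambda_1\cdot(\text{increment of }\lambda_{1}^{-1}\tilde X)$ reduces to increments of $X(\varphi)$, which by \eqref{lemn2.2} is a square-integrable martingale plus drift with bracket $\int\langle\vartheta[\varphi],X_u\rangle\mathrm{d}u$. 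The optional-stopping computation \eqref{lem4.3.3}--\eqref{lem4.3.5} then carries over verbatim. The main obstacle is this step: properly expressing $R^{(t)}$ through martingale increments and a single $\mathcal{F}_{t}$-independent tight term so that Aldous' estimates apply uniformly over stopping times. Continuity of the limit then follows from the $\mathcal{C}$-closedness remark.
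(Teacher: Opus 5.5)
Your proposal is correct and has the same skeleton as the paper's proof. Both take the finite-dimensional limits from Proposition~\ref{prop:fddsuper}, split $Z^{2,(t)}_s(\varphi)=\mathfrak{M}^{(t)}_s+\lambda_1^{-1}\mathfrak{N}^{(t)}_s$ with $\mathfrak{N}^{(t)}_s=\e^{\frac{\lambda_1}{2}(t+s)}(W^\varphi_{t+s}-W^\varphi_\infty)$ (your $R^{(t)}=\lambda_1^{-1}\mathfrak{N}^{(t)}$), use Lemma~\ref{lem3.9} for $\mathfrak{M}^{(t)}$, and add the pieces with \cite[Corollary VI.3.33]{JS}.

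The difference is in the remaining ingredient. The paper asserts that $\{\mathfrak{N}^{(t)}\}$ is C-tight, refers to \cite[Theorem 1.5]{Yang25}, and omits the proof. You instead put the C-tightness on $\mathfrak{M}^{(t)}$. Its jumps are $-\lambda_1^{-1}\e^{-\frac{\lambda_1}{2}(t+s)}\Delta X_{t+s}(\varphi)$, and $\pp_{\delta_x}(\sup_{s\le N}|\Delta\mathfrak{M}^{(t)}_s|>\epsilon)\le\epsilon^{-2}\e^{-\lambda_1 t}\pp_{\delta_x}\big[\int_t^{t+N}\int_{|y|>\epsilon\e^{\lambda_1 t/2}}y^2\,\nu^{\tilde{\mathrm{M}}}(\mathrm{d}u,\mathrm{d}y)\big]\to0$, by the Lindeberg-type estimate in the proof of Lemma~\ref{lem3.n1} applied at time $t+N$. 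Tightness plus vanishing jumps then gives C-tightness \cite[Proposition VI.3.26]{JS}, so only ordinary tightness of $R^{(t)}$ is needed. This keeps everything within the paper's own tools; the paper's route is shorter but rests on an external, omitted argument. Since $Z^{2,(t)}$ is continuous, the jumps of $R^{(t)}$ are exactly minus those of $\mathfrak{M}^{(t)}$. Your argument therefore also recovers the C-tightness of $\mathfrak{N}^{(t)}$ that the paper imports.

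The step you call the main obstacle needs to be organised differently from what you wrote.
\begin{itemize}
\item $R^{(t)}$ is not $(\mathcal{F}_{t+s})_{s\ge0}$-adapted, since it contains $W^\varphi_\infty$. So Aldous' criterion cannot be run on $R^{(t)}$ itself with $\mathcal{F}_{t+s}$-stopping times. Enlarging the filtration by $\sigma(W^\varphi_\infty)$ destroys the martingale property behind \eqref{lem4.3.3}.
\item Instead write $R^{(t)}_s=\lambda_1^{-1}\e^{\frac{\lambda_1}{2}s}N^{(t)}_s+\lambda_1^{-1}\e^{\frac{\lambda_1}{2}s}\xi_t$. Here $N^{(t)}_s:=\e^{\frac{\lambda_1}{2}t}(W^\varphi_{t+s}-W^\varphi_t)$ is an $(\mathcal{F}_{t+s})$-martingale with predictable quadratic variation $\e^{\lambda_1 t}\int_t^{t+s}\e^{-2\lambda_1 u}\langle\vartheta[\varphi],X_u\rangle\,\mathrm{d}u$. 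The process $s\mapsto\e^{\frac{\lambda_1}{2}(t+s)}W^\varphi_{t+s}$ you mention is not a martingale.
\item The other factor is $\xi_t:=\e^{\frac{\lambda_1}{2}t}(W^\varphi_t-W^\varphi_\infty)$. It is bounded in $L^2$, not convergent in $L^2$, and it is neither $\mathcal{F}_t$-measurable nor independent of $\mathcal{F}_t$.
\item Apply Aldous only to the first term, exactly as in \eqref{lem4.3.1}--\eqref{lem4.3.5}. The second term is C-tight directly, since $\{\xi_t\}$ is tight and on $[0,N]$ its modulus is pathwise at most $c_N\theta|\xi_t|$. Then combine the two by \cite[Corollary VI.3.33]{JS}.
\end{itemize}

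There is also an even shorter route. One has $Z^{2,(t)}_s=\e^{-\frac{\lambda_1}{2}s}Z^{2,(t)}_0+\int_0^s\e^{-\frac{\lambda_1}{2}(s-r)}\mathfrak{N}^{(t)}_r\,\mathrm{d}r$, with $\sup_t\|Z^{2,(t)}_0\|_{L^2(\pp_{\delta_x})}+\sup_{t,r}\|\mathfrak{N}^{(t)}_r\|_{L^2(\pp_{\delta_x})}<+\infty$. Hence $\|Z^{2,(t)}_v-Z^{2,(t)}_u\|_{L^2(\pp_{\delta_x})}\le c|v-u|$, and Lemma~\ref{lemA.3} applies to the sum directly. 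Your observation that the Billingsley-type bound fails is correct for $R^{(t)}$ alone, but not for $Z^{2,(t)}$, in which the jumps cancel.
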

\begin{proof}
Noting that the convergence of finite-dimensional distributions has been proved by Proposition \ref{prop:fddsuper}, we only need to show that $\{Z^{2,(t)}(\varphi):\ t\ge 0\}$ is tight in $\mathcal{D}[0,+\infty)$.
To this end, we may write $Z^{2,(t)}_{s}(\varphi)=\mathfrak{M}^{(t)}_{s}+\lambda^{-1}_{1}\mathfrak{N}^{(t)}_{s}$ for $s,t\ge 0$, where $\mathfrak{M}^{(t)}_{s}$ is defined as in Lemma \ref{lem3.9}, and
$\mathfrak{N}^{(t)}_{s}:=\e^{\frac{\lambda_{1}}{2}(t+s)}\big(W^{\varphi}_{t+s}-W^{\varphi}_{\infty}\big)$.
We claim that $\{\mathfrak{N}^{(t)}:\ t\ge 0\}$ is C-tight in the sense of \cite{JS}, i.e., the set of laws of $\mathfrak{N}^{(t)}$ is tight in $\mathcal{P}(\mathcal{D}[0,+\infty))$, and all limit points of the subsequences are laws of continuous processes. The proof of this claim is a line-by-line copy of \cite[Theorem 1.5]{Yang25}, thus we omit it.
Since $\{\mathfrak{M}^{(t)}:\ t\ge 0\}$ is tight by Lemma \ref{lem3.9}, it follows by \cite[Corollary VI.3.33]{JS} that $\{Z^{2,(t)}(\varphi):\ t\ge 0\}$ is tight.
\end{proof}

\subsection{Proofs of Theorems \ref{them1}-\ref{them3}}\label{sec3.3}

Recall that $\tilde{f}=f-\langle f,\widetilde{\varphi}\rangle \varphi$. We may write for every $f\in\mathcal{B}_{b}(E)$ and $t,s\ge 0$,
\begin{equation}\label{eq2}
Z^{(t)}_{s}(f)=\e^{-\lambda_{1}(t+s)}Y_{t+s}(\tilde{f})+\langle f,\widetilde{\varphi}\rangle Z^{(t)}_{s}(\varphi).
\end{equation}

\medskip

\noindent\textit{Proof of Theorem \ref{them1}:}
Note that $\tilde{f}\in \mathcal{D}^{0}_{\mathrm{S}}$ for $f\in \mathcal{D}_{\mathrm{S}}$. We have by \eqref{eq2}
\begin{equation}\label{them1.1}
\e^{\frac{\lambda_{1}}{2}(t+s)}Z^{(t)}_{s}(f)=\e^{-\frac{\lambda_{1}}{2}(t+s)}Y_{t+s}(\tilde{f})+\langle f,\widetilde{\varphi}\rangle \e^{\frac{\lambda_{1}}{2}(t+s)}Z^{(t)}_{s}(\varphi)=Z^{1,(t)}_{s}(\tilde{f})+\langle f,\widetilde{\varphi}\rangle Z^{2,(t)}_{s}(\varphi).
\end{equation}
Recall from Proposition \ref{prop:fddsuper} that $\big(Z^{1,(t)}_{s}(\tilde{f}),Z^{2,(t)}_{s}(\varphi)\big)_{s\ge 0}$ converges in finite-dimensional distributions to
$\sqrt{W^{\varphi}_{\infty}}\big(G^{1}(\tilde{f}),G^{2}(\varphi)\big)$, where $\big(G^{1}(\tilde{f}),G^{2}(\varphi)\big)$ is a Gaussian process independent of $W^{\varphi}_{\infty}$.
Hence by \eqref{them1.1} $\big(\e^{\frac{\lambda_{1}}{2}(t+s)}Z^{(t)}_{s}(f)\big)_{s\ge 0}$
converges in finite-dimensional distributions to a process $\sqrt{W^{\varphi}_{\infty}}G(f)$,
where $G(f)$ is independent of $W^{\varphi}_{\infty}$ and is equal in law to $G^{1}(\tilde{f})+\langle f,\widetilde{\varphi}\rangle G^{2}(\varphi)$. Consequently, $G(f)=(G_{s}(f))_{s\ge 0}$ is a centered Gaussian process with covariance given by
\begin{align*}
&\Cov (G_{s}(f),G_{t}(f))\\
=&\Cov(G^{1}_{s}(\tilde{f}),G^{1}_{t}(\tilde{f}))+\langle f,\widetilde{\varphi}\rangle^{2}\Cov(G^{2}_{s}(\varphi),G^{2}_{t}(\varphi))+\langle f,\widetilde{\varphi}\rangle \big(\Cov(G^{1}_{s}(\tilde{f}),G^{2}_{t}(\varphi))+\Cov(G^{1}_{t}(\tilde{f}),G^{2}_{s}(\varphi))\big)\\
=&\e^{-\frac{\lambda_{1}}{2}|s-t|}\big(\sigma_{\tilde{f}}(\infty,|s-t|)+\langle f,\widetilde{\varphi}\rangle^{2}\kappa_{\varphi}(|s-t|)+\frac{1}{2}\langle f,\widetilde{\varphi}\rangle\big({\varsigma}_{\varphi,\tilde{f}}(\infty,0)+{\varsigma}_{\varphi,\tilde{f}}(\infty,|s-t|)\big)\big)
\end{align*}
for $s,t\ge 0$.

On the other hand, since $\{Z^{1,(t)}(\tilde{f}):\ t\ge 0\}$ is tight by Proposition \ref{prop:tight1} and $\{Z^{2,(t)}(\varphi):\ t\ge 0\}$ is C-tight by Proposition \ref{prop:tight2}, it follows by \cite[Corollary VI.3.33]{JS} that $\{\big(\e^{\frac{\lambda_{1}}{2}(t+s)}Z^{(t)}_{s}(f)\big)_{s\ge 0}:\ t\ge 0\}$ is tight. \qed

\medskip

\noindent\textit{Proof of Theorem \ref{them2}:} For $f\in \mathcal{D}(\lambda_{1}/2,r)$, we have $\tilde{f}\in\mathcal{D}^{0}(\lambda_{1}/2,r)$, and by \eqref{eq2}
\begin{equation}\label{them1.2}
(t+s)^{-(r+1/2)}\e^{\frac{\lambda_{1}}{2}(t+s)}Z^{(t)}_{s}(f)=Z^{1,(t)}_{s}(\tilde{f})+\langle f,\widetilde{\varphi}\rangle (t+s)^{-(r+1/2)}Z^{2,(t)}_{s}(\varphi).
\end{equation}
It follows by Propositions \ref{prop:fddcri} and \ref{prop:tight2}, respectively, that
$$Z^{1,(t)}(\tilde{f})=\big((t+s)^{-(r+1/2)}\e^{-\frac{\lambda_{1}}{2}(t+s)}Y_{t+s}(\tilde{f})\big)_{s\ge 0}\stackrel{f.d.d.}{\longrightarrow}\frac{2}{\lambda_{1}}\sqrt{\frac{\langle\vartheta[f^{*}],\widetilde{\varphi}\rangle}{1+2r}W^{\varphi}_{\infty}}C\mbox{ as }t\to+\infty,$$
and
$$\big((t+s)^{-(r+1/2)}Z^{2,(t)}_{s}(\varphi)\big)_{s\ge 0}\stackrel{f.d.d.}{\longrightarrow}0\mbox{ as }t\to+\infty.$$
Hence, by Slutsky's lemma, we have
$$\big((t+s)^{-(r+1/2)}\e^{\frac{\lambda_{1}}{2}(t+s)}Z^{(t)}_{s}(f)\big)_{s\ge 0}\stackrel{f.d.d.}{\longrightarrow}\frac{2}{\lambda_{1}}\sqrt{\frac{\langle\vartheta[f^{*}],\widetilde{\varphi}\rangle}{1+2r}W^{\varphi}_{\infty}}C\mbox{ as }t\to+\infty$$
Since $\{Z^{2,(t)}(\varphi):\ t\ge 0\}$ is tight by Proposition \ref{prop:tight2}, it follows by \cite[Theorem VI.3.21]{JS} that, for any $N>0$ and $\epsilon>0$, there are constants $t_{0}>0$ and $K>0$ such that
$$\pp_{\delta_{x}}\big(\sup_{s\in [0,N]}|Z^{2,(t)}_{s}(\varphi)|>K\big)\le \epsilon\quad\forall t\ge t_{0}.$$
Thus, for any $\delta>0$ and $t\ge t_{0}\vee (K/\delta)^{1/(r+1/2)}$, we have
$$\pp_{\delta_{x}}\big(\sup_{s\in [0,N]}\big|(t+s)^{-(r+1/2)}Z^{2,(t)}_{s}(\varphi)\big|>\delta\big)\le \pp_{\delta_{x}}\big(\sup_{s\in [0,N]}|Z^{2,(t)}_{s}(\varphi)|>\delta t^{r+1/2}\big)\le \epsilon,$$
which in turn yields that
\begin{equation}\label{them2.1}
\limsup_{t\to+\infty}\pp_{\delta_{x}}\big(\sup_{s\in [0,N]}\big|(t+s)^{-(r+1/2)}Z^{2,(t)}_{s}(\varphi)\big|>\delta\big)=0.
\end{equation}
Since $\{Z^{1,(t)}(\tilde{f}):\ t\ge 0\}$ is tight by Proposition \ref{prop:tight1}, it follows by \eqref{them1.2}, \eqref{them2.1} and \cite[Lemma VI.3.31]{JS} that $\{\big((t+s)^{-(r+1/2)}\e^{\frac{\lambda_{1}}{2}(t+s)}Z^{(t)}_{s}(f)\big)_{s\ge 0}:\ t\ge 0\}$ is tight.\qed

\medskip

\noindent\textit{Proof of Theorem \ref{them3}:} Note that, for $f\in\mathcal{D}(\lambda,r)$, $\tilde{f}\in\mathcal{D}^{0}(\lambda,r)$ with the limiting function equal to $f^{*}$. The conclusion (i) follows directly from Proposition \ref{prop:fddsub}(i). Conclusion (ii) can be proved similarly to Theorem \ref{them2}.
Indeed, for $f\in\mathcal{D}(\lambda,r)$, we may write
$$(t+s)^{-r}\e^{(\lambda_{1}-\lambda)(t+s)}Z^{(t)}_{s}(f)=Z^{1,(t)}_{s}(\tilde{f})+(t+s)^{-r}\e^{-(\lambda-\frac{\lambda_{1}}{2})(t+s)}Z^{2,(t)}_{s}(\varphi).$$
Then the convergence of finite-dimensional distributions follows from Propositions \ref{prop:fddsub}(iv), \ref{prop:tight2} and Slutsky's lemma. Moreover, we can similarly prove that \eqref{them2.1} holds with $(t+s)^{-(r+1/2)}$ replaced by $(t+s)^{-r}\e^{-(\lambda-\lambda_{1}/2)(t+s)}$. This combined with Proposition \ref{prop:tight1} and \cite[Lemma VI.3.31]{JS} yields the tightness of $
\{\big((t+s)^{-r}\e^{(\lambda_{1}-\lambda)(t+s)}Z^{(t)}_{s}(f)\big)_{s\ge 0}:\ t\ge 0\}$.\qed
}

\appendix
\section{Appendix}
\subsection{Proof of Lemma \ref{lem:moment formulae}}\label{sec:A.1}
\noindent\textit{Proof of Lemma \ref{lem:moment formulae}:} By Fubini's theorem and the Markov property, we have
 \begin{align*}
 \pp_{\delta_{x}}\big[Y_{t}(f)\cdot X_{t}(g)\big]=&\int_{0}^{t}\pp_{\delta_{x}}\big[X_{s}(f)\pp_{X_{s}}\big[X_{t-s}(g)\big]\big]\mathrm{d}s=\int_{0}^{t}\pp_{\delta_{x}}\big[X_{s}(f)\cdot X_{s}(T_{t-s}g)\big]\mathrm{d}s\\
 =&\int_{0}^{t}\Cov_{\delta_{x}}(X_{s}(f),X_{s}(T_{t-s}g))+T_{s}f(x)\cdot T_{t}g(x)\mathrm{d}s
 \end{align*}
 Note that $\int_{0}^{t}T_{s}f(x)\cdot T_{t}g(x)\mathrm{d}s=M_{s}[f](x)T_{t}g(x)=\pp_{\delta_{x}}[Y_{t}(f)]\pp_{\delta_{x}}[X_{t}(g)]$. Thus, by \eqref{eq:covforX_{t}} and Fubini's theorem,
 \begin{align*}
 \Cov_{\delta_{x}}(Y_{t}(f),X_{t}(g))=&\int_{0}^{t}\Cov_{\delta_{x}}(X_{s}(f),X_{s}(T_{t-s}g))\mathrm{d}s=\int_{0}^{t}\int_{0}^{s}T_{r}\big(\vartheta[T_{s-r}f,T_{t-r}g]\big)(x)\mathrm{d}r\mathrm{d}s\\
 =&\int_{0}^{t}T_{r}\big(\vartheta\big[\int_{r}^{t}T_{s-r}f\mathrm{d}s,T_{t-r}g\big]\big)(x)\mathrm{d}r=\int_{0}^{t}T_{r}\big(\vartheta\big[M_{t-r}[f],T_{t-r}g\big]\big)(x)\mathrm{d}r.
 \end{align*}
Similarly, by Fubini's theorem and the Markov property, we have
\begin{align*}
\pp_{\delta_{x}}\big[Y_{t}(f)Y_{t}(g)\big]=&\pp_{\delta_{x}}\big[\int_{0}^{t}X_{s}(f)\mathrm{d}s\int_{s}^{t}X_{r}(g)\mathrm{d}r\big]+\pp_{\delta_{x}}\big[\int_{0}^{t}X_{s}(f)\mathrm{d}s
\int_{0}^{s}X_{r}(g)\mathrm{d}r\big]\\
=&\int_{0}^{t}\mathrm{d}s\int_{s}^{t}\pp_{\delta_{x}}\big[X_{s}(f)X_{s}(T_{r-s}g)\big]\mathrm{d}r+\int_{0}^{t}\mathrm{d}s\int_{0}^{s}\pp_{\delta_{x}}\big[X_{r}(g)X_{r}(T_{s-r}f)\big]\mathrm{d}r\\
=&\int_{0}^{t}\mathrm{d}s\int_{s}^{t}\Cov_{\delta_{x}}(X_{s}(f),X_{s}(T_{r-s}g))+T_{s}f(x)T_{r}g(x)\mathrm{d}r\\
&+\int_{0}^{t}\mathrm{d}s\int_{0}^{s}\Cov_{\delta_{x}}(X_{r}(g),X_{r}(T_{s-r}f))+T_{s}f(x)T_{r}g(x)\mathrm{d}r.
\end{align*}
Since $\int_{0}^{t}\int_{0}^{t}T_{s}f(x)T_{r}g(x)\mathrm{d}s\mathrm{d}r=M_{t}[f](x)M_{t}[g](x)=\pp_{\delta_{x}}\big[Y_{t}(f)\big]\pp_{\delta_{x}}\big[Y_{t}(g)\big]$, we have
\begin{align*}
&\Cov_{\delta_{x}}(Y_{t}(f),Y_{t}(g))\\
=&\int_{0}^{t}\mathrm{d}s\int_{s}^{t}\Cov_{\delta_{x}}(X_{s}(f),X_{s}(T_{r-s}g))\mathrm{d}r
+\int_{0}^{t}\mathrm{d}s\int_{0}^{s}\Cov_{\delta_{x}}(X_{r}(g),X_{r}(T_{s-r}f))\mathrm{d}r\\
=&\int_{0}^{t}\mathrm{d}s\int_{s}^{t}\mathrm{d}r\int_{0}^{s}T_{u}\big(\vartheta[T_{s-u}f,T_{r-u}g]\big)(x)\mathrm{d}u
+\int_{0}^{t}\mathrm{d}s\int_{0}^{s}\mathrm{d}r\int_{0}^{r}T_{u}\big(\vartheta[T_{s-u}f,T_{r-u}g]\big)(x)\mathrm{d}u\\
=&\int_{0}^{t}\Big(\int_{u}^{t}\int_{u}^{t}T_{u}\big(\vartheta[T_{s-u}f,T_{r-u}g]\big)(x)\mathrm{d}r\mathrm{d}s\Big)\mathrm{d}u=
\int_{0}^{t}T_{u}\big(\vartheta[\int_{u}^{t}T_{s-u}f\mathrm{d}s,\int_{u}^{t}T_{r-u}g\mathrm{d}r]\big)(x)\mathrm{d}u\\
=&\int_{0}^{t}T_{u}\big(M_{t-u}[f],M_{t-u}[g]\big)(x)\mathrm{d}u.
\end{align*}
To prove the second conclusion of this lemma, we deduce from \eqref{eq:U_{t}} that, for any $\mu\in\mf$,
\begin{align*}
\Cov_{\mu}(Y_{t}(f),X_{t}(g))=&\pp_{\mu}\big[Y_{t}(f)X_{t}(g)\big]-\pp_{\mu}\big[Y_{t}(f)\big]\pp_{\mu}\big[X_{t}(g)\big]\\
=&\left.\frac{\partial^{2}}{\partial\theta_{1}\partial\theta_{2}}\e^{-\langle U_{t}[\theta_{1}g,\theta_{2}f],\mu\rangle}\right|_{\theta_{1}=\theta_{2}=0}-\left.\frac{\partial}{\partial\theta_{2}}\e^{-\langle U_{t}[\theta_{1}g,\theta_{2}f],\mu\rangle}\cdot\frac{\partial}{\partial\theta_{1}}\e^{-\langle U_{t}[\theta_{1}g,\theta_{2}f],\mu\rangle}\right|_{\theta_{1}=\theta_{2}=0}\\
=&-\langle \left.\frac{\partial^{2}}{\partial\theta_{1}\partial\theta_{2}}U_{t}[\theta_{1}g,\theta_{2}f]\right|_{\theta_{1}=\theta_{2}=0},\mu\rangle.
\end{align*}
The final equality follows from the fact that $\left.U_{t}[\theta_{1}g,\theta_{2}f]\right|_{\theta_{1}=\theta_{2}=0}=0$.
Similarly, we have $\Cov_{\mu}(Y_{t}(f),Y_{t}(g))=-\langle \left.\frac{\partial^{2}}{\partial\theta_{1}\partial\theta_{2}}U_{t}[0,\theta_{1}g+\theta_{2}f]\right|_{\theta_{1}=\theta_{2}=0},\mu\rangle$. Hence the second conclusion follows immediately.

\subsection{Proof of Lemma \ref{lem3.n2}}\label{sec:A.3}
\noindent\textit{Proof of Lemma \ref{lem3.n2}:}
We deduce from \eqref{lem1.9} that for $f\in\mathcal{D}^{0}_{\mathrm{S}}$, there exists $\theta\in (0,\lambda_{1}/2)$ such that
\begin{equation}\label{lem3.2.1}
c_{0}:=\sup_{x\in E,\,t\ge 0}\big|\e^{-(\frac{\lambda_{1}}{2}-\theta)t}M_{t}[f]\big|<+\infty
\end{equation}
Thus, for $t,r\ge 0$ and $x\in E$,
$$\e^{-\lambda_{1}r}\vartheta[M_{r+t}[f],M_{r}[f]](x)=\e^{(\lambda_{1}-2\theta)t-2\theta r}\vartheta[\e^{-(\frac{\lambda_{1}}{2}-\theta)(t+r)}M_{t+r}[f],\e^{-(\frac{\lambda_{1}}{2}-\theta)t}M_{t}[f]](x)\le \e^{(\lambda_{1}-2\theta)t-2\theta r}c_{0}^{2}\vartheta[1](x).$$
Consequently, the integral $\int_{0}^{+\infty}\e^{-\lambda_{1}r}\langle\vartheta[M_{r+t}[f],M_{r}[f]],\widetilde{\varphi}\rangle\mathrm{d}r$ is finite for every $t\ge 0$, and by the monotone convergence theorem, it is equal to $\lim_{s\to+\infty}\sigma_{f}(s,t)$. Similarly, we can verify the existence and finiteness of the other limit of the first conclusion.

Next we calculate the four covariances appearing in the left hand side of \eqref{lem3.2.2}. Suppose $0\le u\le v\le s<+\infty$.
For $x\in E$, we have
\begin{align*}
&\Cov_{\delta_{x}}(\hat{A}_{f}(u,s),\hat{A}_{f}(v,s))\\
=&\e^{-\lambda_{1}(s+\frac{u+v}{2})}\Big(\Cov_{\delta_{x}}(Y_{s+u}(f),Y_{s+v}(f))-\Cov_{\delta_{x}}(Y_{s+u}(f),Y_{v}(f)+ X_{v}(M_{s}[f]))\\
&-\Cov_{\delta_{x}}(Y_{u}(f)+ X_{u}(M_{s}[f]),Y_{s+v}(f)-Y_{v}(f)- X_{v}(M_{s}[f]))\Big)\\
=&\e^{-\lambda_{1}(s+\frac{u+v}{2})}\Big(\Cov_{\delta_{x}}(Y_{s+u}(f),Y_{s+u}(f)+ X_{s+u}(M_{v-u}[f]))
-\Cov_{\delta_{x}}(Y_{v}(f)+ X_{v}(M_{s}[f]),Y_{v}(f)+ X_{v}(M_{s+u-v}[f]))\\
=&\e^{-\lambda_{1}(s+\frac{u+v}{2})}\Big(\int_{0}^{s+u}T_{s+u-r}\big(\vartheta[M_{r}[f],M_{r}[f]+T_{r}(M_{v-u}[f])]\big)(x)\mathrm{d}r\\
&-\int_{0}^{v}T_{v-r}\big(\vartheta[M_{r}[f]+T_{r}(M_{s}[f]),M_{r}[f]+T_{r}(M_{s+u-v}[f])]\big)(x)\mathrm{d}r\Big)\\
=&\e^{-\lambda_{1}(s+\frac{u+v}{2})}\Big(\int_{0}^{s+u}T_{s+u-r}\big(\vartheta[M_{r}[f],M_{v-u+r}[f])]\big)(x)\mathrm{d}r\\
&-\int_{0}^{v}T_{v-r}\big(\vartheta[M_{r+s}[f],M_{s+u-v+r}[f])]\big)(x)\mathrm{d}r\Big).
\end{align*}
Here the second equality follows from the Markov property and the third equality follows from Lemma \ref{lem:moment formulae}.
By a change of variables, the final integral equals $\int_{s+u-v}^{s+u}T_{s+u-r}\big(\vartheta[M_{v-u+r}[f],M_{r}[f]]\big)(x)\mathrm{d}r$. Hence we get
\begin{equation}\label{lem3.2.3}
\Cov_{\delta_{x}}(\hat{A}_{f}(u,s),\hat{A}_{f}(v,s))=\e^{-\lambda_{1}(s+\frac{u+v}{2})}\int_{0}^{s+u-v}T_{s+u-r}\big(\vartheta[M_{r+v-u}[f],M_{r}[f]]\big)(x)\mathrm{d}r.
\end{equation}
Similarly, by the Markov property and Lemma \ref{lem:moment formulae} we have
\begin{align}\label{lem3.2.4}
&\Cov_{\delta_{x}}(B_{\varphi}(s+u),B_{\varphi}(s+v))\nonumber\\
=&\e^{-\lambda_{1}(s+\frac{u+v}{2})}\Cov_{\delta_{x}}( Y_{s+u}(\varphi), Y_{s+v}(\varphi)
-\frac{1}{\lambda_{1}}
\e^{\lambda_{1}(s+v)}W^{\varphi}_{\infty})\nonumber\\
&-\e^{-\lambda_{1}(s+\frac{u+v}{2})}\Cov_{\delta_{x}}( Y_{s+v}(\varphi),\frac{1}{\lambda_{1}}\e^{\lambda_{1}(s+u)}W^{\varphi}_{\infty})
+\frac{1}{\lambda^{2}_{1}}\e^{\lambda_{1}(s+\frac{u+v}{2})}\Var_{\delta_{x}}[W^{\varphi}_{\infty}]\nonumber\\
=&\e^{-\lambda_{1}(s+\frac{u+v}{2})}\Cov_{\delta_{x}}( Y_{s+u}(\varphi), (Y_{s+v}(\varphi)+ X_{s+u}(M_{v-u}[\varphi]))-\frac{1}{\lambda_{1}}
\e^{\lambda_{1}(s+v)}W^{\varphi}_{s+u})\nonumber\\
&-\e^{-\lambda_{1}(s+\frac{u+v}{2})}\Cov_{\delta_{x}}( Y_{s+v}(\varphi),\frac{1}{\lambda_{1}}\e^{\lambda_{1}(s+u)}W^{\varphi}_{s+v})
+\frac{1}{\lambda^{2}_{1}}\e^{\lambda_{1}(s+\frac{u+v}{2})}\Theta(x)\nonumber\\
=&\e^{-\lambda_{1}(s+\frac{u+v}{2})}\Cov_{\delta_{x}}(Y_{s+u}(\varphi),Y_{s+u}(\varphi)-\frac{1}{\lambda_{1}} X_{s+u}(\varphi))\nonumber\\
&-\e^{-\lambda_{1}(s+\frac{3v-u}{2})}\Cov_{\delta_{x}}(Y_{s+v}(\varphi),\frac{1}{\lambda_{1}} X_{s+v}(\varphi))
+\frac{1}{\lambda^{2}_{1}}\e^{\lambda_{1}(s+\frac{u+v}{2})}\Theta(x)\nonumber\\
=&\e^{-\lambda_{1}(s+\frac{u+v}{2})}\int_{0}^{s+u}T_{s+u-r}\big(\vartheta[M_{r}[\varphi],M_{r}[\varphi]-\frac{1}{\lambda_{1}}T_{r}\varphi]\big)(x)\mathrm{d}r\nonumber\\
&-\e^{-\lambda_{1}(s+\frac{3v-u}{2})}\int_{0}^{s+v}T_{s+v-r}\big(\vartheta[M_{r}[\varphi],\frac{1}{\lambda_{1}}T_{r}\varphi]\big)(x)\mathrm{d}r
+\frac{1}{\lambda^{2}_{1}}\e^{\lambda_{1}(s+\frac{u+v}{2})}\Theta(x)\nonumber\\
=&-\e^{-\lambda_{1}(s+\frac{u+v}{2})}\frac{1}{\lambda^{2}_{1}}\int_{0}^{s+u}\big(\e^{\lambda_{1}r}-1\big)T_{s+u-r}\big(\vartheta[\varphi]\big)(x)\mathrm{d}r\nonumber\\
&-\e^{-\lambda_{1}(s+\frac{3v-u}{2})}\frac{1}{\lambda^{2}_{1}}\int_{0}^{s+v}\big(\e^{\lambda_{1}r}-1\big)\e^{\lambda_{1}r}
T_{s+v-r}\big(\vartheta[\varphi]\big)(x)\mathrm{d}r
+\frac{1}{\lambda^{2}_{1}}\e^{\lambda_{1}(s+\frac{u+v}{2})}\Theta(x),
\end{align}
where $\Theta(x)=\Var_{\delta_{x}}[W^{\varphi}_{\infty}]$ is given by \eqref{def:Theta(x)}. Moreover, we have
\begin{align}\label{lem3.2.7}
&\Cov_{\delta_{x}}(\hat{A}_{f}(u,s),B_{\varphi}(s+v))\nonumber\\
=&\e^{-\lambda_{1}(s+\frac{u+v}{2})}\Cov_{\delta_{x}}(Y_{s+u}(f)-Y_{u}(f)- X_{u}(M_{s}[f]),Y_{s+v}(\varphi)-\frac{1}{\lambda_{1}}\e^{\lambda_{1}(s+v)}W^{\varphi}_{\infty})\nonumber\\
=&\e^{-\lambda_{1}(s+\frac{u+v}{2})}\Cov_{\delta_{x}}(Y_{s+u}(f)-Y_{u}(f)- X_{u}(M_{s}[f]),(Y_{s+u}(\varphi)+ X_{s+u}(M_{v-u}[\varphi]))-\frac{1}{\lambda_{1}}\e^{\lambda_{1}(s+v)}W^{\varphi}_{s+u})\nonumber\\
=&\e^{-\lambda_{1}(s+\frac{u+v}{2})}\Cov_{\delta_{x}}(Y_{s+u}(f)-Y_{u}(f)- X_{u}(M_{s}[f]),Y_{s+u}(\varphi)-\frac{1}{\lambda_{1}} X_{s+u}(\varphi))\nonumber\\
=&\e^{-\lambda_{1}(s+\frac{u+v}{2})}\Big(\Cov_{\delta_{x}}(Y_{s+u}(f),Y_{s+u}(f)-\frac{1}{\lambda_{1}} X_{s+u}(\varphi))-\Cov_{\delta_{x}}(Y_{u}(f)+ X_{u}(M_{s}[f]),Y_{s+u}(\varphi)-\frac{1}{\lambda_{1}} X_{s+u}(\varphi))\Big).
\end{align}
By the Markov property, the second covariance in the right hand side of the last equality equals
\begin{align*}
&\Cov_{\delta_{x}}(Y_{u}(f)+ X_{u}(M_{s}[f]),Y_{u}(\varphi)+ X_{u}(M_{s}[\varphi])-\frac{1}{\lambda_{1}} X_{u}(T_{s}\varphi))\\
=&\Cov_{\delta_{x}}(Y_{u}(f)+ X_{u}(M_{s}[f]),Y_{u}(\varphi)-\frac{1}{\lambda_{1}}\langle\varphi,X_{u}\rangle).
\end{align*}
Hence we get by \eqref{lem3.2.7}
\begin{align}\label{lem3.2.5}
\Cov_{\delta_{x}}(\hat{A}_{f}(u,s),B_{\varphi}(s+v))
=&\e^{-\lambda_{1}(s+\frac{u+v}{2})}\Big(\int_{0}^{s+u}T_{s+u-r}\big(\vartheta[M_{r}[f],M_{r}[\varphi]-\frac{1}{\lambda_{1}}T_{r}\varphi]\big)(x)\mathrm{d}r\nonumber\\
&-\int_{0}^{u}T_{u-r}\big(\vartheta[M_{r}[f]+T_{r}(M_{s}[f]),M_{r}[\varphi]-\frac{1}{\lambda_{1}}T_{r}\varphi]\big)(x)\mathrm{d}r\Big)\nonumber\\
=&\frac{1}{\lambda_{1}}\e^{-\lambda_{1}(s+\frac{u+v}{2})}\Big(-\int_{0}^{s+u}T_{s+u-r}\big(\vartheta[M_{r}[f],\varphi]\big)(x)\mathrm{d}r\nonumber\\
&+\int_{0}^{u}T_{u-r}\big(\vartheta[M_{r+s}[f],\varphi]\big)(x)\mathrm{d}r\Big)\nonumber\\
=&-\frac{1}{\lambda_{1}}\e^{-\lambda_{1}(s+\frac{u+v}{2})}\int_{0}^{s}T_{s+u-r}\big(\vartheta[M_{r}[f],\varphi]\big)(x)\mathrm{d}r.
\end{align}
The final equality follows from a change of variable. Finally, we have
\begin{align}\label{lem3.2.6}
&\Cov_{\delta_{x}}(\hat{A}_{f}(v,s),B_{\varphi}(s+u))\nonumber\\
=&\e^{-\lambda_{1}(s+\frac{u+v}{2})}\Big(\Cov_{\delta_{x}}(Y_{s+u}(\varphi),Y_{s+v}(f))-\Cov_{\delta_{x}}(Y_{v}(f)+ X_{v}(M_{s}[f]),Y_{s+u}(\varphi))\Big)\nonumber\\
&+\frac{1}{\lambda_{1}}\e^{\frac{\lambda_{1}}{2}(u-v)}\Big(-\Cov_{\delta_{x}}(Y_{s+v}(f),W^{\varphi}_{\infty})+\Cov_{\delta_{x}}(Y_{v}(f)+ X_{v}(M_{s}[f]),W^{\varphi}_{\infty})\Big)\nonumber\\
=&\e^{-\lambda_{1}(s+\frac{u+v}{2})}\Big(\Cov_{\delta_{x}}(Y_{s+u}(\varphi),Y_{s+u}(f)+ X_{s+u}(M_{v-u}[f]))\nonumber\\
&-\Cov_{\delta_{x}}(Y_{v}(f)+ X_{v}(M_{s}[f]),Y_{v}(\varphi)+X_{v}(M_{s+u-v}[\varphi]))\Big)\nonumber\\
&+\frac{1}{\lambda_{1}}\e^{\frac{\lambda_{1}}{2}(u-v)}\Big(-\Cov_{\delta_{x}}(Y_{s+v}(f),W^{\varphi}_{s+v})+\Cov_{\delta_{x}}(Y_{v}(f)+ X_{v}(M_{s}[f]),W^{\varphi}_{v})\Big)\nonumber\\
=&\e^{-\lambda_{1}(s+\frac{u+v}{2})}\Big(\int_{0}^{s+u}T_{s+u-r}\big(\vartheta[M_{r}[\varphi],M_{r}[f]+T_{r}(M_{v-u}[f])]\big)(x)\mathrm{d}r\nonumber\\
&-\int_{0}^{v}T_{v-r}\big(\vartheta[M_{r}[f]+T_{r}(M_{s}[f]),M_{r}[\varphi]+T_{r}(M_{s+u-v}[\varphi])]\big)(x)\mathrm{d}r\Big)\nonumber\\
&+\frac{1}{\lambda_{1}}\e^{\frac{\lambda_{1}}{2}(u-v)}\Big(-\int_{0}^{s+v}T_{s+v-r}\big(\vartheta[M_{r}[f],\e^{-\lambda_{1}(s+v)}T_{r}\varphi]\big)(x)\mathrm{d}r\nonumber\\
&+\int_{0}^{v}T_{v-r}\big(\vartheta[M_{r}[f]+T_{r}(M_{s}[f]),\e^{-\lambda_{1}v}T_{r}\varphi]\big)(x)\mathrm{d}r\Big)\nonumber\\
=&\frac{1}{\lambda_{1}}\e^{-\lambda_{1}(s+\frac{u+v}{2})}\Big(\int_{0}^{s+u}\big(\e^{\lambda_{1}r}-1\big)T_{s+u-r}\big(\vartheta[M_{v-u+r}[f],\varphi]\big)(x)\mathrm{d}r\nonumber\\
&+\int_{0}^{v}T_{v-r}\big(\vartheta[M_{r+s}[f],\varphi]\big)(x)\mathrm{d}r\Big)\nonumber\\
&-\frac{1}{\lambda_{1}}\e^{\frac{\lambda_{1}}{2}(u-v)-\lambda_{1}(s+v)}\int_{0}^{s+v}\e^{\lambda_{1}r}T_{s+v-r}\big(\vartheta[M_{r}[f],\varphi]\big)(x)\mathrm{d}r.
\end{align}
Elementary calculation using \eqref{lem3.2.3}, \eqref{lem3.2.4}, \eqref{lem3.2.5}, \eqref{lem3.2.6} and the fact that $\e^{-\lambda_{1}t}T_{t}$ is invariant with respect to the measure $\widetilde{\varphi}$ gives the formulae presented in \eqref{lem3.2.2}.\qed

\subsection{Other useful results}\label{sec:A.4}

\begin{lemma}\label{lemA.0}
Suppose $(\Omega,\mathcal{G},\mathrm{P})$ is a probability space with a right continuous complete filtration $\mathcal{G}=\{\mathcal{G}_{t}:\ t\ge 0\}$, and $\mathrm{M}:=\{\mathrm{M}(t):\ t\ge 0\}$ is a $\mathcal{G}$-adapted c\`{a}dl\`{a}g local martingale with $\mathrm{M}(0)=0$. Let $\langle \mathrm{M}\rangle$ and $\nu^{\mathrm{M}}$, respectively, denote the predictable quadratic variation of $\mathrm{M}$ and the predictable compensator of its jump measure. Suppose the following conditions hold.
\begin{description}
\item[(i)] There exist a continuous and strictly increasing nonnegative function $\alpha$ on $[0,+\infty)$  and a nonnegative random variable $\eta$, such that $\lim_{t\to+\infty}\alpha(t)=+\infty$ and
$$\frac{\langle \mathrm{M}\rangle_{t}}{\alpha(t)}\stackrel{\mathrm{P}}{\longrightarrow}\eta\mbox{ as }t\to+\infty.$$
Here $\stackrel{\mathrm{P}}{\longrightarrow}$ denotes the convergence in probability under $\mathrm{P}$.
\item[(ii)] For any $\epsilon>0$,
$$\frac{1}{\alpha(t)}\int_{0}^{t}\int_{\mathbb{R}}|x|^{2}1_{\{|x|>\epsilon\sqrt{\alpha(t)}\}}\nu^{\mathrm{M}}(\mathrm{d}s,\mathrm{d}x)\stackrel{\mathrm{P}}{\longrightarrow}0\mbox{ as }t\to+\infty.$$
\end{description}
Then $\{\mathrm{M}_{t}/\sqrt{\alpha(t)}:\ t\ge 0\}$ converges stably in distribution to $\sqrt{\eta}\cdot N$, where $N$ is a standard normal random variable independent of $\eta$.
\end{lemma}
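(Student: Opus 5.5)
This is Touati's stable martingale central limit theorem \cite{Tou92}. The plan is to reduce it to the discrete-time (or rescaled-time) stable CLT via a time change, using the predictable characteristics of $\mathrm{M}$.

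\textbf{Step 1: rescaled family.} For each $t$ define the process $\mathrm{M}^{t}_{u}:=\mathrm{M}(u)/\sqrt{\alpha(t)}$, $u\in[0,t]$. Its predictable characteristics are:
\begin{itemize}
\item quadratic variation $\langle\mathrm{M}\rangle_{u}/\alpha(t)$;
\item jump compensator equal to the image of $\nu^{\mathrm{M}}$ under $x\mapsto x/\sqrt{\alpha(t)}$.
\end{itemize}
In this normalisation, condition (ii) is exactly the conditional Lindeberg condition: for every $\epsilon>0$,
$$\int_{0}^{t}\int x^{2}1_{\{|x|>\epsilon\}}\,\nu^{\mathrm{M}^{t}}(\mathrm{d}s,\mathrm{d}x)\to 0$$
in probability. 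Condition (i) says the terminal bracket $\langle\mathrm{M}^{t}\rangle_{t}\to\eta$ in probability.

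\textbf{Step 2: conditional characteristic function.} Fix a bounded $\mathcal{G}_{\infty}$-measurable $\zeta$; it suffices to show $\mathrm{E}[\zeta\,\e^{\mathrm{i}\theta\mathrm{M}^{t}_{t}}]\to\mathrm{E}[\zeta\,\e^{-\theta^{2}\eta/2}]$.

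First reduce to the case where $\eta$ is bounded and $\zeta$ is $\mathcal{G}_{s}$-measurable for some fixed $s$. The first reduction is by truncation. For the second, approximate $\zeta$ in $L^{1}$ by such variables, and note that $\mathrm{M}(s)/\sqrt{\alpha(t)}\to 0$, so the contribution of $[0,s]$ is negligible.

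Then use the complex exponential semimartingale
$$\mathcal{E}_{u}=\exp\Bigl\{\mathrm{i}\theta(\mathrm{M}^{t}_{u}-\mathrm{M}^{t}_{s})-\Psi_{u}\Bigr\},$$
where $\Psi_{u}$ is the predictable compensator built from $\tfrac{\theta^{2}}{2}\mathrm{d}\langle\mathrm{M}^{t}\rangle$ plus the jump correction $\int(\e^{\mathrm{i}\theta x}-1-\mathrm{i}\theta x+\tfrac{\theta^{2}}{2}x^{2})\,\nu^{\mathrm{M}^{t}}$. The process $\mathcal{E}$ is a local martingale on $[s,t]$. By the inequality \eqref{ineq2} together with the Lindeberg condition, the jump correction tends to $0$ in probability, so $\Psi_{t}-\tfrac{\theta^{2}}{2}(\langle\mathrm{M}^{t}\rangle_{t}-\langle\mathrm{M}^{t}\rangle_{s})\to 0$.

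Since $\langle\mathrm{M}^{t}\rangle_{s}\to 0$ and $\langle\mathrm{M}^{t}\rangle_{t}\to\eta$, replacing $\Psi_{t}$ by $\tfrac{\theta^{2}}{2}\eta$ changes expectations only by $o(1)$, provided $\mathcal{E}$ is uniformly bounded.

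\textbf{Step 3: the main obstacle, a stopping argument.} The difficulty is that $\eta$ is random and $\Psi$ may be unbounded, so $|\mathcal{E}|$ need not be bounded and the martingale property cannot be used directly.

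To handle this, stop at
$$\tau_{t}:=\inf\bigl\{u:\ \langle\mathrm{M}^{t}\rangle_{u}>K\bigr\},$$
which is a predictable-bracket stopping time. This makes $\Re\Psi$ bounded by $\theta^{2}K/2$ plus a small jump term. On $\{\eta<K\}$ we have $\tau_{t}\ge t$ with probability tending to one. The stopped exponential is then a bounded martingale, giving
$$\mathrm{E}\bigl[\zeta\,\mathcal{E}_{t\wedge\tau_{t}}\bigr]=\mathrm{E}[\zeta].$$
Combining this with Step 2 yields
$$\mathrm{E}\bigl[\zeta\,\e^{\mathrm{i}\theta(\mathrm{M}^{t}_{t}-\mathrm{M}^{t}_{s})}\e^{\theta^{2}\eta/2}\bigr]\to\mathrm{E}[\zeta].$$
The factor $\e^{\theta^{2}\eta/2}$ is inside the expectation; to move it out, apply this identity with $\zeta$ replaced by $\zeta\e^{-\theta^{2}\eta/2}$. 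That replacement is legitimate because $\eta$ is approximately $\mathcal{G}_{s}$-measurable: approximate $\eta$ by $\mathcal{G}_{s}$-measurable variables and use the earlier reduction. Letting $K\to\infty$ concludes.

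Since the paper cites this result, I would alternatively just invoke \cite[Theorem VIII.5.42]{JS} or \cite{Tou92} after checking (i)–(ii) match their hypotheses.
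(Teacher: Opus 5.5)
\textbf{How your route differs from the paper's.} You propose a direct, Hall--Heyde style proof. The paper does not reprove the martingale CLT. For an arbitrary sequence $t_n\to+\infty$ it time-changes, $\mathrm{M}^{(n)}(u)=\mathrm{M}(\alpha^{-1}(\alpha(t_n)u))/\sqrt{\alpha(t_n)}$ for $u\in[0,1]$. It then checks that $\langle\mathrm{M}^{(n)}\rangle_u\to u\eta$ for every $u\in(0,1]$ and that the Lindeberg condition holds, and applies Touati's functional theorem.

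Your fallback ``check (i)--(ii) and cite Touati or Jacod--Shiryaev'' skips exactly this reparametrisation. The cited results are functional and need the bracket to converge at all times of a dense set to a continuous limit. Your Step~1 normalisation does not give that: for $\alpha(t)=\e^{\lambda_1 t}$ one has $\langle\mathrm{M}\rangle_{ct}/\alpha(t)\to0$ for every $c<1$. A correct direct proof would be self-contained and use only the terminal bracket, so continuity and monotonicity of $\alpha$ would become superfluous.

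\textbf{The gap in Step 3.} From $\mathrm{E}[\zeta\mathcal{E}_{t\wedge\tau_t}]=\mathrm{E}[\zeta]$ you cannot pass to the unstopped expression $\mathrm{E}[\zeta\e^{\mathrm{i}\theta(\mathrm{M}^t_t-\mathrm{M}^t_s)}\e^{\theta^2\eta/2}]$.
\begin{itemize}
\item On $\{\tau_t<t\}$, whose probability tends to roughly $\mathrm{P}(\eta\ge K)$, the stopped exponential has modulus about $\e^{\theta^2K/2}$. So the only available bound on the discarded term is of order $\e^{\theta^2K/2}\mathrm{P}(\eta\ge K)$. This need not tend to $0$ as $K\to\infty$; take $\eta$ exponential and $\theta^2>2$.
\item The factor $\e^{\theta^2\eta/2}$ need not even be integrable.
\item Replacing $\zeta$ by $\zeta\e^{-\theta^2\eta/2}$ extends an identity from $\mathcal{G}_s$-measurable to $\mathcal{G}_\infty$-measurable test variables. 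That is legitimate only against a uniformly bounded factor.
\end{itemize}
Your ``reduction to bounded $\eta$ by truncation'' in Step~2 is the same difficulty. You cannot truncate $\eta$ itself, only stop $\mathrm{M}$.

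\textbf{How to repair it.} Run the whole argument on the stopped martingale $\bar{\mathrm{M}}:=\mathrm{M}^t_{\cdot\wedge\tau_t}$, whose bracket tends to $\eta\wedge K$.
\begin{enumerate}
\item Write $\e^{\mathrm{i}\theta\bar{\mathrm{M}}_t}=\bar{\mathcal{E}}_t\e^{-\bar\Psi_t}$, with $|\bar{\mathcal{E}}_t|\le\e^{\theta^2K/2}$, $|\e^{-\bar\Psi_t}|\le1$, and $\bar\Psi_t\to\frac{\theta^2}{2}(\eta\wedge K)$ in probability.
\item For bounded $\mathcal{G}_s$-measurable $Y$, $\mathrm{E}[Y\bar{\mathcal{E}}_t]=\mathrm{E}[Y\bar{\mathcal{E}}_s]\to\mathrm{E}[Y]$, because $\mathrm{M}(s)/\sqrt{\alpha(t)}\to0$.
\item The uniform bound and the density of $\bigcup_s L^\infty(\mathcal{G}_s)$ in $L^1(\mathcal{G}_\infty)$ then give $\bar{\mathcal{E}}_t\to1$ weakly in $L^1$. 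Hence $\mathrm{E}[\zeta\e^{\mathrm{i}\theta\bar{\mathrm{M}}_t}]\to\mathrm{E}[\zeta\e^{-\theta^2(\eta\wedge K)/2}]$.
\item Only then compare with the original martingale, at the level of characteristic functions rather than exponentials: $|\mathrm{E}[\zeta\e^{\mathrm{i}\theta\mathrm{M}^t_t}]-\mathrm{E}[\zeta\e^{\mathrm{i}\theta\bar{\mathrm{M}}_t}]|\le2\|\zeta\|_\infty\mathrm{P}(\tau_t\le t)$.
\item Let $t\to+\infty$, then $K\to+\infty$.
\end{enumerate}

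\textbf{Two smaller corrections.} First, the sign is reversed: the local martingale is $\exp\{\mathrm{i}\theta X+\Psi\}$, with $\mathrm{Re}\,\Psi\ge0$ and modulus $\e^{\mathrm{Re}\,\Psi}\le\e^{\theta^2\langle X\rangle/2}$. Second, this is a local martingale only when $\mathrm{M}$ is quasi-left-continuous. That holds in the paper's application but not in the lemma as stated. In general you must divide $\e^{\mathrm{i}\theta X}$ by the Dol\'eans-Dade exponential of $-\Psi$. The Lindeberg condition gives $\sup_u\Delta\langle X\rangle_u\to0$ in probability, which keeps that exponential away from $0$.
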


\begin{proof}
Without loss of generality, we assume $\alpha(0)=0$. Indeed, since $\alpha(t)\to+\infty$ as $t\to+\infty$, the function $\bar{\alpha}(t):=\alpha(t)-\alpha(0)$ satisfies $\bar{\alpha}(t)/\alpha(t)\to 1$, so that conditions (i)-(ii) and the desired convergence hold for $\alpha$ if and only if they hold for $\bar{\alpha}$. Under this assumption, the inverse function of $\alpha$ is well defined on $[0,+\infty)$.
We use $\alpha^{-1}$ to denote the inverse function of $\alpha$ which is also a continuous and strictly increasing nonnegative function on $[0,+\infty)$.
Let $\{t_{n}:\ n\ge 1\}$ be an arbitrary increasing sequence of positive numbers satisfying that $\lim_{n\to+\infty}t_{n}=+\infty$.
For every $n\ge 1$, define $\beta(n):=\alpha(t_{n})$ and
$$\mathrm{M}^{(n)}(t):=\frac{\mathrm{M}(\alpha^{-1}(\beta(n)t))}{\sqrt{\beta(n)}}\quad\forall t\in [0,1].$$
Let $\mathcal{G}^{(n)}_{t}:=\mathcal{G}_{\alpha^{-1}(\beta(n)t)}$ and $\mathcal{G}^{(n)}:=\{\mathcal{G}^{(n)}_{t}:\ t\ge 0\}$.
We can easily verify that for every $n\ge 1$,
$\mathrm{M}^{(n)}:=(\mathrm{M}^{(n)}(t))_{t\in [0,1]}$ is a
$\mathcal{G}^{(n)}$-adapted c\`{a}dl\`{a}g local martingale with $\mathrm{M}^{(n)}(0)=0$ and the predictable quadratic variation given by
$$\langle \mathrm{M}^{(n)}\rangle_{t}=\frac{\langle \mathrm{M}\rangle_{\alpha^{-1}(\beta(n)t)}}{\beta(n)}\quad\forall t\in [0,1].$$
By condition (i), we have for every $t\in (0,1]$,
\begin{equation}\label{lemA.2.1}
\langle \mathrm{M}^{(n)}\rangle_{t}=t\cdot \frac{\langle \mathrm{M}\rangle_{\alpha^{-1}(\beta(n)t)} }{\alpha(\alpha^{-1}(\beta(n)t))}\stackrel{\mathrm{P}}{\longrightarrow}t\cdot\eta\mbox{ as }n\to+\infty.
\end{equation}
If we use $\triangle \mathrm{M}^{(n)}_{s}$ (resp. $\triangle \mathrm{M}_{s}$) to denote the jump of $\mathrm{M}^{(n)}$ (resp. $\mathrm{M}$) at time $s$, then from the definition, for every $t\in (0,1]$ and $\epsilon>0$,
\begin{align*}
\sum_{s\le t}|\triangle \mathrm{M}^{(n)}_{s}|^{2}1_{\{|\triangle \mathrm{M}^{(n)}_{s}|>\epsilon\}}=&\frac{1}{\beta(n)}\sum_{s\le \alpha^{-1}(\beta(n)t)}|\triangle \mathrm{M}_{s}|^{2}1_{\{|\triangle \mathrm{M}_{s}|>\epsilon\sqrt{\beta(n)}\}}.
\end{align*}
Consequently, if we use $\nu^{(n)}$ to denote the predictable compensator of the jump measure of $\mathrm{M}^{(n)}$, then for every $t\in (0,1]$ and $\epsilon>0$,
\begin{align}\label{lemA.2.2}
\int_{0}^{t}\int_{\R}|x|^{2}1_{\{|x|>\epsilon\}}\nu^{(n)}(\mathrm{d}s,\mathrm{d}x)=&\frac{1}{\beta(n)}\int_{0}^{\alpha^{-1}(\beta(n)t)}\int_{\R}|x|^{2}1_{\{|x|>\epsilon\sqrt{\beta(n)}\}}
\nu^{\mathrm{M}}(\mathrm{d}s,\mathrm{d}x)\nonumber\\
=&t\cdot \frac{1}{\alpha(\alpha^{-1}(\beta(n)t))}\int_{0}^{\alpha^{-1}(\beta(n)t)}\int_{\R}|x|^{2}1_{\{|x|>\frac{\epsilon}{\sqrt{t}}\sqrt{\alpha(\alpha^{-1}(\beta_{n}t))}\}}
\nu^{\mathrm{M}}(\mathrm{d}s,\mathrm{d}x)\nonumber\\
\stackrel{\mathrm{P}}{\longrightarrow}&0,\quad\mbox{ as }n\to+\infty.
\end{align}
The final convergence follows from condition (ii) and the fact that $\alpha^{-1}(\beta(n)t)\to +\infty$ as $n\to+\infty$.
In view of \eqref{lemA.2.1} and \eqref{lemA.2.2}, we can apply Touati's central limit theorem (\cite[Theorem 3 and Corollary]{Tou92}) to the sequence
$\{\mathrm{M}^{(n)}:\ n\ge 1\}$ of local martingales to deduce that $\{\mathrm{M}^{(n)}(1)=\mathrm{M}(t_{n})/\sqrt{\alpha(t_{n})}:\ n\ge 1\}$ converges stably in distribution to $\sqrt{\eta}\cdot N$, where $N$ is a standard normal random variable independent of $\eta$. Since the sequence $\{t_{n}:\ n\ge 1\}$ is taken arbitrarily, it follows that $\{\mathrm{M}(t)/\sqrt{a(t)}:\ t\ge 0\}$ converges stably in distribution to $\sqrt{\eta}\cdot N$.
\end{proof}

The following result follows from \cite[Theorem VI.4.1]{JS}, see also \cite[Theorem 15.6]{B68}.
\begin{lemma}\label{lemA.3}
Let $\{X^{(n)}=(X^{(n)}_{t})_{t\ge 0}:\ n\ge  1\}$ be a sequence of processes in $\mathcal{D}[0,+\infty)$. Suppose the following conditions hold.
\begin{description}
\item[(i)] $\{X^{(n)}_{0}:\ n\ge 1\}$ is tight.
\item[(ii)] For any $\epsilon>0$,
$$\lim_{\delta\downarrow 0}\limsup_{n\to+\infty}\mathrm{P}\big(|X^{(n)}_{\delta}-X^{(n)}_{0}|>\epsilon\big)=0.$$
\item[(iii)] There exist a non-decreasing function $F$, two constants $\gamma\ge 0$ and $\alpha>1$, such that for any $\epsilon>0$ and $0<s<r<t$,
$$\sup_{n\ge 1}\mathrm{P}\big(|X^{(n)}_{r}-X^{(n)}_{s}|\ge \epsilon,\ |X^{(n)}_{t}-X^{(n)}_{r}|\ge \epsilon\big)\le \epsilon^{-\gamma}|F(t)-F(s)|^{\alpha}.$$
Then $\{X^{(n)}:\ n\ge 1\}$ is tight in $\mathcal{D}[0,+\infty)$.
\end{description}
\end{lemma}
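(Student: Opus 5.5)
This is a tightness criterion for càdlàg processes in the spirit of Billingsley, Theorem 15.6, and I would derive it from the general characterization of tightness in $\mathcal{D}[0,+\infty)$ in \cite[Theorem VI.3.21]{JS}. That result says tightness is equivalent to two things. First, for each $N$ the supremum $\sup_{t\le N}|X^{(n)}_t|$ is tight in $n$. Second, for each $N$ and $\epsilon>0$ the modified modulus $w'_N(X^{(n)},\theta)$ satisfies $\lim_{\theta\downarrow0}\limsup_n \mathrm{P}(w'_N(X^{(n)},\theta)>\epsilon)=0$. The plan is to obtain both from (i)–(iii), working on a fixed interval $[0,N]$.

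The first step controls the three-point modulus
$$w''_N(x,\theta)=\sup\{|x_r-x_s|\wedge|x_t-x_r|:\ s\le r\le t\le N,\ t-s\le\theta\}.$$
Condition (iii) is exactly the moment-type hypothesis of Billingsley's fluctuation inequality (Theorem 10.3/10.4 in \cite{B68}), with $\mu$ the measure induced by the non-decreasing function $F$. First apply this inequality on dyadic grids of $[s,t]$, then pass to continuous time using right-continuity of paths. The result is
$$\mathrm{P}\big(w''_N(X^{(n)},\theta)\ge\epsilon\big)\le K_{\gamma,\alpha}\,\epsilon^{-\gamma}\big(F(N)-F(0)\big)\sup_{|t-s|\le 2\theta}|F(t)-F(s)|^{\alpha-1},$$
uniformly in $n$. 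I expect this step to be the main obstacle. The trouble is that $F$ may have jumps, so the right side need not vanish as $\theta\downarrow0$. I would handle this as Billingsley does: assume $F$ is continuous (replace $F(t)$ by $F(t)+t$ if needed). If that does not work, reduce to the case where jumps of $F$ are absorbed by noting that (iii) still bounds the probability for triples straddling a jump by the jump size. Since the paper only applies the lemma with $F(t)=t$ and with an $n$-uniform $\sup$, I would state the continuous-$F$ reduction explicitly.

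The second step converts $w''$ control into $w'$ control on $[0,N)$. Here I use the deterministic relation $w'_N(x,\theta)\le C\big(w''_N(x,2\theta)+|x_\theta-x_0|+|x_{N-}-x_{N-\theta}|\big)$ from \cite[Section 15]{B68} or \cite[VI.3]{JS}. Condition (ii) controls the left-endpoint term. The right-endpoint term is harmless because tightness in $\mathcal{D}[0,+\infty)$ only requires the modulus on $[0,N]$ for $N$ outside a countable set, and one may enlarge $N$. Finally, $\sup_{t\le N}|X^{(n)}_t|$ is bounded by $|X^{(n)}_0|$ plus a finite multiple of the oscillations between the partition points given by $w'$. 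So (i) together with the $w'$ bound yields tightness of the suprema, and Theorem VI.3.21 concludes.
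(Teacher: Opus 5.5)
Your final step fails, and it cannot be repaired: the lemma as stated is false. The modulus $w'_N(x,\theta)$ only controls oscillations inside the intervals $[t_{i-1},t_i)$ of a partition. It says nothing about the jumps $x(t_i)-x(t_i-)$ at the partition points themselves, so $\sup_{t\le N}|x_t|$ is not bounded by $|x_0|$ plus a multiple of $w'_N$.

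Concretely, take the deterministic processes $X^{(n)}_t=n\,1_{\{t\ge 1\}}$. They satisfy (i) and (ii) trivially. They satisfy (iii) with $F(t)=t$ (for any $\gamma\ge 0$, $\alpha>1$), since a single jump can never make two consecutive increments nonzero. Yet $\sup_{t\le 2}|X^{(n)}_t|=n$, so $\{X^{(n)}\}$ is not tight.

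An additional hypothesis controlling the marginals is therefore indispensable, for example tightness of $X^{(n)}_t$ for every $t$ in a dense set $D$. Each partition interval of length $\ge\theta$ then contains a point of a fixed finite set $D_0\subset D$ with gaps $<\theta$. This bounds $\sup_{t\le N}|x_t|$ by $\max_{s\in D_0}|x_s|+w'$, after the usual endpoint bookkeeping. In Billingsley's Theorem 15.6 this control is supplied by the assumed convergence of finite-dimensional distributions.

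Your treatment of a discontinuous $F$ also does not work. Replacing $F(t)$ by $F(t)+t$ leaves the jumps of $F$ in place, and jumps cannot be ``absorbed''. Take $F=1_{[1,\infty)}$, $\gamma=0$, $\alpha=2$, and the spikes $X^{(n)}=1_{[1,1+1/n)}$. These satisfy (i)--(iii): two nonzero consecutive increments require $s<1\le t$, where the right-hand side equals $1$. They are also uniformly bounded. But for $N\ge 2$ and $1/n<\theta$, no admissible partition can isolate both jumps, so $w'_N(X^{(n)},\theta)\ge 1$ and the family is not tight. Continuity of $F$ must therefore be assumed, not derived.

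With continuous $F$ and marginal tightness added, your Steps 1 and 2 go through as sketched: the fluctuation inequality for $w''$, then $w''\to w'$ using (ii) at the left endpoint and enlarging $N$ at the right. Both extra hypotheses hold in the paper's only application, Proposition~\ref{prop:tight1}. There $F(t)=\sqrt{c_1}\,t$, and \eqref{lem4.2.1} gives $\|Z^{1,(t)}_s(f)-Z^{1,(t)}_0(f)\|_{L^2(\pp_{\delta_x})}\le\sqrt{c_1}\,s$, so every marginal is tight. In fact \eqref{lem4.2.1} is already a Kolmogorov--Chentsov bound for these continuous processes.
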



\small


\begin{thebibliography}{99}
\bibitem{AM15}  Adamczak, R., Mi\l o\'{s}, P.: CLT for Ornstein--Uhlenbeck branching particle system. Electron. J. Probab. 20(1), 1--35, (2015).
\bibitem{AH83} Asmussen, S. and Hering, H.: Branching Processes (Progress Prob. Statist. 3), Birkh\"{a}user, Boston, MA, 1983.
\bibitem{Ath69a} Athreya, K.B.: Limit theorems for multitype continuous time Markov branching processes. I. The case of an eigenvector linear functional. { Z. Wahrscheinlichkeitstheorie verw Gebiete}, 12, 320--332, (1969).

\bibitem{Ath69b} Athreya, K.B: Limit theorems for multitype continuous time Markov branching processes. II. The case of an arbitrary linear functional. { Z. Wahrscheinlichkeitstheorie verw Gebiete}, 13, 204--214, (1969).
\bibitem{BLP15} Barczy, M., Li, Z., Pap, G.: Stochastic differential equation with jumps for multi-type continuous state and
continuous time branching processes with immigration, ALEA, Lat. Am. J. Probab. Math. Stat. 12,
129--169, (2015).
\bibitem{B68} Billingsley, P.: Convergence of Probability Measures, Wiley and Sons, New York, 1968.

\bibitem{BZ07} Birkner, M., Z\"{a}hle, I.: A functional CLT for the occupation time of a state-dependent branching random walk. Ann. Probab., 35, 2063--2090, (2007).

\bibitem{BGT04} Bojdecki, T., Gorostiza, L.G., Talarczyk, A.: Sub-fractional Brownian
 motion and its relation to occupation times. Statist. Probab. Lett., 69, 405--419, (2004).
\bibitem{BGT06a} Bojdecki, T., Gorostiza, L.G., Talarczyk, A.: Limit theorems for occupation time fluctuations of branching
systems I: Long-range dependence. Stoch. Proc. Appl., 116, 1--18, (2006).
\bibitem{BGT06b} Bojdecki, T., Gorostiza, L.G., Talarczyk, A.: Limit theorems for occupation time fluctuations of branching
systems II: Critical and large dimensions. Stoch. Proc. Appl., 116, 19--35, (2006).
\bibitem{BGT07} Bojdecki, T., Gorostiza, L.G., Talarczyk, A.: A long range dependence stable process and an infinite variance branching system. Ann. Probab., 35, 500--527, (2007).

\bibitem{CG85} Cox, J.T., Griffeath, D.: Occupation times for critical branching Brownian motions. Ann. Probab., 13, 1108--1132, (1985).

\bibitem{DGW01} Dawson, D.A., Gorostiza, L.G., Wakolbinger, A.: Occupation time fluctuations in branching systems. J.
Theoret. Probab., 14, 729--796, (2001).

\bibitem{DH25}  Dean, C. B. C., Horton, E.: Fluctuations of non-local branching Markov processes. arXiv:2502.19382 (2025).


\bibitem{Dudley} Dudley, R.M.: Real Analysis and Probability, Cambridge University Press, 2002.

\bibitem{FHX25} Fan, Z., Hong, J., Xiong, J.: Some quenched and annealed limit
theorems for superprocesses in random environments. Stoch. Proc. Appl., 188, 104686 (2025).


\bibitem{Guttorp91} Guttorp, P.: Statistical Inference for Branching Processes. Wiley, New York, 1991.
\bibitem{HHKW}Harris, S., Horton, E., Kyprianou, A. E., Wang, M.: Yaglom limit for critical nonlocal branching Markov processes.
	\emph{Ann. Probab.}, \textbf{50}(6), 2373--2408 (2022).
\bibitem{Hong04} Hong, W.: Functional central limit theorem for super $\alpha$-stable processes. Sci. China Ser. A, 47, 874--881, (2004).
\bibitem{HK23} Horton, E., Kyprianou, A.E.: Stochastic Neutron Transport: And Non-Local Branching Markov Processes. Birkh\"{a}user Cham, 2023.
\bibitem{HKMPR} Horton, E., Kyprianou, A.E., Mart\'{\i}n-Ch\'{a}vez, P., Powell, E., Rivero, V.: Stability of (sub) critical non-local spatial branching processes with and without immigration. To appear in \emph{Ann. Probab.}, https://arxiv.org/abs/2407.05472.	
	

\bibitem{Iscoe86a} Iscoe, I.: A weighted occupation time for a class of measure-valued branching processes. Probab. Theory Rel. Fields, 71, 85--116, (1986).
\bibitem{Iscoe86b} Iscoe, I.: Ergodic theory and a local occupation time for measure-valued critical branching Brownian motion. Stochastics, 18, 197--243, (1986).

\bibitem{JS} Jacod, J., Shiryaev, A.N.: Limit Theorems for Stochastic Processes. Springer, Berlin, 2003.

\bibitem{Jan04} Janson, S.: Functional limit theorems for multitype branching
        processes and generalized P\'{o}lya urns. Stoch. Proc. Appl., 110(2), 177--245, (2004).


\bibitem{KS66} Kesten, H., Stigum, B.P.: A limit theorem for multidimensional Galton--Watson processes. Ann. Math. Statist. 37(5), 1211--1223,(1966).

\bibitem{Li11} Li, Z.-H.:
Measure-valued Branching Markov Processes,
Springer, Heidelberg, 2011.


\bibitem{Milos09} Mi{\l}o\'{s}, P.: Occupation times of subcritical branching immigration systems with Markov motions. Stoch. Proc. Appl., 119, 3211--3237, (2009).

\bibitem{Milos18} Mi\l o\'s, P.: Spatial central limit theorem for supercritical superprocesses. J. Theoret. Probab. 31(1), 1--40 (2018).

\bibitem{PY20} Palau, S., Yang, T.: Law of large numbers for supercritical superprocesses
with non-local branching. {Stoch. Proc. Appl.}, 130, 1074--1102, (2020).

\bibitem{RSZ14} Ren, Y.-X., Song, R., Zhang, R.: Central limit theorems for supercritical branching Markov processes.
{J. Funct. Anal.}, 266(3), 1716--1756, (2014).
\bibitem{RSZ15} Ren, Y.-X., Song, R. and Zhang, R.: Central limit theorems for supercritical superprocesses. Stoch. Proc. Appl.,125, 428--457, (2015).
\bibitem{RSZ17} Ren, Y.-X., Song, R., Zhang, R.: Functional central limit theorems for supercritical superprocesses. Acta Appl. Math. 147, 137--175, (2017).
\bibitem{RSZ17b} Ren, Y.-X., Song, R. and Zhang, R.: Central limit theorems for supercritical branching nonsymmetric Markov processes. Ann. Probab., 45(1), 564--623, (2017).

\bibitem{Tou92} Touati, A.: On the functional convergence in distribution of sequences of semimartingales to a mixture of Brownian motions. \textit{Theory Probab. Appl.}, 1992, 36(4), 752--771.

\bibitem{Yang25} Yang, T.: Fluctuations of the linear functionals for supercritical non-local branching superprocesses. \emph{Electron. J. Probab.} \textbf{30} (2025) 1--60. https://doi.org/10.1214/25-EJP1443




\end{thebibliography}
\end{document}